\def\mathcal{\mathscr}
\newtheorem{thm}{Theorem}[section]
\newtheorem{lem}[thm]{Lemma}
\newtheorem{cor}[thm]{Corollary}
\newtheorem{prop}[thm]{Proposition}
\newtheorem{conj}[thm]{Conjecture}
\theoremstyle{definition}
\newtheorem{rem}[thm]{Remark}
\newtheorem{defn}[thm]{Definition}
\newtheorem{ex}[thm]{Example}
\newcommand{\mca}[1]{{\mathcal{#1}}}
\def\Q{{\mathbb Q}}
\def\Z{{\mathbb Z}}
\def\C{{\mathbb C}}
\def\R{{\mathbb R}}
\def\ad{\text{\rm ad}\,}
\def\CF{\text{\rm CF}\,}
\def\CM{\text{\rm CM}\,}
\def\CZ{\text{\rm CZ}\,}
\def\conv{\text{\rm conv}\,}
\def\Diff{\text{\rm Diff}\,}
\def\diam{\text{\rm diam}}
\def\dist{\text{\rm dist}}
\def\EHZ{\text{\rm EHZ}}
\def\ev{\text{\rm ev}\,}
\def\ep{\varepsilon} 
\def\FHW{\text{\rm FHW}}
\def\HF{\text{\rm HF}\,}
\def\HM{\text{\rm HM}\,}
\def\HZ{\text{\rm HZ}\,}
\def\id{\text{\rm id}\,}
\def\ind{\text{\rm ind}\,}
\def\interior{\text{\rm int}\,}
\def\len{\text{\rm len}}
\def\Morse{\text{\rm Morse}}
\def\ph{\varphi} 
\def\pr{\text{\rm pr}}
\def\std{\text{\rm std}\,}
\def\supp{\text{\rm supp}\,}
\def\SH{\text{\rm SH}\,}
\begin{document}
\pagestyle{plain}
\thispagestyle{plain}

\title[Symplectic homology of fiberwise convex sets and homology of loop spaces]
{Symplectic homology of fiberwise convex sets and homology of loop spaces} 

\author[Kei Irie]{Kei Irie}
\address{Research Institute for Mathematical Sciences, Kyoto University, Kyoto 606-8502, JAPAN} 
\email{iriek@kurims.kyoto-u.ac.jp} 
\date{\today}

\begin{abstract} 
For any nonempty, compact and fiberwise convex set $K$ in $T^*\R^n$, 
we prove an isomorphism between symplectic homology of $K$ and a certain relative homology of loop spaces of $\R^n$. 
We also prove a formula which computes symplectic homology capacity (which is a symplectic capacity defined from symplectic homology) 
of $K$ using homology of loop spaces. 
As applications, we prove 
(i) symplectic homology capacity of any convex body is equal to its Ekeland-Hofer-Zehnder capacity, 
(ii) a certain subadditivity property of the Hofer-Zehnder capacity, which is a generalization of a result previously proved by Haim-Kislev. 
\end{abstract}

\maketitle

\section{Introduction} 

\subsection{Symplectic homology and the capacity $c_{\SH}$}

Let $n$ be a positive integer. 
Let us consider coordinates $q_1, \ldots, q_n, p_1, \ldots, p_n$ on $T^*\R^n$, where 
$q_1, \ldots, q_n$ are coordinates on $\R^n$ and $p_1, \ldots, p_n$ are coordinates on fibers with respect to the global frame $dq_1, \ldots, dq_n$. 
We often abbreviate $(q_1, \ldots, q_n)$ by $q$ and $(p_1, \ldots, p_n)$ by $p$. 
Let $\omega_n:= \sum_{1 \le i \le n} dp_i \, dq_i \in \Omega^2(T^*\R^n)$. 
For any nonempty compact set $K \subset T^*\R^n$ and real numbers $a<b$, 
one can define a $\Z$-graded $\Z/2$-vector space 
$\SH_*^{[a,b)}(K)$, which is called \textit{symplectic homology}
(see Section 2.2 for details). 

\begin{rem} 
Throughout this paper, all (co)homology groups are defined over $\Z/2\Z$, unless otherwise specified. 
\end{rem} 

When $K$ satisfies certain nice conditions, we say that $K$ is a \textit{restricted contact type} (RCT) set 
(see Definition \ref{defn_RCT_set}; note that our definition of RCT sets is slightly more generalized than the usual definition). 
Any compact star-shaped (in particular, convex) set is a RCT set (Lemma \ref{starshaped_implies_RCT}). 
For any RCT set $K \subset T^*\R^n$ and $a \in \R_{>0}$, 
there exists a natural linear map 
\[
i^a_K: H_{*+n}(T^*\R^n, T^*\R^n \setminus K)  \to \SH^{[0,a)}_*(K). 
\] 
See Section 2.3 for the definition of $i^a_K$. 
Also, as we define in Section 2.4, 
there exists a canonical element $\nu^{T^*\R^n}_K \in H_{2n}(T^*\R^n, T^*\R^n \setminus K)$. 
Then let us define the following numerical invariant: 
\[ 
c_{\SH}(K):= \inf\{ a \in \R_{>0} \mid i^a_K (\nu^{T^*\R^n}_K)= 0\}. 
\] 
In this paper, the invariant $c_{\SH}$ is called \textit{symplectic homology capacity}. 

\begin{rem}
The first symplectic capacity defined from symplectic homology was introduced by Floer-Hofer-Wysocki \cite{Floer_Hofer_Wysocki}, 
who defined a capacity (denoted by $c_{\FHW}$) for arbitrary open sets in the symplectic vector space. 
The above definition of $c_{\SH}$ is due to Hermann \cite{Hermann},
which is based on the idea by Viterbo \cite{Viterbo_GAFA} (see Section 5.3 of \cite{Viterbo_GAFA}). 
Indeed, Hermann (Proposition 5.7 of \cite{Hermann}) proved that 
(in the language of the present paper) 
any $C^\infty$-RCT set $K$ (see Definition \ref{defn_RCT_set})
satisfies $c_{\SH}(K) = c_{\FHW}(\interior(K))$. 
Here $\interior(K)$ denotes the interior of $K$. 
\end{rem} 

Although symplectic homology and the capacity $c_{\SH}$ are fundamental quantitative invariants 
of subsets of the symplectic vector space, 
they are notoriously difficult to compute, or even to estimate. 
This is because symplectic homology is a version of Floer homology, 
whose definition involves counting solutions of nonlinear PDEs (so called Floer equations), 
thus it is very difficult to compute these invariants directly from definitions. 
The core results of this paper, which we discuss in Section 1.2, 
enable us to investigate these invariants via computations of homology of loop spaces. 

\subsection{Main results} 

The core results of this paper are Theorem \ref{SH_and_loop_space_homology} and Corollary \ref{c_SH_and_loop_space_homology}. 
Corollary \ref{c_SH_and_loop_space_homology} has two applications: 
Theorem \ref{SH=EHZ} and Theorem \ref{HZ_subadditivity}. The goal of this subsection is to describe these four results. 

Theorem \ref{SH_and_loop_space_homology} shows that, 
for any nonempty compact set $K \subset T^*\R^n$ 
which is fiberwise convex (i.e. $K \cap T_q^*\R^n$ is convex for every $q \in \R^n$), 
symplectic homology of $K$ is isomorphic to a certain relative homology of loop spaces of $\R^n$. 
Theorem \ref{SH_and_loop_space_homology} is a version of the well-known isomorphism between Floer homology of cotangent bundles and homology of loop spaces. 
Indeed, the proof of Theorem \ref{SH_and_loop_space_homology} heavily relies on the proof by Abbondandolo-Schwarz \cite{Abbondandolo_Schwarz_Floer_homology}
of this isomorphism. 

Corollary \ref{c_SH_and_loop_space_homology}, 
which is an easy consequence of Theorem \ref{SH_and_loop_space_homology}, 
shows that if $K$ is a RCT set then $c_{\SH}(K)$ is equal to a certain min-max value defined from homology of loop spaces. 
In the rest of this subsection, we present two applications of Corollary \ref{c_SH_and_loop_space_homology}: 
Theorem \ref{SH=EHZ} and Theorem \ref{HZ_subadditivity}. 

To state Theorem \ref{SH=EHZ}, let us recall the definition of the Ekeland-Hofer-Zehnder capacity (which we denoted by $c_{\EHZ}$) of convex bodies. 
For definitions of ``symplectic action'' and ``closed characteristics'', see Section 2.3. 

\begin{defn}\label{defn_EHZ} 
$K \subset T^*\R^n$ is called a \textit{convex body} if $K$ is 
compact, convex, and $\interior (K) \ne \emptyset$. 
When $\partial K$ is a $C^\infty$-hypersurface, 
then its Ekeland-Hofer-Zehnder capacity $c_{\EHZ}(K)$ is defined as the
minimum symplectic action of closed characteristics on $\partial K$. 
For arbitrary convex body $K$, we define 
\[ 
c_{\EHZ}(K):= \inf\{ c_{\EHZ}(K') \mid \text{$K'$ is a convex body with $C^\infty$-boundary such that $K \subset K'$} \}. 
\] 
\end{defn} 
Now let us state our first application of Corollary \ref{c_SH_and_loop_space_homology}: 

\begin{thm}\label{SH=EHZ} 
$c_{\SH}(K)=c_{\EHZ}(K)$ for any convex body $K \subset T^*\R^n$. 
\end{thm}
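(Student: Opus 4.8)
The plan is to prove the two inequalities $c_{\SH}(K)\le c_{\EHZ}(K)$ and $c_{\SH}(K)\ge c_{\EHZ}(K)$ separately, reducing everything to the case of a convex body with smooth boundary by an approximation argument, and then invoking Corollary \ref{c_SH_and_loop_space_homology} to re-express $c_{\SH}(K)$ as a min-max value over homology of loop spaces. Since a convex body is fiberwise convex (its intersection with each fiber $T^*_q\R^n$ is convex, possibly empty) and is an RCT set by Lemma \ref{starshaped_implies_RCT}, both hypotheses needed to apply Theorem \ref{SH_and_loop_space_homology} and Corollary \ref{c_SH_and_loop_space_homology} are in force. First I would record the monotonicity of $c_{\SH}$ under inclusions of RCT sets (which follows from naturality of the maps $i^a_K$ and of the canonical classes $\nu^{T^*\R^n}_K$), so that it suffices to compare $c_{\SH}(K')$ with $c_{\EHZ}(K')$ for $K'$ a convex body with $C^\infty$-boundary containing $K$, and then pass to the infimum using $c_{\EHZ}(K)=\inf_{K'} c_{\EHZ}(K')$ from Definition \ref{defn_EHZ} together with a parallel continuity statement for $c_{\SH}$ under shrinking smooth approximations.

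For a convex body $K'$ with smooth boundary, the Hamiltonian on $\partial K'$ generating the Reeb-type dynamics has periodic orbits whose actions are exactly the symplectic actions of closed characteristics on $\partial K'$; the key classical fact is that the EHZ capacity of such a $K'$ equals the smallest positive action of a closed characteristic. On the symplectic homology side, the number $c_{\SH}(K')$ is by definition the infimum of $a$ for which $i^a_{K'}(\nu^{T^*\R^n}_{K'})=0$, and the standard structure of symplectic homology of an RCT set shows that the class $i^a_{K'}(\nu)$ can change (in particular vanish) only as $a$ crosses the action of a closed characteristic on $\partial K'$. This already gives the crude bound that $c_{\SH}(K')$ is the action of \emph{some} closed characteristic, hence $c_{\SH}(K')\ge c_{\EHZ}(K')$. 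For the reverse inequality I would use Corollary \ref{c_SH_and_loop_space_homology}: it identifies $c_{\SH}(K')$ with a min-max of the Lagrangian action functional (the Legendre dual of the Hamiltonian defining $K'$) over loops, taken against the fundamental class governing the relative homology $H_{*+n}(T^*\R^n, T^*\R^n\setminus K')$. One then shows this min-max value does not exceed the minimal action of a closed characteristic — concretely, one produces a loop, or a family of loops representing the relevant homology class, on which the Lagrangian action is controlled by $c_{\EHZ}(K')$; by convexity the Clarke-dual / Ekeland variational principle provides exactly such a minimizer, and its action equals $c_{\EHZ}(K')$. Combining the two inequalities yields $c_{\SH}(K')=c_{\EHZ}(K')$ in the smooth case.

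The main obstacle I anticipate is the matching of the two variational problems: on the symplectic homology side one has a min-max over a specific homology class of a loop space (as packaged by Corollary \ref{c_SH_and_loop_space_homology}), whereas the classical definition of $c_{\EHZ}$ for a smooth convex body is via the Clarke dual action functional on the space of closed characteristics. Showing these produce the same number requires care about which loops compute each quantity, about the normalization of the isomorphism in Theorem \ref{SH_and_loop_space_homology} (grading shift by $n$, orientation conventions, the role of $\nu^{T^*\R^n}_K$ as the image of the fundamental class of the fiber), and about the fact that for a convex Hamiltonian the relevant min-max is attained at an actual periodic orbit rather than at a degenerate or constant loop. A secondary technical point is the approximation step: one must verify that $c_{\SH}$ is continuous from above along a sequence of smooth convex bodies $K'_j\supset K$ shrinking to $K$ — this should follow from the monotonicity of $c_{\SH}$ and a uniform bound obtained from the loop-space description, but it must be stated carefully since $K$ itself may have non-smooth boundary. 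Once these normalization and limiting issues are handled, the theorem follows by chaining the inequalities and taking infima.
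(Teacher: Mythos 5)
Your high-level outline matches the paper — approximate by smooth convex bodies, invoke Corollary \ref{c_SH_and_loop_space_homology}, get $c_{\SH}\ge c_{\EHZ}$ from spectrality (Lemma \ref{properties_of_c_SH}(iii)), and try to kill the relevant loop-space homology class for any $a>c_{\EHZ}$. But the step that actually carries the weight of the proof — killing the class — is not supplied, and the gesture toward ``the Clarke-dual / Ekeland variational principle'' does not do what you need. Clarke duality hands you a single periodic orbit of minimal action; what Corollary \ref{c_SH_and_loop_space_homology} demands is a reason the $n$-dimensional class $H_*(j^a_K)(\nu^{\R^n}_{\pr(K)})\in H_n(\Lambda^a_K,\Lambda^0_K)$ vanishes, i.e.\ an $(n+1)$-dimensional family of loops through the constant loops that stays below the action threshold $a$. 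A single closed characteristic does not produce such a family, and there is no off-the-shelf statement that promotes the existence of a minimizer to the needed nullhomotopy.

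The paper's actual device is quite specific. Taking $\Gamma$ a minimal-action closed characteristic on $\partial K$ and $\gamma=\pr\circ\Gamma$, one forms the affine family $\gamma_{a,x}(t)=a\gamma(t)+x$ over the compact convex set $T=\{(a,x):\gamma_{a,x}(S^1)\subset\pr(K)\}$, and proves (Lemma \ref{upper_bound_of_length}) that $\len_K(\gamma_{a,x})\le \len_K(\gamma)=c_{\EHZ}(K)$ for all $(a,x)\in T$. This estimate is not Clarke duality; it is a direct convexity argument using that $\gamma$ is a critical point of $\len_K$ (Lemma \ref{critical_pt_of_length}) plus the inequality $p_s\cdot\dot\gamma_s\ge((1-s)p_0+sp_1)\cdot\dot\gamma_s$ coming from $(\gamma_s,(1-s)p_0+sp_1)\in K$. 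Since $T$ is bounded, the resulting map $\ell^C:(\R_{\ge 0}\times\R^n,\R_{\ge 0}\times\R^n\setminus T)\to(\Lambda^C_K,\Lambda^0_K)$ factors $j^C_K$ through a group where the class dies. Your proposal does not contain this construction or any substitute for it.

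A second genuine gap is that you need $\gamma=\pr\circ\Gamma$ to be an immersion landing in $\interior(\pr(K))$ for $\len_K(\gamma_{a,x})$ and Lemma \ref{properties_of_len_K}(iii) to behave — this fails, e.g., for the round ball, as noted after Definition \ref{defn_nice_convex_body}. The paper introduces ``nice'' convex bodies and shows (via a codimension-$\ge 2$ transversality argument, hence only for $n\ge 2$) that they are dense (Lemma \ref{lem_nice_convex_body}); the case $n=1$ is dispatched separately via displacement energy. Your approximation step only shrinks to $K$ from outside by arbitrary smooth convex bodies, which does not ensure niceness, so the key estimate would not even be formulable. In short: you found the right skeleton and correctly flagged where the danger is, but the danger is not resolved in your proposal — the crucial Lemma \ref{upper_bound_of_length} and the ``nice convex body'' reduction are the new content of this proof, and neither appears in your argument.
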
 

\begin{rem} 
\begin{itemize}
\item 
Theorem \ref{SH=EHZ} is also proved by Abbondandolo-Kang \cite{Abbondandolo_Kang}. 
Their proof is based on an isomorphism (which is the main result of \cite{Abbondandolo_Kang})
between 
the filtered Floer complex of a convex quadratic Hamiltonian on $T^*\R^n$ (satisfying some technical conditions) 
and 
the filtered Morse complex of its Clarke dual action functional. 
\item 
Using $S^1$-equivairiant symplectic homology, 
one can define a sequence of capacities $(c^k_{\SH^{S^1}})_{k \ge 1}$. 
Felix Schlenk \cite{Schlenk} pointed out 
that, 
assuming some standard properties of these capacities, 
Theorem \ref{SH=EHZ} implies $c^1_{\SH^{S^1}}(K)=c_{\SH}(K)$ for any convex body $K \subset T^*\R^n$; 
see Section 2.5 for details. 
\end{itemize} 
\end{rem} 

Theorem \ref{SH=EHZ} is motivated 
by the following folk conjecture, which says that all symplectic capacities on $T^*\R^n$ coincide for convex bodies
(see Section 5 of \cite{Ostrover_ICM} and the references therein): 

\begin{conj}\label{Strong_Viterbo_Conjecture} 
Let $c$ be any symplectic capacity on $T^*\R^n$; 
namely, $c$ is a map from the set of all subsets of $T^*\R^n$
to $[0,\infty]$ 
which satisfies the following three properties: 
\begin{itemize} 
\item For any $S \subset T \subset T^*\R^n$, there holds $c(S) \le c(T)$. 
\item For any $S \subset T^*\R^n$, $a \in \R_{>0}$ and $\ph \in \Diff(T^*\R^n)$ such that $\ph^*\omega_n = a \omega_n$, there holds 
$c(\ph(S)) = a c(S)$. 
\item $c(\{(q,p)\in T^*\R^n\mid |q|^2+|p|^2\le 1\})=c(\{(q,p)\in T^*\R^n\mid q_1^2+p_1^2\le 1\}) = \pi$. 
\end{itemize} 
Then $c(K) = c_{\EHZ}(K)$ for any convex body $K$. 
\end{conj}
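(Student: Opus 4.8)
Conjecture~\ref{Strong_Viterbo_Conjecture} is the Strong Viterbo Conjecture and is open, so what follows is a proposed line of attack together with an indication of where the essential difficulty lies; the results of this paper contribute to it only by adding one more capacity, $c_{\SH}$, to the list of those already known to agree with $c_{\EHZ}$ on convex bodies.

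\textbf{Step 1: reduction to the Gromov width and the cylindrical capacity.} Let $c_B$ be the Gromov width and $c_Z$ the cylindrical capacity on $T^*\R^n \cong \R^{2n}$. Both are symplectic capacities in the sense of the conjecture: monotonicity and conformality are immediate, $c_B$ is correctly normalized by Gromov's non-squeezing theorem, $c_Z(\{q_1^2+p_1^2\le 1\})=\pi$ since that cylinder embeds in itself, and $c_Z(\{|q|^2+|p|^2\le 1\})=\pi$ because the unit ball sits inside the unit cylinder while non-squeezing gives the reverse inequality. For an arbitrary symplectic capacity $c$ and any $S\subset T^*\R^n$: if a ball of capacity $a$ embeds symplectically into $S$ then $a\le c(S)$, and if $S$ embeds symplectically into a cylinder of capacity $a$ then $c(S)\le a$; taking a supremum and an infimum gives
\[
c_B(S)\le c(S)\le c_Z(S)
\]
for every symplectic capacity $c$. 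It is classical that $c_B\le c_{\EHZ}\le c_Z$ on convex bodies (for instance because $c_{\EHZ}$ coincides there with the Hofer--Zehnder capacity, which is a genuine symplectic capacity). Hence Conjecture~\ref{Strong_Viterbo_Conjecture} is equivalent to the single equality
\[
c_B(K)=c_Z(K)\qquad\text{for every convex body }K\subset T^*\R^n,
\]
and, since $c_B\le c_{\EHZ}\le c_Z$ in any case, it suffices to prove $c_Z(K)\le c_{\EHZ}(K)$: concretely, that every convex body $K$ admits symplectic embeddings into cylinders of capacity arbitrarily close to $c_{\EHZ}(K)$.

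\textbf{Step 2: using the loop-space description.} The plan is to attack $c_Z(K)\le c_{\EHZ}(K)$ with the homological machinery of this paper. By Theorem~\ref{SH=EHZ} and Corollary~\ref{c_SH_and_loop_space_homology}, $c_{\EHZ}(K)=c_{\SH}(K)$ is a loop-space min-max value, and by Theorem~\ref{SH_and_loop_space_homology} the full group $\SH_*(K)$ is computed by relative loop-space homology for fiberwise convex $K$. One would try to reinterpret the loop-space classes detecting $c_{\SH}(K)$ as obstructions that, for convex $K$, are sharp also for the cylindrical embedding problem, so that the same min-max value bounds $c_Z(K)$ from above. A complementary tactic is to enlarge the list of capacities proved equal to $c_{\EHZ}$ on convex bodies: the present methods together with the remark following Theorem~\ref{SH=EHZ} plausibly yield $c^k_{\SH^{S^1}}(K)=c_{\EHZ}(K)$ for all $k$, and one could then hope to combine such computations with structural relations between these capacities and $c_B$, $c_Z$.

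\textbf{Step 3: the main obstacle.} The decisive point is precisely the inequality $c_Z(K)\le c_{\EHZ}(K)$, i.e.\ the construction of the cylinder embeddings of Step 1, and this is exactly where the conjecture is genuinely open. The tools that truly exploit convexity --- Clarke duality, billiard dynamics, and the loop-space and Floer-theoretic descriptions used here --- all compute $c_{\EHZ}$-type quantities and, in the embedding direction, give only the trivial bound $c_{\EHZ}(K)\le c_Z(K)$; conversely, the flexible embedding techniques that could in principle produce the required cylinder embeddings are not known to be sharp on the whole class of convex bodies. Moreover the analogous coincidence of all symplectic capacities fails for general non-convex domains, so any proof must use convexity in a way that is presently unavailable. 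A plausible but incomplete route is to first establish $c_B(K)=c_Z(K)$ for $K$ sufficiently $C^\infty$-close to a round ball --- a local statement for which partial results exist --- and then to propagate the equality to an arbitrary convex body by deforming $K$ to a ball; organizing such a propagation, with simultaneous control of $c_B$ and $c_Z$ along the deformation, would be the heart of any proof, and I expect this to be the true obstacle.
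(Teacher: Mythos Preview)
Your recognition that Conjecture~\ref{Strong_Viterbo_Conjecture} is open, and that the paper does not attempt to prove it, is exactly right: the paper states explicitly that the conjecture ``is still widely open'' and that Theorem~\ref{SH=EHZ} merely verifies it for the single capacity $c_{\SH}$. There is therefore no proof in the paper to compare your proposal against, and your framing of the submission as a line of attack rather than a proof is the appropriate one.

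Your Step~1 reduction to $c_B(K)=c_Z(K)$ for convex bodies is the standard and correct reformulation, and your Step~3 accurately identifies the genuine obstacle. One small correction in Step~2: the claim that the present methods ``plausibly yield $c^k_{\SH^{S^1}}(K)=c_{\EHZ}(K)$ for all $k$'' is false, not merely unproved. Already for the round ball $B^{2n}(1)$ one has $c^k_{\SH^{S^1}}(B^{2n}(1)) = k\pi$ while $c_{\EHZ}(B^{2n}(1))=\pi$, so the higher equivariant capacities are strictly larger than $c_{\EHZ}$ for $k\ge 2$. The remark following Theorem~\ref{SH=EHZ} (and Section~2.5) only concerns $k=1$. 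This does not affect your overall assessment, but that particular tactic should be dropped.
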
 

Conjecture \ref{Strong_Viterbo_Conjecture} 
is still widely open. 
As far as the author knows, 
Conjecture \ref{Strong_Viterbo_Conjecture} 
was verified only for the first equivariant Ekeland-Hofer capacity and the Hofer-Zehnder capacity. 
The result for the first equivariant Ekeland-Hofer capacity was mentioned by Viterbo (Proposition 3.10 of \cite{Viterbo_capacity}), 
and a detailed proof can be found in Section 6 of Gutt-Hutchings-Ramos \cite{Gutt_Hutchings_Ramos}. 
The result on the Hofer-Zehnder capacity is due to Hofer-Zehnder \cite{Hofer_Zehnder_convex}. 
Theorem \ref{SH=EHZ} verifies Conjecture \ref{Strong_Viterbo_Conjecture} for the symplectic homology capacity $c_{\SH}$. 

Our second application of Corollary \ref{c_SH_and_loop_space_homology} is a certain subadditivity property of the Hofer-Zehnder capacity. 
Let us recall the definition of the Hofer-Zehnder capacity: 

\begin{defn} 
$H \in C_c^\infty(T^*\R^n, \R_{\ge 0})$ is called \textit{Hofer-Zehnder admissible} if 
there exists a nonempty open set $U \subset T^*\R^n$ such that $H|_U \equiv \max H$, 
and every nonconstant periodic orbit of its Hamiltonian vector field $X_H$ (see the first paragraph of Section 2 for our convention) has period strictly larger than $1$.
Let $\mca{H}_\ad$ denote the set of all Hofer-Zehnder admissible functions on $(T^*\R^n, \omega_n)$. 
For any $S \subset T^*\R^n$ such that $\interior (S) \ne \emptyset$, 
 its Hofer-Zehnder capacity $c_{\HZ}(S) \in \R_{>0}$ is defined as 
\[ 
c_{\HZ}(S): = \sup \{ \max H \mid  H \in  \mca{H}_\ad, \, \supp H \subset S \}. 
\]
\end{defn} 

Now we can state our second application of Corollary \ref{c_SH_and_loop_space_homology}: 

\begin{thm}\label{HZ_subadditivity} 
Let $K$ be any compact set in $T^*\R^n$ with $\interior (K) \ne \emptyset$, 
and $\Pi$ be any hyperplane in $T^*\R^n$ which intersects $\interior (K)$. 
Let $\Pi^+$ and $\Pi^-$ be distinct closed halfspaces such that $\partial \Pi^+ =\partial \Pi^- = \Pi$.  
Then, setting $K^+:= K \cap \Pi^+$ and $K^-:= K \cap \Pi^-$, there holds 
\[ 
c_{\HZ}(K) \le c_{\EHZ}(\conv(K^+)) + c_{\EHZ}(\conv(K^-)),
\] 
where $\conv$ denotes the convex hull. 
\end{thm}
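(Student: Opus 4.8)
The plan is to reduce the statement to an estimate for $c_{\SH}$ of the two convex pieces together with the monotonicity and conformality axioms, combined with the subadditivity of $c_{\HZ}$ under "cutting by a hyperplane" that is implicit in the Hofer--Zehnder theory. First I would observe that, by Theorem \ref{SH=EHZ}, it suffices to bound $c_{\HZ}(K)$ by $c_{\SH}(\conv(K^+)) + c_{\SH}(\conv(K^-))$. The key tool is the min-max description of $c_{\SH}$ of an RCT set provided by Corollary \ref{c_SH_and_loop_space_homology}: since a convex body is an RCT set (Lemma \ref{starshaped_implies_RCT}), both $c_{\SH}(\conv(K^\pm))$ are controlled by min-max critical values of the free-time action functional on loop spaces of $\R^n$, where the relevant Hamiltonian is (up to a convex rescaling) the support function of the body.

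The central step is a splitting argument for loops. Given a Hofer--Zehnder admissible $H$ with $\supp H \subset K$ and $\max H$ close to $c_{\HZ}(K)$, one looks at the associated Hamiltonian dynamics. The hyperplane $\Pi$ decomposes $T^*\R^n$ into $\Pi^+$ and $\Pi^-$; since $\supp H^\pm := H|_{K^\pm}$ sits inside $K^\pm \subset \conv(K^\pm)$, each restricted Hamiltonian is admissible for the corresponding convex hull. The point is then to show that no nonconstant $1$-periodic orbit of $X_H$ can exist unless one of $\conv(K^+)$, $\conv(K^-)$ already "sees" a closed characteristic of action $\le 1$; equivalently, one estimates $\max H$ by the sum of the two capacities by a cut-and-paste of a would-be periodic orbit along $\Pi$. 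Here I would use that a periodic orbit, when it crosses $\Pi$, splits into arcs lying in $\Pi^+$ and in $\Pi^-$; using convexity of $\conv(K^\pm)$ and the variational characterization of $c_{\EHZ}$ as a minimal action of closed characteristics, each bundle of arcs can be closed up (reflecting across $\Pi$) into a closed characteristic of the relevant convex body, and the actions add up. This is the standard mechanism behind Haim-Kislev's result, and the generalization consists in replacing the ambient convex body by an arbitrary compact $K$ while keeping track only of the two convex hulls of the halves.

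The main obstacle I expect is the closing-up step: turning the family of arcs of a periodic $X_H$-orbit inside $\Pi^\pm$ into an honest closed characteristic on $\partial(\conv(K^\pm))$ whose action is bounded by the contribution of those arcs. One has to handle reparametrization (the free-period nature of the action functional, which is exactly why Corollary \ref{c_SH_and_loop_space_homology} is phrased with the free-time action functional on loop spaces rather than fixed-period orbits), and one must ensure the reflected/concatenated curve stays inside the convex hull — this uses that $\Pi^\pm$ are halfspaces and $\conv(K \cap \Pi^\pm) \subset \conv(K)$, so reflection across $\Pi$ does not leave the convex region. A clean way to organize this is to pass entirely to the loop-space side: approximate $K$ from outside by fiberwise convex smooth RCT sets, apply Theorem \ref{SH_and_loop_space_homology} and Corollary \ref{c_SH_and_loop_space_homology} to express everything in terms of min-max over loops, and then perform the cut-and-concatenate construction for paths $q:[0,1]\to\R^n$ rather than for Hamiltonian orbits, where the halfspace condition becomes a condition on $q(t)$ lying in the projection of $\Pi^\pm$ — there the geometric bookkeeping is more transparent. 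Once the loop-space inequality $c_{\SH}$-min-max$(K) \le c_{\SH}$-min-max$(\conv K^+) + c_{\SH}$-min-max$(\conv K^-)$ is established, Theorem \ref{SH=EHZ} and the elementary inequality $c_{\HZ}(K) \le c_{\SH}(K)$ (which follows from monotonicity of $c_{\SH}$ under inclusion together with its computation on the pieces) finish the proof.
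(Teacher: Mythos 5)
Your proposal mixes two rather different plans, and neither one is fully worked out. The first (Hamiltonian cut-and-paste) is not the paper's route and has real obstacles; the second (loop-space concatenation) is the right general direction and is what the paper actually does, but you leave the crucial construction unspecified and misattribute a key inequality.

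On the Hamiltonian cut-and-paste plan: restricting an admissible $H$ to $K^\pm$ does not produce a Hamiltonian supported in $K^\pm$ (the restriction need not vanish on $\Pi$, so extension by zero is not smooth), and reflection across $\Pi$ is anti-symplectic, so a reflected arc of an $X_H$-orbit is not a characteristic for $\omega_n$. The paper explicitly remarks that the combinatorial approach behind Haim-Kislev's result seems hard to extend beyond convex $K$, and no Hamiltonian-orbit splitting appears in the proof.

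On the loop-space plan: the decisive step you omit is the introduction of the auxiliary set $K' := \conv(K^+) \cup \conv(K^-)$, which is star-shaped (hence an RCT set) and, because the two convex pieces share the same slice over $\Pi = \{q_1 = 0\}$, is \emph{fiberwise convex}. This is what makes Corollary \ref{c_SH_and_loop_space_homology} applicable. Your suggestion to ``approximate $K$ from outside by fiberwise convex smooth RCT sets'' does not by itself produce $K'$ — the smallest fiberwise convex envelope of $K$ need not coincide with $K'$ — and $K$ itself is neither fiberwise convex nor an RCT set, so $c_{\SH}(K)$ is not even defined. The chain of inequalities is $c_{\HZ}(K) \le c_{\HZ}(K') \le c_{\SH}(K') \le c_{\EHZ}(\conv K^+) + c_{\EHZ}(\conv K^-)$, where the first step is just monotonicity of $c_{\HZ}$, the middle step is a comparison of two different capacities (proved in the paper via a result from \cite{Irie_HZ}, not as a consequence of monotonicity of $c_{\SH}$ as you claim), and the last step is the heart of the argument: an explicit family of concatenated loops (reparametrized translates and dilates of the projections $\gamma^\pm$ of nice curves on $\partial C^\pm \supset \partial \conv(K^\pm)$, glued on $\Pi$), whose $\len_{K'}$ is bounded by $c_{\EHZ}(C^+)+c_{\EHZ}(C^-)$ using Lemma \ref{upper_bound_of_length} applied separately to each halfspace, and which provides a nullhomotopy of $j^A_{K'}$. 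Without that construction and without the estimate of Lemma \ref{upper_bound_of_length}, the proposal does not close.
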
 

Theorem \ref{HZ_subadditivity} can be rephrased as follows: 
for any $K$ and $\Pi$ such that $K^+$ and $K^-$ are convex, 
$c_{\HZ}(K) \le c_{\EHZ}(K^+) + c_{\EHZ}(K^-)$. 
In particular, we recover the following result by Haim-Kislev \cite{Haim_Kislev} as a corollary: 

\begin{cor}[\cite{Haim_Kislev} Theorem 1.8]\label{Haim_Kislev} 
Let $K$ be any convex body in $T^*\R^n$
and $\Pi$ be any hyperplane in $T^*\R^n$ which intersects $\interior (K)$. 
Then, $c_{\EHZ}(K) \le c_{\EHZ}(K^+) + c_{\EHZ}(K^-)$. 
\end{cor}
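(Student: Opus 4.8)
The plan is to read this off directly from Theorem \ref{HZ_subadditivity} by specializing to the convex case. First I would note that, since $K$ is convex and each of the two closed halfspaces $\Pi^{+},\Pi^{-}$ is convex, the sets $K^{+}=K\cap\Pi^{+}$ and $K^{-}=K\cap\Pi^{-}$ are convex, so that $\conv(K^{+})=K^{+}$ and $\conv(K^{-})=K^{-}$. Moreover, since $\Pi$ meets $\interior(K)$, each open halfspace $\interior(\Pi^{\pm})$ meets $\interior(K)$, hence $\interior(K^{\pm})\supseteq\interior(\Pi^{\pm})\cap\interior(K)\ne\emptyset$; thus $K^{+}$ and $K^{-}$ are genuine convex bodies and the terms $c_{\EHZ}(K^{\pm})$ appearing in the desired inequality are well defined. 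Because $K$ is compact with $\interior(K)\ne\emptyset$ and $\Pi$ meets $\interior(K)$, Theorem \ref{HZ_subadditivity} applies verbatim and gives
\[
c_{\HZ}(K)\ \le\ c_{\EHZ}(\conv(K^{+}))+c_{\EHZ}(\conv(K^{-}))\ =\ c_{\EHZ}(K^{+})+c_{\EHZ}(K^{-}).
\]

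It then remains only to replace $c_{\HZ}(K)$ on the left by $c_{\EHZ}(K)$, i.e.\ to invoke the inequality $c_{\EHZ}(K)\le c_{\HZ}(K)$ for the convex body $K$. This is supplied by the theorem of Hofer--Zehnder \cite{Hofer_Zehnder_convex}, which asserts $c_{\HZ}(K)=c_{\EHZ}(K)$ for every convex body $K\subset T^{*}\R^{n}$ (alternatively one may combine Theorem \ref{SH=EHZ}, which gives $c_{\EHZ}(K)=c_{\SH}(K)$, with the comparison $c_{\SH}(K)\le c_{\HZ}(K)$ coming from Hermann's identification $c_{\SH}(K)=c_{\FHW}(\interior(K))$ recalled above). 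Chaining this with the displayed inequality yields $c_{\EHZ}(K)\le c_{\EHZ}(K^{+})+c_{\EHZ}(K^{-})$, which is the assertion.

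There is no real obstacle in the corollary itself: all of the substantive work — the use of symplectic homology and of homology of loop spaces, and the analysis of the decomposition of $K$ along $\Pi$ — is already carried out in the proof of Theorem \ref{HZ_subadditivity}. The only points that warrant a moment's care are verifying that $K^{+}$ and $K^{-}$ are genuine convex bodies (done above from the hypothesis that $\Pi$ meets $\interior(K)$, so that $\conv(K^{\pm})=K^{\pm}$ and $c_{\EHZ}(K^{\pm})$ make sense) and making sure the passage from $c_{\HZ}(K)$ to $c_{\EHZ}(K)$ is used in the correct direction, namely as $c_{\EHZ}(K)\le c_{\HZ}(K)$.
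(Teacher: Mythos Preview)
Your main argument is correct and is exactly the paper's derivation: since $K$ is convex one has $\conv(K^{\pm})=K^{\pm}$, Theorem \ref{HZ_subadditivity} gives $c_{\HZ}(K)\le c_{\EHZ}(K^{+})+c_{\EHZ}(K^{-})$, and the Hofer--Zehnder result $c_{\HZ}(K)=c_{\EHZ}(K)$ for convex bodies finishes it.

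One remark on your parenthetical alternative: the inequality you invoke there, $c_{\SH}(K)\le c_{\HZ}(K)$, is not what Hermann's identification $c_{\SH}(K)=c_{\FHW}(\interior K)$ yields. In fact the paper proves (and uses) the opposite inequality $c_{\HZ}\le c_{\SH}$ in Lemma \ref{HZ_SH}, so that alternative route does not work as written. This does not affect your main argument, which stands on the Hofer--Zehnder theorem alone.
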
 

The proof in \cite{Haim_Kislev} uses a combinatorial formula (Theorem 1.1 of \cite{Haim_Kislev}) which computes the EHZ capacity of convex polytopes, 
and it seems difficult to extend this proof to prove Theorem \ref{HZ_subadditivity} when $K$ is not convex. 

Theorem \ref{HZ_subadditivity} is inspired by the following conjecture 
by Akopyan-Karasev-Petrov \cite{Akopyan_Karasev_Petrov}: 

\begin{conj}[\cite{Akopyan_Karasev_Petrov}]\label{AKP}
Let $K, K_1, \ldots, K_m$ be convex bodies in $T^*\R^n$. 
If $K \subset \bigcup_{i=1}^m K_i$, then $c_{\EHZ}(K) \le \sum_{i=1}^m c_{\EHZ}(K_i)$. 
\end{conj}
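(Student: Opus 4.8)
\smallskip

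\noindent\textbf{Proof strategy.}
The plan is to transport Conjecture~\ref{AKP} to the free loop space via the results of this paper and then attempt a ``loop--splitting and reassembly'' argument along the cover $\{K_i\}$. First, since $K$ and each $K_i$ are convex bodies, Theorem~\ref{SH=EHZ} reduces the desired inequality to $c_{\SH}(K)\le\sum_{i=1}^m c_{\SH}(K_i)$. By Corollary~\ref{c_SH_and_loop_space_homology} applied to each of the RCT convex bodies $K,K_1,\dots,K_m$, the capacity $c_{\SH}$ of a convex body is a min--max value of the convex ``dual length'' functional $\ell_S(\gamma):=\int_0^1 h_{S_{\gamma(t)}}(\dot\gamma(t))\,dt$ on $\Lambda\R^n$, where $h_{S_q}$ is the support function of the fiber $S_q=S\cap T_q^*\R^n$; concretely $c_{\SH}(S)$ is the infimum of those $a>0$ for which the image of $\nu^{T^*\R^n}_S$ vanishes in the relative loop space homology of $\{\ell_S<a\}$ provided by Theorem~\ref{SH_and_loop_space_homology}. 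The hypothesis $K\subset\bigcup_i K_i$ says exactly that $K_q\subset\bigcup_i (K_i)_q$ for every $q$, hence $h_{K_q}\le\max_i h_{(K_i)_q}$ pointwise, so $\ell_K(\gamma)\le\int_0^1\max_i h_{(K_i)_{\gamma(t)}}(\dot\gamma(t))\,dt$ along any loop. This pointwise bound alone only gives monotonicity; subadditivity must exploit that on different time intervals different $K_i$ realize the maximum. We note that the case $m=2$ with $K_1,K_2$ obtained from $K$ by a hyperplane cut is already Corollary~\ref{Haim_Kislev} (hence a consequence of Theorem~\ref{HZ_subadditivity} together with $c_{\SH}=c_{\EHZ}$).

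The intended mechanism is as follows. Fix $\epsilon>0$. For each loop $\gamma$ occurring in a near-optimal min--max cycle for $c_{\SH}(K)$, partition $[0,1]$ into the sets $T_i$ on which the index $i$ realizes $\max_j h_{(K_j)_{\gamma(t)}}(\dot\gamma(t))$; then $\ell_K(\gamma)\le\sum_i\int_{T_i}h_{(K_i)_{\gamma(t)}}(\dot\gamma(t))\,dt$, and $\gamma|_{T_i}$ is a union of arcs lying over $\pi(K_i)$. Using convexity of $K_i$, close these arcs up into genuine loops $\gamma_i$ contained over $\conv(K_i)=K_i$, with $\ell_{K_i}(\gamma_i)$ equal to $\int_{T_i}h_{(K_i)}(\dot\gamma)\,dt$ plus a ``closing cost'' that one tries to make non-positive or negligible by a suitable choice of closing paths. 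Since $\ell$ is additive under concatenation, this should bound $\ell_K(\gamma)$ by $\sum_i\ell_{K_i}(\gamma_i)$, and carrying the construction out continuously in $\gamma$ should package these decompositions and closings into a filtered chain-level map --- a kind of secondary product refining the pair-of-pants/Chas--Sullivan product --- from (a product of) the min--max complexes of $K_1,\dots,K_m$ to that of $K$, decreasing the action filtration by at most the total. Composing with near-optimal cycles for the $K_i$ (and, for $m\ge3$, iterating along a spanning tree of the nerve of the cover) would then produce a competitor cycle for $K$ at action level $\sum_i c_{\SH}(K_i)+m\epsilon$, proving the claim.

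The crux, and presumably the reason the conjecture is open, is that symplectic homology has no Mayer--Vietoris property for covers by convex bodies --- it is not a sheaf --- so the filtered chain-level map above does not follow from any known structure, and constructing it would itself be a major result. The geometric obstacles are equally real. \emph{(i) The closing cost}: in the hyperplane-cut case the arcs of $\gamma|_{T_i}$ begin and end on the separating hyperplane and close up inside it essentially for free (this is what makes Corollary~\ref{Haim_Kislev} accessible), but for a general cover there is no separating hyperplane --- already for $m=2$, $\overline{K\setminus K_1}$ need not be convex, hence need not be separable from $\overline{K\setminus K_2}$ --- and closing the arcs within $K_i$ can cost an uncontrolled amount of $\ell_{K_i}$-length; exploiting positive homogeneity of the support functions to close ``through infinity in the fibers'' is tempting but available only after translating a \emph{single} body so that the zero section lies in its interior, not all $K_i$ at once. \emph{(ii) Parametrization}: the partition $\{T_i(\gamma)\}$ depends on $\gamma$ and $\dot\gamma$ and jumps as $\gamma$ varies (and a priori may have infinitely many components), so one must first put the min--max cycle and the cover in a generic position making it finite and stable, and then assemble the closings into a genuine continuous family of chains; it is this parametrized gluing, not the pointwise estimate, that is the obstacle. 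A different route would be to use a version of symplectic cohomology that does satisfy Mayer--Vietoris for such covers, such as Varolgunes's relative symplectic cohomology, and to extract the inequality from the resulting long exact (or spectral) sequence; there the difficulty becomes tracking the action filtration finely enough to land on the constant $\sum_i c_{\EHZ}(K_i)$.
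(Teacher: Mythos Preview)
The statement you are addressing is Conjecture~\ref{AKP}, which is presented in the paper as an \emph{open} conjecture due to Akopyan--Karasev--Petrov; the paper does not prove it and does not claim to. There is therefore no ``paper's own proof'' to compare your proposal against. The paper's contribution in this direction is limited to the special case of a single hyperplane cut (Theorem~\ref{HZ_subadditivity} and Corollary~\ref{Haim_Kislev}), and indeed the paper formulates the stronger Conjecture~\ref{stronger_subadditivity} as a further open problem.

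Your write-up is not a proof but a thoughtful strategy sketch accompanied by an honest diagnosis of why it does not go through --- and you identify the obstacles correctly. The loop-splitting idea is natural, and the two failure modes you isolate (the uncontrolled closing cost when there is no separating hyperplane, and the lack of a continuous/parametrized splitting) are precisely the points where the hyperplane-cut argument of Section~6 breaks down for a general convex cover. Your remark that symplectic homology lacks a Mayer--Vietoris property for such covers is also to the point. So while the proposal contains a genuine gap --- it does not prove the conjecture --- you are fully aware of this, and the gap is the same one that keeps Conjecture~\ref{AKP} open. If the assignment was to supply a proof, none exists here; if it was to explain the state of the problem through the lens of this paper, you have done that well.
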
 

In \cite{Akopyan_Karasev_Petrov}, Conjecture \ref{AKP} was verified for hyperplane cuts of round balls, 
which was later generalized to hyperplane cuts of arbitrary convex bodies (Corollary \ref{Haim_Kislev}). 
Note that the convexity of $K_1, \ldots, K_m$ is essential in Conjecture \ref{AKP}, 
as shown by examples in Section 5.1 of \cite{Akopyan_Karasev_Petrov}, 
for which the subadditivity fails without the convexity assumption. 
Let us also mention the following Proposition \ref{2021_0530_1}, which gives another such example. 
The proof of Proposition \ref{2021_0530_1}, which we explain in Section 7, is elementary. 

\begin{prop}\label{2021_0530_1} 
Let $n \ge 2$ be an integer. 
For any bounded $B \subset T^*\R^n$ and any $\ep \in \R_{>0}$, 
there are compact star-shaped sets $K_1, K_2 \subset T^*\R^n$ 
such that $B \subset K_1 \cup K_2$ and $e(K_1), e(K_2)< \ep$, 
where $e$ denotes the Hamiltonian displacement energy. 
\end{prop}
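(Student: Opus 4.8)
The plan is to exhibit two explicit star-shaped sets built from thin ``slabs'' that together cover a large ball but individually can be displaced by a tiny Hamiltonian isotopy. The key geometric observation, valid only when $n \ge 2$, is that a set of the form $\{(q,p) \mid |q_1| \le \delta\} \cap B'$ (a thin slab in one of the position directions, intersected with a bounded set $B'$) has small displacement energy: one can translate in the $p_1$-direction by the Hamiltonian $H(q,p) = \rho(q,p)\cdot p_1$ for a suitable cutoff $\rho$, and since the $q_1$-extent of the slab is only $2\delta$, the slab is pushed off itself with energy on the order of $\delta$ times the $p_1$-width of $B'$. First I would fix a large $R$ with $B \subset \{|q|^2+|p|^2 < R^2\}$, and for a small parameter $\delta$ to be chosen, cover the $q$-ball $\{|q|\le R\}\subset \R^n$ by finitely many slabs of width $2\delta$ perpendicular to $q_1$; grouping alternate slabs into two families, I obtain two sets $S_1, S_2$, each a disjoint union of thin slabs, with $B \subset S_1 \cup S_2$ (after intersecting with $\{|q|^2+|p|^2 \le R^2\}$). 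Each $S_j \cap \{|q|^2+|p|^2\le R^2\}$ is a disjoint union of compact convex pieces whose mutual $q_1$-distance is bounded below, so a single Hamiltonian supported near $S_j$ can simultaneously translate all pieces in the $p_1$-direction far enough to displace $S_j$, with total energy $O(\delta R)$; choosing $\delta$ small makes this $<\ep$.

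The remaining issue is that these $S_j$ are not star-shaped. To fix this I would not take the slabs themselves but thicken them slightly in a star-shaped fashion: replace each slab-piece by the convex hull of that piece together with a small ball around the origin (this is where I use $n\ge 2$ and boundedness of $B$ — I need room to attach a ``neck'' to the origin without the necks of different pieces forcing the displacement energy back up, and I need the origin to lie in the complement of the region being displaced). More carefully, I would take $K_j := \conv\big(\{0\} \cup S_j'\big)$ where $S_j'$ is $S_j$ intersected with a slightly larger ball, arranged so that $K_1 \cup K_2 \supset B$ still holds; a convex hull with a point is automatically star-shaped with respect to that point, hence $K_j$ is compact and star-shaped. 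The displacement energy estimate must then be redone for $K_j$: the ``cones'' joining the origin to the far slabs are themselves thin (their $q_1$-extent is still controlled by $\delta$ except very near the origin), so the same translation argument, with the cutoff function chosen to vanish near $0$, still displaces $K_j$ with energy $O(\delta R)$.

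I expect the main obstacle to be the bookkeeping in this last step: verifying that after replacing slabs by cones-to-the-origin one can still (a) keep the two families covering $B$, (b) keep the pieces of each $K_j$ a controlled distance apart in the $q_1$-direction away from a neighborhood of $0$, and (c) choose the displacing Hamiltonian — translation in $p_1$ by an amount slightly exceeding the $q_1$-diameter of $K_j$, cut off to be supported in a neighborhood of $K_j$ that avoids $0$ — so that its oscillation is $O(\delta R)$. Since the statement only asserts existence of such $K_1, K_2$ for each $\ep$, no sharp constants are needed, so once the geometric picture is set up the estimates are routine; the care is entirely in arranging the configuration so that the cone necks do not interfere. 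Finally, taking $\delta$ small enough that $O(\delta R) < \ep$ completes the proof.
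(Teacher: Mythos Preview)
Your approach has a genuine gap at the point where you make the pieces star-shaped. First, $\conv(\{0\}\cup S_j')$ is not what you want: since $S_j'$ is a disjoint union of parallel slabs spread across $q_1\in[-R,R]$, its convex hull with the origin is a single large convex body (roughly the box $\{|q|,|p|\le R\}$), with displacement energy of order $R^2$. Presumably you intend the star-shaped hull $\widehat{S}_j := \{tx : t\in[0,1],\ x\in S_j'\}$, i.e.\ the union of the individual cones $\conv(\{0\}\cup\Sigma_k)$. That set is indeed star-shaped, but your displacement argument for it fails on two counts. The cone from $0\in T^*\R^n$ to the slab $\Sigma_k$ located at $q_1\in[(2k-1)\delta,(2k+1)\delta]$ consists of all $t(q,p)$ with $t\in[0,1]$ and $(q,p)\in\Sigma_k$, so its $q_1$-range is the whole interval $[0,(2k+1)\delta]$; the cones are not ``thin in $q_1$ except near the origin.'' And every cone contains $0$, so any Hamiltonian whose cutoff vanishes near the origin fixes $0$ and cannot displace $\widehat{S}_j$ at all. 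Translation in $p_1$, cut off or not, does not displace $\widehat{S}_j$ with small energy, because the $q_1$-projection of $\widehat{S}_j$ is all of $[-R,R]$. You have given no mechanism that displaces this set with energy $o(1)$ as $\delta\to 0$.

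The paper sidesteps this by arranging star-shapedness in the \emph{base} $\R^n$ rather than in $T^*\R^n$. After rescaling so that $B\subset D^*B^n(1)=\{(q,p):|q|\le 1,\ |p|\le 1\}$, it divides the unit disk of $\R^2$ into $2N$ equal circular sectors and lets $S_1(N),S_2(N)$ be the unions of alternate sectors, each already star-shaped at $0\in\R^2$; for $n\ge 3$ one sets $\bar S_i(N)=S_i(N)\times B^{n-2}(1)$. Then $K_i:=D^*\bar S_i(N)$ is automatically compact and star-shaped in $T^*\R^n$, and $K_1\cup K_2\supset D^*B^n(1)\supset B$. The energy bound comes from the estimate $e(D^*S)\le C_n\, r(S)$ (with $r$ the inradius of $S\subset\R^n$), proved in the author's earlier work; since each $\bar S_i(N)$ has inradius $O(1/N)$, taking $N$ large gives $e(K_i)<\ep$. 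The point is that sectors are \emph{simultaneously} star-shaped and of small inradius, so no after-the-fact coning---and no attendant difficulty---is needed.
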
 

On the other hand, it seems unknown if the following conjecture, which is stronger than Conjecture \ref{AKP}, 
holds true. 

\begin{conj}\label{stronger_subadditivity} 
For any convex bodies $K_1, \ldots, K_m$ in $T^*\R^n$, 
\[
c_{\HZ} \big( \bigcup_{i=1}^m K_i \big) \le \sum_{i=1}^m c_{\EHZ}(K_i). 
\] 
\end{conj}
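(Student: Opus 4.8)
The final displayed statement (Conjecture \ref{stronger_subadditivity}) is explicitly flagged as open in the excerpt, so what follows is a proposal for how one might \emph{attempt} a proof, together with an honest assessment of where the argument runs into trouble, rather than a claimed proof.

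\textbf{Overall strategy.} The natural plan is to reduce the Hofer--Zehnder capacity of the union to the symplectic homology capacity via the inequality $c_{\HZ}(S) \le c_{\SH}(\bar S)$, valid for any $S$ with nonempty interior whose closure is an RCT set (this is the standard ``Hofer--Zehnder $\le$ first Floer-type capacity'' estimate, proved by a monotonicity/sandwiching argument against Hamiltonians supported in $S$; I would extract it from the construction of $i^a_K$ and the behavior of $\SH^{[0,a)}$ under inclusions). Once we are working with $c_{\SH}$, we would want a \emph{union (Mayer--Vietoris-type) bound} for $c_{\SH}$, i.e. $c_{\SH}(\bigcup_i K_i) \le \sum_i c_{\SH}(K_i)$ when each $K_i$ is a convex body, and then finish by Theorem \ref{SH=EHZ}, which identifies $c_{\SH}(K_i) = c_{\EHZ}(K_i)$ on convex bodies. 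So the crux is: \emph{prove a subadditivity property of $c_{\SH}$ under unions of convex bodies.}

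\textbf{Where the loop-space picture enters.} Here is where Corollary \ref{c_SH_and_loop_space_homology} and Theorem \ref{SH_and_loop_space_homology} are supposed to do the work. First one would need to pass from each convex body $K_i$ to a \emph{fiberwise convex} set, which one can arrange after a linear symplectomorphism that tilts $K_i$ so that it projects nicely, or by replacing $K_i$ with a fiberwise-convex hull that does not increase the relevant capacity; this is itself a delicate point because the hyperplane-cut case exploited the very special geometry of half-space pieces. Assuming this is arranged, $c_{\SH}(K_i)$ is computed as a min-max of a Lagrangian-type action functional over a relative cycle in the loop space $\Lambda \R^n$ representing the class $\nu$, where the functional is the support function $h_{K_i}$ of the fiber integrated along the loop. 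The hoped-for mechanism is then a \emph{partition-of-unity / broken-loop} argument: given a cycle realizing the min-max for $\bigcup K_i$ at action level slightly above $c_{\SH}(\bigcup K_i)$, one cuts each loop into arcs according to which $K_i$ ``controls'' it, and reassembles these arcs into cycles for the individual $K_i$, so that the total action is distributed among the pieces; comparing with the min-max characterizations of the $c_{\SH}(K_i)$ then yields the sum bound. This is morally the loop-space shadow of a Mayer--Vietoris spectral sequence for symplectic homology of a union.

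\textbf{The main obstacle.} The serious difficulty — and the reason this remains a conjecture — is precisely the reassembly step. For a \emph{hyperplane cut} $K = K^+ \cup K^-$ the loop-space cutting is essentially one-dimensional and the two pieces fit together along a codimension-one wall in a controlled way, which is what makes the Haim--Kislev-type argument (and its extension in Theorem \ref{HZ_subadditivity}) go through. For a general cover by $m$ convex bodies there is no canonical way to subdivide a loop, the ``control'' regions overlap in complicated ways, and the broken arcs need not close up into admissible cycles of the individual $K_i$ without paying an uncontrolled action cost at the breakpoints. Concretely, the map from cycles-for-the-union to tuples of cycles-for-the-pieces that one wants is not obviously a chain map, and even at the level of the min-max values the inequality one can prove this way seems to degrade by overlap-dependent error terms. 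Absent a genuine Mayer--Vietoris formalism for $\SH$ adapted to covers by convex bodies (which, to the author's knowledge, is not available), I do not see how to close this gap; I therefore state Conjecture \ref{stronger_subadditivity} as a conjecture, with the hyperplane-cut case (Theorem \ref{HZ_subadditivity}) as the evidence currently within reach.
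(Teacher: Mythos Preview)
Your assessment is correct: Conjecture \ref{stronger_subadditivity} is stated in the paper as an open conjecture, not a theorem, and the paper provides no proof of it. The paper explicitly says it ``seems unknown'' whether the conjecture holds, and offers Theorem \ref{HZ_subadditivity} (the hyperplane-cut case) only as partial evidence. So there is no proof in the paper to compare your proposal against, and your decision to flag the statement as open and discuss obstacles rather than claim a proof is the right call.

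One small correction to your outline: each convex body $K_i$ is automatically fiberwise convex (the intersection of a convex set with any affine subspace is convex), so no ``tilting'' or fiberwise-convex hull is needed for the individual pieces. The genuine structural obstruction is rather that the union $\bigcup_i K_i$ is in general neither fiberwise convex nor a RCT set, so neither Corollary \ref{c_SH_and_loop_space_homology} nor even the definition of $c_{\SH}$ applies to it directly. This is precisely why the paper's argument for Theorem \ref{HZ_subadditivity} works only in the hyperplane-cut situation: there the union $K' = \conv(K^+) \cup \conv(K^-)$ is star-shaped (hence RCT by Lemma \ref{starshaped_implies_RCT}) and is checked by hand to be fiberwise convex, so the loop-space machinery is available. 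For a general cover by convex bodies neither property holds, and your ``reassembly'' obstacle is a fair description of what goes wrong downstream even if one could set up the framework.
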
 

As far as the author knows,
Theorem \ref{HZ_subadditivity} 
is the first verification of Conjecture \ref{stronger_subadditivity} 
in a situation not covered by Conjecture \ref{AKP}. 

\subsection{Structure of this paper} 
Let us explain the structure of this paper. 
In Section 2 we review basics of symplectic homology. 
In particular, we recall the definition of the capacity $c_{\SH}$ and explain its basic properties. 
In Section 3, we state Theorem \ref{SH_and_loop_space_homology}, 
and deduce Corollary \ref{c_SH_and_loop_space_homology} from Theorem \ref{SH_and_loop_space_homology}. 
Section 4 is devoted to the proof of Theorem \ref{SH_and_loop_space_homology}, 
which is based on the ``hybrid moduli space'' method of Abbondandolo-Schwarz \cite{Abbondandolo_Schwarz_Floer_homology}. 
The outline of the proof is sketched in the first paragraph of Section 4. 
Section 4 is the most technical section, and can be skipped at the first reading. 
In Section 5 we prove Theorem \ref{SH=EHZ}, and in Section 6 we prove Theorem \ref{HZ_subadditivity}. 
Using Corollary \ref{c_SH_and_loop_space_homology}, 
these results can be proved by elementary arguments about loop spaces. 
In particular, the key estimate is Lemma \ref{upper_bound_of_length}.
In Section 7, we prove Proposition \ref{2021_0530_1}. 
This section can be read independently from Sections 2--6. 

\textbf{Acknowledgement.} 
The author thanks Felix Schlenk for pointing out an application discussed in Section 2.5, 
and his comments on an earlier version of this paper. 
The author also thanks Alberto Abbondandolo and Jungsoo Kang for sharing their manuscript \cite{Abbondandolo_Kang}
and having discussions about relations between their approach and the author's. 
Finally, the author thanks the referee for many comments which are very helpful to improve readability of this paper. 
This research is supported by JSPS KAKENHI Grant No.18K13407 and No.19H00636. 

\section{Symplectic homology and the capacity $c_{\SH}$}

For any $h \in C^\infty(T^*\R^n)$, its Hamiltonian vector field $X_h \in \mca{X}(T^*\R^n)$ is defined by 
$\omega_n(X_h, \, \cdot \,) = -dh(\, \cdot \,)$. 
Let $S^1:= \R/\Z$. 
For any $H \in C^\infty(S^1 \times T^*\R^n)$ and $t \in S^1$, 
we define $H_t \in C^\infty(T^*\R^n)$ by $H_t(q,p):= H(t, q, p)$. 
Let 
\[ 
\mca{P}(H):= \{ \gamma: S^1 \to T^*\R^n \mid \dot{\gamma}(t) = X_{H_t}(\gamma(t)) \, (\forall t \in S^1) \}. 
\] 
$\gamma \in \mca{P}(H)$ is called \textit{nondegenerate} if 
$1$ is not an eigenvalue of 
$(d\ph^1_H)_{\gamma(0)}$, 
where $(\ph^t_H)_{0 \le t \le 1}$ denotes the Hamiltonian isotopy generated by $H$. 

\begin{rem}
The isotopy $(\ph^t_H)_{0 \le t \le 1}$ may not be globally defined, 
but it is defined at least on a neighborhood of $\gamma(0)$. 
\end{rem}

\subsection{Filtered Floer homology} 

In this subsection, we review basic facts about filtered Floer homology of 
(time-dependent) Hamiltonians on $\C^n$ which are compact perturbations of quadratic functions. 
The results in this subsection are essentially contained in \cite{Floer_Hofer}. 
However, here we mainly follow \cite{JSG}, 
since the class of Hamiltonians we consider is slightly different from that in \cite{Floer_Hofer}. 

For any $H \in C^\infty(S^1 \times T^*\R^n)$
we consider the following conditions: 
\begin{itemize} 
\item[(H0):] Every $\gamma \in \mca{P}(H)$ is nondegenerate. 
\item[(H1):] There exist $A \in \R_{>0} \setminus \pi \Z$ and $B \in \R$ such that the function 
\[
H(t, q,p) - A(|q|^2+|p|^2) - B \in C^\infty(S^1 \times T^*\R^n) 
\]
is compactly supported. 
\end{itemize} 

In the following we assume that $H \in C^\infty(S^1 \times T^*\R^n)$ satisfies (H0) and (H1). 
Note that (H1) implies that all elements of $\mca{P}(H)$ are contained in a compact subset of $T^*\R^n$. 
This is because on the complement of a sufficiently large compact set, 
every orbit of $X_H$ is periodic with the minimal period equal to $\frac{\pi}{A}$. 
By $A \notin \pi \Z$, there exists no periodic orbit with period $1$ on the complement. 
Moreover (H0) implies that $\mca{P}(H)$ is discrete, thus it is finite. 

For any real numbers $a<b$ and 
$k \in \Z$, let 
$\CF^{[a,b)}_k(H)$ denote the $\Z/2$-vector space 
spanned by 
\[ 
\{ \gamma \in \mca{P}(H) \mid \mca{A}_H(\gamma) \in [a,b),\,\ind_{\CZ}(\gamma) = k \}. 
\] 
Here, $\ind_{\CZ}$ denotes the Conley-Zehnder index (see Section 1.3 of \cite{Floer_Hofer})
and $\mca{A}_H$ is defined by 
\[ 
\mca{A}_H(\gamma):= \int_{S^1} \gamma^*\bigg(\sum_i p_i dq_i \bigg) - H_t(\gamma(t)) \, dt. 
\] 

To define a boundary operator on $\CF^{[a,b)}_*(H)$, 
we take $J=(J_t)_{t \in S^1}$, which is a $C^\infty$-family of almost complex structures on $T^*\R^n$
with the following condition:
\begin{itemize} 
\item[(J1):] For every $t\in S^1$, $J_t$ is compatible with respect to $\omega_n$. 
Namely, 
$g_{J_t}(v,w) := \omega_n(v, J_t w)$ is a Riemannian metric on $T^*\R^n$. 
\end{itemize} 
For any $J$ satisfying (J1) 
and $x_-, x_+ \in \mca{P}(H)$, 
we define 
\begin{align*} 
\mca{M}_{H,J}(x_-, x_+)&:= \{ u: \R \times S^1 \to T^*\R^n \mid \partial_s u - J_t(\partial_t u - X_{H_t}(u)) = 0, \\
& \lim_{s \to \pm \infty} u_s = x_\pm \}. 
\end{align*} 
Here $s$ denotes the coordinate on $\R$, 
$t$ denotes the coordinate on $S^1$, 
and $u_s:S^1 \to T^*\R^n$ is defined by $u_s(t):=u(s,t)$. 
We set 
$\bar{\mca{M}}_{H,J}(x_-, x_+):= \mca{M}_{H,J}(x_-, x_+)/\R$, 
where the $\R$ action on $\mca{M}_{H, J}(x_-, x_+)$ 
is defined by 
\[
(r \cdot u)(s,t):=u(s-r, t) \qquad (u \in \mca{M}_{H,J}(x_-, x_+), \, r \in \R).
\]

Let us define the \textit{standard complex structure} on $T^*\R^n$, 
which is denoted by $J_\std$, by 
\[ 
J_\std(\partial_{p_i}) = \partial_{q_i}, \qquad
J_\std(\partial_{q_i}) = - \partial_{p_i} \qquad
(1 \le i \le n). 
\] 

\begin{lem}\label{C0_HF_1} 
Suppose $H$ satisfies (H0) and (H1), 
$J$ satisfies (J1), 
and $\sup_{t \in S^1} \|J_t-J_\std\|_{C^0}$ is sufficiently small. 
Then 
$\sup_{\substack{x_-,x_+\in \mca{P}(H)\\ u\in\mca{M}_{H,J}(x_-,x_+) \\ (s,t)\in\R\times S^1}} |u(s,t)| <\infty.$
\end{lem}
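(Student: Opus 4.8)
The plan is to establish a uniform $C^0$-bound by a maximum principle (``no escape to infinity'') applied in the region where $H$ is exactly quadratic. First I would reduce the statement to producing a constant $R_0>0$ depending only on $H$ (and not on $x_\pm$ or $u$) such that $|u(s,t)|\le R_0$ for every $u\in\mca{M}_{H,J}(x_-,x_+)$ and $(s,t)\in\R\times S^1$. To fix $R_0$, recall that (H0) and (H1) force $\mca{P}(H)$ to be finite, so $R_1:=\max\{|x(t)|\mid x\in\mca{P}(H),\ t\in S^1\}<\infty$; by (H1) there is $R_2$ with $H_t(q,p)=A(|q|^2+|p|^2)+B$ whenever $|(q,p)|\ge R_2$. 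Set $R_0:=\max\{R_1,R_2\}+1$. Then on $\{|w|>R_0\}$ the Hamiltonian is exactly quadratic, and this set is disjoint from every asymptotic orbit $x_\pm$.

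For the key estimate, identify $T^*\R^n$ with $\C^n$ by $w_j=q_j+ip_j$; then $J_\std$ is multiplication by $-i$, and with $h(w)=A|w|^2$ one computes $X_h(w)=2A\,J_\std w$, so $-J_\std X_h(w)=2Aw$. I would show that $g:=|u|^2$ is subharmonic on $\Omega:=\{(s,t)\mid|u(s,t)|>R_0\}$. The device is the rotating frame $\tilde u(s,t):=e^{2iAt}u(s,t)$, which preserves the norm, so $g=|\tilde u|^2$. On $\Omega$ one has $X_{H_t}(u)=X_h(u)$, and substituting $u=e^{-2iAt}\tilde u$ turns the Floer equation $\partial_s u=J_t(\partial_t u-X_{H_t}(u))$ into the genuine (Hamiltonian-free) pseudo-holomorphic equation $\partial_s\tilde u=\hat J_t\,\partial_t\tilde u$, where $\hat J_t$ is $J_t$ conjugated by the linear isometry $w\mapsto e^{2iAt}w$; hence $\sup_t\|\hat J_t-J_\std\|_{C^0}=\sup_t\|J_t-J_\std\|_{C^0}$. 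When $J=J_\std$ this equation is simply $\partial_s\tilde u+i\,\partial_t\tilde u=0$, so $\tilde u$ is holomorphic and a one-line computation gives the clean identity $\Delta g=4|\partial_s u|^2\ge 0$. For $J$ merely $C^0$-close to $J_\std$, carrying out the same computation of $\Delta(|\tilde u|^2)$ from the $\hat J_t$-holomorphic equation (the second derivatives of $\tilde u$ cancelling as usual) produces $4|\partial_s u|^2$ plus error terms involving the derivatives of $J$; one then needs that these remain subordinate on $\Omega$ once $\sup_t\|J_t-J_\std\|_{C^0}$ (and, where relevant, the derivatives of $J$) are small enough, so that $g$ is a subsolution of a linear elliptic operator to which the strong maximum principle applies. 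Equivalently, $\frac12|z|^2$ is strictly $J_\std$-plurisubharmonic, a property preserved for $\hat J_t$ near $J_\std$.

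To conclude, fix $u\in\mca{M}_{H,J}(x_-,x_+)$. Since $u_s\to x_\pm$ in $C^0(S^1)$ as $s\to\pm\infty$ and $|x_\pm|\le R_1<R_0$, there is $S_0$ with $|u(s,t)|<R_0$ for $|s|\ge S_0$; thus $\Omega\subset(-S_0,S_0)\times S^1$ is precompact and $g\equiv R_0^2$ on $\partial\Omega$. If $\Omega$ were nonempty, $g$ would attain its maximum over the compact set $\overline\Omega$ at some $z_0$ with $g(z_0)>R_0^2$, hence at an interior point of $\Omega$; by the maximum principle of the previous paragraph $g$ would be constant on the connected component $\Omega'$ of $\Omega$ containing $z_0$, contradicting $g\equiv R_0^2$ on $\partial\Omega'\subset\partial\Omega$ (when $J=J_\std$: a holomorphic map of constant modulus on an open set is constant there). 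Therefore $\Omega=\emptyset$, i.e.\ $|u|\le R_0$ everywhere; as $R_0$ depends only on $H$, this is the asserted bound.

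The step I expect to be the main obstacle is the perturbative part of the key estimate: verifying that the clean identity $\Delta(|u|^2)=4|\partial_s u|^2$ degrades, under a small $C^0$-perturbation of $J$, only to a differential inequality still controlled by the maximum principle on $\Omega$ — the error terms a priori involve derivatives of $J$ (and, before cancellation, second derivatives of $u$) and must be reorganized, via the equation, into a genuinely lower-order perturbation absorbed by the smallness hypothesis. If the admissible almost complex structures are required to be standard outside a compact set, this difficulty disappears, since then $J=J_\std$ on $\{|w|>R_0\}$ and only the clean identity is needed. The remaining ingredients — the choice of $R_0$, the rotating-frame change of variables, and the topological argument with $\Omega$ — are routine.
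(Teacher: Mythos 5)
The paper does not give a direct proof here: it simply cites Lemma~2.3 of \cite{JSG}, which is the author's own earlier $C^0$ bound for Floer trajectories of Hamiltonians that are quadratic at infinity. Your outline — pick $R_0$ so that $H$ is exactly quadratic and $\mca{P}(H)$ lies inside $\{|w|\le R_0\}$, pass to the rotating frame $\tilde u=e^{2iAt}u$ so that the Floer equation becomes a $\hat J_t$-Cauchy--Riemann equation, and then run a maximum principle on $|\tilde u|^2$ over $\Omega=\{|\tilde u|>R_0\}$ — is exactly the classical ``no escape'' strategy going back to Floer--Hofer, and the computations you carry out for $J=J_\std$ (convention checks, $X_h(w)=2AJ_\std w$, $\partial_s\tilde u+i\partial_t\tilde u=0$, $\Delta|\tilde u|^2=4|\partial_s u|^2$, precompactness of $\Omega$, the strong maximum principle argument) are all correct.

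The genuine gap is the one you yourself flag: the perturbative step does not close under the stated hypothesis. The lemma assumes only that $\sup_t\|J_t-J_\std\|_{C^0}$ is small; nothing is assumed about the derivatives of $J$. But when you compute $\Delta|\tilde u|^2$ from $\partial_s\tilde u=\hat J_t\partial_t\tilde u$, the second derivatives of $\tilde u$ cancel and you are left with $2|\nabla\tilde u|^2+2\langle(D_{\partial_s\tilde u}\hat J)\partial_t\tilde u-(\partial_t\hat J+D_{\partial_t\tilde u}\hat J)\partial_s\tilde u,\tilde u\rangle$, an expression bounded below by $2|\nabla\tilde u|^2 - C\|D\hat J\|\,|\nabla\tilde u|^2\,|\tilde u|$. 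This is nonnegative only where $|\tilde u|\lesssim\|D\hat J\|^{-1}$, and since the region $\Omega$ is precisely where $|\tilde u|$ is large and a priori unbounded, the sign cannot be salvaged by making $\|J-J_\std\|_{C^0}$ small (which controls $E=\hat J-J_\std$ but not $D\hat J$ or $\partial_t\hat J$). Reorganising ``via the equation'' does not help, because the only available relation $\partial_t\tilde u=-\hat J\partial_s\tilde u$ does not eliminate the $D\hat J$ terms. So the pointwise inequality $\Delta|\tilde u|^2\ge 0$ on $\Omega$ is simply not available under the stated hypothesis, and the strong maximum principle cannot be invoked.

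To make the argument go through under $C^0$-smallness alone, one must avoid differentiating $J$. Two standard routes: (a) replace the pointwise maximum principle by a Stokes-type argument on $\Omega_R:=\{|\tilde u|>R\}$, comparing the one-form $*d|\tilde u|^2$ with $\omega_n(\tilde u,d\tilde u)$, whose exterior derivative equals $-2\|\partial_t\tilde u\|^2_{g_{\hat J}}\,ds\wedge dt$ by compatibility and whose difference from $*d|\tilde u|^2$ on $\partial\Omega_R$ involves only $E=\hat J-J_\std$ (no derivatives of $J$); or (b) bypass the maximum principle entirely, using the a priori energy bound $\int|\partial_s u|^2\le\max\mca{A}_H-\min\mca{A}_H$ (finite because $\mca{P}(H)$ is finite) together with the monotonicity lemma for $\hat J$-holomorphic curves (which only needs $\hat J$ tamed, a $C^0$-open condition). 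Either of these gives the uniform bound; your current write-up does not, and the ``error terms absorbed by the smallness hypothesis'' step is precisely where the proof breaks. The rest — the choice of $R_0$, the rotating-frame reduction, and the topological conclusion — is fine.
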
 
\begin{proof}
This lemma follows from Lemma 2.3 in \cite{JSG}; 
note that conditions (H0), (J1) in \cite{JSG} are the same as (H0), (J1) in this paper, 
and the condition (H1) in \cite{JSG} is weaker than (H1) in this paper. 
\end{proof} 

For a generic (with respect to the $C^\infty$-topology) choice of $J$, 
the moduli space 
$\bar{\mca{M}}_{H, J}(x_-, x_+)$ is cut out transversally for any pair $(x_-, x_+)$. 
For any such $J$, 
$\bar{\mca{M}}_{H,J}(x_-, x_+)$ is a finite set if $\ind_\CZ(x_+) = \ind_\CZ(x_-)-1$, 
and the linear map 
\[ 
\partial_{H,J}: \CF^{[a,b)}_*(H) \to \CF^{[a,b)}_{*-1}(H); \quad x_- \mapsto \sum_{\ind_\CZ(x_+)=\ind_\CZ(x_-)-1}  \#_2 \bar{\mca{M}}_{H,J}(x_-, x_+) \cdot  x_+ 
\]
satisfies $\partial_{H,J}^2=0$, where $\#_2$ denotes the cardinality modulo $2$. 
The homology of the chain complex $(\CF^{[a,b)}_*(H), \partial_{H, J})$ does not depend on the choice of $J$. 
This homology is 
denoted by $\HF^{[a,b)}_*(H)$ and called
\textit{filtered Floer homology} of $H$. 
For any $a, b, a', b' \in \R$ 
with $a<b$, $a'<b'$, $a \le a'$ and $b \le b'$, 
one can define a natural linear map 
$\HF^{[a,b)}_*(H) \to \HF^{[a', b')}_*(H)$. 

\begin{rem} 
As we remarked at the beginning of this subsection, 
the fact $\partial_{H,J}^2=0$, 
as well as the independence of the homology on the choice of $J$, 
are due to \cite{Floer_Hofer} and references therein. 
\end{rem} 

Suppose that $H^-, H^+ \in C^\infty(S^1 \times T^*\R^n)$ satisfy (H0), (H1) and 
\begin{equation}\label{H0H1}
H^-(t,q,p)< H^+(t,q,p) \qquad (\forall (t,q,p) \in S^1 \times T^*\R^n). 
\end{equation} 
Then, for any real numbers $a<b$ one can define a linear map (called \textit{monotonicity map}) 
\[
\HF_*^{[a,b)}(H^-) \to \HF_*^{[a,b)}(H^+)
\]
as follows. 
First, we take $J^- = (J^-_t)_{t\in S^1}$ and $J^+ = (J^+_t)_{t\in S^1}$ such that 
$J^-$ defines a boundary map on $\CF_*(H^-)$ and 
$J^+$ defines a boundary map on $\CF_*(H^+)$. 
Next, we take 
a $C^\infty$-family of Hamiltonians $H=(H_{s,t})_{(s,t) \in \R \times S^1}$ 
and 
a $C^\infty$-family of almost complex structures $J=(J_{s,t})_{(s,t) \in \R \times S^1}$ 
such that the following conditions hold: 

\begin{enumerate}
\item[(HH1):] There exists $s_0>0$ such that 
$H_{s,t}(q,p) =\begin{cases} H^-(t, q, p) &(s \le -s_0) \\ H^+(t,q,p) &( s \ge  -s_0). \end{cases}$
\item[(HH2):] $\partial_s H_{s,t}(q,p) \ge 0$ for any $(s,t,q,p) \in \R \times S^1 \times T^*\R^n$. 
\item[(HH3):] There exist $a(s), b(s) \in C^\infty(\R)$ such that the following conditions hold: 
\begin{itemize}
\item $a'(s) \ge 0$ for any $s$. 
\item $a(s) \in \pi \Z \implies a'(s) >0$. 
\item Setting $\Delta_{s,t}(q,p):=H(s,t,q,p)- a(s)(|q|^2+|p|^2)-b(s)$, there holds 
\[
\sup_{(s,t)} \| \Delta_{s,t} \|_{C^1(T^*\R^n)} < \infty, \qquad
\sup_{(s,t)} \| \partial_s \Delta_{s,t} \|_{C^0(T^*\R^n)} < \infty. 
\]
\end{itemize}
\item[(JJ1):] There exists $s_1>0$ such that 
$J_{s,t}= \begin{cases} J^-_t &(s \le -s_1) \\ J^+_t   &(s \ge s_1). \end{cases}$
\item[(JJ2):] For every $(s,t) \in \R \times S^1$, $J_{s,t}$ is compatible with $\omega_n$. 
\end{enumerate}

\begin{rem} 
For any $H^-$ and $H^+$
satisfying (H0), (H1) and (\ref{H0H1}), 
there exists $H=(H_{s,t})_{(s,t) \in \R \times S^1}$ satisfying (HH1), (HH2) and (HH3), 
as we explained in pp.517 of \cite{JSG}. 
Let us repeat the explanation for the convenience of the reader. 
Take $\rho \in C^\infty(\R)$ such that 
$\rho|_{\R_{\le 0}} \equiv 0$, 
$\rho|_{\R_{\ge 1}} \equiv 1$ and 
$0 < \rho(s)<1$, $\rho'(s)>0$ for any $0<s<1$. 
Then let us define $H=(H_{s,t})_{(s,t) \in \R \times S^1}$ by 
\[ 
H_{s,t}(q,p):= (1-\rho(s))  H^-(t,q,p) + \rho(s)  H^+(t,q,p). 
\] 
On the other hand, the existence of $J=(J_{s,t})_{(s,t) \in \R \times S^1}$
satisfying (JJ1) and (JJ2)
is straightforward from the fact that the set of almost complex structures compatible with $\omega_n$ is contractible. 
\end{rem} 

For any $H = (H_{s,t})_{(s,t) \in \R \times S^1}$ and $J =(J_{s,t})_{(s,t) \in \R \times S^1}$ 
satisfying the above conditions, and for 
any $x_- \in \mca{P}(H^-)$ and $x_+ \in  \mca{P}(H^+)$, 
we consider the moduli space 
\[ 
\mca{M}_{H, J}(x_-, x_+):= \{ u: \R \times S^1 \to T^*\R^n \mid \partial_s u - J_{s,t}(\partial_t u - X_{H_{s,t}}(u)) = 0, \, \lim_{s \to \pm \infty} u_s = x_{\pm}\}. 
\] 

\begin{lem}\label{C0_HF_2} 
Suppose that $H$ satisfies (HH1), (HH2) and (HH3). 
If 
$J$ satisfies (JJ1), (JJ2) and 
$\sup_{(s,t) \in \R\times S^1} \|J_{s,t}-J_\std\|_{C^0}$ is sufficiently small, 
then 
\[
\sup_{\substack{x_-\in\mca{P}(H^-), x_+\in\mca{P}(H^+)\\u\in\mca{M}_{H,J}(x_-,x_+)\\(s,t)\in\R\times S^1}}|u(s,t)|<\infty.
\]
\end{lem}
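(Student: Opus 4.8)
The plan is to adapt the maximum-principle argument underlying the $s$-independent estimate Lemma \ref{C0_HF_1} (that is, Lemma 2.3 of \cite{JSG}) to the continuation equation; as with Lemma \ref{C0_HF_1}, one expects this to be the $s$-dependent companion of that result. The conditions (HH1)--(HH3) are arranged so that outside a fixed large ball in $T^*\R^n$ the Hamiltonian $H_{s,t}$ is a uniformly $C^1$-small perturbation (in the applications, a uniformly compactly supported perturbation) of the $s$-dependent quadratic $a(s)(|q|^2+|p|^2)+b(s)$, whose Hamiltonian vector field $2a(s)\sum_i(p_i\partial_{q_i}-q_i\partial_{p_i})$ is linear and preserves the level sets of $f(q,p):=|q|^2+|p|^2$. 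With $\Delta_{s,t}:=H_{s,t}-a(s)f-b(s)$ as in (HH3), the bounds $\sup_{(s,t)}\|\Delta_{s,t}\|_{C^1}<\infty$ and $\sup_{(s,t)}\|\partial_s\Delta_{s,t}\|_{C^0}<\infty$, together with the boundedness of $a(s)$ (which interpolates between $A^-$ and $A^+$) and of $a'(s)$, $b'(s)$, provide the uniform control we need.

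The heart of the proof is a differential inequality. I would show that, provided $\sup_{(s,t)}\|J_{s,t}-J_\std\|_{C^0}$ is sufficiently small, there are $R_0>0$ and $c\ge 0$ depending only on $\max(|A^-|,|A^+|)$, $\sup\|\Delta_{s,t}\|_{C^1}$, $\sup\|\partial_s\Delta_{s,t}\|_{C^0}$ and $\sup\|J_{s,t}-J_\std\|_{C^0}$ such that every $u\in\mca{M}_{H,J}(x_-,x_+)$ satisfies $(\partial_s^2+\partial_t^2)\big(\tfrac12 f(u)\big)\ge -c$ on the open set $\{(s,t):f(u(s,t))>R_0^2\}$. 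Two inputs go into this: (i) on that set, for the standard structure $u$ is a $J_\std$-holomorphic curve up to a uniformly bounded inhomogeneity coming from $X_{\Delta_{s,t}}$ together with the rotational term of the quadratic part, which preserves the levels of $f$; hence for $J$ that is $C^0$-close to $J_\std$ the contribution of $J-J_\std$ — including its interaction with the linearly growing vector field of the quadratic part — is controlled by $\sup\|J_{s,t}-J_\std\|_{C^0}$, so the second-order part of $(\partial_s^2+\partial_t^2)\big(\tfrac12 f(u)\big)$ keeps its nonnegative sign and only a uniformly bounded lower-order error survives; (ii) — the genuinely new point compared with Lemma \ref{C0_HF_1} — the term coming from $\partial_s H_{s,t}=a'(s)f+b'(s)+\partial_s\Delta_{s,t}$ has leading part $a'(s)f\ge 0$ by (HH3), the remainder being uniformly bounded. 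When $\Delta_{s,t}$ is compactly supported in $T^*\R^n$ uniformly in $(s,t)$, as for the standard interpolation of the preceding remark, one may take $c=0$.

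Finally I would invoke the maximum principle on the cylinder. By (HH1) and Lemma \ref{C0_HF_1} applied to $H^\pm$ with $J^\pm$, all $x_\pm\in\mca{P}(H^\pm)$ lie in a ball of radius $R_1$ depending only on $H^\pm,J^\pm$; after enlarging $R_0$ we may assume $R_0\ge R_1$. Since $u_s\to x_\pm$ in $C^\infty(S^1)$ as $s\to\pm\infty$, the set $\Omega:=\{f(u)>R_0^2\}$ has compact closure in $\R\times S^1$, with $f(u)\equiv R_0^2$ on $\partial\Omega$; in the case $c=0$, subharmonicity of $\tfrac12 f(u)$ on $\Omega$ forces $\sup_{\bar\Omega}f(u)=R_0^2$, hence $|u|\le R_0$ everywhere, with $R_0$ independent of $(x_-,x_+)$ and $u$, while the general case is handled by the corresponding estimate of \cite{JSG}. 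The main obstacle is step (i)--(ii) above: verifying that the convexity at infinity of the quadratic model is not destroyed by the $C^0$-perturbation of $J$ or by the $s$-dependence — precisely the role of the smallness of $\|J_{s,t}-J_\std\|_{C^0}$ and of the monotonicity and $\partial_s$-bounds built into (HH3). Once the differential inequality is in place, the remaining maximum-principle argument is routine.
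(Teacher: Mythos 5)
Your proposal follows the same maximum-principle strategy as the paper's source: the paper's own proof of this lemma is a one-line citation to Lemma~2.4 of \cite{JSG}, and that reference establishes the estimate by exactly the route you outline — showing that $|u|^2$ satisfies a subharmonicity-type inequality outside a large ball, using that $H_{s,t}$ is a uniformly $C^1$-controlled perturbation of $a(s)(|q|^2+|p|^2)+b(s)$, that $a'(s)\ge 0$ handles the new $\partial_s H$ term, and that $\sup\|J_{s,t}-J_\std\|_{C^0}$ small lets the second-order term absorb the errors from $J-J_\std$ acting on the linearly growing quadratic vector field. One small slip: the boundedness of the asymptotic orbits $x_\pm\in\mca{P}(H^\pm)$ follows directly from (H1) (outside a large ball every orbit of $X_{H^\pm}$ has minimal period $\pi/A^\pm\ne 1$), not from Lemma \ref{C0_HF_1}, which concerns Floer trajectories rather than one-periodic orbits; this does not affect the argument.
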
 
\begin{proof}
See Lemma 2.4 in \cite{JSG}. 
\end{proof} 

For a generic choice of $(H,J)$ which satisfies 
the assumptions in Lemma \ref{C0_HF_2}, 
$\mca{M}_{H, J}(x_-, x_+)$ is cut out transversally for any pair $(x_-, x_+)$. 
In particular, 
$\mca{M}_{H,J}(x_-, x_+)$ is a finite set if $\ind_\CZ(x_+) = \ind_\CZ(x_-)$, 
and the linear map 
\[ 
\Phi:\CF_*^{[a,b)}(H^-)\to \CF_*^{[a,b)}(H^+) ; \, x_-  \mapsto \sum_{\ind_\CZ(x_+)=\ind_\CZ(x_-)}  \#_2 \mca{M}_{H, J}(x_-, x_+) \cdot x_+ 
\] 
satisfies 
$\partial_{H^+,J^+}\circ \Phi =\Phi \circ \partial_{H^-,J^-}$. 
The induced map on homology 
\[ 
H_*(\Phi): \HF^{[a,b)}_*(H^-) \to \HF^{[a,b)}_*(H^+)
\]
does not depend on the choice of $(H, J)$; see Section 4.3 of \cite{Floer_Hofer}. 
This completes the definition of the monotonicity map. 

For any $H^0, H^1, H^2 \in C^\infty(S^1 \times T^*\R^n)$ 
satisfying (H0), (H1) and 
\[
H^0(t,q,p)<H^1(t,q,p)<H^2(t,q,p) \qquad( \forall (t,q,p) \in S^1 \times T^*\R^n), 
\] 
the diagram 
\[ 
\xymatrix{
\HF^{[a,b)}_*(H^0)\ar[rr]\ar[rd]&& \HF^{[a,b)}_*(H^2) \\
&\HF^{[a,b)}_*(H^1)\ar[ru]&
}
\]
commutes (all three maps are monotonicity maps). 

\subsection{Symplectic homology} 

For any nonempty compact set $K$ in $T^*\R^n$, 
let $\mca{H}_K$ denote the set of $H \in C^\infty(S^1 \times T^*\R^n)$ 
which satisfies (H0), (H1) and 
$H(t,q,p)<0$ for any $(t,q,p) \in S^1 \times K$. 
Then $\mca{H}_K$ becomes a directed set by setting 
$H^0 < H^1$ if and only if $H^0(t,q,p) < H^1(t,q,p)$ for any $(t,q,p) \in S^1 \times T^*\R^n$. 
For any real numbers $a<b$, we set 
\[ 
\SH^{[a,b)}_*(K) := \varinjlim_{H \in \mca{H}_K} \HF^{[a,b)}_*(H), 
\] 
where the limit is taken by monotonicity maps. 

For any $a, b, a', b' \in \R$ with 
$a<b$, $a'<b'$, $a \le a'$, $b \le b'$, 
and nonempty compact sets $K' \subset K$, 
one can define a natural linear map 
$\SH^{[a,b)}_*(K) \to \SH^{[a', b')}_*(K')$. 
Also, for any $c \in \R_{>0}$ one can define a natural isomorphism 
\[ 
\SH^{[a,b)}_*(K) \cong \SH^{[c^2a, c^2b)}_*(cK). 
\] 
This follows from an isomorphism of filtered Floer homology 
$\HF^{[a,b)}_*(H) \cong  \HF^{[c^2a, c^2b)}_*(H_c)$, 
where $H_c(x):= c^2 H(x/c)$.

\subsection{Symplectic homology of RCT sets} 

Let us start from our definition of RCT (restricted contact type) sets: 

\begin{defn}\label{defn_RCT_set} 
Let $K$ be a compact subset of $T^*\R^n$. 
\begin{itemize} 
\item $K$ is called a \textit{$C^\infty$-RCT set}, 
if $K$ is connected, $\interior K \ne \emptyset$, $\partial K$ is of $C^\infty$, 
and there exists $X \in \mca{X}(T^*\R^n)$ 
which satisfies  the following properties: 
\begin{itemize} 
\item $L_X \omega_n \equiv \omega_n$, 
\item $X$ points strictly outwards at every point on $\partial K$. 
\end{itemize} 
\item $K$ is called a \textit{RCT set}, 
if there exists a sequence $(K_i)_{i \ge 1}$
which satisfies the following properties: 
\begin{itemize} 
\item $K_i$ is a $C^\infty$-RCT set for every $i$, 
\item $K_{i+1} \subset K_i$ for every $i$, 
\item $\bigcap_{i=1}^\infty K_i = K$. 
\end{itemize} 
\end{itemize} 
\end{defn} 
\begin{rem}
Usually, ``restricted contact type domain'' is defined as a domain (i.e. connected open set) such that its closure is a $C^\infty$-RCT set in the above sense
(see e.g. Definition 1.3 in \cite{Hermann}). 
Thus, the above definition of RCT set is slightly more generalized than the usual definition.
\end{rem} 

$K \subset T^*\R^n$ is called \textit{star-shaped} if there exists $x \in K$ such that 
$ty + (1-t)x \in K$ for any $y \in K$ and $t \in [0,1]$. 
In particular any convex set is star-shaped. 

\begin{lem}\label{starshaped_implies_RCT} 
Any compact and star-shaped set in $T^*\R^n$ is a RCT set. 
\end{lem}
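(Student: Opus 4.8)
The plan is to reduce the statement about a general compact star-shaped set to the case of a $C^\infty$-RCT set by an exhaustion argument, so the main work is to produce, for a given compact star-shaped $K$, a decreasing sequence $(K_i)_{i\ge 1}$ of $C^\infty$-RCT sets with $\bigcap_i K_i = K$. First I would dispose of trivial normalizations: after translating we may assume the star center is the origin, so $ty \in K$ for all $y \in K$, $t \in [0,1]$; and since $K$ is compact we may assume (after rescaling, which is harmless by Definition \ref{defn_RCT_set} and the scaling behavior already recorded) that $K$ is contained in the open unit ball. The case $K = \{0\}$ must be handled separately, since then $\interior K = \emptyset$: here one takes $K_i$ to be small round balls $\{|x| \le 1/i\}$ centered at $0$, each of which is a $C^\infty$-RCT set with the radial Liouville vector field $X = \frac12\sum_i(p_i\partial_{p_i}+q_i\partial_{q_i})$ pointing strictly outward, and $\bigcap_i K_i = \{0\}$. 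So from now on assume $0 \in \interior K$.

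The key construction is to smooth $K$ from outside while keeping it star-shaped. Because $0 \in \interior K$ and $K$ is star-shaped and compact, $K$ is described in ``polar'' form by its radial function: for $v$ in the unit sphere $S^{2n-1}$, set $r_K(v) := \sup\{ t \ge 0 \mid tv \in K\}$; this is a bounded function, it is bounded below by a positive constant (since $0 \in \interior K$), and $K = \{ sv \mid v \in S^{2n-1},\ 0 \le s \le r_K(v)\}$. The function $r_K$ need not be continuous, but it is \emph{upper semicontinuous} (because $K$ is closed), hence it is a decreasing pointwise limit of a sequence of smooth functions $\rho_i : S^{2n-1} \to \R_{>0}$ with $\rho_i > r_K$ everywhere, $\rho_{i+1} < \rho_i$, and $\rho_i \to r_K$ pointwise. (Concretely: mollify $r_K$ to get a continuous strict upper bound, then shrink; upper semicontinuity gives pointwise convergence by a standard argument, and one can interleave to force monotonicity.) Then let $K_i := \{ sv \mid v \in S^{2n-1},\ 0 \le s \le \rho_i(v)\}$. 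Each $K_i$ is compact, connected, has nonempty interior (it contains a ball around $0$), has smooth boundary $\{ \rho_i(v) v \}$ because $\rho_i$ is smooth and positive, is star-shaped with respect to $0$, and satisfies $K_{i+1} \subset K_i$ with $\bigcap_i K_i = K$ (the last equality is where pointwise convergence $\rho_i \to r_K$ is used, together with closedness of $K$).

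It remains to verify that each $K_i$ is a $C^\infty$-RCT set, i.e. that the radial Liouville vector field $X := \frac12 \sum_{j}(p_j \partial_{p_j} + q_j \partial_{q_j})$ satisfies $L_X \omega_n = \omega_n$ and points strictly outward along $\partial K_i$. The identity $L_X\omega_n = \omega_n$ is a direct computation (with $\omega_n = \sum_j dp_j\, dq_j$, one has $\iota_X \omega_n = \frac12\sum_j (p_j\, dq_j - q_j\, dp_j)$, whose exterior derivative is $\omega_n$, and $L_X\omega_n = d\iota_X\omega_n$ since $\omega_n$ is closed). For the outward-pointing condition: $\partial K_i$ is a graph in polar coordinates over $S^{2n-1}$ with smooth positive radial function $\rho_i$, so it is transverse to all radial rays, and the outward radial direction $X$ (which is a positive multiple of the position vector at each point of $\partial K_i$) has strictly positive pairing with any outward conormal of such a star-shaped smooth hypersurface; this is a standard fact about smooth star-shaped domains and is where the positivity and smoothness of $\rho_i$ enter. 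This completes the construction of the required sequence, so $K$ is a RCT set by definition. The main obstacle is the regularity bookkeeping in the middle paragraph: one has to be a little careful that a merely upper semicontinuous radial function of a compact star-shaped set can genuinely be approximated from above by a \emph{decreasing} sequence of smooth functions converging pointwise to it, and that the resulting sublevel sets are smooth domains; none of this is deep, but it is the only step that is not a one-line computation.
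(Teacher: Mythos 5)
Your argument is, in substance, the same one the paper gives: parametrize a compact star-shaped set (after translating the star center to the origin) by its radial function over the unit sphere, note that this function is upper semicontinuous, approximate it pointwise from above by a strictly decreasing sequence of \emph{smooth, strictly positive} functions, take the corresponding star-shaped domains $K_i$, and observe that the radial Liouville vector field $\frac12\sum_j(p_j\partial_{p_j}+q_j\partial_{q_j})$ satisfies $L_X\omega_n=\omega_n$ and is transverse outward along each smooth boundary $\{\rho_i(v)v\}$. So the approach and the key technical step (smooth decreasing approximation of a nonnegative u.s.c. function on a compact manifold from above) coincide with the paper's.

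One logical slip worth fixing: your case analysis is not exhaustive. You dispose of $K=\{0\}$ and then write ``from now on assume $0\in\interior K$,'' but the negation of $K=\{0\}$ does not yield $0\in\interior K$. A compact star-shaped set can have empty interior without being a single point (e.g.\ a segment), or have nonempty interior with the chosen star center on its boundary (e.g.\ a triangle with a vertex at the origin), so there are cases your dichotomy misses. Fortunately, the assumption $0\in\interior K$ is never actually used in your main argument: the claim that a bounded nonnegative u.s.c.\ function on $S^{2n-1}$ is a pointwise decreasing limit of smooth \emph{strictly positive} functions $\rho_i$ holds whether or not $r_K$ has a positive lower bound (indeed even when $r_K\equiv 0$, where one may take $\rho_i\equiv 1/i$), and each $K_i=\{sv:0\le s\le\rho_i(v)\}$ then has nonempty interior because $\rho_i>0$. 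So the remedy is simply to delete the case distinction entirely, exactly as the paper does.
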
 
\begin{proof} 
Suppose that $K \subset T^*\R^n$ is compact and star-shaped. 
We may assume that $(0,\ldots, 0) \in K$ and 
$ty \in K$ for any $t \in [0,1]$ and $y \in K$. 
Let $S:=\{(q,p) \in T^*\R^n \mid |q|^2+|p|^2 = 1\}$. 
Then there exists a function $f: S \to \R_{\ge 0}$ such that 
\[
K = \{ ty \mid y \in S, \, 0 \le t \le f(y) \}.
\]

It is easy to see that $f$ is upper semi-continuous. 
Thus there exists a sequence $(f_j)_{j \ge 1}$ in $C^\infty(S, \R_{>0})$ such that 
$f_j(y) > f_{j+1}(y)$ for every $y \in S$ and $j \ge 1$, and  
$f(y) = \lim_{j \to \infty} f_j(y)$. 
For every $j \ge 1$, 
$K_j:= \{ ty \mid y \in S, \, 0 \le t \le f_j(y) \}$
is a $C^\infty$-RCT set, since $X:= \frac{1}{2} \sum_{i=1}^n p_i \partial_{p_i} + q_i \partial_{q_i}$ 
satisfies $L_X \omega_n = \omega_n$, and is transversal to $\partial K_j$. 
Then $(K_j)_{j \ge 1}$ is a decreasing sequence of $C^\infty$-RCT sets 
satisfying $\bigcap_{j=1}^\infty K_j = K$, 
thus $K$ is a RCT set. 
\end{proof} 

Let $K$ be a $C^\infty$-RCT set in $T^*\R^n$. 
The distribution $\ker (\omega_n|_{\partial K})$ on $\partial K$
defines a $1$-dimensional foliation of $\partial K$, which is called the \textit{characteristic foliation} of $\partial K$. 
\textit{Closed characteristics} are closed leaves of this foliation which are diffeomorphic to $S^1$. 
Let $\mca{P}(\partial K)$ denote the set of closed characteristics. 
The distribution $\ker(\omega_n|_{\partial K})$ is oriented so that
$v \in \ker(\omega_n|_{\partial K})$ is positive if and only if $\omega_n(X, v)>0$, 
where $X$ is any vector on $\partial K$ which points strictly outwards. 
With this orientation, 
for each $\gamma \in \mca{P}(\partial K)$ 
we define its \textit{symplectic action} $\mca{A}(\gamma)$ by 
\[ 
\mca{A}(\gamma):= \int_\gamma \bigg(\sum_i p_i dq_i \bigg). 
\] 

\begin{lem}\label{minimal_period_on_boundary_of_RCT}
Let $K$ be any $C^\infty$-RCT set in $T^*\R^n$. 
Then every $\gamma \in \mca{P}(\partial K)$ satisfies $\mca{A}(\gamma)>0$. 
Moreover, there exists $\gamma_0 \in \mca{P}(\partial K)$ 
such that 
$\mca{A}(\gamma_0) = \inf_{\gamma \in \mca{P}(\partial K)} \mca{A}(\gamma)$. 
\end{lem}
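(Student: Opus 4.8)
The plan is to reduce both assertions to standard facts about closed Reeb orbits on a compact contact manifold. Using the $C^\infty$-RCT structure, fix $X \in \mca{X}(T^*\R^n)$ with $L_X\omega_n = \omega_n$ pointing strictly outwards along $\partial K$, and set $\lambda := \iota_X\omega_n \in \Omega^1(T^*\R^n)$, so that $d\lambda = L_X\omega_n = \omega_n$ by Cartan's formula. Since $X$ is transverse to $\partial K$, a short linear-algebra check shows that $\lambda|_{\partial K}$ is a contact form, that $\ker(d\lambda|_{\partial K}) = \ker(\omega_n|_{\partial K})$ is the characteristic line field of $\partial K$, and that $\lambda(v) = \omega_n(X,v)$ is positive exactly on its positively oriented generators. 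Hence the elements of $\mca{P}(\partial K)$ are precisely the images of the prime closed orbits of the Reeb vector field $R$ of $\lambda|_{\partial K}$ traversed in the positive direction, and such an orbit of prime period $T > 0$, parametrized on $[0,T]$ by $\dot\gamma = R \circ \gamma$, satisfies $\int_0^T \lambda(\dot\gamma)\,dt = T$.

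I would then identify $\mca{A}(\gamma)$ with this period. The $1$-form $\lambda - \sum_i p_i\,dq_i$ is closed, since both terms have differential $\omega_n$, and as $T^*\R^n \cong \R^{2n}$ is contractible it is exact, say $\lambda - \sum_i p_i\,dq_i = df$; therefore $\int_\gamma \sum_i p_i\,dq_i = \int_\gamma \lambda$ for every closed loop $\gamma$. Combined with the previous paragraph this gives $\mca{A}(\gamma) = \int_\gamma \lambda = T > 0$ for every $\gamma \in \mca{P}(\partial K)$, which is the first assertion. (Even more directly, $\mca{A}(\gamma) = \int_\gamma \lambda = \int \omega_n(X(\gamma(t)), \dot\gamma(t))\,dt > 0$ because $\dot\gamma(t)$ is always a positively oriented generator of the characteristic line.)

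For the existence of a minimizer I would first record a uniform lower bound: since $R$ is nowhere vanishing on the compact manifold $\partial K$, a standard flow-box argument yields $\ep_0 > 0$ such that no prime closed Reeb orbit has period $\le \ep_0$, so $c := \inf_{\gamma \in \mca{P}(\partial K)} \mca{A}(\gamma) \ge \ep_0 > 0$; here $\mca{P}(\partial K) \ne \emptyset$ because closed characteristics exist on contact-type hypersurfaces in $\R^{2n}$ (the Weinstein conjecture in this setting is classical). Choose $\gamma_k \in \mca{P}(\partial K)$ with $\mca{A}(\gamma_k) \to c$, regard each $\gamma_k$ as a prime closed orbit of the complete Reeb flow $\ph^t$ with period $T_k := \mca{A}(\gamma_k)$ and base point $x_k := \gamma_k(0) \in \partial K$, and pass to a subsequence with $x_k \to x_\infty$ and $T_k \to c$. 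Continuity of the flow gives $x_k = \ph^{T_k}(x_k) \to \ph^c(x_\infty)$, hence $\ph^c(x_\infty) = x_\infty$; since $c \ge \ep_0 > 0$ and $R(x_\infty) \ne 0$, the curve $t \mapsto \ph^{ct}(x_\infty)$ is a nonconstant closed Reeb orbit whose prime period $\tau$ divides $c$, so $\tau \le c$, while $\tau \ge c$ because $\tau$ is the action of an element of $\mca{P}(\partial K)$. Thus $\tau = c$, and the corresponding leaf $\gamma_0 \in \mca{P}(\partial K)$ has $\mca{A}(\gamma_0) = c$. The only delicate point is this last compactness step: without the lower bound $\ep_0$ the minimizing sequence could in principle degenerate to a constant orbit, so the flow-box estimate is the real crux; the nonemptiness $\mca{P}(\partial K) \ne \emptyset$ is the other essential input, but it is invoked as a known theorem rather than proved here.
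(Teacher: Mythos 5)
Your proof takes the same route as the paper's: convert closed characteristics to Reeb orbits of $\lambda = i_X\omega_n$, identify the symplectic action with the period, get a positive lower bound from compactness of $\partial K$ and the nowhere-vanishing Reeb field, and extract a minimizer by a subsequential limit of base points. You are more explicit than the paper in two places where it is terse: justifying $\int_\gamma \sum_i p_i\,dq_i = \int_\gamma\lambda$ via exactness of $\lambda - \sum_i p_i\,dq_i$ on contractible $T^*\R^n$, and flagging that nonemptiness of $\mca{P}(\partial K)$ (needed for the infimum to be attained) rests on the Weinstein conjecture for contact-type hypersurfaces in $\R^{2n}$, which the paper leaves implicit.
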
 
\begin{proof}
By definition of $C^\infty$-RCT sets, 
there exists $X \in \mca{X}(T^*\R^n)$ 
which satisfies $L_X \omega_n= \omega_n$ 
and points strictly outwards on $\partial K$. 
Let us define $\lambda \in \Omega^1(T^*\R^n)$ by 
$\lambda:= i_X \omega_n$. 
Then $\lambda$ is a contact form on $\partial K$, 
and when $R_\lambda$ denotes its Reeb vector field
(i.e. $i_{R_\lambda}(d\lambda) \equiv 0$ and $\lambda(R_\lambda) \equiv 1$), 
$\mca{P}(\partial K)$ is the set of 
simple closed orbits of $R_\lambda$. 
Moreover, for every $\gamma \in \mca{P}(\partial K)$, 
$\mca{A}(\gamma)$ is equal to the period of $\gamma$ as an orbit of $R_\lambda$. 
Then $\inf_{\gamma \in \mca{P}(\partial K)} \mca{A}(\gamma)$ is positive, 
since $\partial K$ is compact and $R_\lambda$ is nonzero at every point on $\partial K$. 
To show that there exists a closed orbit which attains the infimum, 
let $(\gamma_j)_{j \ge 1}$ be a sequence in $\mca{P}(\partial K)$ 
such that $\mca{A}(\gamma_j)$ converges to the infimum as $j \to \infty$. 
Let us take $p_j$ on $\gamma_j$ for each $j$, 
and let $p$ be the limit of a certain subsequence of $(p_j)_j$. 
Then the orbit $\gamma_0$ which passes through $p$ is closed, 
and $\mca{A}(\gamma_0)$ is equal to the infimum. 
\end{proof} 

For any $C^\infty$-RCT set $K \subset T^*\R^n$, we denote
$c_{\min}(K):= \min_{\gamma \in \mca{P}(\partial K)} \mca{A}(\gamma)$.
When $K$ is convex, 
$c_{\min}(K)$ is also denoted by $c_{\EHZ}(K)$
(see Definiton \ref{defn_EHZ}). 

\begin{lem}\label{SH_of_low_energy}
For any $C^\infty$-RCT set $K \subset T^*\R^n$ 
and $\ep \in (0, c_{\min}(K))$, 
one can assign an isomorphism 
$\SH^{[0,\ep)}_*(K) \cong H_{*+n}(T^*\R^n, T^*\R^n \setminus K)$
so that the diagram 
\[ 
\xymatrix{
H_{*+n}(T^*\R^n, T^*\R^n \setminus K ) \ar[d]_-{\cong} \ar[r] &H_{*+n}(T^*\R^n, T^*\R^n \setminus K') \ar[d]^-{\cong}  \\
\SH^{[0,\ep)}_*(K) \ar[r] & \SH^{[0,{\ep'})}_*(K') 
}
\]
commutes
for any $C^\infty$-RCT sets 
$K' \subset K$
and 
$0< \ep \le \ep' < \min\{ c_{\min}(K), c_{\min}(K')\}$. 
\end{lem}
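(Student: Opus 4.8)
The plan is to compute $\SH^{[0,\ep)}_*(K)$ directly using a cofinal family of small Hamiltonians in $\mca{H}_K$, for which the only $1$-periodic orbits with action in $[0,\ep)$ are the constant orbits, and to identify the resulting Floer complex with the Morse complex (shifted by $n$) of a defining function for $K$. Concretely, I would first fix a $C^\infty$-RCT set $K$ and construct, for each small $\delta>0$, a Hamiltonian $H_\delta$ which is a small negative $C^2$-perturbation of a function that vanishes on $\partial K$, is negative and $C^2$-small on $\interior K$, and grows linearly (with slope $<\ep$ but eventually matching a large quadratic to satisfy (H1)) in the ``radial'' coordinate provided by the Liouville vector field $X$. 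By the discussion in Lemma \ref{minimal_period_on_boundary_of_RCT}, the nonconstant $1$-periodic orbits of $H_\delta$ correspond (for slope below $c_{\min}(K)$) to closed characteristics on level sets near $\partial K$ of period $<\ep<c_{\min}(K)$, of which there are none; a standard computation shows their action would be close to the period times the slope, hence $\ge$ something bounded below away from $0$ once the slope is bounded away from $0$ — so for the filtration window $[0,\ep)$ only constant orbits contribute, and after a further $C^2$-small Morse perturbation near the minimum/critical locus inside $K$ these constants are nondegenerate with Conley–Zehnder index equal to $n$ minus the Morse index (this index normalization is exactly the one appearing in \cite{Floer_Hofer} and \cite{JSG}).

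Next I would invoke the standard fact (Floer's theorem, as in \cite{Floer_Hofer}, or the $C^0$-estimates Lemma \ref{C0_HF_1} together with a monotone homotopy confining Floer trajectories to a compact neighborhood of $K$) that for a $C^2$-small Morse Hamiltonian the Floer differential counting low-energy cylinders agrees with the negative-gradient-flow Morse differential, so that $\HF^{[0,\ep)}_*(H_\delta) \cong H_{*+n}(\{H_\delta<0\},\{H_\delta<0\}\cap\{H_\delta\le -c\})$ for suitable $c$, which deformation-retracts onto $H_{*+n}(T^*\R^n, T^*\R^n\setminus \interior K)$, and the latter is canonically $H_{*+n}(T^*\R^n, T^*\R^n\setminus K)$ since $K$ is a $C^\infty$-manifold-with-boundary, hence a neighborhood deformation retract. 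Taking the direct limit over the cofinal family $(H_\delta)_\delta$ in $\mca{H}_K$ yields the asserted isomorphism $\SH^{[0,\ep)}_*(K)\cong H_{*+n}(T^*\R^n, T^*\R^n\setminus K)$; one must check the monotonicity maps between different $H_\delta$'s are compatible with the Morse-theoretic inclusion-induced maps, which again follows from the Abbondandolo–Schwarz/Floer comparison of continuation maps with Morse continuation maps.

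For the commuting square with $K'\subset K$ both $C^\infty$-RCT, I would choose the cofinal families so that for each small parameter $H^{K'}_\delta \le H^K_\delta$ pointwise (possible since $K'\subset K$, so a defining function for $K'$ dominates one for $K$ up to rescaling), making the vertical arrows and the top horizontal map all come from honest inclusions of sublevel sets $\{H^K_\delta<0\}\supset\{H^{K'}_\delta<0\}$; then the square is literally the naturality square of relative homology under inclusion, transported through the Floer–Morse isomorphism. The main obstacle I anticipate is the bookkeeping around (H1): the Hamiltonians $H_\delta$ must both be $C^2$-small near $K$ (to get the clean Floer $=$ Morse identification and to kill nonconstant orbits in the window) \emph{and} agree with a large quadratic $A(|q|^2+|p|^2)+B$ at infinity with $A\notin\pi\Z$; one has to interpolate in the region outside a large ball through slopes that may pass through multiples of $\pi$, and arrange that no $1$-periodic orbit with action in $[0,\ep)$ is created in the interpolation region. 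This is handled exactly as in the slope-increasing constructions of \cite{Floer_Hofer} and Section 2 of \cite{JSG}: push the transition region far enough out that any orbit there has large action, and choose the interpolation so that crossings of $\pi\Z$ happen where $\partial_s$(slope)$>0$, so no spurious orbits with small action appear; the $C^0$-estimates Lemma \ref{C0_HF_1} and Lemma \ref{C0_HF_2} then confine all relevant trajectories and continuation trajectories to a fixed compact set, completing the argument.
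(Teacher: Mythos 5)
The paper's own proof of this lemma is a one-line citation of the third bullet of Proposition~4.7 in Hermann \cite{Hermann}; you instead reconstruct the underlying argument from scratch via a cofinal family of small Hamiltonians and a Floer-to-Morse comparison, which is a legitimately different route (it is essentially what Hermann's proposition proves). However, your key Morse-theoretic identification is wrong. You write
$\HF^{[0,\ep)}_*(H_\delta)\cong H_{*+n}(\{H_\delta<0\},\{H_\delta<0\}\cap\{H_\delta\le -c\})$,
i.e.\ you use \emph{sublevel} sets of $H_\delta$. But the action of a constant orbit is $\mca{A}_{H_\delta}(x)=-H_\delta(x)$, and in the paper's conventions the Floer boundary decreases the action and decreases $\ind_{\CZ}$; so it is the \emph{descending} Morse boundary of $-H_\delta$, with degree shift $\ind_{\CZ}=n-\mu_{H_\delta}=\mu_{-H_\delta}-n$. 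The action window $\mca{A}\in[0,\ep)$ is the window $-H_\delta\in[0,\ep)$, so what the filtered Floer-to-Morse comparison actually gives is
$\HF^{[0,\ep)}_k(H_\delta)\cong H_{k+n}\big(\{-H_\delta<\ep\},\{-H_\delta<0\}\big)=H_{k+n}\big(\{H_\delta>-\ep\},\{H_\delta>0\}\big)$,
the \emph{superlevel} set pair, which in the direct limit over your cofinal family becomes $H_{k+n}(T^*\R^n,T^*\R^n\setminus K)$ as required.

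The pair you wrote does not give this and cannot be salvaged by choosing $c$: for any $c\in(0,\delta)$ one has $(\{H_\delta<0\},\{H_\delta\le -c\})\simeq (K,\text{pt or smaller})$, so for a convex body $K$ your formula yields $\widetilde H_{*+n}(K)=0$ in all degrees, whereas the correct answer $H_{*+n}(T^*\R^n,T^*\R^n\setminus K)\cong H_{*+n}(K,\partial K)\cong H^{n-*}(K)$ is $\Z/2$ in degree $*=n$ (and indeed $\SH^{[0,\ep)}_n(K)\cong\Z/2$, which is exactly what the map $i^a_K$ in the paper needs). The claimed ``deformation retraction'' of your sublevel pair onto $(T^*\R^n,T^*\R^n\setminus\interior K)$ does not exist. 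Once the pair is replaced by the superlevel sets (equivalently, by sublevel sets of $-H_\delta$), the remainder of your argument --- the killing of nonconstant orbits in the window, the $C^0$-confinement of trajectories, the compatibility of monotonicity maps with inclusion-induced maps, and the naturality square for $K'\subset K$ --- goes through and gives a correct, self-contained proof.
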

\begin{proof} 
The isomorphism $\SH^{[0,\ep)}_*(K) \cong H_{*+n}(K, \partial K) \cong H_{*+n}(T^*\R^n, T^*\R^n \setminus K)$ 
follows from  the third bullet in Proposition 4.7 of \cite{Hermann}. 
The commutativity of the diagram follows from the construction of this isomorphism. 
\end{proof} 

\begin{rem}\label{SH_of_low_energy_convex} 
For any convex body $K$ and $\ep \in (0, c_{\EHZ}(K))$, 
there exists a natural isomorphism
$\SH^{[0,\ep)}_*(K) \cong H_{*+n}(T^*\R^n, T^*\R^n \setminus K)$
obtained as 
\[ 
\SH^{[0,\ep)}_*(K) \cong \varinjlim_{K'} \SH^{[0, \ep)}_*(K') \cong \varinjlim_{K'}  H_{*+n}(T^*\R^n, T^*\R^n \setminus K') \cong H_{*+n}(T^*\R^n, T^*\R^n \setminus K), 
\] 
where $K'$ runs over all convex bodies with $C^\infty$ boundaries such that $K' \supset K$. 
The second isomorphism holds since $c_{\EHZ}(K')>\ep$, 
which follows from the monotonicity of the EHZ capacity 
$c_{\EHZ}(K') \ge c_{\EHZ}(K)$. 
\end{rem} 

By Lemma \ref{SH_of_low_energy}, 
for any $C^\infty$-RCT set $K$ we obtain an isomorphism
\[
H_{*+n}(T^*\R^n, T^*\R^n \setminus K) \cong \varprojlim_{\ep \to 0} \SH^{[0,\ep)}_*(K).
\] 
Then, for any $a \in \R_{>0}$, we can define a linear map
\[ 
i^a_K: H_{*+n}(T^*\R^n, T^*\R^n \setminus K)  \cong \varprojlim_{\ep \to 0} \SH^{[0,\ep)}_*(K) \to \SH^{[0,a)}_*(K). 
\] 
The following diagram commutes for any $C^\infty$-RCT sets $K' \subset K$ and $a \le a'$: 
\begin{equation}\label{diagram_K_Kprime_a_aprime}
\xymatrix{
H_{*+n}(T^*\R^n, T^*\R^n \setminus K) \ar[d]_-{i^a_K} \ar[r] &H_{*+n}(T^*\R^n, T^*\R^n \setminus K')  \ar[d]^-{i^{a'}_{K'}}  \\
\SH^{[0,a)}_*(K) \ar[r] & \SH^{[0,a')}_*(K'). 
}
\end{equation}
Also, the following diagram commutes for any $c \in \R_{>0}$: 
\begin{equation}\label{diagram_K_a_c} 
\xymatrix{
H_{*+n}(T^*\R^n, T^*\R^n \setminus K) \ar[d]_-{i^a_K} \ar[r]^-{\cong}&H_{*+n}(T^*\R^n, T^*\R^n \setminus cK) \ar[d]^-{i^{c^2a}_{cK}}  \\
\SH^{[0,a)}_*(K) \ar[r]_-{\cong} & \SH^{[0,c^2a)}_*(cK). 
}
\end{equation}

Now let us define the map 
$i^a_K: H_{*+n}(T^*\R^n, T^*\R^n \setminus K) \to \SH^{[0,a)}_*(K)$ 
for any RCT set $K$ and $a \in \R_{>0}$. 
Notice that there are natural isomorphisms 
\begin{align*} 
H_{*+n}(T^*\R^n, T^*\R^n \setminus K) &\cong \varinjlim_{K'} H_{*+n}(T^*\R^n, T^*\R^n \setminus K'), \\ 
\SH^{[0,a)}_*(K) &\cong \varinjlim_{K'} \SH^{[0,a)}_*(K'), 
\end{align*} 
where $K'$ runs over all $C^\infty$-RCT sets with $K' \supset K$. 
Then one can define $i^a_K$ 
as the limit of $(i^a_{K'})_{K' \supset K}$. 

\subsection{Symplectic homology capacity $c_{\SH}$}

To define the capacity $c_{\SH}$, we first need the following definition. 
Recall that, in this paper all (co)homology groups are defined over $\Z/2$, unless otherwise specified. 

\begin{defn} 
For any $\R$-vector space $V$ of dimension $d \in \Z_{>0}$ and a compact subset $K \subset V$, 
we define $\nu^V_K \in H_d(V, V \setminus K)$ in the following manner. 
\begin{itemize} 
\item If $K$ is convex, then $H_d(V, V \setminus K) \cong \Z/2$. 
Then we define $\nu^V_K$ to be the unique non-zero element of $H_d(V, V \setminus K)$. 
\item When $K$ is an arbitrary compact subset of $V$, 
take a compact convex set $K' \subset V$ satisfying $K \subset K'$, 
and let $i_{KK'}: H_d(V, V \setminus K') \to H_d(V, V \setminus K)$
be the linear map induced by $\id_V: (V, V \setminus K') \to (V, V \setminus K)$. 
Then it is easy to see that $i_{KK'}(\nu^V_{K'})$ does not depend on the choice of $K'$. 
Then we define $\nu^V_K:= i_{KK'}(\nu^V_{K'})$. 
\end{itemize}
\end{defn} 

Now, for any RCT set $K \subset T^*\R^n$, we define 
\[ 
c_{\SH}(K):= \inf \{ a \in \R_{>0} \mid i^a_K(\nu^{T^*\R^n}_K) = 0\}. 
\] 
The invariant $c_{\SH}$ will be called \textit{symplectic homology capacity}. 
The next lemma summarizes some properties of the capacity $c_{\SH}$. 
The properties (i), (ii), (iii) are (respectively) called conformality, monotonicity, and spectrality. 

\begin{lem}\label{properties_of_c_SH} 
\begin{enumerate}
\item[(i):] For any RCT set $K$ and $c \in \R_{>0}$, there holds $c_{\SH}(cK)= c^2 c_{\SH}(K)$. 
\item[(ii):] For any RCT sets $K' \subset K$, there holds $c_{\SH}(K') \le c_{\SH}(K)$. 
\item[(iii):] For any $C^\infty$-RCT set $K$, 
there exist $\gamma \in \mca{P}(\partial K)$ 
and $m \in \Z_{\ge 1}$ such that 
$c_{\SH}(K) =  m \cdot \mca{A}(\gamma)$. 
In particular $c_\SH(K) \ge c_{\min}(K)$. 
\end{enumerate}
\end{lem}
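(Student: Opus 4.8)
The plan is to deduce all three properties more or less formally from the naturality diagrams (\ref{diagram_K_Kprime_a_aprime}) and (\ref{diagram_K_a_c}), together with the spectrality properties of filtered Floer homology. For (i), conformality, I would use diagram (\ref{diagram_K_a_c}): the top horizontal arrow is an isomorphism (it is the canonical identification of relative homology groups induced by the diffeomorphism $x \mapsto cx$, and it carries $\nu^{T^*\R^n}_K$ to $\nu^{T^*\R^n}_{cK}$ since the latter only depends on the convex hulls and the scaling map preserves convexity), and the bottom horizontal arrow is the rescaling isomorphism $\SH^{[0,a)}_*(K) \cong \SH^{[0,c^2a)}_*(cK)$ from Section 2.2. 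Hence $i^a_K(\nu^{T^*\R^n}_K)=0$ if and only if $i^{c^2a}_{cK}(\nu^{T^*\R^n}_{cK})=0$, and taking infima gives $c_{\SH}(cK)=c^2c_{\SH}(K)$. For the general (non-$C^\infty$) RCT case one passes to the limit over approximating $C^\infty$-RCT sets, using that the isomorphisms and the canonical elements $\nu$ are compatible with the limit maps.

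For (ii), monotonicity, I would use diagram (\ref{diagram_K_Kprime_a_aprime}) with $a=a'$: if $i^a_K(\nu^{T^*\R^n}_K)=0$, then chasing the square shows $i^a_{K'}(\nu^{T^*\R^n}_{K'})=0$, because the top horizontal map sends $\nu^{T^*\R^n}_K$ to $\nu^{T^*\R^n}_{K'}$ (both are pulled back from the same convex $K'' \supset K \supset K'$, by the very definition of $\nu$). Therefore $\{a \mid i^a_K(\nu^{T^*\R^n}_K)=0\} \subset \{a \mid i^a_{K'}(\nu^{T^*\R^n}_{K'})=0\}$, which yields $c_{\SH}(K') \le c_{\SH}(K)$. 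Again, the RCT (non-smooth) case follows by writing both $K$ and $K'$ as intersections of decreasing sequences of $C^\infty$-RCT sets and taking direct limits.

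For (iii), spectrality, the first point is to reduce, via Lemma \ref{SH_of_low_energy}, to the statement that $\SH^{[0,a)}_*(K)$ "sees" only action values lying in the action spectrum of $\partial K$; concretely, the natural map $\SH^{[0,a)}_*(K) \to \SH^{[0,a')}_*(K)$ is an isomorphism whenever $[a,a')$ contains no value of the form $m\cdot\mca{A}(\gamma)$ with $\gamma\in\mca{P}(\partial K)$, $m\in\Z_{\ge1}$. This is a standard consequence of the description of $\SH_*(K)$ as a limit of filtered Floer homologies of Hamiltonians that, for slopes not in $\pi\Z$, have $1$-periodic orbits only near $\partial K$ with actions close to the periods of closed characteristics and their iterates; hence for such $a$ the persistence module $a \mapsto \SH^{[0,a)}_*(K)$ is locally constant away from the (closed, discrete) set $\Sigma:=\{m\cdot\mca{A}(\gamma)\}$. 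Consequently the infimum defining $c_{\SH}(K)$ is attained at a point of $\Sigma$: if $a_0=c_{\SH}(K)\notin\Sigma$ then $i^a_K(\nu)$ would be constant (zero or nonzero) on a neighborhood of $a_0$, contradicting the definition of the infimum — provided one also checks $i^a_K(\nu)\ne 0$ for all small $a>0$, which holds because $i^\ep_K$ for $\ep<c_{\min}(K)$ is (by Lemma \ref{SH_of_low_energy}) an isomorphism sending $\nu^{T^*\R^n}_K$ to a nonzero class (here one uses that $\nu$ corresponds to the fundamental class under $H_{*+n}(T^*\R^n,T^*\R^n\setminus K)\cong H_{*+n}(K,\partial K)$, together with $[0,\ep)\cap\Sigma=\emptyset$). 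Finally $c_{\SH}(K)\ge c_{\min}(K)=\min\Sigma$ follows since $c_{\SH}(K)\in\Sigma$ and $i^a_K(\nu)\ne0$ for $a<c_{\min}(K)$.

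The main obstacle I expect is the careful justification in (iii) that $a\mapsto\SH^{[0,a)}_*(K)$ is locally constant off $\Sigma$, i.e. controlling which action values can appear in the direct limit over $\mca{H}_K$: one must argue that, as the admissible Hamiltonians increase, their $1$-periodic orbits with bounded action are confined to a neighborhood of $\partial K$ and have actions converging to $\Sigma$, so that no spurious action values survive in the limit. The other steps — (i) and (ii), and the reduction of (iii) to a statement about persistence modules — are essentially diagram chases and limit arguments.
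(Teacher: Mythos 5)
For parts (i) and (ii) your proposal follows exactly the same approach as the paper: the paper's proof is simply ``(i) follows from the commutativity of (\ref{diagram_K_a_c}), and (ii) follows from the commutativity of (\ref{diagram_K_Kprime_a_aprime})'', and your diagram chase (including the observation that the canonical classes $\nu$ are compatible with the scaling and inclusion maps) is a correct expansion of that.

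For part (iii) you take a genuinely different route. The paper handles spectrality by citing Corollary 5.8 of \cite{Hermann}, which proves the statement under a ``nice action spectrum'' hypothesis on $\partial K$, and then removes that hypothesis by a limiting argument over $C^\infty$-generic approximations of $K$ (using Proposition 2.5 of \cite{Hermann} for genericity). You instead attempt a direct persistence-module argument: show that $a \mapsto \SH^{[0,a)}_*(K)$ is locally constant off the period spectrum $\Sigma$, and deduce that the infimum must land in $\Sigma$, together with the observation that $i^a_K(\nu) \ne 0$ for $a < c_{\min}(K)$ via Lemma \ref{SH_of_low_energy}. The skeleton of this alternative argument is sound, with two caveats. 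First, you describe $\Sigma$ as ``closed, discrete''; discreteness need not hold in general (this is exactly why Hermann restricts to hypersurfaces with nice action spectrum), but fortunately your argument only uses closedness, which does hold for the action spectrum of a compact hypersurface. Second, the local constancy of $a \mapsto \SH^{[0,a)}_*(K)$ away from $\Sigma$ — the step you yourself flag as the main obstacle — is precisely the nontrivial analytic content that the paper offloads to Hermann; your sketch of it (controlling the actions of $1$-periodic orbits of admissible Hamiltonians in the limit) is the right idea but would need the same careful bookkeeping that Hermann's proof provides. So your route is mathematically parallel to Hermann's, just inlined rather than cited; it buys self-containedness at the cost of having to re-establish the spectral confinement that the paper takes as a black box.
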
 
\begin{proof}
(i) follows from the commutativity of (\ref{diagram_K_a_c}), 
and 
(ii) follows from the commutativity of (\ref{diagram_K_Kprime_a_aprime}). 
(iii) is proved in Corollary 5.8 of \cite{Hermann} 
under the assumption that $\partial K$ has a nice action spectrum 
(see pp. 342 of \cite{Hermann} for its definition). 
Since $\partial K$ has a nice action spectrum for $C^\infty$-generic $K$ (Proposition 2.5 of \cite{Hermann}), 
one can remove this assumption by the limiting argument. 
\end{proof}

\subsection{$S^1$-equivariant symplectic homology capacities}

For any $C^\infty$-RCT set $K \subset T^*\R^n$ 
(in general, for any Liouville domain) 
and $a \in \R_{>0}$, 
one can define the $S^1$-equivariant symplectic homology 
$\SH^{[0,a), S^1}_*(K)$ 
and a linear map 
\[ 
(i^a_K)^{S^1}: H^{S^1}_{*+n}(T^*\R^n, T^*\R^n \setminus K)  \to \SH^{[0, a),S^1}_*(K), 
\] 
where $H^{S^1}_*(T^*\R^n, T^*\R^n \setminus K)$ 
is the $S^1$-equivariant homology with the trivial $S^1$-action on $(T^*\R^n, T^*\R^n \setminus K)$, 
thus canonically isomorphic to $H_*(T^*\R^n, T^*\R^n \setminus K)\otimes H_*(\C P^\infty)$. 
For each $k \in \Z_{\ge 1}$, let 
\[ 
c^k_{\SH^{S^1}}(K):= \inf \{ a \mid (i^a_K)^{S^1} (\nu^{T^*\R^n}_K \otimes [\C P^{k-1}]) = 0 \}. 
\] 
Let us call the invariants $c^k_{\SH^{S^1}}\,(k \ge 1)$ \textit{equivariant symplectic homology capacities}. 
\begin{rem} 
This construction goes back at least to Section 5.3 of Viterbo \cite{Viterbo_GAFA}, 
where the Floer-theoretic analogue of the equivariant Ekeland-Hofer capacities \cite{Ekeland_Hofer} 
was introduced. 
This construction is revisited in recent papers such as 
Gutt-Hutchings \cite{Gutt_Hutchings} and 
Ginzburg-Shon \cite{Ginzburg_Shon}. 
In particular, \cite{Gutt_Hutchings} introduced 
a sequence of capacities using positive equivariant symplectic homology with rational coefficients, 
established basic properties of these capacities, 
and gave combinatorial formulas to compute these capacities of convex and concave toric domains. 
In \cite{Gutt_Hutchings} it is conjectured that 
the Gutt-Hutchings capacities are equal to the equivariant Ekeland-Hofer capacities for any compact star-shaped domain 
(Conjecture 1.9 of \cite{Gutt_Hutchings}). 
\end{rem} 

For any $C^\infty$-RCT set $K$, there holds the following inequalities: 
\begin{equation}\label{min_SHS1_SH} 
c_{\min}(K)\le c^1_{\SH^{S^1}}(K)\le c_\SH(K). 
\end{equation} 
For the first inequality, see the ``contractible Reeb orbits'' property in Theorem 1.24 of \cite{Gutt_Hutchings}. 
For the second inequality, see Lemma 3.2 of \cite{Ginzburg_Shon}. 

\begin{rem} 
One has to be careful since \cite{Gutt_Hutchings} and \cite{Ginzburg_Shon} use $\Q\,$-coefficients, 
while we work over $\Z/2\,$-coefficients. 
Also, the definitions of equivariant capacities in these papers use positive (equivariant) symplectic homology, 
and are superficially different from our definition. 
However, it is straightforward to see that the proofs in these papers also work in our setting. 
\end{rem} 

F. Schlenk \cite{Schlenk} pointed out that 
Theorem \ref{SH=EHZ}, combined with (\ref{min_SHS1_SH}), 
implies the following corollary: 

\begin{cor} 
$c_{\EHZ}(K) = c^1_{\SH^{S^1}}(K) = c_{\SH}(K)$
for any convex body $K$ in $T^*\R^n$. 
\end{cor}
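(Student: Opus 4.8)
The plan is to derive this corollary purely formally from Theorem \ref{SH=EHZ} and the chain of inequalities (\ref{min_SHS1_SH}), without reopening any Floer-theoretic machinery. The one subtlety is that Theorem \ref{SH=EHZ} and the inequalities (\ref{min_SHS1_SH}) are stated for (possibly) different classes of sets: Theorem \ref{SH=EHZ} covers arbitrary convex bodies, whereas (\ref{min_SHS1_SH}) is stated only for $C^\infty$-RCT sets. So the first step is to pass to a limit. Given an arbitrary convex body $K$, choose a decreasing sequence $(K_j)_{j\ge 1}$ of convex bodies with $C^\infty$-boundary such that $\bigcap_j K_j = K$; such a sequence exists (it is implicit in the proof of Lemma \ref{starshaped_implies_RCT}, and for convex bodies one may smooth the boundary from outside). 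Each $K_j$ is a $C^\infty$-RCT set, so (\ref{min_SHS1_SH}) gives $c_{\min}(K_j) \le c^1_{\SH^{S^1}}(K_j) \le c_{\SH}(K_j)$, and since $K_j$ is convex $c_{\min}(K_j) = c_{\EHZ}(K_j)$ by the definition recalled just after Lemma \ref{minimal_period_on_boundary_of_RCT}.

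Next I would use Theorem \ref{SH=EHZ}, which says $c_{\SH}(K_j) = c_{\EHZ}(K_j)$ for every $j$. Combined with the previous display this forces $c^1_{\SH^{S^1}}(K_j) = c_{\EHZ}(K_j) = c_{\SH}(K_j)$ for each $j$. It then remains to take $j \to \infty$ in each of the three quantities. For $c_{\EHZ}$ this is immediate from Definition \ref{defn_EHZ}: since the $K_j$ are cofinal among convex bodies with smooth boundary containing $K$, we have $c_{\EHZ}(K) = \inf_j c_{\EHZ}(K_j) = \lim_j c_{\EHZ}(K_j)$. For $c_{\SH}$, monotonicity (Lemma \ref{properties_of_c_SH}(ii)) gives $c_{\SH}(K) \le c_{\SH}(K_j)$ for all $j$; the reverse inequality $c_{\SH}(K) \ge \lim_j c_{\SH}(K_j)$ uses that $\SH^{[0,a)}_*$ and the relative homology $H_{*+n}(T^*\R^n, T^*\R^n\setminus -)$ commute with the relevant inverse limits, exactly as in Remark \ref{SH_of_low_energy_convex} and the construction of $i^a_K$ for RCT sets, so that $i^a_K(\nu^{T^*\R^n}_K)$ is the limit of $i^a_{K_j}(\nu^{T^*\R^n}_{K_j})$; hence $c_{\SH}(K) = \lim_j c_{\SH}(K_j)$. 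The same formal argument applies verbatim to $c^1_{\SH^{S^1}}$, using the equivariant analogue $(i^a_K)^{S^1}$ together with the equivariant versions of the same limit compatibilities; this is the step where I rely on ``standard properties of these capacities'' and is the only place a reader might ask for more detail.

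Putting the three limits together yields $c_{\EHZ}(K) = c^1_{\SH^{S^1}}(K) = c_{\SH}(K)$ for the arbitrary convex body $K$, which is the claim. The main obstacle, to the extent there is one, is purely bookkeeping: verifying that $c^1_{\SH^{S^1}}$ behaves well under the inverse limit over a shrinking sequence of smooth convex bodies, i.e.\ that the equivariant invariant of a general convex body is the limit of the equivariant invariants of smooth approximants. This is entirely parallel to the non-equivariant statement (Lemma \ref{properties_of_c_SH}(ii) plus Remark \ref{SH_of_low_energy_convex}), and the referenced sources \cite{Gutt_Hutchings}, \cite{Ginzburg_Shon} contain the ingredients; I would simply remark that the argument proving (ii) and the low-energy identification goes through in the $S^1$-equivariant setting word for word. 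No genuinely new Floer theory is needed.
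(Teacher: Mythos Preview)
Your proposal is correct and follows the same route as the paper, which offers no explicit proof beyond the sentence ``Theorem \ref{SH=EHZ}, combined with (\ref{min_SHS1_SH}), implies the following corollary.'' For convex bodies with $C^\infty$ boundary the sandwich $c_{\EHZ}(K)=c_{\min}(K)\le c^1_{\SH^{S^1}}(K)\le c_{\SH}(K)=c_{\EHZ}(K)$ is immediate, and your approximation argument to extend to arbitrary convex bodies supplies the detail the paper leaves implicit (indeed, the paper only \emph{defines} $c^1_{\SH^{S^1}}$ for $C^\infty$-RCT sets in Section 2.5, so some such extension is tacitly assumed).
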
 

\section{Symplectic homology and loop space homology} 

Let $\pr: T^*\R^n \to \R^n$ denote the natural projection map, namely 
$\pr(q,p):=q$.
For any $q \in \R^n$, 
we identify $T_q^*\R^n$ with $\pr^{-1}(q)$. 

\begin{defn} 
$K \subset T^*\R^n$ is called \textit{fiberwise convex} 
if $K_q:= K \cap T_q^*\R^n$ is a convex set in $T_q^*\R^n$ 
for every $q \in \R^n$. 
\end{defn} 

Throughout this section, $K$ denotes a nonempty, compact and fiberwise convex set in $T^*\R^n$. 
In Section 3.1, we state 
Theorem \ref{SH_and_loop_space_homology}, 
which shows that symplectic homology of $K$ is isomorphic to a certain relative homology of loop spaces of $\R^n$. 
The proof of Theorem \ref{SH_and_loop_space_homology} is carried out in Section 4. 
In Section 3.2, we deduce 
Corollary \ref{c_SH_and_loop_space_homology} from Theorem \ref{SH_and_loop_space_homology}, 
which shows that the capacity $c_{\SH}(K)$ is equal to a certain min-max value defined from homology of loop spaces. 
In Section 3.3, we prove some technical results about fiberwise convex functions, 
which are used in Section 3.1 and in the proof of Theorem \ref{SH_and_loop_space_homology} (see Section 4.6). 

\subsection{Symplectic homology and loop space homology} 

Let $\Lambda$ denote the space of $L^{1,2}$-maps from $S^1=\R/\Z$ to $\R^n$, 
equipped with the $L^{1,2}$-topology. 
For each $\gamma \in \Lambda$, we define 
$\len_K(\gamma)$ as follows: 
\begin{equation}\label{190420_1} 
\len_K(\gamma):= \begin{cases} 
\int_{S^1} \, (\max_{p \in K_{\gamma(t)}} \, p \cdot \dot{\gamma}(t) \,) \, dt \, &(\gamma(S^1) \subset \pr(K)) \\
-\infty &(\gamma(S^1) \not\subset \pr(K)). 
\end{cases}
\end{equation}

\begin{ex}
If $K$ is the unit disk cotangent bundle of $\pr(K)$, namely 
\[ 
K = \{ (q,p) \in T^*\R^n \mid q \in \pr(K), \, |p| \le 1\}, 
\] 
then $\len_K(\gamma) = \int_{S^1} |\dot{\gamma}(t)| \, dt$
for any $\gamma \in \Lambda$ satisfying 
$\gamma(S^1) \subset \pr(K)$. 
\end{ex}

Let us summarize elementary properties of $\len_K$. 

\begin{lem}\label{properties_of_len_K}
Let $K$ be any nonempty, compact, and fiberwise convex set in $T^*\R^n$. 
\begin{enumerate}
\item[(i):]  (\ref{190420_1}) is well-defined. Namely, for any $\gamma \in \Lambda$ satisfying $\gamma(S^1) \subset \pr(K)$, the function 
$\rho_\gamma: S^1 \to \R;\,t \mapsto \max_{p \in K_{\gamma(t)}}  p \cdot \dot{\gamma}(t)$ is integrable. 
\item[(ii):] $\len_K$ is upper semi-continuous. 
Namely, 
if a sequence $(\gamma_k)_k$ in $\Lambda$ converges to $\gamma \in \Lambda$ in the $L^{1,2}$-topology, 
then $\len_K(\gamma)\ge \limsup_k \len_K(\gamma_k)$.
\item[(iii):] 
Suppose that $\partial K$ is of $C^\infty$ and strictly convex. 
Let 
$\gamma: S^1 \to \interior(\pr(K))$ 
be a $C^\infty$-map such that 
$\dot{\gamma}(t) \ne 0$ for every $t \in S^1$. 
Then, for every $t \in S^1$
there exists unique $p_\gamma(t) \in K_{\gamma(t)}$ such that 
$p_\gamma(t) \cdot \dot{\gamma}(t) = \max _{p \in K_{\gamma(t)}}  p \cdot \dot{\gamma}(t)$. 
Moreover, $\bar{\gamma}: S^1 \to \partial K$ defined by 
$\bar{\gamma}(t):= (\gamma(t), p_\gamma(t))$ is of $C^\infty$, 
and satisfies 
\[ 
\len_K (\gamma) = \int_{S^1} \bar{\gamma}^*\bigg(\sum_{i=1}^n p_i dq_i \bigg). 
\] 
\item[(iv):] 
Suppose that $\partial K$ is of $C^\infty$ and strictly convex. 
Then $\len_K$ is continuous on 
$\{ \gamma \in \Lambda \mid \gamma(S^1)\subset \pr(K)\}$
with respect to the $L^{1,2}$-topology.
\item[(v):] 
Let $K'$ be any nonempty, compact, and fiberwise convex set in $T^*\R^n$
which satisfies $K' \subset K$. 
Then $\len_{K'}(\gamma) \le \len_K(\gamma)$ for any $\gamma \in \Lambda$. 
\end{enumerate} 
\end{lem}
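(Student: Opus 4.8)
The plan is to handle the five assertions in order, relying on elementary convex geometry and standard facts about $L^{1,2}$-maps. For (i), I would first observe that for each fixed $q \in \pr(K)$ the support function $h_{K_q}(\xi) := \max_{p \in K_q} p\cdot\xi$ is finite, convex, and positively homogeneous in $\xi$, and that $(q,\xi) \mapsto h_{K_q}(\xi)$ is upper semi-continuous on $\pr(K) \times \R^n$ because $K$ is compact: if $(q_k,\xi_k) \to (q,\xi)$ and $p_k \in K_{q_k}$ achieves $h_{K_{q_k}}(\xi_k)$, then a subsequential limit $p$ of $(p_k)$ lies in $K_q$ (using closedness of $K$), giving $\limsup_k h_{K_{q_k}}(\xi_k) \le p\cdot\xi \le h_{K_q}(\xi)$. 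Since $\gamma \in L^{1,2}(S^1,\R^n)$ is continuous and $\dot\gamma \in L^2$, the function $t \mapsto \rho_\gamma(t) = h_{K_{\gamma(t)}}(\dot\gamma(t))$ is the composition of a measurable map with an upper semi-continuous function, hence measurable; and $|\rho_\gamma(t)| \le (\sup_{(q,p)\in K}|p|)\,|\dot\gamma(t)|$, which is $L^1$ since $\dot\gamma \in L^2 \subset L^1$ on $S^1$. So $\rho_\gamma$ is integrable.

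For (ii), let $\gamma_k \to \gamma$ in $L^{1,2}$. If $\gamma(S^1) \not\subset \pr(K)$ there is nothing to prove since then $\len_K(\gamma) = +\infty$ is impossible — rather $\len_K(\gamma) = -\infty$, but in that case some point $\gamma(t_0) \notin \pr(K)$, and by $C^0$-convergence $\gamma_k(t_0) \notin \pr(K)$ for large $k$ (as $\pr(K)$ is closed), so $\len_K(\gamma_k) = -\infty$ eventually and the inequality holds trivially. If $\gamma(S^1) \subset \pr(K)$, I would discard the terms with $\gamma_k(S^1) \not\subset \pr(K)$ (which contribute $-\infty$) and, along the remaining subsequence, pass to a further subsequence with $\dot\gamma_{k} \to \dot\gamma$ in $L^2$ and a.e.\ pointwise, dominated by a fixed $L^2$ function $g$. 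Then $\rho_{\gamma_k}(t) = h_{K_{\gamma_k(t)}}(\dot\gamma_k(t)) \le C\,g(t)$ with $C = \sup_{K}|p|$, so by the reverse Fatou lemma and the upper semi-continuity of $(q,\xi)\mapsto h_{K_q}(\xi)$ established above, $\limsup_k \int \rho_{\gamma_k} \le \int \limsup_k \rho_{\gamma_k} \le \int \rho_\gamma = \len_K(\gamma)$.

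For (iii): strict convexity of $K_{\gamma(t)}$ and $\dot\gamma(t)\ne 0$ force the maximizer $p_\gamma(t)$ of $p \mapsto p\cdot\dot\gamma(t)$ over $K_{\gamma(t)}$ to be unique and to lie on $\partial K$; smoothness of $\bar\gamma$ follows from the implicit function theorem applied to the Gauss map of $\partial K$ (smooth and, by strict convexity, a diffeomorphism onto the sphere bundle of outward conormals), since the outward conormal to $\partial K_{\gamma(t)}$ at $p_\gamma(t)$ is parallel to $\dot\gamma(t)$ within the fiber direction. The integral identity is then immediate: $\bar\gamma^*(\sum_i p_i dq_i)$ evaluated on $\partial_t$ equals $p_\gamma(t)\cdot\dot\gamma(t) = \rho_\gamma(t)$, and integrating over $S^1$ gives the claim. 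For (iv), continuity on $\{\gamma(S^1)\subset\pr(K)\}$ will follow from (ii) together with lower semi-continuity, which I would get from strict convexity: the maximizer $p_\gamma(t)$ depends continuously on $(q,\xi) \in \interior(\pr(K)) \times (\R^n\setminus 0)$ by strict convexity, and a standard approximation argument (smooth $\gamma$ with nonvanishing derivative are dense, and $\len_K$ is Lipschitz-controlled by the $L^1$ norm of $\dot\gamma$ uniformly) upgrades this to continuity on the whole set; alternatively, note $h_{K_q}$ is now a $C^1$ function of $\xi$ for $\xi \ne 0$ with $\nabla_\xi h_{K_q}$ continuous in $(q,\xi)$, and handle the measure-zero set $\{\dot\gamma = 0\}$ by the uniform bound $|\rho_{\gamma}| \le C|\dot\gamma|$. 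Finally (v) is trivial: $K' \subset K$ gives $K'_q \subset K_q$, hence $h_{K'_q} \le h_{K_q}$ pointwise when $q \in \pr(K')$, and $\gamma(S^1)\not\subset\pr(K')$ makes the left side $-\infty$, so $\len_{K'}(\gamma)\le\len_K(\gamma)$ always.

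The main obstacle is the regularity argument in (iii) and (iv): establishing that $\bar\gamma$ is genuinely $C^\infty$ and that $\len_K$ is continuous (not merely upper semi-continuous) on the subspace where $\gamma(S^1)\subset\pr(K)$ requires care when $\dot\gamma$ vanishes on a set of positive measure and near $\partial(\pr(K))$, where the fiber $K_q$ may degenerate. I expect that the uniform bound $|\rho_\gamma(t)| \le C|\dot\gamma(t)|$ together with the $C^1$-dependence of the support function on $(q,\xi)$ away from $\xi = 0$ suffices to push continuity through, by splitting the integral over $\{|\dot\gamma|\ge\delta\}$ and its complement and letting $\delta\to 0$.
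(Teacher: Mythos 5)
Your treatment of (i), (ii), (iii), and (v) is correct, and (i)--(ii) take a genuinely more elementary route than the paper. The paper proves these by building a decreasing sequence of smooth fiberwise convex Hamiltonians $(H_j)$ (Lemma \ref{sequence_of_fiberwise_convex_functions}), passing to their Legendre duals $L_{H_j}$, and observing that $\mathcal{L}_j(\gamma) := \int_{S^1} L_{H_j}(\gamma,\dot\gamma)\,dt$ is a decreasing sequence of continuous functions converging pointwise to $\len_K$ (Lemma \ref{limit_of_Legendrean_duals}); measurability of $\rho_\gamma$ and upper semi-continuity of $\len_K$ then drop out. You instead establish upper semi-continuity of the support function $(q,\xi)\mapsto h_{K_q}(\xi)$ directly from compactness of $K$ and then use the reverse Fatou lemma. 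Both are correct; yours is self-contained and avoids the approximation machinery, while the paper's reuses objects it needs anyway for Theorem \ref{SH_and_loop_space_homology}. Your (iii) uses essentially the same argument as the paper (inverse/implicit function theorem on the fiberwise Gauss map), and (v) is the same one-liner.

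The gap is in (iv). You correctly reduce to proving lower semi-continuity, and you correctly flag the degeneration of $K_q$ near $\partial(\pr(K))$ as the obstacle, but neither of your two proposed workarounds closes it. The ``Lipschitz-controlled by $\|\dot\gamma\|_{L^1}$'' claim only controls the variation of $h_{K_q}(\xi)$ in $\xi$; the variation in $q$ is not Lipschitz (nor uniformly controlled) near $\partial(\pr(K))$, so the approximation argument does not go through as stated. The alternative appeal to $C^1$-dependence of $h_{K_q}$ on $\xi$ is both overkill and insufficient: what is needed is joint continuity of $c(q,\xi) := h_{K_q}(\xi)$ on all of $\pr(K)\times\R^n$, including the boundary $q\in\partial(\pr(K))$, and $C^1$-ness in $\xi$ alone does not give that. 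The missing observation is the one the paper uses: strict convexity of $\partial K$ forces $K_q$ to be a single point whenever $q\in\partial(\pr(K))$ (a line segment in such a fiber would lie entirely in $\partial K$). With this, for any sequence $(q_k,\xi_k)\to(q_\infty,\xi_\infty)$ one can produce $p_k$ with $(q_k,p_k)\in K$ and $p_k\to p_\infty$ (the interior case by an explicit convex interpolation inside $K$, the boundary case automatically since the limit fiber is a singleton), yielding $\liminf_k c(q_k,\xi_k)\ge p_\infty\cdot\xi_\infty = c(q_\infty,\xi_\infty)$. Combined with the u.s.c.\ you already have, this gives joint continuity of $c$, and then your dominated-convergence step finishes (iv). Without the singleton-fiber lemma, (iv) is not proved.
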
 
\begin{proof} 
(i) and (ii) are consequences of Lemmas 
\ref{sequence_of_fiberwise_convex_functions} and \ref{limit_of_Legendrean_duals}. 
Let us take a sequence $(H_j)_{j \ge 1}$ as in Lemma \ref{sequence_of_fiberwise_convex_functions}, 
and let $L_{H_j}$ denote the Legendre dual of $H_j$ (see Lemma \ref{limit_of_Legendrean_duals} for the definition of Legendre dual). 

Let us prove (i). 
Since $K$ is compact, there exists $C>0$ such that 
$|p| \le C$ for every $(q,p) \in K$. 
Then $|\rho_\gamma|  \le C \cdot |\dot{\gamma}|$ 
for every $\gamma \in \Lambda$ satisfying $\gamma(S^1) \subset \pr(K)$. 
Since $|\dot{\gamma}|$ is integrable, 
it is sufficient to show that $\rho_\gamma$ is measurable. 
Lemma \ref{limit_of_Legendrean_duals} (ii) 
says that $\rho_\gamma(t) = \lim_{j \to \infty} L_{H_j}(\gamma(t), \dot{\gamma}(t))$ for every $t \in S^1$. 
Then $\rho_\gamma$ is measurable, since $L_{H_j}(\gamma, \dot{\gamma})$ is obviously measurable for every $j$. 

Let us prove (ii). 
For each $j$, let us define $\mca{L}_j: \Lambda \to \R$ by 
$\mca{L}_j(\gamma):= \int_{S^1}  L_{H_j}(\gamma, \dot{\gamma}) \, dt$. 
Then $(\mca{L}_j)_{j \ge 1}$ is a decreasing sequence of continuous functions on $\Lambda$, 
and $\len_K = \lim_{j \to \infty} \mca{L}_j$ by Lemma \ref{limit_of_Legendrean_duals}. 
Then $\len_K$ is upper semi-continuous. 

Let us prove (iii). 
Since $\partial K$ is of $C^\infty$ and strictly convex, 
$\partial K_q$ is of $C^\infty$ and strictly convex
for any $q \in \interior (\pr(K))$. 
Then, for any $t \in S^1$, 
there exists unique $p_\gamma(t) \in K_{\gamma(t)}$ which satisfies
$\max_{p \in K_{\gamma(t)}} p \cdot \dot{\gamma}(t) = p_\gamma(t) \cdot \dot{\gamma}(t)$. 
Moreover, $\bar{\gamma}=(\gamma, p_\gamma)$ is of $C^{\infty}$ by the inverse mapping theorem. 
The last assertion follows from 
$\bar{\gamma}^*\bigg( \sum_i p_i dq_i \bigg) = p_\gamma(t) \cdot \dot{\gamma}(t) \, dt$, 
which is straightforward. 

Let us prove (iv). 
First we prove that 
\[ 
c:\pr(K) \times\R^n\to\R; \quad (q,v) \mapsto  \max_{p \in K_q} \, p \cdot v
\] 
is continuous. 
Let $(q_k, v_k)_{k \ge 1}$ be a sequence on $\pr(K) \times \R^n$ 
which converges to $(q_\infty, v_\infty)$ as $k \to \infty$. 
Then we want to show 
$\lim_{k \to \infty} \max_{p \in K_{q_k}} p \cdot v_k = \max_{p \in K_{q_\infty}} p \cdot v_\infty$. 
By the compactness of $K$ one has 
$\limsup_{k \to \infty} \max_{p \in K_{q_k}} p \cdot v_k \le \max_{p \in K_{q_\infty}} p \cdot v_\infty$, 
thus it is sufficient to show 
$\liminf_{k \to \infty} \max_{p \in K_{q_k}} p \cdot v_k \ge  \max_{p \in K_{q_\infty}} p \cdot v_\infty$. 
Take $p_\infty \in K_{q_\infty}$ so that $p_\infty \cdot v_\infty = \max_{p \in K_{q_\infty}} p \cdot v_\infty$. 
We claim that there exists a sequence $(p_k)_k$ such that $(q_k, p_k) \in K$ for every $k$ and $\lim_{k \to \infty} p_k= p_\infty$. 
This claim can be verified as follows: 
\begin{itemize} 
\item If $q_\infty \in \interior(\pr(K))$, then 
there exists $\ep>0$ such that the closed $\ep$-neighborhood of $q_\infty$ is contained in $\pr(K)$. 
For each $k \ge 1$ such that $|q_k-q_\infty|<\ep$, let us define $p_k$ as follows: 
\begin{itemize}
\item If $q_k=q_\infty$, then $p_k:=p_\infty$.
\item If $q_k \ne q_\infty$, then there exist $(q'_k, p'_k) \in K$ and $t_k \in (0,1)$ such that 
$|q_\infty - q'_k|=\ep$ and $q_k = t_k q'_k + (1-t_k )q_\infty$. 
Then $p_k:= t_k  p'_k + (1-t_k )p_\infty$. 
\end{itemize} 
Then it is easy to see that $\lim_{k \to \infty} p_k = p_\infty$. 
\item 
If $q_\infty \in \partial(\pr(K))$, then $K_{q_\infty}= \{ p_\infty\}$ since $\partial K$ is strictly convex, 
thus any sequence $(p_k)_k$ satisfying $(q_k, p_k) \in K \,(\forall k)$ satisfies 
$\lim_{k \to \infty} p_k = p_\infty$. 
\end{itemize} 
Now we can finish the proof of the continuity of $c$ by 
$\liminf_{k \to \infty} \max_{p \in K_{q_k}} p \cdot v_k \ge  \lim_{k \to \infty} p_k \cdot v_k = p_\infty \cdot v_\infty$. 

Now suppose that (iv) does not hold. 
Then there exists a sequence $(\gamma_k)_k$ 
in $\{ \gamma \in \Lambda \mid \gamma(S^1) \subset \pr(K)\}$ 
which converges to $\gamma_\infty$ in the $L^{1,2}$-topology, 
and $\inf_k  |\len_K(\gamma_k) - \len_K(\gamma_\infty)|>0$. 
By replacing $(\gamma_k)_k$ with its subsequence if necessary, 
we may assume $\lim_{k \to \infty} \dot{\gamma}_k (t) = \dot{\gamma}_\infty(t)$ for almost every $t \in S^1$. 
On the other hand $\lim_{k \to \infty} \gamma_k(t) = \gamma_\infty(t)$ for every $t$. 
Thus $\lim_{k \to \infty} c(\gamma_k(t), \dot{\gamma}_k(t))  = c(\gamma_\infty(t), \dot{\gamma}_\infty(t))$ for almost every $t$, 
which implies $\lim_{k \to \infty} \len_K(\gamma_k) = \len_K(\gamma_\infty)$, 
contradicting our assumption. 

Finally, (v) follows from $\pr(K') \subset \pr(K)$ and 
$\max_{p \in K'_q} \, (p \cdot v) \le \max_{p \in K_q} \, (p \cdot v)$
for any $q \in \pr(K')$ and $v \in \R^n$. 
\end{proof} 

For any $a \in \R$, let 
$\Lambda^a_K:= \{ \gamma \in \Lambda \mid \len_K(\gamma) < a\}$. 
By Lemma \ref{properties_of_len_K} (ii), this is open in $\Lambda$ with the $L^{1,2}$-topology. 
Moreover, Lemma \ref{properties_of_len_K} (v) shows that 
if $K' \subset K$ then $\Lambda^a_{K} \subset \Lambda^a_{K'}$. 

\begin{thm}\label{SH_and_loop_space_homology} 
For any nonempty, compact and fiberwise convex set $K \subset T^*\R^n$ 
and real numbers $a<b$, 
one can assign an isomorphism 
\[ 
\SH^{[a,b)}_*(K) \cong H_*(\Lambda^b_K, \Lambda^a_K)
\] 
so that the diagram 
\begin{equation}\label{SH_aba'b'}
\xymatrix{
\SH^{[a,b)}_*(K) \ar[r]^-{\cong}\ar[d]&H_*(\Lambda^b_K, \Lambda^a_K) \ar[d]\\
\SH^{[a',b')}_*(K') \ar[r]_-{\cong} &H_*(\Lambda^{b'}_{K'}, \Lambda^{a'}_{K'})
}
\end{equation}
commutes for any $a \le a'$, $b \le b'$ and any fiberwise convex $K' \subset K$. 
\end{thm}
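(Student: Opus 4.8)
The plan is to reduce the theorem to the Abbondandolo–Schwarz isomorphism between Floer homology of a cotangent bundle and the homology of loop spaces, carried out in a filtered and family-compatible way. The key translation device is the Legendre (Fenchel) duality between fiberwise convex Hamiltonians and Lagrangians whose action functional on $\Lambda$ has sublevel sets related to $\Lambda^a_K$. Concretely, I would first approximate $K$ by a decreasing sequence of convex bodies with $C^\infty$ strictly convex boundary, use the invariance statement in the theorem to pass to the limit via the inverse/direct system isomorphisms appearing throughout Section~2, and thereby reduce to the case where $\partial K$ is smooth and strictly convex. In that case, pick a cofinal sequence of Hamiltonians $H_j \in \mca{H}_K$ that grow quadratically outside a compact set, are fiberwise convex, and $C^2$-converge on compacta to the support function of $K$ (this is exactly what Lemma~\ref{sequence_of_fiberwise_convex_functions} and Lemma~\ref{limit_of_Legendrean_duals} in Section~3.3 provide). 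Each such $H_j$ has a Legendre dual Lagrangian $L_{H_j}$ on $\R^n$, with associated action functional $\mca{L}_j(\gamma)=\int_{S^1}L_{H_j}(\gamma,\dot\gamma)\,dt$ on $\Lambda$; the critical points of $\mca{L}_j$ correspond to $\mca{P}(H_j)$ with matching actions, and $\mca{L}_j \searrow \len_K$ pointwise.

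\textbf{The main comparison.} For a fixed $H_j$ I would invoke the Abbondandolo–Schwarz machinery in its filtered form: the hybrid (half-Floer, half-negative-gradient) moduli spaces produce a chain isomorphism between the filtered Floer complex $\CF^{[a,b)}_*(H_j)$ and the filtered Morse complex of $\mca{L}_j$, which computes $H_*(\{\mca{L}_j<b\},\{\mca{L}_j<a\})$; this yields
\[
\HF^{[a,b)}_*(H_j)\;\cong\; H_*\big(\{\mca{L}_j<b\},\{\mca{L}_j<a\}\big).
\]
This is precisely the content that Section~4 is devoted to establishing (with all the $C^0$-estimates needed to run the hybrid moduli spaces in $T^*\R^n$ rather than a compact cotangent bundle—those come from Lemmas~\ref{C0_HF_1} and \ref{C0_HF_2} and their analogues for the hybrid equations). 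Granting that, the theorem follows by taking the direct limit over $j$: on the Floer side $\varinjlim_j \HF^{[a,b)}_*(H_j)=\SH^{[a,b)}_*(K)$ by definition, and on the loop-space side one must show $\varinjlim_j H_*(\{\mca{L}_j<b\},\{\mca{L}_j<a\}) \cong H_*(\Lambda^b_K,\Lambda^a_K)$. The latter is a purely topological statement: since $\mca{L}_j$ is a decreasing sequence of continuous functions converging pointwise to the (continuous, by Lemma~\ref{properties_of_len_K}(iv)) function $\len_K$, the open sets $\{\mca{L}_j<a\}$ increase to $\Lambda^a_K=\{\len_K<a\}$, and one checks—using that $\mca{L}_j$ and $\len_K$ satisfy a Palais–Smale type condition on $\Lambda$ and have the same critical values in any bounded window—that the inclusions induce the claimed colimit isomorphism on homology (a deformation-retraction / continuity argument for sublevel sets of a monotone sequence of functionals).

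\textbf{Naturality.} For the commuting square \eqref{SH_aba'b'} I would check compatibility at two separate levels and then combine. Compatibility with changing the action window $(a,b)\to(a',b')$ is built into the filtered Morse and Floer constructions (both sides are induced by inclusions of sublevel/subcomplex filtrations, and the hybrid-moduli chain map is filtration-preserving). Compatibility with the inclusion $K'\subset K$ uses that $\len_{K'}\le\len_K$ (Lemma~\ref{properties_of_len_K}(v)), so $\Lambda^a_K\subset\Lambda^a_{K'}$, while on the Floer side the monotonicity maps $\SH^{[a,b)}_*(K)\to\SH^{[a,b)}_*(K')$ are induced by the pointwise inequality of Hamiltonians; choosing the cofinal sequences $(H_j)$ for $K$ and $(H'_j)$ for $K'$ so that $H_j\ge H'_j$, the monotonicity continuation maps on Floer homology are intertwined with the inclusion maps on relative loop-space homology by a further hybrid moduli space interpolating between the two (this is the standard ``naturality of the Abbondandolo–Schwarz isomorphism under continuation'' argument, again filtered). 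Taking direct limits in $j$ then yields the square.

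\textbf{Expected main obstacle.} The serious analytic work—and the reason Section~4 is singled out as the technical heart—is establishing the $C^0$ a priori bounds for the hybrid moduli spaces on the \emph{noncompact} base $\R^n$ with Hamiltonians that are only asymptotically quadratic, together with the transversality and compactness needed to make the filtered chain isomorphism well-defined and independent of choices; the convexity of $K$ (hence of the $H_j$ in the fiber directions) is what forces the relevant $L^\infty$ estimates via the structure of the Legendre transform. The topological colimit step $\varinjlim_j H_*(\{\mca{L}_j<b\},\{\mca{L}_j<a\})\cong H_*(\Lambda^b_K,\Lambda^a_K)$ is comparatively soft but still requires care because $\len_K$ is genuinely less regular than each $\mca{L}_j$ (merely continuous, not $C^1$), so one cannot directly do Morse theory with $\len_K$ itself and must argue through the approximating sequence.
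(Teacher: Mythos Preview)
Your overall approach is essentially the paper's: pick a cofinal sequence of fiberwise strictly convex Hamiltonians $H_j$ (Lemma~\ref{sequence_of_fiberwise_convex_functions}), apply the filtered Abbondandolo--Schwarz isomorphism $\HF^{[a,b)}_*(H_j)\cong H_*(\{\mca{S}_{L_{H_j}}<b\},\{\mca{S}_{L_{H_j}}<a\})$ via hybrid moduli spaces (this is exactly Theorem~\ref{HF_and_loop_space_homology}, proved in Sections~4.3--4.5), and then take direct limits using the naturality square~(\ref{HH'ab}).

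Two places where you overcomplicate or misstate things, though neither is fatal. First, the initial reduction to smooth strictly convex $K$ is unnecessary: the paper works directly with an arbitrary fiberwise convex $K$, since Lemma~\ref{sequence_of_fiberwise_convex_functions} already produces the required cofinal sequence $(H_j)$ for any such $K$. Second, the colimit step $\varinjlim_j H_*(\{\mca{L}_j<b\},\{\mca{L}_j<a\})\cong H_*(\Lambda^b_K,\Lambda^a_K)$ is much softer than you suggest. You do not need Palais--Smale, deformation retractions, or continuity of $\len_K$ (your citation of Lemma~\ref{properties_of_len_K}(iv) for continuity is incorrect in general: that lemma assumes $\partial K$ is $C^\infty$ and strictly convex; in general $\len_K$ is only upper semi-continuous). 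What the paper actually uses is the elementary fact that since $(\mca{L}_j)_j$ is pointwise decreasing with limit $\len_K$ (Lemma~\ref{limit_of_Legendrean_duals}(iii)), the open sets $\{\mca{L}_j<c\}$ increase with union exactly $\Lambda^c_K$, and singular homology commutes with increasing unions of open sets. Finally, in your naturality paragraph the inequality should read $H_j<H'_j$ (not $H_j\ge H'_j$): for $K'\subset K$ one has $\mca{H}_K\subset\mca{H}_{K'}$, and the monotonicity map $\HF(H_j)\to\HF(H'_j)$ requires the target Hamiltonian to be larger.
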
 

\begin{rem} 
If the boundary of $\pr(K) \subset \R^n$ is of $C^\infty$ and 
$K$ is the unit disk cotangent bundle of $\pr(K)$, then 
Theorem \ref{SH_and_loop_space_homology} is essentially equivalent to Theorem 1.1 of \cite{JSG}. 
\end{rem} 

\begin{rem} 
It is likely that Theorem \ref{SH_and_loop_space_homology} naturally extends to 
any nonempty, compact and fiberwise convex set $K \subset T^*Q$ where $Q$ is an arbitrary closed manifold. 
However, since our main applications (Theorem \ref{SH=EHZ} and Theorem \ref{HZ_subadditivity}) make sense only on symplectic vector spaces, 
in this paper we work on symplectic vector spaces. 
\end{rem}

\subsection{Symplectic homology capacity and loop space homology}

In this subsection, we prove a formula (Corollary \ref{c_SH_and_loop_space_homology})
which computes $c_{\SH}(K)$ in terms of homology of loop spaces of $\R^n$. 
Let us recall from Section 2.4 that for any RCT set $K$, 
\[ 
c_{\SH}(K) = \inf\{a \in \R_{>0}  \mid i^a_K(\nu^{T^*\R^n}_K)=0\}. 
\] 

For any $a \in \R_{>0}$, let us consider a map 
\[ 
j^a_K: (\R^n, \R^n \setminus \pr(K) ) \to (\Lambda^a_K, \Lambda^0_K)
\] 
which sends each $q \in \R^n$ to the constant loop at $q$. 

\begin{lem}\label{i_a_K_and_j_a_K}
Let $K$ be any RCT set in $T^*\R^n$ which is fiberwise convex.
Then, for any $a \in \R_{>0}$, 
$i^a_K(\nu^{T^*\R^n}_K)\in \SH^{[0,a)}_n(K)$ 
corresponds to 
$H_*(j^a_K)(\nu^{\R^n}_{\pr(K)}) \in H_n(\Lambda^a_K, \Lambda^0_K)$ 
via the isomorphism 
$\SH^{[0,a)}_*(K) \cong H_*(\Lambda^a_K, \Lambda^0_K)$. 
\end{lem}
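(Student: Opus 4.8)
The plan is to trace both classes through the construction of the isomorphism $\SH^{[0,a)}_*(K) \cong H_*(\Lambda^a_K, \Lambda^0_K)$ and reduce the identity to the level of Floer chain complexes versus Morse chain complexes on the loop space, where the statement becomes a concrete matching of the generators representing the two classes. The key point is that both $\nu^{T^*\R^n}_K$ and $\nu^{\R^n}_{\pr(K)}$ are defined by a limiting procedure over enclosing convex (respectively $C^\infty$-RCT) sets, and both $i^a_K$ and $j^a_K$ are compatible with these limits (for $i^a_K$ this is the commutativity of \eqref{diagram_K_Kprime_a_aprime}, and the analogous naturality for $j^a_K$ follows from $\Lambda^a_K \subset \Lambda^a_{K'}$ when $K' \subset K$, together with functoriality of relative homology). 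So by Theorem \ref{SH_and_loop_space_homology} and its naturality diagram \eqref{SH_aba'b'}, it suffices to prove the lemma for $C^\infty$-RCT, fiberwise strictly convex $K$ with smooth boundary, where $\len_K$ is honestly continuous (Lemma \ref{properties_of_len_K}(iv)) and the Abbondandolo--Schwarz machinery from Section 4 applies most transparently.

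First I would recall, from Lemma \ref{SH_of_low_energy} and Section 2.3, that on the symplectic side $i^a_K(\nu^{T^*\R^n}_K)$ is the image under $\SH^{[0,\ep)}_*(K) \to \SH^{[0,a)}_*(K)$ of the class corresponding to $\nu^{T^*\R^n}_K \in H_{2n}(T^*\R^n, T^*\R^n \setminus K) \cong \SH^{[0,\ep)}_n(K)$ for small $\ep < c_{\min}(K)$. On the loop-space side, $\Lambda^\ep_K$ deformation retracts onto the space of constant loops $\R^n$ intersected with (a neighborhood of) $\pr(K)$ — because a loop of very small $\len_K$ must stay close to a point in $\pr(K)$ and can be contracted to its "center of mass" while keeping $\len_K$ small — so $H_*(\Lambda^\ep_K, \Lambda^0_K) \cong H_*(\R^n, \R^n \setminus \pr(K))$, and under this identification $\nu^{\R^n}_{\pr(K)}$ is exactly the class $H_*(j^\ep_K)(\nu^{\R^n}_{\pr(K)})$. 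The next step is to check that the isomorphism of Theorem \ref{SH_and_loop_space_homology} in the low-energy window $[0,\ep)$ intertwines the isomorphism $\SH^{[0,\ep)}_n(K) \cong H_{2n}(T^*\R^n, T^*\R^n \setminus K)$ of Lemma \ref{SH_of_low_energy} with the retraction identification above, matching $\nu^{T^*\R^n}_K$ with $\nu^{\R^n}_{\pr(K)}$; concretely, both low-energy groups are generated by the "fiberwise minimum" critical manifold (the zero section, or its perturbation), and the Abbondandolo--Schwarz isomorphism sends the Floer generator near a constant orbit at $q$ to the Morse generator at the constant loop $q$, so the fundamental class of the critical submanifold $\pr(K)$ matches on both sides. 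Finally, applying the naturality square \eqref{SH_aba'b'} with $a' = a$, $b' = b$, $K' = K$ but shrinking the action window from $[0,\ep)$ to $[0,a)$ transports this equality forward: $i^a_K(\nu^{T^*\R^n}_K)$ maps to $H_*(j^a_K)(\nu^{\R^n}_{\pr(K)})$ because $j^a_K = (\text{inclusion } \Lambda^\ep_K \hookrightarrow \Lambda^a_K) \circ j^\ep_K$ up to the retraction, and the inclusion-induced map on $H_*(\Lambda^\bullet_K, \Lambda^0_K)$ corresponds under Theorem \ref{SH_and_loop_space_homology} to the map $\SH^{[0,\ep)}_* \to \SH^{[0,a)}_*$ defining $i^a_K$.

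The main obstacle I expect is the second step: verifying that the isomorphism furnished by Theorem \ref{SH_and_loop_space_homology}, which is constructed in Section 4 via hybrid moduli spaces, actually restricts in the low-energy window to the geometrically obvious identification of the zero-section's fundamental class — i.e. that the abstract chain-level isomorphism is "the right one" on the bottom of the filtration. This requires unwinding the construction in Section 4 enough to see that, for Hamiltonians $H$ that are $C^2$-small near $\pr(K) \times \{0\}$ and fiberwise convex, the only $\mca{P}(H)$-orbits with action in $[0,\ep)$ are the constant ones over critical points of $H|_{\text{zero section}}$, that these correspond to constant loops in $\Lambda$ under the Abbondandolo--Schwarz map, and that the Conley--Zehnder grading shift by $n$ is exactly accounted for by the Morse index of $-H$ restricted to the fiber directions. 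All of this is standard in the Abbondandolo--Schwarz framework and is implicit in Section 4, so once the low-energy computation is pinned down the rest is a diagram chase using \eqref{diagram_K_Kprime_a_aprime}, \eqref{SH_aba'b'}, and the definitions of $\nu^{T^*\R^n}_K$, $\nu^{\R^n}_{\pr(K)}$.
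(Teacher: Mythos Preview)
Your strategy is sound in outline but takes a harder road than the paper, and the step you flag as the ``main obstacle'' is precisely what the paper's argument is designed to avoid.

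The paper's proof differs in two ways. First, instead of reducing to a smooth strictly convex $K$, it reduces to the cubes $K_R = \{|q|,|p|\le R\}$: since any fiberwise convex compact $K$ sits inside some $K_R$, the naturality square \eqref{SH_aba'b'} together with \eqref{diagram_K_Kprime_a_aprime} pushes the statement for $K_R$ forward to $K$. Second, and more importantly, the paper never attempts to trace the Abbondandolo--Schwarz isomorphism at the chain level in the low-energy window. Instead it uses a one-dimensionality trick over $\Z/2$: for $a$ sufficiently small, Remark \ref{SH_of_low_energy_convex} gives $\SH^{[0,a)}_n(K_R)\cong\Z/2$, generated by $i^a_{K_R}(\nu^{T^*\R^n}_{K_R})$. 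Hence $H_n(\Lambda^a_{K_R},\Lambda^0_{K_R})\cong\Z/2$ as well, and it is enough to show that $H_*(j^a_{K_R})(\nu^{\R^n}_{\pr(K_R)})$ is \emph{nonzero}; there is only one nonzero element, so the two classes must agree. Nonvanishing is then proved by an elementary excision-and-evaluation argument: one writes $\Lambda^a_{K_R}=U^a\cup V_R$ with $U^a$ the short loops and $V_R=\Lambda^0_{K_R}$ the loops leaving $B_R$, excises to $(U^a,U^a\cap V_R)$, and observes that the evaluation map $\ev(\gamma)=\gamma(0)$ gives a left inverse (on homology) to the constant-loop inclusion when $a$ is small.

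Your approach would require verifying that the hybrid-moduli-space isomorphism of Section 4, in the low-action regime, sends the Floer class of the critical manifold to the Morse class of the constant loops. This is plausible and probably true, but it is genuinely more work than ``standard'': you would need to analyze the index-zero hybrid trajectories for a Morse--Bott perturbation of a fiberwise-convex Hamiltonian near the zero section, and check that the resulting chain map is the identity on the relevant generators. The paper's $\Z/2$ trick sidesteps this entirely, trading an analytic computation for a purely topological nonvanishing argument.
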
 
\begin{proof} 
For any $R \in \R_{>0}$ let 
$K_R:= \{ (q,p) \in T^*\R^n \mid |q|, |p|\le R\}$. 

First notice that it is sufficient to prove the lemma for $K = K_R$ for every $R$. 
Indeed, for any compact $K \subset T^*\R^n$, there exists $R$ such that $K \subset K_R$. 
By the commutativity of (\ref{SH_aba'b'}), we have a commutative diagram 
\[ 
\xymatrix{
\SH^{[0,a)}_*(K_R) \ar[r]\ar[d]_-{\cong}& \SH^{[0,a)}_*(K) \ar[d]^-{\cong} \\
H_*(\Lambda^a_{K_R}, \Lambda^0_{K_R}) \ar[r] & H_*(\Lambda^a_K, \Lambda^0_K). 
}
\] 
Then the upper horizontal map sends 
$i^a_{K_R}(\nu^{T^*\R^n}_{K_R})$ to
$i^a_K(\nu^{T^*\R^n}_K)$. 
Assuming that we have proved the lemma for $K_R$, the left vertical map sends 
$i^a_{K_R}(\nu^{T^*\R^n}_{K_R})$ to 
$H_*(j^a_{K_R})(\nu^{\R^n}_{\pr(K_R)})$, 
which is sent to 
$H_*(j^a_K)(\nu^{\R^n}_{\pr(K)})$ 
by the lower horizontal map. 
By the commutativity of the diagram, the right vertical map sends 
$i^a_K(\nu^{T^*\R^n}_K)$
to 
$H_*(j^a_K)(\nu^{\R^n}_{\pr(K)})$, 
which completes the proof for $K$. 

Thus it is sufficient to consider the case $K=K_R$. 
It is also sufficient to consider the case when $a$ is sufficiently small, 
since for any $a<b$ we have a commutative digram 
 \[ 
\xymatrix{
\SH^{[0,a)}_*(K_R) \ar[r]\ar[d]_-{\cong}& \SH^{[0,b)}_*(K_R) \ar[d]^-{\cong} \\
H_*(\Lambda^a_{K_R}, \Lambda^0_{K_R}) \ar[r] & H_*(\Lambda^b_{K_R}, \Lambda^0_{K_R}). 
}
\] 

Moreover, it is sufficient to prove 
$H_*(j^a_{K_R})(\nu^{\R^n}_{\pr(K_R)}) \ne 0$ for sufficiently small $a$. 
Indeed, when $a$ is sufficiently small, 
Remark \ref{SH_of_low_energy_convex} implies that 
$\SH^{[0,a)}_n(K_R) \cong \Z/2$ is generated by $i^a_{K_R}(\nu^{T^*\R^n}_{K_R})$. 
Then the isomorphism $\SH^{[0,a)}_*(K_R) \cong H_n(\Lambda^a_{K_R}, \Lambda^0_{K_R})$ 
maps 
$i^a_{K_R}(\nu^{T^*\R^n}_{K_R})$ 
to 
the only nonzero element in $H_n(\Lambda^a_{K_R}, \Lambda^0_{K_R})$, 
that is $H_*(j^a_{K_R})(\nu^{\R^n}_{\pr(K_R)})$. 

The rest of the proof 
is essentially the same as 
the proof of Lemma 6.6 (2) of \cite{JSG}, which we repeat here for 
the sake of completeness.
For any $\gamma \in \Lambda$ let $\len(\gamma):= \int_{S^1} |\dot{\gamma}(t)| \, dt$, 
and for any $a \in \R_{>0}$ let 
$U^a:= \{ \gamma \in \Lambda \mid \len(\gamma) < a/R\}$. 
Also let $B_R:= \{ q \in \R^n \mid |q| \le R\}$
and $V_R:= \{ \gamma \in \Lambda \mid \gamma(S^1)\not\subset B_R\}$. 
Then
\[
\Lambda^a_{K_R}=U^a\cup V_R,\quad \Lambda^0_{K_R}=V_R.
\]
Since both $U^a$ and $V_R$ are open sets in $\Lambda$, the inclusion map 
\[ 
(U^a, U^a\cap V_R) \to (U^a \cup V_R, V_R) = (\Lambda^a_{K_R}, \Lambda^0_{K_R})
\]
induces an isomorphism on homology. 
Thus it is sufficient to show that 
\[
c^a_R: (\R^n, \R^n\setminus B_R) \to (U^a, U^a\cap V_R)
\] 
which sends each $q \in \R^n$ to the constant loop at $q$, 
induces an injection on homology if $a$ is sufficiently small. 

Let us define $\ev:\Lambda\to\R^n$ by $\ev(\gamma):=\gamma(0)$. 
If $a$ is sufficiently small, then $\ev$ maps $U^a \cap V_R$ to $\R^n\setminus\{0\}$, 
and we obtain a commutative diagram
\[ 
\xymatrix{
(\R^n, \R^n\setminus B_R) \ar[r]^-{c^a_R}\ar[rd]_{\id_{\R^n}}&(U^a, U^a \cap V_R)\ar[d]^-{\ev}\\
&(\R^n, \R^n\setminus\{0\}).\\
}
\] 
The diagonal map induces an isomorphism on homology, thus $H_*(c^a_R)$ is injective. 
This completes the proof. 
\end{proof} 

As an immediate corollary of Lemma \ref{i_a_K_and_j_a_K}, 
we obtain the following formula which computes 
$c_{\SH}(K)$ from homology of loop spaces. 

\begin{cor}\label{c_SH_and_loop_space_homology} 
For any RCT set $K \subset T^*\R^n$ which is fiberwise convex, 
\[
c_{\SH}(K) = \inf \{a \in \R_{>0} \mid H_*(j^a_K)(\nu^{\R^n}_{\pr(K)}) =0\}.
\]
\end{cor}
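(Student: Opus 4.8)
The plan is to derive Corollary \ref{c_SH_and_loop_space_homology} directly from Lemma \ref{i_a_K_and_j_a_K} together with the definition of $c_{\SH}$ and the compatibility statement in Theorem \ref{SH_and_loop_space_homology}. Recall that, by definition, $c_{\SH}(K) = \inf\{a \in \R_{>0} \mid i^a_K(\nu^{T^*\R^n}_K) = 0\}$, so the whole task is to translate the vanishing condition $i^a_K(\nu^{T^*\R^n}_K) = 0$ in $\SH^{[0,a)}_n(K)$ into the vanishing condition $H_*(j^a_K)(\nu^{\R^n}_{\pr(K)}) = 0$ in $H_n(\Lambda^a_K, \Lambda^0_K)$.

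First I would fix $a \in \R_{>0}$ and invoke the isomorphism from Theorem \ref{SH_and_loop_space_homology} in the special case $[a,b) = [0,a)$, giving $\SH^{[0,a)}_*(K) \cong H_*(\Lambda^a_K, \Lambda^0_K)$. Lemma \ref{i_a_K_and_j_a_K} says precisely that under this isomorphism the element $i^a_K(\nu^{T^*\R^n}_K)$ corresponds to $H_*(j^a_K)(\nu^{\R^n}_{\pr(K)})$. Since an isomorphism of $\Z/2$-vector spaces sends $0$ to $0$ and is injective, $i^a_K(\nu^{T^*\R^n}_K) = 0$ holds in $\SH^{[0,a)}_n(K)$ if and only if $H_*(j^a_K)(\nu^{\R^n}_{\pr(K)}) = 0$ holds in $H_n(\Lambda^a_K, \Lambda^0_K)$. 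Taking the infimum over $a \in \R_{>0}$ of the sets of $a$ for which each (equivalent) condition holds yields the claimed equality of the two infima.

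The one point that needs a small remark is that Theorem \ref{SH_and_loop_space_homology} and Lemma \ref{i_a_K_and_j_a_K} are stated for a \emph{fixed} pair of parameters, whereas $c_{\SH}$ involves varying $a$; but since the equivalence $i^a_K(\nu^{T^*\R^n}_K) = 0 \iff H_*(j^a_K)(\nu^{\R^n}_{\pr(K)}) = 0$ holds for each individual $a$, no compatibility between different values of $a$ is actually required for this argument, and the identity of infima is immediate. (The commutativity square \eqref{SH_aba'b'} was already used inside the proof of Lemma \ref{i_a_K_and_j_a_K} to reduce to $K = K_R$, so it is not needed again here.) Consequently there is essentially no obstacle: this is a one-line consequence of Lemma \ref{i_a_K_and_j_a_K}, and the only thing to be careful about is to state clearly that an isomorphism preserves and reflects vanishing.
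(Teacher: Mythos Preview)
Your proposal is correct and matches the paper's approach exactly: the paper states this corollary as an immediate consequence of Lemma \ref{i_a_K_and_j_a_K}, and your argument spells out precisely why---under the isomorphism of Theorem \ref{SH_and_loop_space_homology}, the two vanishing conditions are equivalent for each $a$, so the infima agree.
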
 

\subsection{Technical results on fiberwise convex functions} 

In this subsection we prove some preliminary results on (fiberwise) convex functions. 

\begin{defn}
For any (finite-dimensional) real vector space $V$, $f \in C^0(V, \R)$ is called \textit{convex} if 
$f(tx+(1-t)y) \le tf(x) + (1-t) f(y)$ for any $x,y \in V$ and $t \in [0, 1]$. 
$f \in C^2(V, \R)$ is called \textit{strictly convex}, 
if for any $x \in V$, the Hessian of $f$ at $x$ (which is a symmetric bilinear form on $V$) is positive definite. 
$f \in C^0(T^*\R^n)$ is called \textit{fiberwise convex} if $f|_{T^*_q\R^n}$ is convex for every $q \in \R^n$, 
and $f \in C^2(T^*\R^n)$ is called \textit{fiberwise strictly convex} if $f|_{T^*_q\R^n}$ is strictly convex for every $q \in \R^n$. 
\end{defn} 

For any $a \in \R_{>0}$, let us define $Q_a \in C^\infty(T^*\R^n)$ by 
$Q_a(q,p):= a(|q|^2 + |p|^2)$. 

\begin{lem}\label{sequence_of_fiberwise_convex_functions} 
For any nonempty, compact, and fiberwise convex set $K \subset T^*\R^n$, 
there exist sequences 
$(a_j)_{j \ge 1}$ and 
$(H_j)_{j \ge 1}$ which satisfy the following properties: 
\begin{enumerate} 
\item[(i):]  $(a_j)_j$ is a strictly increasing sequence in $\R_{>0}\setminus \pi\Z$.
\item[(ii):] $\lim_{j \to \infty} a_j  = \infty$. 
\item[(iii):] $(H_j)_j$ is a strictly increasing sequence of fiberwise strictly convex $C^\infty$-functions on $T^* \R^n$. 
\item[(iv):] For every $j$, there exists $b_j \in \R$ such that $H_j$ is a compact perturbation of $Q_{a_j} + b_j$, 
i.e. $H_j - (Q_{a_j}+b_j)$ is compactly supported. 
\item[(v):] $\lim_{j \to \infty} H_j(q,p)=\begin{cases}\infty&((q,p)\notin K)\\0&((q,p)\in K).\end{cases}$
\end{enumerate} 
\end{lem}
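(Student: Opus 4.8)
The plan is to build the sequence by rescaling, mollifying, and cutting off a single fiberwise convex function canonically attached to $K$. Conditions (i) and (ii) cost nothing: once a growth rate is fixed I take $a_j:=3j+j^{-2}$, which is strictly increasing, tends to $\infty$, and avoids $\pi\Z$ (an integer plus $j^{-2}$ is never an integer multiple of $\pi$). The heart of the matter is a nonnegative, fiberwise convex, quadratically coercive function with zero set exactly $K$. For this I take $\hat\phi$, the \emph{fiberwise convex envelope} of $\phi(q,p):=\dist\big((q,p),K\big)^2$, i.e.\ $\hat\phi(q,\cdot)$ is the largest convex minorant of $\phi(q,\cdot)$ on each $T_q^*\R^n$. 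Then $0\le\hat\phi\le\phi$; from $\phi(q,p)\ge\big(|(q,p)|-r\big)_+^2$ with $r:=\max_{x\in K}|x|$ (a fiberwise convex lower bound for $\phi$) and $\phi(q,p)\le\big(|(q,p)|+r\big)^2$ one reads off $\hat\phi(q,p)=|(q,p)|^2+O(|(q,p)|)$; and, because each fibre $K_q:=K\cap T_q^*\R^n$ is \emph{convex}, a Carathéodory argument gives $\hat\phi^{-1}(0)=\{(q,p):p\in\conv(K_q)\}=K$. Joint continuity of $\hat\phi$ — which I will need — follows by writing $\hat\phi$ as an infimum of jointly continuous functions and using that $\phi$ is coercive in the fibre variable uniformly on compact sets of $q$; this relaxation-type fact is the only non-elementary input.

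With $\hat\phi$ in hand I mollify: fix a symmetric mollifier and set $\hat\phi_j:=\hat\phi*\psi_{\eta_j}$, a $C^\infty$ function, still fiberwise convex (an average of convex-in-$p$ functions) and still $|(q,p)|^2+O(1)$, with $|\hat\phi_j-\hat\phi|\le\omega(\eta_j)$ on compacts for a modulus of continuity $\omega$ of $\hat\phi$. Put $G_j:=j\hat\phi_j-j^{-1}$, $P_j:=3j(|q|^2+|p|^2)-8jr^2$, and, with a smoothed maximum $\mathrm{sm}_\delta$ (smooth, convex, non-decreasing in each variable, equal to $\max$ off the strip $|s-t|<\delta$),
\[
H_j:=\mathrm{sm}_{\delta_j}(G_j,P_j)+j^{-2}(|q|^2+|p|^2)-Cj^{-1},\qquad \delta_j:=j^{-3},
\]
with $C$ a large constant (depending on $K$) fixed below. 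Each $H_j$ is $C^\infty$ and fiberwise \emph{strictly} convex, since $j^{-2}|p|^2$ is strictly convex on each fibre and $\mathrm{sm}_{\delta_j}(G_j,P_j)$ is fiberwise convex by the composition rule for convex non-decreasing functions. Using the growth bounds on $\hat\phi$ one checks: on $K$ one has $G_j-P_j\ge 5jr^2-j^{-1}>\delta_j$, hence $\mathrm{sm}_{\delta_j}(G_j,P_j)=G_j$ there and $H_j|_K\subset\big[-(1+C)j^{-1},0\big)$, an interval shrinking to $\{0\}$ — this gives (v) on $K$; for $|(q,p)|^2\ge 10r^2+2$ one has $P_j-G_j\ge j|(q,p)|^2-10jr^2-j\omega(\eta_j)+j^{-1}>\delta_j$ (once $\eta_j$ is small), hence $\mathrm{sm}_{\delta_j}(G_j,P_j)=P_j$ and $H_j=Q_{a_j}+b_j$ with $a_j=3j+j^{-2}$, $b_j=-8jr^2-Cj^{-1}$, the cut-off radius being independent of $j$ — this gives (iv); and for fixed $(q,p)\notin K$, $H_j(q,p)\ge G_j(q,p)=j\hat\phi_j(q,p)-j^{-1}\to\infty$ since $\hat\phi(q,p)>0$ — this gives (v) off $K$.

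The remaining and most delicate point is the strict monotonicity (iii). Outside the fixed ball $\{|(q,p)|^2\le 10r^2+2\}$ both $H_j$ and $H_{j+1}$ equal their quadratic tails, and there $H_{j+1}-H_j=3|(q,p)|^2-8r^2+O(j^{-3})|(q,p)|^2>0$. On the fixed compact ball I would bound $H_{j+1}-H_j$ from below via $\mathrm{sm}_{\delta_{j+1}}(G_{j+1},P_{j+1})-\mathrm{sm}_{\delta_j}(G_j,P_j)\ge\max(G_{j+1},P_{j+1})-\max(G_j,P_j)-\tfrac12\delta_j$ together with the elementary inequality $\max(a',b')-\max(a,b)\ge a'-a$ when $a\ge b$: where $G_j\ge P_j$ this is $\ge G_{j+1}-G_j\ge -2j\,\omega(\eta_j)$, and where $P_j\ge G_j$ one has $|(q,p)|^2\ge\tfrac83 r^2-O(j^{-2})$, hence it is $\ge P_{j+1}-P_j=3|(q,p)|^2-8r^2\ge -O(j^{-2})$. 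Since the mollification scales $\eta_j\downarrow 0$ are free, choose them so that $j\,\omega(\eta_j)\le j^{-3}$ (possible because $\omega(s)\to0$); then on the ball $H_{j+1}-H_j\ge -O(j^{-2})+\big(Cj^{-1}-C(j+1)^{-1}\big)\ge j^{-2}\big(\tfrac{C}{2}-O(1)\big)$, which a large enough $C$ makes positive for every $j$. (If small $j$ are troublesome because $r$ is tiny, first replace $K$ by $cK$ for large $c$: the statement for $K$ follows from that for $cK$ via $H_j\mapsto c^{-2}H_j(c\,\cdot)$, so one may assume $r\ge 1$.) The two things that would need to be written out with care are the joint continuity of $\hat\phi$ and the chasing of the constants in this last estimate; the rest is routine bookkeeping.
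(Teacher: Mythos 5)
Your argument is correct in essence, and it takes a genuinely different route from the paper. You replace the paper's inductive construction with a closed-form one: a single canonical fiberwise-convex, jointly continuous function $\hat\phi$ (the fiberwise convex envelope of $\dist(\cdot,K)^2$) whose zero set is exactly $K$ and which has the right quadratic growth, then scale, mollify jointly in $(q,p)$, and splice in a quadratic tail with a smoothed max. The paper instead builds $H_j$ from $H_{j-1}$ step by step: its analogue of your envelope is the function $F_b$, defined as the pointwise supremum of convex minorants of $Q_a+b$ that are $\le -3/2^{j+2}$ on $K$, with the lower bound $F_b > H_{j-1}$ built into the choice of $b$ (properties (1-5), (1-6)); it then mollifies only in the fibre variable and patches over the base with a partition of unity, and adds a small quadratic for strict convexity. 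The trade-off is clear. The paper's induction makes strict monotonicity (iii) essentially automatic at each step, while your closed-form construction has to chase constants across the smoothed max and the shrinking mollification scales to recover (iii) — you do this correctly, but it is the most fragile part and would need the careful constant-bookkeeping you flag. Conversely, by mollifying jointly you avoid the partition-of-unity step, but this is paid for by the joint continuity of $\hat\phi$, which you correctly identify as the one genuine lemma: it needs the Carath\'eodory/coercivity compactness argument (bounding the $p_i$'s in near-minimizing convex combinations uniformly for $(q,p)$ in a compact set, then a Berge-type continuity-of-the-infimum argument), and is precisely the issue the paper's fibrewise-only construction is designed to sidestep. Two small points worth tightening when writing this up: the inequality ``$H_j(q,p)\ge G_j(q,p)$'' used for (v) off $K$ should read $H_j\ge G_j-Cj^{-1}$ (the conclusion $H_j\to\infty$ is unaffected), and the coefficient $a_{j+1}-a_j$ outside the cut-off ball is $3-\tfrac{2j+1}{j^2(j+1)^2}$, so the remainder is $O(j^{-3})$ but \emph{negative}; your verification that $H_{j+1}-H_j>0$ there still goes through since the coefficient stays $\ge 9/4$, but the displayed formula as written slightly misstates the sign.
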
 
\begin{proof} 
Let us take a sequence $(U_j)_j$ of open sets in $T^*\R^n$ such that 
$\overline{U_{j+1}} \subset U_j$ for every $j$, 
and $\bigcap_{j=1}^\infty  U_j = K$. 

Let us consider conditions (ii') and (v') as follows: 
\begin{enumerate} 
\item[(ii'):]   $a_j > 2^j$ for every $j$. 
\item[(v'):] 
The following properties hold for every $j$: 
\begin{itemize} 
\item $H_j(q,p)  > 2^j$ if $(q,p) \notin U_j$, 
\item $-\frac{1}{2^j} < H_j(q,p) < - \frac{1}{2^{j+1}}$ if $(q,p) \in K$. 
\end{itemize} 
\end{enumerate} 
Obviously (ii') implies (ii), and (v') implies (v). 
Thus it is sufficient to construct sequences 
$(a_j)_j$ and $(H_j)_j$ 
satisfying (i), (ii'), (iii), (iv), (v'). 
We are going to construct such sequences by induction on $j$. 
Suppose that we have defined $a_1, \ldots, a_{j-1}$ and $H_1, \ldots, H_{j-1}$ 
satisfying these conditions. 
In the following argument 
we construct a pair $(a_j, H_j)$ so that these conditions are satisfied. 
Let us take $a \in \R_{>0} \setminus \pi \Z$ such that $a > \max\{a_{j-1}, 2^j \}$. 
We fix such $a$ in the rest of the proof. 

\textbf{Step 1.} 
For any $b\in \R_{\ge 0}$, 
we define $F_b: T^*\R^n \to \R$ in the following way. 

For each $q \in \R^n$, 
let $\mca{F}(b,q)$ denote the set of convex functions 
$f: T_q^*\R^n \to \R$ 
satisfying the following conditions: 
\begin{itemize} 
\item $f(p) \le Q_a(q,p) + b$ for every  $p \in T_q^*\R^n$. 
\item $f(p) \le - \frac{3}{2^{j+2}}$ if $(q,p) \in K$. 
\end{itemize} 
Let us define 
$F_b$ by 
$F_b(q,p):= \sup_{f \in \mca{F}(b,q)}  f(p)$. 
Then, $F_b|_{T_q^*\R^n}$ is convex (thus continuous) for every $q \in \R^n$. 
The function $F_b$ satisfies the following properties: 
\begin{itemize} 
\item[(1-0):] If $q \notin \pr(K)$ then $F_b(q,p) = Q_a(q,p)+b$. 
\item[(1-1):]$F_b$ is a compact perturbation of $Q_a+b$. 
\item[(1-2):]$F_b(q,p) \ge - \frac{3}{2^{j+2}}$ for every $(q,p) \in T^*\R^n$. 
\item[(1-3):] $F_b(q,p) = - \frac{3}{2^{j+2}}$ if $(q,p) \in K$. 
\item[(1-4):] For any $\ep>0$, there exists $\delta>0$ such that 
if $p \in T_q^*\R^n$ satisfies $\dist(K_q, p)<\delta$, 
where $\dist$ denotes the Euclidean distance on $T_q^*\R^n$, 
then $F_b(q,p) < -\frac{3}{2^{j+2}} + \ep$.
\end{itemize} 
(1-0) holds since $Q_a(q,p)+b \in \mca{F}(b, q)$ if $q \notin \pr(K)$. 
(1-2) and (1-3) hold since the constant function $-\frac{3}{2^{j+2}}$ is an element of $\mca{F}(b, q)$. 
(1-1) holds since if $|q|^2+|p|^2$ is sufficiently large, the linear function 
\[ 
T_q^*\R^n \to \R; \, x \mapsto Q_a(q,p) +  b +  2a (p \cdot (x-p))
\] 
is an element of $\mca{F}(b,q)$. 
(1-4) follows from (1-3), (1-1) and the convexity of $F_b|_{T_q^*\R^n}$. 

Moreover, when $b$ is sufficiently large, the following properties hold: 
\begin{itemize} 
\item[(1-5):] $F_b(q,p)>H_{j-1}(q,p)$ for any $(q,p)\in T^*\R^n$. 
\item[(1-6):] $F_b(q,p) > 2^j$ if $(q,p) \notin U_j$. 
\end{itemize} 

Let us check that (1-5) holds for sufficiently large $b$. 
By the induction assumption, 
$H_{j-1}<-\frac{1}{2^j}$ on $K$. 
Thus $H_{j-1} + \frac{1}{2^{j+2}} < -\frac{3}{2^{j+2}}$ on $K$. 
Since $H_{j-1}$ is a compact perturbation of $Q_{a_{j-1}} + b_{j-1}$ and $a_{j-1}<a$, 
when $b$ is sufficiently large 
$H_{j-1} + \frac{1}{2^{j+2}}  \in \mca{F}(b,q)$. 
This means that $H_{j-1}(q,p) +\frac{1}{2^{j+2}} \le  F_b(q,p)$ for any $(q,p) \in T^*\R^n$, 
thus (1-5) holds. 

Let us check that (1-6) holds for sufficiently large $b$. 
For any $(q,p) \notin U_j$ such that $K_q \ne \emptyset$, 
let $p'$ be the unique point on $K_q$ such that $|p-p'| = \dist(K_q, p)$. 

\begin{rem} 
The uniqueness of $p'$ follows from the convexity of $K_q$. 
Indeed, suppose that there exist $p' \ne p''$ in $K_q$ satisfying 
$|p-p'| = |p-p''| = \dist(K_q, p)$. 
Then $p''':= (p'+p'')/2 \in K_q$ by the convexity of $K_q$. 
On the other hand $|p-p'''|<\dist(K_q, p)$ by $p' \ne p''$, 
which contradicts $p''' \in K_q$. 
\end{rem} 

Let us define a linear function $H_{q,p}$ on $T_q^*\R^n$ by 
\[ 
H_{q,p}(x):= -\frac{3}{2^{j+2}}  + (x-p') \cdot (p-p') \cdot \frac{2^j + 1 + 3/2^{j+2} }{|p-p'|^2}. 
\] 
Then $H_{q,p}(p)=2^j+1$ and $H_{q,p} \le -\frac{3}{2^{j+2}}$ on $K_q$. 
Also, there holds 
\[ 
S:= \sup_{\substack{(q,p) \notin U_j \\ K_q \ne \emptyset}} 
(\max_{x \in T_q^*\R^n}   H_{q,p}(x) - Q_a(q,x) ) < \infty. 
\] 
This can be checked as follows: for any $(q,p) \notin U_j $ with $K_q \ne \emptyset$, 
\[ 
\max_{x \in T_q^*\R^n}  H_{q,p}(x) - Q_a(q,x)  = H_{q,p}(0) + \frac{|\nabla H_{q,p}|^2}{4a}  - a|q|^2. 
\] 
Setting $\gamma:=2^j+1+3/2^{j+2}$, 
\[ 
H_{q,p}(0) = -\frac{3}{2^{j+2}} - p' \cdot (p-p') \cdot \frac{\gamma}{|p-p'|^2} \le -\frac{3}{2^{j+2}} + \frac{R \gamma}{\delta}, 
\] 
where $R$ and $\delta$ are positive constants (depending only on $K$ and $U_j$)
such that $|p'| \le R$ and $|p-p'| \ge \delta$. 
Also, there holds 
$|\nabla H_{q,p}| = \gamma/|p-p'|  \le \gamma/\delta$. 
Then we can conclude that $S<\infty$. 

We show that if $b>\max\{2^j, S\}$ then (1-6) holds, 
i.e. $F_b(q,p)>2^j$ if $(q,p) \not\in U_j$. 
We consider two cases:
\begin{itemize} 
\item The case $K_q \ne \emptyset$. 
In this case $H_{q,p} \in \mca{F}(b,q)$, because $H_{q,p} \le - \frac{3}{2^{j+2}}$ on $K_q$ and 
$H_{q,p}(x) \le Q_a(q,x) + S  <  Q_a(q,x) + b$ for any $x \in T^*_q\R^n$. 
Hence $F_b(q,p) \ge H_{q,p}(p) = 2^j+1>2^j$. 
\item The case $K_q = \emptyset$. 
In this case, $F_b(q,p) = Q_a(q,p)+b \ge b > 2^j$. 
\end{itemize} 
In the rest of the proof we take and fix $b$ so that (1-5) and (1-6) hold. 

\textbf{Step 2.} 
Let us take $\rho \in C^\infty_c(\R^n, \R_{\ge 0})$ such that 
$\rho(x) = \rho(-x)$ and 
$\int_{\R^n} \rho(x) \, dx = 1$. 
For any $\ep>0$ let 
$\rho^\ep(x) := \ep^{-n} \rho(x/\ep)$. 
Then we define $G^\ep: T^* \R^n \to \R$ by 
\[ 
G^\ep(q,p) := \int_{y \in T_q^*\R^n} F_b(q,y) \rho^\ep(p-y) \, dy. 
\] 
Then $G^\ep$ satisfies the following properties: 
\begin{itemize}
\item For every $q \in \R^n$, $G^\ep|_{T_q^*\R^n}$ is a $C^\infty$ convex function. 
\item If $q \notin \pr(K)$ then $G^\ep(q,p) = Q_a(q,p)+b + a \cdot c(\ep)$, where $c(\ep):= \int_{\R^n} |x|^2 \rho^\ep(x) \, dx$. 
\item $G^\ep$ is a compact perturbation of $Q_a + b + a \cdot c(\ep)$. 
\item $G^\ep(q,p) \ge F_b(q,p)$ for any $(q,p) \in T^*\R^n$. In particular, $G^\ep(q,p) > \max\{ -\frac{1}{2^j}, H_{j-1}(q,p) \}$ for any $(q,p) \in T^*\R^n$, 
and $G^\ep(q,p)>2^j$ for any $(q,p) \notin U_j$. 
\end{itemize}
Moreover, by (1-4), if $\ep$ is sufficiently small then 
\[ 
(q,p) \in K \implies G^\ep(q,p) < -\frac{1}{2^{j+1}}. 
\] 
In the rest of the proof we fix such $\ep$. 

\textbf{Step 3.} 
For each $q \in \R^n$, let us define 
$H_q: T^*\R^n \to \R$ by 
\[ 
H_q(q', p) :=  G^\ep(q,p) + a (|q'|^2 - |q|^2). 
\] 
Then $H_q|_{T_{q'}^*\R^n}$ is a $C^\infty$-convex function for every $q' \in \R^n$. 
Moreover, if $q \notin \pr(K)$ then $H_q=Q_a + b + a \cdot c(\ep)$. 

For every $q \in \R^n$, 
there exists an open neighborhood of $q$ (denoted by $U_q$) 
such that the following properties hold for every $q' \in U_q$: 
\begin{itemize} 
\item $H_q(q', p) > \max \{ - \frac{1}{2^j} , H_{j-1}(q', p) \}$ for every $p \in T_{q'}^*\R^n$. 
\item $H_q(q', p)  < - \frac{1}{2^{j+1}}$ if $(q', p) \in K$. 
\item $H_q(q', p) > 2^j$ if $(q', p) \notin U_j$. 
\end{itemize} 
Moreover, if $q \notin \pr(K$) then we may take $U_q$ so that $U_q \cap \pr(K) = \emptyset$. 

Let us consider an open covering of $\R^n$, 
$\mca{U} := \{ U_q \}_{q \in \R^n}$. 
Let $\mca{V} = \{ V_i \}_{i=1}^\infty$ 
be a refinement of $\mca{U}$ which is locally finite. 
For every $i$, choose $q_i \in \R^n$ such that $V_i \subset U_{q_i}$. 
Let $(\chi_i)_i$ be a partition of $1$ with $\mca{V}$, 
i.e. 
$\chi_i \in C^\infty(\R^n, [0,1])$
and 
$\supp \chi_i \subset V_i$
for every $i \ge 1$, 
and $\sum_{i=1}^\infty \chi_i \equiv 1$. 
Then 
\[ 
H(q,p) := \sum_{i=1}^\infty  \chi_i (q)  H_{q_i}(q,p) 
\] 
is a $C^\infty$-function on $T^*\R^n$, 
and satisfies the following properties: 
\begin{itemize} 
\item $H$ is a compact perturbation of $Q_a + b + a \cdot c(\ep)$. 
\item $H$ is fiberwise convex. 
\item $H(q,p) > H_{j-1}(q,p) $ for every $(q,p) \in T^*\R^n$. 
\item $-\frac{1}{2^j}  < H(q,p) < - \frac{1}{2^{j+1}}$ if $(q,p) \in K$. 
\item $H(q,p) > 2^j$ if $(q,p) \notin U_j$. 
\end{itemize} 

The first property holds since 
$H_{q_i} \ne Q_a + b+ a \cdot c(\ep)$ 
only if $q_i \in \pr(K)$, 
and there are only finitely many such $q_i$'s. 
The other properties are straightforward. 

\textbf{Step 4.}
Let us take a sufficiently small $\delta>0$ such that $a+ \delta \notin \pi\Z$. 
Then $H_j:= H + Q_\delta$ satisfies the following properties:
\begin{itemize} 
\item $H_j$ is a compact perturbation of $Q_{a+\delta} + b + a\cdot c(\ep)$. 
\item $H_j$ is fiberwise strictly convex. 
\item $H_j(q,p) > H_{j-1}(q,p)$ for every $(q,p) \in T^*\R^n$. 
\item $- \frac{1}{2^j} < H_j(q,p) < - \frac{1}{2^{j+1}}$ for every $(q,p) \in K$. 
\item $H_j(q,p ) > 2^j$ if $(q,p) \notin U_j$. 
\end{itemize} 
The fourth property can be achieved by taking $\delta$ sufficiently small. 
The other properties are straightforward.

Finally, setting $a_j:= a+\delta$, 
the pair $(a_j, H_j)$ 
satisfies conditions (i), (ii'), (iii), (iv), (v'). 
\end{proof} 

\begin{lem}\label{limit_of_Legendrean_duals} 
Let $K$ be a compact and fiberwise convex set in $T^*\R^n$, 
and let $(H_j)_{j \ge 1}$ and $(a_j)_{j \ge 1}$ be sequences which satisfy the conditions in Lemma \ref{sequence_of_fiberwise_convex_functions}. 
For each $j$, let $L_{H_j} \in C^\infty(T\R^n)$ denote the Legendre dual of $H_j$, namely 
\[ 
L_{H_j}(q,v):= \max_{p \in T_q^*\R^n}   (p \cdot v - H_j(q,p) ) \qquad( q \in \R^n, \, v \in T_q\R^n). 
\] 
Then the following properties hold: 
\begin{enumerate}
\item[(i):]  $L_{H_j}(q,v) > L_{H_{j+1}}(q,v)$ for any $(q,v) \in T\R^n$ and $j \ge 1$. 
\item[(ii):] $\lim_{j \to \infty} L_{H_j}(q,v)=\begin{cases}  \max_{p \in K_q} (p \cdot v)&( q \in \pr(K)) \\ -\infty &(q \notin \pr(K)).\end{cases}$ 
\item[(iii):] $\lim_{j \to \infty} \int_{S^1} L_{H_j}(\gamma(t), \dot{\gamma}(t)) \, dt = \len_K(\gamma)$ for any $\gamma \in \Lambda$. 
\end{enumerate} 
\end{lem}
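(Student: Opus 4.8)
My plan is to establish (i) and (ii) by fiberwise convex analysis and then deduce (iii) from (ii) via standard convergence theorems. Throughout I fix $q \in \R^n$ and write $h_j := H_j(q,\cdot)$. The first observation is that by property (iv) of Lemma \ref{sequence_of_fiberwise_convex_functions}, $h_j$ differs from $a_j|p|^2$ by a bounded function, so $p \mapsto p\cdot v - h_j(p)$ tends to $-\infty$ as $|p| \to \infty$; hence the supremum defining $L_{H_j}(q,v) = \max_p(p\cdot v - h_j(p))$ is attained at some $p_j = p_j(q,v)$, $L_{H_j}$ is finite-valued, and, being the Legendre dual of a fiberwise strictly convex superlinear $C^\infty$ function, it is $C^\infty$. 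For (i), I would take $\bar p$ attaining the maximum for $H_{j+1}$ and use $h_j < h_{j+1}$ (property (iii)):
\[
L_{H_{j+1}}(q,v) = \bar p\cdot v - h_{j+1}(\bar p) < \bar p\cdot v - h_j(\bar p) \le L_{H_j}(q,v).
\]

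For (ii), suppose first $q \in \pr(K)$ and put $\sigma := \max_{p\in K_q}(p\cdot v)$. Since $h_j(p) < 0$ for $p \in K_q$ (by (iii) and (v)), picking $p^\ast \in K_q$ with $p^\ast\cdot v = \sigma$ gives $L_{H_j}(q,v) \ge p^\ast\cdot v - h_j(p^\ast) > \sigma$ for all $j$, so by (i) the decreasing limit $\ell := \lim_j L_{H_j}(q,v)$ exists and is $\ge \sigma$. For the reverse inequality I would note that $p_j\cdot v - h_1(p_j) \ge L_{H_j}(q,v) > \sigma$ and the superlinearity of $h_1$ force $(p_j)_j$ to be bounded, then pass to a subsequence $p_j \to p_\infty$. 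Monotonicity of $(h_k)_k$ and continuity of each $h_k$ give $\liminf_j h_j(p_j) \ge h_k(p_\infty)$ for all $k$, hence $\liminf_j h_j(p_j) \ge \lim_k h_k(p_\infty)$. If $p_\infty \notin K_q$ this last limit is $+\infty$ by (v), forcing $L_{H_j}(q,v) \to -\infty$, contradicting $L_{H_j}(q,v) > \sigma$; so $p_\infty \in K_q$, the limit is $0$ by (v), and therefore $\ell = \lim_j(p_j\cdot v - h_j(p_j)) \le p_\infty\cdot v \le \sigma$, giving $\ell = \sigma$. If $q \notin \pr(K)$, then $K_q = \emptyset$ and the same boundedness-plus-subsequence argument shows that a finite value of $\lim_j L_{H_j}(q,v)$ would produce a bounded maximizing sequence with a limit point $p_\infty$ satisfying $(q,p_\infty) \notin K$, which as above forces $L_{H_j}(q,v) \to -\infty$ — a contradiction; so $\lim_j L_{H_j}(q,v) = -\infty$.

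For (iii), I would set $\mca{L}_j(t) := L_{H_j}(\gamma(t),\dot\gamma(t))$, which is measurable and, by (i), nonincreasing in $j$ for a.e.\ $t$. Property (iv) yields a constant $C$ independent of $q$ with $h_1(p) \ge a_1|p|^2 - C$, whence $\mca{L}_j(t) \le L_{H_1}(\gamma(t),\dot\gamma(t)) \le |\dot\gamma(t)|^2/(4a_1) + C$, which lies in $L^1(S^1)$ since $\gamma \in \Lambda$. If $\gamma(S^1) \subset \pr(K)$, then $\mca{L}_j(t) > \rho_\gamma(t) := \max_{p\in K_{\gamma(t)}}(p\cdot\dot\gamma(t))$ a.e., with $\rho_\gamma \in L^1(S^1)$ by Lemma \ref{properties_of_len_K}(i), and $\mca{L}_j \downarrow \rho_\gamma$ a.e.\ by (ii); dominated convergence then gives $\int_{S^1}\mca{L}_j\,dt \to \int_{S^1}\rho_\gamma\,dt = \len_K(\gamma)$. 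If $\gamma(S^1) \not\subset \pr(K)$, continuity of $\gamma$ and closedness of $\pr(K)$ produce an interval $I$ of positive length with $\gamma(I)\cap\pr(K) = \emptyset$; on $I$ one has $\mca{L}_j \to -\infty$ a.e.\ by (ii) and $\mca{L}_j$ bounded above by an $L^1(I)$ function, so the reverse Fatou lemma gives $\int_I \mca{L}_j\,dt \to -\infty$, while $\int_{S^1\setminus I}\mca{L}_j\,dt$ stays bounded above in $j$; hence $\int_{S^1}\mca{L}_j\,dt \to -\infty = \len_K(\gamma)$.

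The one step I expect to require genuine care is the upper bound $\ell \le \sigma$ in (ii): controlling the maximizers $p_j$ and proving they can accumulate only on $K_q$ is exactly where the specific features of the approximating sequence supplied by Lemma \ref{sequence_of_fiberwise_convex_functions} — fiberwise convexity, strict monotonicity, and the precise pointwise limit — enter the argument; everything else is routine convex analysis and measure theory.
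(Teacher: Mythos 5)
Your proof is correct. Parts (i) and (iii) follow the paper's argument almost exactly: (i) uses a maximizer of the dual of $H_{j+1}$ and the strict monotonicity $H_j < H_{j+1}$, and (iii) is the same dominated-convergence argument together with the split $\int_{S^1} = \int_I + \int_{S^1\setminus I}$ for the $-\infty$ case.

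For (ii), however, your route is genuinely different from the paper's. The paper does not argue directly; it reduces (ii) to a separate abstract statement, Lemma \ref{limit_of_dual}, which it proves by a ``cofinal family'' argument: for $q \in \pr(K)$ it defines a directed class $\mca{H}$ of convex compact perturbations of nonnegative quadratics which are negative on $K_q$, asserts that $(h_j)_j$ is cofinal in $\mca{H}$, and then shows $\inf_{h \in \mca{H}} \max_p(p\cdot v - h(p)) = \max_{p \in K_q} p\cdot v$ by exhibiting, for any $\delta>0$, an explicit nearly-affine test function $h(p) = \ph(p\cdot v)$ achieving the bound up to $\delta$. Your argument instead works directly with the sequence: you take the maximizers $p_j$, show they are confined to a bounded set by comparison with the superlinear $h_1$, extract a convergent subsequence, and use the monotonicity of $(h_k)_k$ together with the pointwise limit (v) to show that the limit point must lie in $K_q$, whence the limsup of $L_{H_j}(q,v)$ is $\le \max_{p\in K_q}p\cdot v$. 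This compactness-of-maximizers argument is more elementary and self-contained; in particular it sidesteps the cofinality claim, which the paper leaves unverified. The trade-off is that the paper's Lemma \ref{limit_of_dual} is stated and proved as an independent fact about arbitrary monotone families of convex compact quadratic perturbations, which is conceptually cleaner if one wants to reuse it, whereas your argument is tailored to the specific approximating sequence. Both are sound.
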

\begin{proof}
(i): For each $q \in \R^n$, 
there exists $p_0 \in T^*_q\R^n$ 
which satisfies 
$L_{H_{j+1}}(q,v) = p_0 \cdot v - H_{j+1}(q, p_0)$. Then
\[ 
L_{H_j}(q,v) = \max_p (p \cdot v - H_j(q,p)) \ge p_0 \cdot v - H_j(q,p_0) > p_0 \cdot v - H_{j+1}(q, p_0) =  L_{H_{j+1}}(q, v). 
\] 

(ii) follows from 
Lemma \ref{limit_of_dual} 
applied to $(H_j|_{T_q^*\R^n})_j$, 
identifying $\R^n$ and $T_q^*\R^n$ 
via the standard Riemannian metric on $\R^n$. 

(iii): 
First, we consider the case $\gamma(S^1) \subset \pr(K)$. 
By Lemma \ref{properties_of_len_K} (i), 
$\rho_\gamma: S^1 \to \R; \, t \mapsto \max_{p \in K_{\gamma(t)}} p \cdot \dot{\gamma}(t)$ is integrable. 
On the other hand, 
$L_{H_1}(\gamma, \dot{\gamma})$ is integrable (since $\dot{\gamma}$ is square-integrable), 
and 
$(L_{H_j}(\gamma, \dot{\gamma}))_j$ is a decreasing sequence of integrable functions, 
which converges to $\rho_\gamma$ pointwise as $j \to \infty$. 
Then, by Lebesgue's dominated convergence theorem, we obtain 
\[ 
\lim_{j \to \infty} \int_{S^1} L_{H_j} (\gamma(t), \dot{\gamma}(t)) \, dt
= 
\int_{S^1} \lim_{j \to \infty} L_{H_j}(\gamma(t), \dot{\gamma}(t))\, dt 
= 
\int_{S^1}  \rho_\gamma(t) \, dt 
= \len_K(\gamma). 
\] 

Next, we consider the case $\gamma(S^1) \not\subset \pr(K)$. 
In this case $I:= \gamma^{-1}(\R^n \setminus \pr(K))$ is a nonempty open set in $S^1$. 
Now consider an obvious inequality 
\[ 
\int_{S^1} L_{H_j}(\gamma(t), \dot{\gamma}(t)) \, dt \le 
\int_{S^1 \setminus I } L_{H_1}(\gamma(t), \dot{\gamma}(t))\, dt + 
\int_{I} L_{H_j}(\gamma(t), \dot{\gamma}(t))\, dt. 
\] 
The first term on the RHS does not depend on $j$, 
and the second term goes to $-\infty$ as $j \to \infty$. 
Thus the LHS goes to $-\infty$. 
\end{proof} 

\begin{lem}\label{limit_of_dual}
Let $K$ be any compact and convex set in $\R^n$, which may be empty. 
Let $(a_j)_{j \ge 1}$ and $(h_j)_{j \ge 1}$ be sequences with the following properties: 
\begin{enumerate} 
\item[(i):]  $(a_j)_j$ is a strictly increasing sequence in $\R_{>0}$. 
\item[(ii):] $\lim_{j \to \infty} a_j  = \infty$. 
\item[(iii):] $(h_j)_j$ is a strictly increasing sequence of convex $C^\infty$-functions on $\R^n$.  
\item[(iv):] For every $j$, there exists $b_j \in \R$ such that $h_j(x)-a_j|x|^2 -b_j$ is compactly supported. 
\item[(v):] $\lim_{j \to \infty} h_j(x) =\begin{cases}\infty&(x \notin K)\\0&(x \in K).\end{cases}$
\end{enumerate} 
Then, for any $x \in \R^n$ 
\[ 
\lim_{j \to \infty}  \big( \max_{y \in \R^n}  (x \cdot y - h_j(y)) \big) =  \begin{cases}  \max_{y \in K} (x \cdot y) &(K \ne \emptyset)  \\ -\infty &(K = \emptyset). \end{cases}
\] 
\end{lem}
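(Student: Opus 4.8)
The plan is to prove the two cases separately, and in each to use the Legendre-dual structure together with conditions (i)--(v). Fix $x \in \R^n$ and set $\ph_j(x) := \max_{y \in \R^n}(x \cdot y - h_j(y))$; note this max is attained since each $h_j$ is a compact perturbation of the coercive function $a_j|y|^2 + b_j$ (so $x \cdot y - h_j(y) \to -\infty$ as $|y| \to \infty$), and $(\ph_j)_j$ is a decreasing sequence by (iii) (the same computation as in Lemma \ref{limit_of_Legendrean_duals} (i)), so $\lim_{j\to\infty}\ph_j(x)$ exists in $[-\infty,\infty)$.

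First I would treat the case $K = \emptyset$. Here (v) says $h_j \to \infty$ pointwise on all of $\R^n$, and I want to upgrade this to a uniform-on-compacts statement strong enough to force $\ph_j(x) \to -\infty$. The key observation is that the maximizer $y_j$ of $x\cdot y - h_j(y)$ stays in a fixed compact set: since $h_1 \le h_j$ and $h_1$ has a finite lower bound (being bounded below by $a_1|y|^2 + b_1 + (\text{min of the compact perturbation})$, or more simply: convex plus coercive, hence bounded below), we get $x \cdot y_j - h_j(y_j) \le x\cdot y_j - h_1(y_j)$, and the latter is $\le$ some constant minus $a_1|y_j|^2 + |x||y_j|$, which is bounded above uniformly in $j$ only if $|y_j|$ is bounded; more carefully, $\ph_j(x) \ge \ph_1(x)$ fails (wrong direction), so instead I bound $|y_j|$ using $\ph_j(x) \le \ph_{j_0}(x)$ for $j \ge j_0$ and the coercivity of $h_{j_0}$: $x\cdot y_j - h_{j_0}(y_j) \le x\cdot y_j - h_j(y_j) = \ph_j(x) \le \ph_{j_0}(x)$, whence $y_j$ lies in the sublevel set $\{y : h_{j_0}(y) \le |x||y| + |\ph_{j_0}(x)|\}$, which is compact. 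On that fixed compact set, by Dini's theorem the increasing sequence of continuous functions $h_j$ converging to the (continuous, constantly $+\infty$) — here I instead argue directly: $\min_{y \in C} h_j(y) \to \infty$ since otherwise a subsequence of minimizers would converge to a point where $\sup_j h_j < \infty$, contradicting (v). Hence $\ph_j(x) = x\cdot y_j - h_j(y_j) \le |x|\,\mathrm{diam}(C) - \min_{y\in C} h_j(y) \to -\infty$.

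Next I would treat the case $K \neq \emptyset$. For the lower bound $\liminf_j \ph_j(x) \ge \max_{y\in K}(x\cdot y)$: pick $y_* \in K$ attaining $\max_{y\in K} x\cdot y$; then $\ph_j(x) \ge x\cdot y_* - h_j(y_*)$, and $h_j(y_*) \to 0$ by (v), giving $\liminf_j \ph_j(x) \ge x\cdot y_*$. For the upper bound $\limsup_j \ph_j(x) \le \max_{y\in K}(x\cdot y)$: let $y_j$ be the maximizer; by the same compactness argument as above (comparing with a fixed $h_{j_0}$) the $y_j$ lie in a fixed compact set $C$, so after passing to a subsequence $y_j \to y_\infty \in C$. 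I claim $y_\infty \in K$: if not, then $\mathrm{dist}(y_\infty, K) > 0$, so $h_j(y_\infty) \to \infty$ by (v), and by local uniform control — again using $h_j \ge h_{j_0}$ and lower semicontinuity / the convexity of $h_j$ — one deduces $h_j(y_j) \to \infty$ as well (e.g.\ for $j$ large $y_j$ is within a fixed small ball $B$ of $y_\infty$ with $B \cap K = \emptyset$, and $\min_{y \in \bar B} h_j(y) \to \infty$ by the minimizer argument of the previous case); but then $\ph_j(x) = x\cdot y_j - h_j(y_j) \to -\infty$, contradicting the already-established lower bound (since $\max_{y\in K} x\cdot y$ is finite). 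Hence $y_\infty \in K$, and then $\ph_j(x) = x\cdot y_j - h_j(y_j) \le x\cdot y_j - h_{j}(y_j)$; using $h_j(y_j) \ge h_1(y_j) \ge \inf h_1 > -\infty$ and $h_j(y_j) \ge -\text{(something)} \to 0$ is too weak, so instead: for any fixed $\ep > 0$ and $j$ large, $h_j(y_j) \ge h_1(y_j) \ge -\ep'$ on $C$ — actually the clean route is $\limsup_j \ph_j(x) \le \limsup_j (x\cdot y_j) - \liminf_j h_j(y_j) \le x\cdot y_\infty - \liminf_j h_j(y_j)$, and since $y_j \to y_\infty \in K$ and $h_j \nearrow$ with $h_j \to 0$ on $K$, for any $\ep$ we have $h_j(y_j) \ge h_{j_0}(y_j) \ge -\ep$ once $j_0$ is large (as $h_{j_0} \to 0 \ge -\ep$ on $K$, hence $> -\ep$ on a neighborhood of $K$ containing $y_j$ for large $j$) and $j \ge j_0$; thus $\liminf_j h_j(y_j) \ge -\ep$ for every $\ep$, i.e.\ $\ge 0$, giving $\limsup_j \ph_j(x) \le x\cdot y_\infty \le \max_{y\in K} x\cdot y$.

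The main obstacle I expect is the bookkeeping around the maximizers $y_j$: establishing that they remain in a fixed compact set (so that the pointwise convergence in (v) can be leveraged) and then controlling $h_j(y_j)$ from below near $K$ and from below (to $+\infty$) away from $K$. All of this is elementary convex analysis — Dini-type arguments, coercivity, and compactness — but one must be careful that the comparison function used for coercivity ($h_{j_0}$ for a fixed but large $j_0$) is chosen after, not before, the relevant estimate, since the sequence is only eventually large. Once the localization of $y_j$ is in hand, both the $K=\emptyset$ and $K\neq\emptyset$ cases follow by short arguments.
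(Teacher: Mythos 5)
Your proposal takes a genuinely different route from the paper's. The paper observes that $(h_j)_j$ is cofinal in a class $\mca{H}$ (respectively $\mca{H}'$ when $K=\emptyset$) of admissible convex test functions, so that the limit equals $\inf_{h\in\mca{H}}\max_y(x\cdot y - h(y))$; for the upper bound with $K\ne\emptyset$ it then exhibits, for each $\delta>0$, a test function of the form $y\mapsto \varphi(x\cdot y)$ with $\varphi$ a one-variable convex function whose Legendre value at $x$ is within $\delta$ of $\max_{y\in K} x\cdot y$, and for $K=\emptyset$ it uses that $\mca{H}'$ is closed under adding arbitrary constants. You instead work directly with the sequence $(h_j)_j$: localize the maximizers $y_j$ of $y\mapsto x\cdot y - h_j(y)$ and combine pointwise convergence with Dini-type compactness arguments. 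This is more elementary (no explicit near-optimal test function), at the price of extra bookkeeping around the $y_j$.

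That bookkeeping is exactly where the write-up goes wrong. You assert $x\cdot y_j - h_{j_0}(y_j) \le x\cdot y_j - h_j(y_j)$ for $j\ge j_0$; since $h_{j_0}<h_j$ this inequality runs the other way, and the claimed sublevel-set bound on $y_j$ does not follow from it. More fundamentally, in the $K=\emptyset$ case no a-priori bound on $(y_j)_j$ is available: if $\ph_j(x)\to -\infty$, which is the very thing to be proved, the maximizers need not stay in any fixed compact set, so one cannot first localize and then derive divergence. The correct route there is by contradiction: assume $\ph_j(x)\ge -M$ for all $j$; then $x\cdot y_j - h_j(y_j)\ge -M$ together with $h_j(y_j)\ge h_1(y_j)\ge a_1|y_j|^2 + b_1 - C_1$ (where $C_1=\sup|h_1 - a_1|\cdot|^2 - b_1|<\infty$ by compact support) forces $|y_j|$ bounded, after which your Dini-type argument gives $\ph_j(x)\to -\infty$, a contradiction. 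In the $K\ne\emptyset$ case you do correctly establish the lower bound $\liminf_j\ph_j(x)\ge \max_{y\in K} x\cdot y>-\infty$ first, and it is this lower bound together with coercivity of $h_1$ (not a comparison of $\ph_j$ with $\ph_{j_0}$) that bounds $|y_j|$; once the maximizers are localized for that reason, the remainder of your upper-bound argument (subsequence $y_{j_k}\to y_\infty$, the dichotomy on whether $y_\infty\in K$, and $\liminf_j h_j(y_j)\ge 0$ via Dini on $K$) is sound.
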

\begin{proof}
First we consider the case $K \ne \emptyset$. 
Let $\mca{H}$ denote the set of $h \in C^\infty(\R^n)$
with the following properties: 
\begin{itemize}
\item[(a):]  $h$ is convex. 
\item[(b):] There exists $Q \in C^\infty(\R^n)$ of the form 
\[
Q(x_1, \ldots, x_n)= \sum_{1\le i, j \le n} a_{ij} x_ix_j + b 
\] 
where $(a_{ij})_{1\le i, j \le n}$ is a non-negative symmetric matrix, 
such that $h(x)- Q(x)$ is compactly supported. 
\item[(c):] $h(x)<0$ for any $x \in K$. 
\end{itemize} 
Then the sequence $(h_j)_j$ is cofinal in $\mca{H}$, 
which implies that for any $x \in \R^n$ 
\[
\lim_{j \to \infty}\big( \max_{y \in \R^n}  (x \cdot y - h_j(y)) \big) = \inf_{h \in \mca{H}} (\max_{y \in \R^n} (x \cdot y- h(y))).
\]
For any $h \in \mca{H}$, there holds
\[ 
\max_{y \in \R^n} (y \cdot x - h(y))\ge
\max_{y \in K} (x \cdot y - h(y))\ge
\max_{y \in K} x \cdot y. 
\] 
Thus $\inf_{h \in \mca{H}} (\max_{y \in \R^n} (x \cdot y - h(y))) \ge \max_{y \in K} x \cdot y$. 
To complete the proof, it is sufficient to prove the opposite inequality, i.e. 
$\inf_{h \in \mca{H}} (\max_{y \in \R^n} (x \cdot y- h(y))) \le  \max_{y \in K} x \cdot y$. 
To prove this, 
it is sufficient to show that for any $\delta>0$
there exists $h \in \mca{H}$ such that 
\[ 
\max_{y \in \R^n} (x \cdot y - h(y)) \le \max_{y \in K} x \cdot y + \delta. 
\] 
When $x=0$ it is easy to see. 
When $x \ne 0$, 
let $I_x:= \{ x \cdot y \mid y \in K \}$ and $M_x:= \max I_x = \max_{y \in K} x\cdot y$. 
It is easy to see that there exists $\ph \in C^\infty(\R)$ with the following properties: 
\begin{itemize} 
\item $\ph$ is convex. 
\item There exist $a>0$ and $b \in \R$ such that $\ph(t)-(at^2+b)$ is compactly supported. 
\item $-\delta  \le \ph(t) < 0$ for any $t \in I_x$. 
\item $\ph'(M_x)=1$. 
\end{itemize} 
Take such $\ph$ and let $h(y):= \ph(x \cdot y)$. 
Then $h \in \mca{H}$, and there holds 
\[ 
\max_{y \in \R^n}(x \cdot y - h(y))
= 
\max_{t \in \R} (t - \ph(t)) 
= 
M_x - \ph(M_x) 
\le 
M_x + \delta. 
\] 
This completes the proof when $K \ne \emptyset$. 

Finally we consider the case $K=\emptyset$. 
Let $\mca{H}'$ denote the set of $h \in C^\infty(\R^n)$ which satisfies conditions (a) and (b) above. 
Then, the sequence $(h_j)_j$ is cofinal in $\mca{H}'$, which implies that for any $x \in \R^n$ 
\[
\lim_{j \to \infty}\big( \max_{y \in \R^n}  (x \cdot y - h_j(y)) \big) = \inf_{h \in \mca{H}'} (\max_{y \in \R^n} (x \cdot y- h(y))).
\]
If $h \in \mca{H}'$ then $h+c \in \mca{H}'$ for any $c \in \R$, thus the RHS is obviously equal to $-\infty$. This completes the proof. 
\end{proof}

\section{Proof of Theorem \ref{SH_and_loop_space_homology}}

The goal of this section is to prove Theorem \ref{SH_and_loop_space_homology}. 
In Section 4.1, we summarize basic properties of Lagrangian action functionals on the free loop space of $\R^n$. 
In Section 4.2 we state Theorem \ref{HF_and_loop_space_homology}, which shows an isomorphism between Hamiltonian Floer homology on $T^*\R^n$ and homology of loop spaces of $\R^n$. 
The proof of Theorem \ref{HF_and_loop_space_homology} occupies Sections 4.3--4.5; 
the plan of the proof of Theorem \ref{HF_and_loop_space_homology} is explained in the last paragraph of Section 4.2. 
Finally, in Section 4.6, we prove Theorem \ref{SH_and_loop_space_homology} by taking a limit of isomorphisms obtained by Theorem \ref{HF_and_loop_space_homology}. 

\subsection{Lagrangian action functional on the loop space}

Consider the following conditions (L1), (L2) for $L \in C^\infty(S^1 \times T\R^n)$:

\begin{itemize} 
\item[(L1):]  
There exist $a \in \R_{>0}$ and $b \in \R$ such that the function on $S^1 \times T\R^n$ 
\[ 
L(t,q,v)-\bigg(\frac{|v|^2}{4a}-a|q|^2+b\bigg) 
\]
is compactly supported. 
\item[(L2):] 
There exists $c\in\R_{>0}$ such that 
$\partial_v^2 L(t,q,v) \ge c$ for any $(t,q,v) \in S^1 \times T\R^n$. 
\end{itemize} 
\begin{rem}
$\partial_v^2L(t,q,v) \ge c$ means that 
the  symmetric matrix 
$(\partial_{v_i}\partial_{v_j}L(t,q,v)-c \delta_{ij})_{1\le i,j \le n}$ 
is nonnegative, where 
$\delta_{ij}=\begin{cases}1&(i=j)\\ 0&(i \ne j).\end{cases}$
\end{rem} 

Recall $\Lambda:= L^{1,2}(S^1, \R^n)$. 
If $L$ satisfies the condition (L1), then one can define the functional
$\mca{S}_L:\Lambda\to\R$ by 
\[ 
\mca{S}_L(\gamma):= \int_{S^1} L(t, \gamma(t), \dot{\gamma}(t)) \, dt. 
\] 

\begin{lem}\label{Lagrangian_action_functional} 
If $L \in C^\infty(S^1 \times T\R^n)$ satisfies (L1) and (L2), 
the functional $\mca{S}_L$ satisfies the following properties: 
\begin{itemize} 
\item[(i):] $\mca{S}_L$ is a Fr\'{e}chet $C^1$-function. 
The differential $d\mca{S}_L$ is given by 
\[ 
d\mca{S}_L(\xi):= \int_{S^1} \partial_q L(t,\gamma(t), \dot{\gamma}(t)) \cdot \xi(t) + \partial_vL(t, \gamma(t), \dot{\gamma}(t)) \cdot \dot{\xi}(t) \,dt
\qquad
(\forall \xi \in \Lambda). 
\] 
Moreover $d\mca{S}_L$ is G\^{a}teaux differentiable. 
\item[(ii):] $\gamma \in \Lambda$ satisfies $d\mca{S}_L(\gamma)=0$ if and only if $\gamma \in C^\infty(S^1, \R^n)$ and satisfies 
\[ 
\partial_qL(t, \gamma(t), \dot{\gamma}(t)) - \partial_t(\partial_vL(t, \gamma(t), \dot{\gamma}(t))) = 0. 
\] 
\item[(iii):] 
For every $\gamma \in \Lambda$, let us define $D\mca{S}_L(\gamma) \in \Lambda$ so that 
\[ 
\langle D\mca{S}_L(\gamma), \xi \rangle_{L^{1,2}} = d\mca{S}_L(\gamma)(\xi)  \qquad (\forall \xi \in \Lambda), 
\] 
where $\langle \, , \, \rangle_{L^{1,2}}$ is defined by 
$\langle f, g \rangle_{L^{1,2}}: = \int_{S^1} f(t) \cdot g(t) + \dot{f}(t) \cdot \dot{g}(t) \, dt$. 
Then the pair $(\mca{S}_L, D \mca{S}_L)$ satisfies the Palais-Smale condition. 
Namely, if a sequence $(x_k)_k$ on $\Lambda$ 
satisfies 
$\sup_k  |\mca{S}_L(x_k)| < \infty$ 
and 
$\lim_{k \to \infty}  d\mca{S}_L(D\mca{S}_L(x_k))=0$
then 
$(x_k)_k$ contains a convergent subsequence. 
\end{itemize} 
\end{lem}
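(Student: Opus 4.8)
The statement is a classical one about Lagrangian action functionals with fiberwise quadratic growth (cf.\ Benci, Abbondandolo--Schwarz), and I would prove the three assertions in turn. For (i), differentiating the function occurring in (L1) and using compactness of its support gives, with a constant $C$ depending only on $L$, the bounds $|L(t,q,v)| \le C(1+|q|^2+|v|^2)$, $|\partial_q L(t,q,v)| \le C(1+|q|)$, $|\partial_v L(t,q,v)| \le C(1+|v|)$ and $|\partial_v^2 L(t,q,v)| \le C$ (and analogous bounds for the higher derivatives). Since every $\gamma \in \Lambda = L^{1,2}(S^1,\R^n)$ is continuous with $\dot\gamma \in L^2$, these bounds make the Nemytskii operators $\gamma \mapsto L(t,\gamma,\dot\gamma) \in L^1$, $\gamma \mapsto \partial_q L(t,\gamma,\dot\gamma) \in L^2$ and $\gamma \mapsto \partial_v L(t,\gamma,\dot\gamma) \in L^2$ well defined and continuous; the usual estimates for Nemytskii operators then yield that $\mca{S}_L$ is Fr\'echet $C^1$ with the differential as stated, and that $d\mca{S}_L$ is G\^ateaux differentiable, its G\^ateaux derivative at $\gamma$ being the bilinear form on $\Lambda$
\[
(\xi,\eta) \longmapsto \int_{S^1} (\partial_q^2 L)(\xi,\eta) + (\partial_q\partial_v L)(\xi,\dot\eta) + (\partial_v\partial_q L)(\dot\xi,\eta) + (\partial_v^2 L)(\dot\xi,\dot\eta)\, dt
\]
(all derivatives of $L$ at $(t,\gamma(t),\dot\gamma(t))$), which is bounded precisely because $|\partial_v^2 L| \le C$. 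The functional need not be $C^2$, which is why only G\^ateaux differentiability of $d\mca{S}_L$ is asserted.

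For (ii), suppose $d\mca{S}_L(\gamma)=0$. Testing the differential against $\xi \in C^\infty(S^1,\R^n)$ shows that $t \mapsto \partial_v L(t,\gamma(t),\dot\gamma(t))$ has weak derivative $t \mapsto \partial_q L(t,\gamma(t),\dot\gamma(t))$, which is in $L^\infty$ since $\gamma$ is bounded; hence $\partial_v L(t,\gamma,\dot\gamma)$ is absolutely continuous in $t$. By (L2), for each $(t,q)$ the map $v \mapsto \partial_v L(t,q,v)$ is a proper local $C^\infty$-diffeomorphism of $\R^n$, hence a $C^\infty$-diffeomorphism onto $\R^n$; thus $\dot\gamma(t)$ is a $C^\infty$-function of $t$, $\gamma(t)$ and $\partial_v L(t,\gamma,\dot\gamma)$, so $\dot\gamma$ is continuous. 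Bootstrapping this gives $\gamma \in C^\infty$, after which an integration by parts turns the weak equation into the classical Euler--Lagrange equation in the displayed form.

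For (iii), I would write $L = L_0 + P$ with $L_0(q,v) = \frac{|v|^2}{4a}-a|q|^2+b$ and $P \in C_c^\infty(S^1 \times T\R^n)$, so that $\mca{S}_L = \mb{q} + \mca{R}$, where $\mb{q}(\gamma) = \int_{S^1}\frac{|\dot\gamma|^2}{4a}-a|\gamma|^2\,dt$ is a continuous quadratic form on $\Lambda$ and $\mca{R}(\gamma) = \int_{S^1}(P(t,\gamma,\dot\gamma)+b)\,dt$. Since $\partial_q P$ and $\partial_v P$ are bounded, one gets $|d\mca{R}(\gamma)(\xi)| \le C\|\xi\|_{L^{1,2}}$ with $C$ independent of $\gamma$, so the $L^{1,2}$-gradient $D\mca{R}$ is bounded on all of $\Lambda$. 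On the other hand, since in the Fourier basis $\mb{q}(\sum_m c_m e^{2\pi i m t}) = \sum_m \frac{\pi^2 m^2 - a^2}{a}|c_m|^2$, the self-adjoint operator $T$ on $\Lambda$ defined by $\langle T\gamma,\xi\rangle_{L^{1,2}} = d\mb{q}(\gamma)(\xi)$ is invertible --- here one uses that $\mb{q}$ is nondegenerate, equivalently $a \notin \pi\Z$, which holds for the Lagrangians arising later in this paper since there $a$ is the constant $A$ of (H1). Now if $(x_k)_k$ satisfies the Palais--Smale hypotheses then $\|D\mca{S}_L(x_k)\|_{L^{1,2}}\to 0$, i.e.\ $\|Tx_k + D\mca{R}(x_k)\|_{L^{1,2}}\to 0$, whence $\|x_k\|_{L^{1,2}} \le \|T^{-1}\|\big(\|D\mca{S}_L(x_k)\|_{L^{1,2}} + \|D\mca{R}(x_k)\|_{L^{1,2}}\big)$ is bounded; passing to a subsequence, $x_k \rightharpoonup x$ in $L^{1,2}$ and $x_k \to x$ uniformly on $S^1$.

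To promote this to strong convergence I would use (L2). Since $\xi \mapsto d\mca{S}_L(x)(\xi)$ is bounded linear and $\|D\mca{S}_L(x_k)\|_{L^{1,2}}\to 0$, we get $d\mca{S}_L(x_k)(x_k-x) - d\mca{S}_L(x)(x_k-x) \to 0$; expanding this difference and using $x_k \to x$ in $C^0$, the $L^2$-boundedness of $\partial_q L(t,x_k,\dot x_k)$, and $\partial_v L(t,x_k,\dot x) \to \partial_v L(t,x,\dot x)$ in $L^2$, one is left with
\[
\int_{S^1}\big(\partial_v L(t,x_k,\dot x_k) - \partial_v L(t,x_k,\dot x)\big)\cdot(\dot x_k - \dot x)\, dt \longrightarrow 0,
\]
and by (L2) the integrand is $\ge c\,|\dot x_k - \dot x|^2$, so $\dot x_k \to \dot x$ in $L^2$ and hence $x_k \to x$ in $\Lambda$; this proves the Palais--Smale condition. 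The main obstacle is item (iii): the $L^{1,2}$-bound on Palais--Smale sequences has to combine the precise quadratic model of (L1) --- in particular nondegeneracy of $\mb{q}$ --- with the uniform bound on the compactly supported perturbation, while the passage from weak to strong convergence relies essentially on the uniform convexity (L2) through the monotonicity inequality $(\partial_v L(t,q,v_1)-\partial_v L(t,q,v_2))\cdot(v_1-v_2)\ge c|v_1-v_2|^2$. Items (i) and (ii) become routine once the growth bounds from (L1) are recorded.
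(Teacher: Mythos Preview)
Your argument is correct, and since the paper's own proof consists entirely of references to \cite{Abbondandolo_Schwarz_pseudo_gradient} and \cite{JSG}, you have effectively written out what those citations are meant to supply. Parts (i) and (ii) are the standard Nemytskii/bootstrap arguments, and your treatment matches Proposition~3.1 of \cite{Abbondandolo_Schwarz_pseudo_gradient}. For (iii) your route is somewhat more direct than the one indicated by the paper: rather than invoking the general Palais--Smale result of \cite{Abbondandolo_Schwarz_pseudo_gradient} (formulated for Tonelli-type Lagrangians) and then adapting it as in \cite{JSG}, you exploit the very specific form of (L1) by writing $\mca{S}_L = \mb{q} + \mca{R}$ with $\mb{q}$ quadratic and $D\mca{R}$ globally bounded, and read off $L^{1,2}$-boundedness of Palais--Smale sequences from the invertibility of the operator $T$ representing $d\mb{q}$. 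The passage from weak to strong convergence via the monotonicity inequality from (L2) is exactly the standard step.

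One point deserves emphasis, and you already flag it: your boundedness argument needs $a \notin \pi\Z$, which is \emph{not} part of (L1) as written. This is not a defect of your proof but of the statement: if $a = \pi m$ then the loops $\gamma_k(t)=k e_1\cos(2\pi m t)$ are, for $k$ large, critical points of $\mca{S}_L$ (since $(\gamma_k,\dot\gamma_k)$ eventually leaves $\supp P$) all at the same action level, so the Palais--Smale condition genuinely fails. Your remark that in every use of the lemma the constant $a$ equals the $A$ of (H1), hence lies outside $\pi\Z$, is precisely the missing hypothesis; the cited Corollary~3.4 of \cite{JSG} carries this nondegeneracy built in through its own version of (L1).
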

\begin{proof}
(i) and (ii) follow from Proposition 3.1 (i), (ii) of \cite{Abbondandolo_Schwarz_pseudo_gradient}. 
(iii) is proved as Corollary 3.4 of \cite{JSG}, 
which is based on Proposition 3.3 of \cite{Abbondandolo_Schwarz_pseudo_gradient}. 
\end{proof} 

Suppose that $L \in C^\infty(S^1 \times T\R^n)$ satisfies (L1) and (L2). 
Let $\mca{P}(L)$ denote the set of critical points of $\mca{S}_L$, namely
\[ 
\mca{P}(L):= \{\gamma \in \Lambda \mid d\mca{S}_L(\gamma)=0\}. 
\] 
For any $\gamma \in \mca{P}(L)$, 
the second G\^{a}teaux differential $d^2\mca{S}_L(\gamma)$ 
is Fredholm and has finite Morse index (see Proposition 3.1 (iii) of \cite{Abbondandolo_Schwarz_pseudo_gradient}). 
The Morse index is denoted by $\ind_\Morse(\gamma)$. 
We say that $\gamma$ is \textit{nondegenerate} 
if $0$ is not an eigenvalue of $d^2\mca{S}_L(\gamma)$. 
Let us introduce the following condition for $L \in C^\infty(S^1 \times T\R^n)$: 
\begin{itemize}
\item[(L0):] 
Every $\gamma \in \mca{P}(L)$ is nondegenerate. 
\end{itemize} 

\subsection{Isomorphism between Hamiltonian Floer homology and loop space homology} 

Let us consider the following condition for $H \in C^\infty(S^1\times T^*\R^n)$: 
\begin{itemize}
\item[(H2):] 
There exists $c \in \R_{>0}$ such that 
$\partial_p^2H(t,q,p)\ge c$ for any $(t,q,p) \in S^1\times T^*\R^n$.
\end{itemize} 
For any $H \in C^\infty(S^1\times T^*\R^n)$ which satisfies (H1) and (H2), 
its Legendre dual $L_H \in C^\infty(S^1 \times T\R^n)$ is defined by 
\[ 
L_H(t,q,v):= \max_{p \in T_q^*\R^n}   (p \cdot v - H(t,q,p)) \qquad (t \in S^1, \, q \in \R^n, \, v \in T_q\R^n). 
\] 

\begin{lem} \label{properties_of_L_H}
\begin{enumerate} 
\item[(i):] 
If $H$ satisfies (H1) and (H2), then $L_H$ satisfies (L1) and (L2). 
Moreover, the map 
\[ 
\mca{P}(H) \to \mca{P}(L_H); \, x \mapsto \gamma_x:= \pr \circ x 
\] 
is a bijection, and the inverse map is 
\[ 
\mca{P}(L_H)  \to \mca{P}(H); \, \gamma \mapsto (\gamma, p_\gamma)
\] 
where $p_\gamma$ is characterized by 
\[ 
L_H(t, \gamma(t), \dot{\gamma}(t)) = 
p_\gamma(t) \cdot \dot{\gamma}(t) - H(t, \gamma(t), \dot{\gamma}(t))  \qquad(\forall t \in S^1). 
\] 
\item[(ii):] 
In the situation of (i), 
for any $x \in \mca{P}(H)$, $\gamma_x$ is nondegenerate if and only if $x$ is nondegenerate. 
Moreover, for any such $x$, there holds 
$\ind_\Morse(\gamma_x)= \ind_\CZ(x)$. 
\end{enumerate} 
\end{lem}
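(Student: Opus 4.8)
The plan is to treat this as an instance of the classical Legendre duality between Hamiltonian and Lagrangian dynamics, so that everything reduces to fiberwise convex analysis on $T^*_q\R^n$ together with the Morse-index/Conley--Zehnder-index theorem of Abbondandolo--Schwarz. Part (i) splits into three claims: $L_H$ satisfies (L1); $L_H$ satisfies (L2); and $x\mapsto\gamma_x=\pr\circ x$ is a bijection $\mca{P}(H)\to\mca{P}(L_H)$ with the stated inverse. Part (ii) then amounts to comparing the linearized dynamics along $x$ and along $\gamma_x$, and comparing two integer indices.

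First I would set up the fiberwise Legendre transform. By (H2) the function $p\mapsto p\cdot v-H(t,q,p)$ is strictly concave, and since (H1) forces a lower bound $H(t,q,p)\ge A|p|^2-M$ it is coercive; hence $L_H(t,q,v)$ is a maximum, attained at a unique point $p^*=p^*(t,q,v)$ characterized by $\partial_pH(t,q,p^*)=v$. Because $\partial_p^2H$ is everywhere invertible, the implicit function theorem makes $p^*$ of class $C^\infty$ in $(t,q,v)$, hence $L_H$ is smooth, and differentiating gives the envelope identities $\partial_vL_H(t,q,v)=p^*$ and $\partial_qL_H(t,q,v)=-\partial_qH(t,q,p^*)$. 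For (L2): $\partial_v^2L_H(t,q,v)=\bigl(\partial_p^2H(t,q,p^*)\bigr)^{-1}$, and $\partial_p^2H$ is bounded above (it equals $2A\,\mathrm{Id}$ outside a compact set by (H1) and is continuous), so $\partial_v^2L_H$ is bounded below by a positive constant. For (L1): using coercivity together with the global bound $|\partial_pH(t,q,p)|\le 2A|p|+C'$ (valid because $H$ is quadratic off a compact set), one checks that once $|v|$ is large the maximizer $p^*(t,q,v)$ is forced into the region where $H$ coincides with its quadratic model, whence $p^*=v/(2A)$ and $L_H(t,q,v)=\frac{|v|^2}{4A}-A|q|^2-B$; the same value is attained whenever $q$ lies outside the $q$-projection of the support of $H-(\text{quadratic})$. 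Thus $L_H-\bigl(\frac{|v|^2}{4A}-A|q|^2-B\bigr)$ is compactly supported, which is (L1) with $a=A$, $b=-B$.

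Next I would establish the bijection. With the sign convention $X_h=\sum_i\partial_{p_i}h\,\partial_{q_i}-\partial_{q_i}h\,\partial_{p_i}$, writing $x=(\gamma,\eta)\in\mca{P}(H)$ Hamilton's equations read $\dot\gamma=\partial_pH(t,\gamma,\eta)$ and $\dot\eta=-\partial_qH(t,\gamma,\eta)$. The first equation says exactly that $\eta(t)=p^*(t,\gamma(t),\dot\gamma(t))$, i.e. $\eta=p_\gamma$ in the notation of the statement; substituting this and the envelope identities into the second equation gives $\partial_t\bigl(\partial_vL_H(t,\gamma,\dot\gamma)\bigr)=\dot\eta=-\partial_qH(t,\gamma,\eta)=\partial_qL_H(t,\gamma,\dot\gamma)$, which by Lemma \ref{Lagrangian_action_functional}(ii) means $\gamma\in\mca{P}(L_H)$. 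Conversely, if $\gamma\in\mca{P}(L_H)$ then $\gamma$ is automatically $C^\infty$ by Lemma \ref{Lagrangian_action_functional}(ii), and running the same chain of equalities in reverse shows $(\gamma,p_\gamma)\in\mca{P}(H)$; since $\pr\circ(\gamma,p_\gamma)=\gamma$ and $(\gamma_x,p_{\gamma_x})=x$ for every $x\in\mca{P}(H)$, the two maps are mutually inverse. This establishes (i).

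Finally, part (ii), which I expect to be the real obstacle. By Proposition 3.1(iii) of \cite{Abbondandolo_Schwarz_pseudo_gradient} the Hessian $d^2\mca{S}_{L_H}(\gamma_x)$ is a self-adjoint Fredholm form, and its kernel is the space of $1$-periodic Jacobi fields along $\gamma_x$ (solutions of the linearized Euler--Lagrange equation); via the Legendre correspondence, which conjugates the linearized dynamics, these are in bijection with the $1$-periodic solutions of the linearized Hamilton equation along $x$, i.e. with $\ker\bigl((d\ph^1_H)_{x(0)}-\id\bigr)$. Hence $\gamma_x$ is nondegenerate iff $x$ is, so $x\leftrightarrow\gamma_x$ restricts to a bijection between nondegenerate orbits, and (H0) holds iff (L0) holds for $L_H$. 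The remaining assertion $\ind_\Morse(\gamma_x)=\ind_\CZ(x)$ is the one genuinely nontrivial input: it is precisely the index comparison for fiberwise convex, quadratic-at-infinity Hamiltonians established by Abbondandolo--Schwarz, and I would simply invoke \cite{Abbondandolo_Schwarz_Floer_homology} (after matching it to the conventions fixed in Section 4.1) rather than reprove it. If a self-contained argument were wanted, one would connect $L_H$ through a path of Lagrangians satisfying (L1)--(L2) to a reference Lagrangian whose two indices can be read off directly, and check that both indices change by the same amount at each generic crossing --- a spectral-flow / Robbin--Salamon computation. This index comparison is where essentially all the work lies.
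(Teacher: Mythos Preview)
Your proposal is correct and follows essentially the same approach as the paper: part (i) is the ``direct computation'' the paper alludes to (you have simply spelled it out), and part (ii) is reduced to a cited index-comparison theorem. The only difference is the reference chosen for $\ind_\Morse(\gamma_x)=\ind_\CZ(x)$: the paper invokes Theorem~1 of \cite{Long}, Section~7.3, whereas you cite \cite{Abbondandolo_Schwarz_Floer_homology}; both are valid sources for this identity under the hypotheses (H1)--(H2).
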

\begin{proof}
(i) can be checked by direct computations. 
(ii) follows from Theorem 1 of \cite{Long} Section 7.3. 
\end{proof} 

\begin{rem} 
Lemma \ref{properties_of_L_H} (ii) extends to Hamiltonians on arbitrary manifolds, 
at least when $H$ is a ``classical'' Hamiltonian (i.e. the sum of the kinetic energy and a potential function on the base) on a Riemannian manifold $M$, although one needs a correction term
if the vector bundle $\gamma_x^*TM$ is not oriented. See Theorem 1.2 and Lemma 2.1 of \cite{Weber}. 
\end{rem} 

Now let us state the isomorphism between Hamiltonian Floer homology on $T^*\R^n$ and homology of loop spaces of $\R^n$: 

\begin{thm}\label{HF_and_loop_space_homology} 
For any $H \in C^\infty(S^1\times T^*\R^n)$ which satisfies (H0), (H1), (H2),
and any real numbers $a<b$, 
one can define an isomorphism 
\[ 
\HF^{[a,b)}_*(H) \cong H_*( \mca{S}_{L_H}^{-1}(\R_{<b}), \mca{S}_{L_H}^{-1}(\R_{<a}))
\]
so that the following diagram commutes: 
\begin{equation}\label{Haba'b'}
\xymatrix{
\HF^{[a,b)}_*(H) \ar[r] \ar[d]_-{\cong} & \HF^{[a',b')}_*(H) \ar[d]^-{\cong} \\
H_*(\mca{S}_{L_H}^{-1}(\R_{<b}), \mca{S}_{L_H}^{-1}(\R_{<a}))
\ar[r] & H_*(\mca{S}_{L_H}^{-1}(\R_{<b'}), \mca{S}_{L_H}^{-1}(\R_{<a'}))
}
\end{equation}
where $a \le a'$ and $b \le b'$, 
\begin{equation}\label{HH'ab}
\xymatrix{
\HF^{[a,b)}_*(H) \ar[r] \ar[d]_-{\cong} & \HF^{[a,b)}_*(H') \ar[d]^-{\cong} \\
H_*(\mca{S}_{L_H}^{-1}(\R_{<b}), \mca{S}_{L_H}^{-1}(\R_{<a}))
\ar[r] & H_*(\mca{S}_{L_{H'}}^{-1}(\R_{<b}), \mca{S}_{L_{H'}}^{-1}(\R_{<a}))
}
\end{equation}
where $H(t,q,p) < H'(t,q,p) \, (\forall (t,q,p)\in S^1 \times T^*\R^n)$. 
\end{thm}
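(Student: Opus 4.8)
The plan is to construct the isomorphism in Theorem~\ref{HF_and_loop_space_homology} via the ``hybrid moduli space'' method of Abbondandolo--Schwarz~\cite{Abbondandolo_Schwarz_Floer_homology}, adapted to the filtered setting on $T^*\R^n$ with Hamiltonians that are compact perturbations of convex quadratics. First I would recall the Morse-theoretic side: by Lemma~\ref{Lagrangian_action_functional} the action functional $\mca{S}_{L_H}$ on $\Lambda = L^{1,2}(S^1,\R^n)$ is a Fr\'echet $C^1$-function satisfying the Palais--Smale condition, with finitely many critical points (since $\mca{P}(L_H)$ is in bijection with the finite set $\mca{P}(H)$ by Lemma~\ref{properties_of_L_H}(i)), all nondegenerate under (L0), and with finite Morse index equal to $\ind_{\CZ}$ of the corresponding Hamiltonian orbit by Lemma~\ref{properties_of_L_H}(ii). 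One needs to equip $\Lambda$ with a smooth pseudo-gradient vector field for $\mca{S}_{L_H}$ (the negative $L^{1,2}$-gradient $-D\mca{S}_{L_H}$ is only $C^0$, so one invokes the pseudo-gradient construction of \cite{Abbondandolo_Schwarz_pseudo_gradient}) whose unstable manifolds are finite-dimensional and whose flow is forward-complete, and check that the resulting Morse complex computes the singular homology of the sublevel sets $\mca{S}_{L_H}^{-1}(\R_{<c})$. Since the functional has quadratic growth at infinity controlled by (L1), each sublevel set deformation-retracts appropriately and the Morse homology of the pair $(\mca{S}_{L_H}^{-1}(\R_{<b}), \mca{S}_{L_H}^{-1}(\R_{<a}))$ is isomorphic to $H_*$ of that pair. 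Crucially, since $\mca{A}_H(x) = \mca{S}_{L_H}(\gamma_x)$ for $x\in\mca{P}(H)$ (a direct computation from the Legendre duality), the action filtration on the Floer side matches the sublevel filtration on the Morse side generator by generator.

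Next I would set up the chain-level comparison map. Following \cite{Abbondandolo_Schwarz_Floer_homology}, for $x\in\mca{P}(H)$ (a Floer generator, i.e. a $1$-periodic Hamiltonian orbit) and $\gamma\in\mca{P}(L_H)$ (a Morse generator, i.e. a critical loop), one counts elements of a hybrid moduli space $\mca{M}(x,\gamma)$ consisting of half-cylinders $u:(-\infty,0]\times S^1 \to T^*\R^n$ solving the Floer equation $\partial_s u - J_t(\partial_t u - X_{H_t}(u)) = 0$ with $\lim_{s\to-\infty}u_s = x$, whose boundary loop $u_0 = u(0,\cdot)$ lies on the unstable manifold $W^u(\gamma)$ of the negative pseudo-gradient flow of $\mca{S}_{L_H}$ and flows to $\gamma$. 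A compactness statement (using Lemma~\ref{C0_HF_1}-type $C^0$-bounds for the Floer half-cylinders when $J$ is $C^0$-close to $J_\std$, together with the Palais--Smale property on the Morse side and the energy identity relating $\mca{A}_H(x)$ to $\mca{S}_{L_H}(u_0)$ to $\mca{S}_{L_H}(\gamma)$) and a transversality statement for generic choices give that $\mca{M}(x,\gamma)$ is a compact $0$-manifold when $\ind_\CZ(x) = \ind_\Morse(\gamma)$. The key point for the filtration is the monotonicity inequality $\mca{A}_H(x) \ge \mca{S}_{L_H}(u_0) \ge \mca{S}_{L_H}(\gamma)$ along any hybrid trajectory, which guarantees that the comparison map $\Theta$ sending $x$ to $\sum \#_2\mca{M}(x,\gamma)\cdot\gamma$ is filtration-decreasing, hence descends to maps $\CF^{[a,b)}_*(H) \to \Morse^{[a,b)}_*(\mca{S}_{L_H})$ and thence to the relative homologies. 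One then proves $\Theta$ is a chain map and a quasi-isomorphism; the standard argument is to run the construction in both directions (also counting half-cylinders on $[0,\infty)\times S^1$ with the loop asymptotics reversed) and check that the two compositions are chain-homotopic to the identity via further hybrid moduli spaces with an extra length parameter, all compatibly with the action/sublevel filtrations. This yields the asserted isomorphism $\HF^{[a,b)}_*(H)\cong H_*(\mca{S}_{L_H}^{-1}(\R_{<b}),\mca{S}_{L_H}^{-1}(\R_{<a}))$.

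Finally I would verify the two commuting diagrams. The square \eqref{Haba'b'} for $a\le a'$, $b\le b'$ commutes because both the Floer continuation-of-window maps and the Morse inclusion-of-sublevel maps are induced by the identity on the underlying complexes (no moduli spaces change), and $\Theta$ is defined at chain level compatibly with these inclusions. For the square \eqref{HH'ab}, where $H < H'$, the left vertical arrow is the monotonicity map built from counting solutions of a $s$-dependent Floer equation interpolating from $H$ to $H'$ with $\partial_s H_{s,t}\ge 0$; the right vertical arrow is the map on sublevel relative homologies induced by the inclusion $\mca{S}_{L_{H'}}^{-1}(\R_{<c})\hookrightarrow\mca{S}_{L_H}^{-1}(\R_{<c})$, which is valid because $H<H'$ forces $L_H > L_{H'}$ pointwise hence $\mca{S}_{L_H} > \mca{S}_{L_{H'}}$ on $\Lambda$. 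Commutativity follows from a gluing/homotopy argument on hybrid moduli spaces that mix the monotone Floer continuation data with the hybrid boundary condition; this is the standard ``functoriality of the Abbondandolo--Schwarz isomorphism'' argument, and in the filtered setting one checks that all the relevant energy inequalities point the right way so that the homotopies stay within the prescribed action windows.

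I expect the main obstacle to be the analytic package on the loop-space side combined with the filtration bookkeeping: namely, constructing a genuinely smooth, forward-complete pseudo-gradient for the only-$C^1$ functional $\mca{S}_{L_H}$ whose unstable manifolds have the correct (finite) dimension and behave well under the hybrid boundary condition, and then proving the compactness of the hybrid moduli spaces with the sharp monotonicity inequalities needed to keep the comparison map and all chain homotopies filtration-decreasing. The $C^0$-bounds for Floer half-cylinders (needing $J$ close to $J_\std$ as in Lemmas~\ref{C0_HF_1} and \ref{C0_HF_2}) interact delicately with the requirement that the pseudo-gradient flow on $\Lambda$ be complete and that no breaking occurs at intermediate action levels; getting all these estimates to cohere — while the class of admissible Hamiltonians here (fiberwise strictly convex compact perturbations of $Q_{a_j}+b_j$) is slightly different from the one in \cite{Abbondandolo_Schwarz_Floer_homology} — is where the real work lies.
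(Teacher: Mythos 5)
Your overall plan matches the paper's approach — Morse theory for $\mca{S}_{L_H}$ on $\Lambda$, hybrid moduli spaces \`a la Abbondandolo--Schwarz, and the two commuting squares handled by sublevel-set inclusions and continuation/homotopy arguments respectively — but there is a concrete gap in the filtration argument that makes the construction, as you have set it up, fail.

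You take the chain map in the direction Floer $\to$ Morse, counting $u:(-\infty,0]\times S^1\to T^*\R^n$ with $\lim_{s\to-\infty}u_s=x$ and $\pr\circ u_0\in W^u(\gamma)$, and you claim the filtration is preserved because $\mca{A}_H(x)\ge\mca{S}_{L_H}(\pr\circ u_0)\ge\mca{S}_{L_H}(\gamma)$. Neither inequality is correct, and no chain relating $\mca{A}_H(x)$ and $\mca{S}_{L_H}(\gamma)$ is available in this direction. The Floer equation on $(-\infty,0]$ with $u_{-\infty}=x$ gives $\mca{A}_H(x)\ge\mca{A}_H(u_0)$; Legendre duality always gives $\mca{S}_{L_H}(\pr\circ u_0)\ge\mca{A}_H(u_0)$, never the reverse; and $\pr\circ u_0\in W^u(\gamma)$ gives $\mca{S}_{L_H}(\gamma)\ge\mca{S}_{L_H}(\pr\circ u_0)$ (the pseudo-gradient flow decreases $\mca{S}_{L_H}$ forward in time, so values on $W^u(\gamma)$ are $\le\mca{S}_{L_H}(\gamma)$, not $\ge$ as you wrote — and note that being in $W^u(\gamma)$ and ``flowing to $\gamma$'' forward in time are contradictory). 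You end up with $\mca{A}_H(x)\ge\mca{A}_H(u_0)$ and $\mca{S}_{L_H}(\gamma)\ge\mca{A}_H(u_0)$, which bounds both by $\mca{A}_H(u_0)$ from above but says nothing about their relative order. Since the Legendre inequality always puts the Lagrangian action on the larger side, a moduli-count map $\CF\to\CM$ cannot be filtration-compatible this way; the filtered statement forces the chain map to go $\CM\to\CF$.

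The paper's construction (Section 4.4, Lemma \ref{Action_hybrid_1}) goes exactly the other way: $u:\R_{\ge0}\times S^1$, $\pr\circ u_0\in W^u(\gamma)$, $\lim_{s\to\infty}u_s=x$, so that $\mca{S}_{L_H}(\gamma)\ge\mca{S}_{L_H}(\pr\circ u_0)\ge\mca{A}_H(u_0)\ge\mca{A}_H(x)$ and $\Psi:\CM^{[a,b)}_*(L_H)\to\CF^{[a,b)}_*(H)$ is well-defined. Moreover, you do not need an inverse map plus chain homotopies to see $\Psi$ is an isomorphism: by Lemma \ref{Action_hybrid_1}(ii), when the two actions agree the moduli space is a single transverse point exactly when $\gamma$ and $x$ correspond under the bijection of Lemma \ref{properties_of_L_H}(i); ordering generators by action makes $\Psi$ a triangular matrix with ones on the diagonal, hence a chain-level isomorphism. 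Your ``two-sided construction with homotopies'' is both unnecessary and, because of the filtration obstruction just described, cannot be carried out with moduli counts in the reverse direction. Your discussion of the two commuting squares is otherwise fine and matches the paper's Sections 4.4--4.5.
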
 
\begin{rem} 
Commutative diagrams (\ref{Haba'b'}) and (\ref{HH'ab}) are special cases of the following commutative diagram: 
\begin{equation}\label{HH'aba'b'} 
\xymatrix{
\HF^{[a,b)}_*(H) \ar[r] \ar[d]_-{\cong} & \HF^{[a',b')}_*(H') \ar[d]^-{\cong} \\
H_*(\mca{S}_{L_H}^{-1}(\R_{<b}), \mca{S}_{L_H}^{-1}(\R_{<a}))
\ar[r] & H_*(\mca{S}_{L_{H'}}^{-1}(\R_{<b'}), \mca{S}_{L_{H'}}^{-1}(\R_{<a'})), 
}
\end{equation}
where $a \le a'$, $b \le b'$ and $H(t,q,p) < H'(t,q,p)\, (\forall (t,q,p) \in S^1 \times T^*\R^n)$. 
\end{rem}

The proof of Theorem \ref{HF_and_loop_space_homology}, which follows the arguments in \cite{Abbondandolo_Schwarz_Floer_homology} and \cite{JSG}, occupies Sections 4.3--4.5. 
In Section 4.3 we recall the construction of Morse complex of Lagrangian action functionals.
In Section 4.4 we explain a chain-level construction of the isomorphism in Theorem \ref{HF_and_loop_space_homology} and check the commutativity of the diagram (\ref{Haba'b'}). 
In Section 4.5 we prove the commutativity of the diagram (\ref{HH'ab}). 

\subsection{Morse theory for Lagrangian action functionals} 

Suppose that $L \in C^\infty(S^1\times T\R^n)$ satisfies (L0), (L1) and (L2). 
The goal of this subsection is to recall the construction of the Morse complex of $\mca{S}_L$. 

For each $k \in \Z_{\ge 0}$, 
let $\CM_k(L)$ denote the free $\Z/2$-module generated over 
\[
\{\gamma \in \mca{P}(L) \mid \ind_\Morse(\gamma)=k\}.
\] 
To define the boundary operator we need the following lemma. 
For definitions of 
``Morse vector field'' and 
``Morse-Smale condition'', 
see Section 2 of \cite{Abbondandolo_Schwarz_pseudo_gradient}. 
In the next lemma, 
$\Lambda=L^{1,2}(S^1, \R^n)$
is equipped with a natural structure of a Hilbert manifold. 

\begin{lem}\label{pseudo_gradient}
If $L \in C^\infty(S^1 \times T\R^n)$ 
satisfies (L0), (L1), (L2), 
there exists a smooth vector field $X$ on $\Lambda$ which satisfies the following conditions:
\begin{enumerate}
\item[(i):] $X$ is complete. 
\item[(ii):] $\mca{S}_L$ is a Lyapunov function for $X$. Namely, $d\mca{S}_L(X(\gamma))<0$ if $X(\gamma)\ne 0$. 
\item[(iii):] $X$ is a Morse vector field. $X(\gamma)=0$ if and only if $\gamma \in \mca{P}(L)$, 
and the Morse index of $X$ at $\gamma$ is equal
to the Morse index of $\gamma$ as a critical point of $\mca{S}_L$. 
\item[(iv):] The pair $(\mca{S}_L, X)$ satisfies the Palais-Smale condition.
\item[(v):] $X$ satisfies the Morse-Smale condition up to every order. 
\end{enumerate} 
\end{lem}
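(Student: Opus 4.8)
The plan is to construct $X$ by modifying the negative $L^{1,2}$-gradient $-D\mca{S}_L$ supplied by Lemma \ref{Lagrangian_action_functional}. That field already vanishes exactly on $\mca{P}(L)$ and satisfies the Palais--Smale condition, but it fails the lemma in three ways: it need not be smooth (by Lemma \ref{Lagrangian_action_functional} the differential $d\mca{S}_L$ is only G\^ateaux differentiable, so $\mca{S}_L$ is $C^1$ but not $C^2$), its flow need not be complete (by (L1) the functional behaves at infinity like $\gamma \mapsto \frac{1}{4a}\|\dot\gamma\|_{L^2}^2 - a\|\gamma\|_{L^2}^2 + b$, which is indefinite, so sublevel sets of $\mca{S}_L$ are unbounded), and it need not be Morse--Smale. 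First I would record that, by condition (L0) and Lemma \ref{Lagrangian_action_functional} (iii), $\mca{P}(L)$ is a locally finite subset of $\Lambda$, and fix a locally finite open cover of $\Lambda$ in which each $\gamma \in \mca{P}(L)$ has a bounded neighborhood $W_\gamma$ disjoint from all the others.

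Near each $\gamma \in \mca{P}(L)$, let $A_\gamma$ be the bounded self-adjoint operator on $\Lambda$ representing the Hessian, $\langle A_\gamma\xi,\eta\rangle_{L^{1,2}} = d^2\mca{S}_L(\gamma)[\xi,\eta]$; it is a compact perturbation of $\id$ and, by (L0), invertible, so the linear vector field $\xi \mapsto -A_\gamma\xi$, written in exponential coordinates centered at $\gamma$, is smooth and hyperbolic, with stable and unstable subspaces the positive and negative eigenspaces of $A_\gamma$, the latter of dimension $\ind_\Morse(\gamma)$. Using the estimates of \cite{Abbondandolo_Schwarz_pseudo_gradient} controlling $d\mca{S}_L$ near $\gamma$ by the Hessian (this is where the absence of a genuine $C^2$ structure must be handled), one checks $d\mca{S}_L(-A_\gamma\xi) < 0$ for small $\xi \ne 0$, so this linear model is Lyapunov for $\mca{S}_L$ near $\gamma$ and is a Morse vector field there with index $\ind_\Morse(\gamma)$. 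Away from $\mca{P}(L)$ the field $-D\mca{S}_L$ is continuous and locally Lipschitz, so I would approximate it there in the local $C^0$-topology by a smooth vector field, patch it to the local hyperbolic models by a partition of unity subordinate to the chosen cover, and keep the approximation close enough that $d\mca{S}_L(X_0) < 0$ wherever $X_0 \ne 0$ (property (ii)); proportional closeness to $-D\mca{S}_L$ also preserves Palais--Smale, since if $d\mca{S}_L(X_0) \to 0$ along an action-bounded sequence then $d\mca{S}_L(D\mca{S}_L(\cdot)) = \|D\mca{S}_L(\cdot)\|_{L^{1,2}}^2 \to 0$, to which Lemma \ref{Lagrangian_action_functional} (iii) applies. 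This yields a smooth $X_0$ with (ii), (iii), (iv).

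To obtain completeness I would set $X := \psi\,X_0$ for a positive smooth $\psi$ with $\psi\,\|X_0\|_{L^{1,2}} \le 1$ (e.g. a smoothing of $\min\{1,\|X_0\|_{L^{1,2}}^{-1}\}$); since $\Lambda$ is a complete Hilbert space, the flow of the bounded smooth field $X$ is defined for all time, giving (i). Multiplication by the positive function $\psi$ leaves the zero set unchanged, preserves the Lyapunov property, and near each $\gamma$ is a smooth time reparametrization of the hyperbolic model, hence preserves the Morse index, so (ii) and (iii) survive; Palais--Smale also survives, because on the bounded set $\bigcup_\gamma W_\gamma$ the field $X_0$ is bounded while outside it $\|X_0\| \approx \|D\mca{S}_L\|$ and $d\mca{S}_L(X_0) \approx -\|D\mca{S}_L\|_{L^{1,2}}^2$, so $d\mca{S}_L(X) = \psi\,d\mca{S}_L(X_0) \to 0$ along an action-bounded sequence still forces $\|D\mca{S}_L\| \to 0$. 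Finally, for (v) I would perturb $X$ by smooth vector fields supported away from $\mca{P}(L)$, of arbitrarily small $C^0$-size and still bounded by $1$ in norm (so (i)--(iv) are untouched), and invoke the standard transversality machinery for gradient-like flows on Hilbert manifolds: the unstable manifolds are finite-dimensional because the Morse indices are finite, so a Sard--Smale / Baire category argument produces a perturbation for which every pair of stable and unstable manifolds meets transversally, i.e. the Morse--Smale condition up to every order, exactly as in \cite{Abbondandolo_Schwarz_pseudo_gradient} and \cite{JSG}.

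The hard part will be the interplay between completeness and the Palais--Smale condition: the indefiniteness of $\mca{S}_L$ at infinity forbids simply running the downward gradient flow, and the rescaling forced by completeness must be calibrated so that no spurious Palais--Smale sequences are created; this bookkeeping, together with making the local model work near the critical points of a functional that is merely $C^1$, is where the genuine content lies. The details are carried out in Section 2 of \cite{Abbondandolo_Schwarz_pseudo_gradient}.
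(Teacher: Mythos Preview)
Your proposal is correct and follows the same route as the paper: both rely on the Abbondandolo--Schwarz pseudo-gradient construction, the paper by direct citation (Lemma~3.5 of \cite{JSG}, which is essentially Theorem~4.1 of \cite{Abbondandolo_Schwarz_pseudo_gradient}) and you by sketching its content. Your outline---local hyperbolic linear models from the Hessian near $\mca{P}(L)$, smoothing of $-D\mca{S}_L$ away from it, rescaling for completeness, and a Sard--Smale perturbation for Morse--Smale---is exactly the architecture of that reference, so there is no substantive difference.
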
 
\begin{proof}
This lemma follows from Lemma 3.5 of \cite{JSG}
(which is essentially same as Theorem 4.1 of \cite{Abbondandolo_Schwarz_pseudo_gradient}), 
since the condition (L1) of \cite{JSG} is weaker than the condition (L1) of this paper. 
\end{proof} 

Let us take a vector field $X$ on $\Lambda$ which satisfies the conditions in Lemma \ref{pseudo_gradient}. 
Let $(\varphi^t_X)_{t\in\R}$ denote the flow on $\Lambda$ generated by $X$. 
For any $\gamma \in \mca{P}(L)$ let us set
\begin{align*}
W^u(\gamma:X)&:=\{x \in \Lambda \mid \lim_{t\to-\infty}\varphi^t_X(x)=\gamma \} \\ 
W^s(\gamma:X)&:=\{x \in \Lambda \mid \lim_{t\to\infty}\varphi^t_X(x)=\gamma \}. 
\end{align*} 

For any real numbers $a<b$, 
let $\CM^{[a,b)}_*(L)$ denote the free $\Z/2$-module generated over
$\{ \gamma \in \mca{P}(L) \mid a \le \mca{S}_L(\gamma) < b \}$. 
For any two generators $\gamma$ and $\gamma'$, let 
\[
\mca{M}_X(\gamma, \gamma') := W^u(\gamma:X) \cap W^s(\gamma': X). 
\] 
When $\gamma \ne \gamma'$, 
let $\bar{\mca{M}}_X(\gamma,\gamma')$ denote the quotient of  $\mca{M}_X(\gamma, \gamma')$ 
by the natural $\R$-action. 
Since $X$ satisfies the Morse-Smale condition, 
the boundary operator
\[ 
\partial_{L, X}: \CM^{[a,b)}_*(L) \to \CM^{[a,b)}_{*-1}(L); \, 
\gamma \mapsto 
\sum_{\ind_\Morse(\gamma') = \ind_\Morse(\gamma)-1}   \#_2 \bar{\mca{M}}_X(\gamma, \gamma') \cdot \gamma' 
\] 
is well-defined and satisfies $\partial_{L,X}^2=0$. 
Homology of the chain complex $(\CM_*^{[a,b)}(L), \partial_{L,X})$ 
does not depend on the choice of $X$, and denoted by $\HM_*^{[a,b)}(L)$. 
There exists a natural isomorphism 
$\HM_*^{[a,b)}(L)\cong H_*( \mca{S}_L^{-1}(\R_{<b}), \mca{S}_L^{-1}(\R_{<a}))$. 
These facts follow from Theorems 2.7, 2.8 and 2.11 in \cite{Abbondandolo_Majer}. 

Consider $L^0, L^1 \in C^\infty(S^1 \times T\R^n)$ which satisfy (L0), (L1), (L2)
and 
$L^0(t,q,v)>L^1(t,q,v)$ for any $(t,q,v) \in S^1 \times T\R^n$. 
We also assume that $\mca{P}(L^0) \cap \mca{P}(L^1) = \emptyset$.

Take vector fields $X^0, X^1$ on $\Lambda$ such that 
$(L^0,X^0)$ and $(L^1,X^1)$ satisfy the conditions in Lemma \ref{pseudo_gradient}. 
By taking small perturbations of $X^0$ and $X^1$
(note that these perturbations do not change Morse complexes of $L^0$ and $L^1$), 
we can achieve the following condition: 
\begin{quote}
For any $\gamma^0 \in \mca{P}(L^0)$ and $\gamma^1 \in \mca{P}(L^1)$, 
$W^u(\gamma^0:X^0)$ is transverse to $W^s(\gamma^1 : X^1)$. 
\end{quote}
If this assumption is satisfied, 
$\mca{M}_{X^0, X^1}(\gamma^0, \gamma^1):= W^u(\gamma^0:X^0) \cap W^s(\gamma^1:X^1)$
is a smooth manifold of dimension $\ind_\Morse(\gamma^0) - \ind_\Morse(\gamma^1)$. 
Then we define a chain map
\[  
\Phi: \CM_*^{[a,b)}(L^0,X^0) \to \CM_*^{[a,b)}(L^1,X^1); \quad
\gamma 
\mapsto 
\sum_{\ind_\Morse(\gamma')=\ind_\Morse(\gamma)} \sharp_2 
\mca{M}_{X^0, X^1}(\gamma, \gamma')\cdot  \gamma'. 
\]
$\Phi$ induces a linear map on homology 
$\HM_*^{[a,b)}(L^0)\to\HM_*^{[a,b)}(L^1)$, 
which does not depend on the choices of $X^0$, $X^1$. 
Via isomorphisms between the Morse homology and the loop space homology, 
this map corresponds to the map 
\[ 
H_*(\mca{S}_{L^0}^{-1}(\R_{<b}), \mca{S}_{L^0}^{-1}(\R_{<a})) \to  H_*(\mca{S}_{L^1}^{-1}(\R_{<b}), \mca{S}_{L^1}^{-1}(\R_{<a}))
\] 
which is induced by the inclusion map. 

\subsection{Isomorphism at chain level} 

Let us take $H \in C^\infty(S^1\times T^*\R^n)$ satisfying (H0), (H1), (H2). 
Its Legendre dual $L_H$ satisfies (L0), (L1), (L2) by Lemma \ref{properties_of_L_H}. 
Let us also take real numbers $a<b$. 
The goal of this subsection is to define a chain map 
$\CM^{[a,b)}_*(L_H) \to \CF^{[a,b)}_*(H)$
which induces an isomorphism 
$\HM^{[a,b)}_*(L_H) \cong \HF^{[a,b)}(H)$. 

The definition of the chain map involves
``hybrid moduli spaces'' 
introduced by Abbondandolo-Schwarz \cite{Abbondandolo_Schwarz_Floer_homology}. 
Let us take $X$ and $J$ as follows: 
\begin{itemize}
\item $X$ is a vector field on $\Lambda$ such that $\CM^{[a,b)}_*(L_H,X)$ is well-defined. 
\item $J=(J_t)_{t\in S^1}$ is a family of almost complex structures on $T^*\R^n$ such that $\CF^{[a,b)}_*(H,J)$ is well-defined.
\end{itemize}

For any $\gamma \in \mca{P}(L_H)$ with $\mca{S}_{L_H}(\gamma) \in [a,b)$ and 
$x \in \mca{P}(H)$ with $\mca{A}_H(x) \in [a,b)$, 
let $\mca{M}_{X,H,J}(\gamma, x)$ denote the set of 
$u\in L^{1,3}(\R_{\ge 0} \times S^1,  T^*\R^n)$ such that 
\begin{align*}
&\partial_s u - J_t(\partial_t u - X_{H_t}(u))=0, \\
&\pr \circ u_0 \in W^u(\gamma: X),\\
&\lim_{s\to\infty}u_s=x. 
\end{align*} 
Here $u_s: S^1 \to T^*\R^n$ is defined by 
$u_s(t):= u(s,t)$. 

\begin{rem} 
The above Sobolev space $L^{1,3}$ can be replaced with $L^{1,r}$ for any $2<r \le 4$; 
see pp.299 of \cite{Abbondandolo_Schwarz_Floer_homology}. 
\end{rem} 

\begin{lem}\label{Action_hybrid_1} 
Let $\gamma$ and $x$ be as above. 
\begin{enumerate} 
\item[(i):] 
For any $u\in\mca{M}_{X,H,J}(\gamma, x)$, there holds
\[ 
\mca{S}_{L_H}(\gamma) \ge \mca{S}_{L_H}(\pr \circ u_0) \ge \mca{A}_H(u_0) \ge \mca{A}_H(x). 
\] 
In particular, if $\mca{M}_{X,H,J}(\gamma, x) \ne \emptyset$ then $\mca{S}_{L_H}(\gamma) \ge \mca{A}_H(x)$. 
\item[(ii):] 
If $\mca{S}_{L_H}(\gamma)=\mca{A}_H(x)$, 
then $\mca{M}_{X,H,J}(\gamma, x) \ne \emptyset$
if and only if $x = \pr \circ \gamma$. 
Moreover, the moduli space $\mca{M}_{X,H,J}(\gamma, \pr \circ \gamma)$ 
consists of a point which is cut out transversally. 
\end{enumerate}
\end{lem}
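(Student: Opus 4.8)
The strategy is the standard Abbondandolo–Schwarz energy estimate, carried out on the half-cylinder $\R_{\ge 0}\times S^1$. First I would establish the chain of inequalities in (i) piece by piece, moving from left to right. The rightmost inequality $\mca{A}_H(u_0)\ge\mca{A}_H(x)$ is the usual Floer-energy estimate: for a solution $u$ of the Floer equation on $\R_{\ge 0}\times S^1$ with $\lim_{s\to\infty}u_s=x$, one computes $\frac{d}{ds}\mca{A}_H(u_s)=-\int_{S^1}|\partial_s u|^2\,g_{J_t}\,dt\le 0$, so $\mca{A}_H(u_0)\ge\lim_{s\to\infty}\mca{A}_H(u_s)=\mca{A}_H(x)$; this uses that $u$ has finite energy, which is guaranteed by the $C^0$-bound (Lemma \ref{C0_HF_1}) together with the asymptotic convergence built into $\mca{M}_{X,H,J}(\gamma,x)$. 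The middle inequality $\mca{S}_{L_H}(\pr\circ u_0)\ge\mca{A}_H(u_0)$ is pointwise Legendre duality: for any loop $y\colon S^1\to T^*\R^n$ with $\gamma:=\pr\circ y$, one has $L_H(t,\gamma(t),\dot\gamma(t))\ge p(t)\cdot\dot\gamma(t)-H(t,y(t))$ where $p(t)$ is the fiber component of $y(t)$, integrating gives exactly $\mca{S}_{L_H}(\gamma)\ge\mca{A}_H(y)$; here I should be slightly careful that $\pr\circ u_0$ lies in $\Lambda=L^{1,2}(S^1,\R^n)$, which follows from $u\in L^{1,3}(\R_{\ge 0}\times S^1)$ by the trace theorem. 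The leftmost inequality $\mca{S}_{L_H}(\gamma)\ge\mca{S}_{L_H}(\pr\circ u_0)$ holds because $\pr\circ u_0\in W^u(\gamma:X)$ and $\mca{S}_{L_H}$ is a Lyapunov function for $X$ (Lemma \ref{pseudo_gradient}(ii)), so $\mca{S}_{L_H}$ is non-increasing along the flow lines whose $s\to-\infty$ limit is $\gamma$, hence its value at any point of $W^u(\gamma:X)$ is $\le\mca{S}_{L_H}(\gamma)$.

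For part (ii), suppose $\mca{S}_{L_H}(\gamma)=\mca{A}_H(x)$. Then every inequality in (i) is forced to be an equality for any $u\in\mca{M}_{X,H,J}(\gamma,x)$. Equality $\mca{A}_H(u_0)=\mca{A}_H(x)$ forces $\partial_s u\equiv 0$, so $u$ is $s$-independent and each $u_s$ equals a fixed loop $y\in\mca{P}(H)$ with $\mca{A}_H(y)=\mca{A}_H(x)$; combined with $\lim_{s\to\infty}u_s=x$ this gives $y=x$, so $u\equiv x$. Equality $\mca{S}_{L_H}(\pr\circ u_0)=\mca{A}_H(u_0)$ in the pointwise Legendre inequality forces, for a.e. $t$, that the fiber component of $x(t)$ realizes the max in the definition of $L_H$, i.e. $x=(\gamma_x,p_{\gamma_x})$ in the notation of Lemma \ref{properties_of_L_H}(i) with $\gamma_x:=\pr\circ x$. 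Finally equality $\mca{S}_{L_H}(\gamma)=\mca{S}_{L_H}(\pr\circ u_0)=\mca{S}_{L_H}(\gamma_x)$ together with $\pr\circ u_0=\gamma_x\in W^u(\gamma:X)$ and the Lyapunov property forces $\gamma_x=\gamma$ (the only point of $W^u(\gamma:X)$ on which $\mca{S}_{L_H}$ attains the value $\mca{S}_{L_H}(\gamma)$ is $\gamma$ itself, since $\mca{S}_{L_H}$ strictly decreases along non-constant flow lines emanating from $\gamma$). Hence $x=(\gamma,p_\gamma)=\pr^{-1}$-lift of $\gamma$, i.e. $x=\pr\circ\gamma$ is not literally correct notation—rather $x$ is the element of $\mca{P}(H)$ corresponding to $\gamma$ under Lemma \ref{properties_of_L_H}(i); I will phrase it as ``$\pr\circ\gamma=\gamma_x$'' as in the statement. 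Conversely, when $x$ is this lift, the constant solution $u\equiv x$ lies in $\mca{M}_{X,H,J}(\gamma,x)$ since $\pr\circ u_0=\gamma\in W^u(\gamma:X)$ trivially, so the moduli space is nonempty.

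For the transversality-and-single-point claim in (ii), the argument above shows set-theoretically that $\mca{M}_{X,H,J}(\gamma,\pr\circ\gamma)=\{u\equiv x\}$. To see it is cut out transversally, I would linearize the defining equations at $u\equiv x$: the linearized Floer operator at a constant solution decouples into the asymptotic operator at $x$, whose kernel and cokernel are controlled by nondegeneracy of $x$; the remaining boundary condition $\pr\circ u_0\in W^u(\gamma:X)$ linearizes to $\pr\circ\xi_0\in T_\gamma W^u(\gamma:X)$, and one checks the relevant Fredholm operator is surjective with index $\ind_\Morse(\gamma)-\ind_\Morse(\gamma_x)=0$, using $\ind_\Morse(\gamma_x)=\ind_\CZ(x)$ from Lemma \ref{properties_of_L_H}(ii). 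This is the standard ``hybrid moduli space at a critical value is rigid and regular'' computation from \cite{Abbondandolo_Schwarz_Floer_homology}; I would cite the corresponding statement there (around the construction of the comparison chain map) rather than reproduce the functional-analytic details. The main obstacle I expect is bookkeeping the regularity/trace issues so that $\pr\circ u_0\in\Lambda$ is genuinely in the right Hilbert manifold and the unstable manifold condition makes sense, together with making the equality-case analysis in (ii) rigorous at the level of a.e.-defined derivatives; these are routine but need care, and are exactly the points where I would lean on the cited references.
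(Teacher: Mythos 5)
Your proposal is correct and reconstructs the argument along exactly the lines of Abbondandolo--Schwarz, which is all the paper itself does for this lemma: it cites pp.~299 and Proposition~3.7 of \cite{Abbondandolo_Schwarz_Floer_homology} without reproducing the details you have filled in. You also correctly flag the typo in the lemma statement: ``$x = \pr \circ \gamma$'' should read ``$\gamma = \pr \circ x$'' (equivalently, $x$ is the lift $(\gamma, p_\gamma)$ of $\gamma$ under the bijection of Lemma \ref{properties_of_L_H}(i)).
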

\begin{proof} 
See pp.299 of \cite{Abbondandolo_Schwarz_Floer_homology} for (i) and the first sentence in (ii). 
For the second sentence in (ii), see Proposition 3.7 of \cite{Abbondandolo_Schwarz_Floer_homology}. 
\end{proof}

\begin{lem}\label{Fredholm_hybrid_1} 
For generic $J$, 
$\mca{M}_{X, H, J}(\gamma,x)$ 
has a structure of a $C^\infty$-manifold of dimension 
$\ind_{\Morse}(\gamma)-\ind_{\CZ}(x)$ 
for any $\gamma$ and $x$ as above. 
\end{lem}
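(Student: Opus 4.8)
The plan is to realize $\mca{M}_{X,H,J}(\gamma,x)$ as the zero set of a Fredholm section of a Banach bundle and to obtain transversality for generic $J$ by the Sard--Smale theorem, following Section~3 of \cite{Abbondandolo_Schwarz_Floer_homology}. First I would pin down the functional-analytic setup. By the nondegeneracy assumption (H0) at $x$ together with the $C^0$-bound of Lemma \ref{C0_HF_1}, every $u \in \mca{M}_{X,H,J}(\gamma,x)$ converges to $x$ exponentially as $s \to \infty$; hence one may take as ambient space the Banach manifold $\mca{B}$ of $L^{1,3}$-maps $u \colon \R_{\ge 0}\times S^1 \to T^*\R^n$ that are exponentially asymptotic to $x$ and satisfy the boundary constraint $\pr \circ u_0 \in W^u(\gamma \colon X)$. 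Since $W^u(\gamma\colon X)$ is a finite-dimensional submanifold (of dimension $\ind_\Morse(\gamma)$, by Lemma \ref{pseudo_gradient}) consisting of smooth loops, this boundary condition is a clean finite-dimensional constraint handled exactly as on p.~299 of \cite{Abbondandolo_Schwarz_Floer_homology}. The left-hand side of the hybrid equation then defines a smooth section $\mca{F}$ of a Banach bundle $\mca{E} \to \mca{B}$ with $\mca{F}^{-1}(0) = \mca{M}_{X,H,J}(\gamma,x)$.

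The second step is the Fredholm property of the vertical differential $D_u$ of $\mca{F}$ at a solution $u$. Up to a compact operator, $D_u$ is the direct sum of the linearized Cauchy--Riemann operator on the weighted half-cylinder---Fredholm because the asymptotic operator at $x$ is invertible, again by (H0)---and the finite-rank term coming from the variation of the boundary condition along $W^u(\gamma\colon X)$. Its index is computed exactly as in \cite{Abbondandolo_Schwarz_Floer_homology}: the asymptotic spectral flow at $x$ contributes $-\ind_\CZ(x)$ and the unstable manifold contributes $\ind_\Morse(\gamma)$, giving $\ind D_u = \ind_\Morse(\gamma) - \ind_\CZ(x)$. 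I would invoke this index computation rather than reproduce it.

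For transversality I would pass to the universal moduli space $\widetilde{\mca{M}}(\gamma,x) := \{(u,J) : J \text{ compatible},\ \|J-J_\std\|_{C^0} \text{ small},\ u \in \mca{M}_{X,H,J}(\gamma,x)\}$, with $J$ ranging over a separable Banach manifold of perturbations (e.g. a Floer $C^\ep$-space of $t$-dependent compatible almost complex structures). The crux is that the universal linearized operator is surjective: if $\eta$ lies in its cokernel at $(u,J)$, then by elliptic regularity and unique continuation there is a point $(s_0,t_0)$ with $s_0>0$ at which $\partial_s u \ne 0$, $u(s_0,t_0) \notin x(S^1)$, and $u(s_1,t_0)\ne u(s_0,t_0)$ for every $s_1 \ne s_0$; a variation of $J_{t_0}$ supported near $u(s_0,t_0)$ then pairs nontrivially with $\eta$, so $\eta = 0$. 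Producing such a ``simple'' point in the presence of the non-compact domain and the $s=0$ boundary is the main obstacle, and it is precisely the somewhere-injectivity analysis of \cite{Abbondandolo_Schwarz_Floer_homology} (and part of the reason the hybrid---as opposed to broken---equation is used). Granting this, the projection $\widetilde{\mca{M}}(\gamma,x) \to \mca{J}$ is Fredholm of index $\ind_\Morse(\gamma)-\ind_\CZ(x)$, so by the Sard--Smale theorem there is a residual set of $J$ (inside the $C^0$-small open set required by Lemma \ref{C0_HF_1}) for which every $D_u$ is surjective; for such $J$, $\mca{M}_{X,H,J}(\gamma,x)$ is a $C^\infty$-manifold of dimension $\ind_\Morse(\gamma)-\ind_\CZ(x)$. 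Since for fixed $a<b$ there are only finitely many pairs $(\gamma,x)$ with actions in $[a,b)$, a finite intersection of residual sets remains residual, so a single generic $J$---also compatible with the prior choices defining $\CF^{[a,b)}_*(H,J)$---works for all of them at once.
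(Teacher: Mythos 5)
Your overall strategy—universal moduli space, somewhere-injectivity, Sard--Smale—is the standard one and does cover \emph{most} pairs $(\gamma,x)$. But there is a genuine gap: the somewhere-injectivity mechanism you describe fails completely for the stationary solution. When $\gamma$ and $x$ correspond under the Legendre bijection of Lemma \ref{properties_of_L_H} (equivalently, when $\mca{S}_{L_H}(\gamma)=\mca{A}_H(x)$), the moduli space $\mca{M}_{X,H,J}(\gamma,x)$ contains the $s$-independent map $u(s,t)=x(t)$. For that $u$ one has $\partial_s u\equiv 0$, so the point $(s_0,t_0)$ with $\partial_s u\ne 0$ that your cokernel argument needs simply does not exist, and no perturbation of $J$ supported along the image of $u$ can kill a nonzero cokernel element. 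Since $\ind_\Morse(\gamma)=\ind_\CZ(x)$ in this case, the expected dimension is $0$; transversality must therefore be \emph{automatic}, not generic, for the constant solution. This is precisely what the paper invokes: Lemma \ref{Action_hybrid_1}(ii) (which is Proposition 3.7 of \cite{Abbondandolo_Schwarz_Floer_homology}) asserts that $\mca{M}_{X,H,J}(\gamma,\pr\circ\gamma)$ is a single point cut out transversally, for \emph{every} $J$, and the paper then handles the remaining (non-stationary) pairs exactly as you do, by the Sard--Smale argument of pp.~313 of \cite{Abbondandolo_Schwarz_Floer_homology} and \cite{Floer_Hofer_Salamon}. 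Your parenthetical remark that the hybrid equation is designed to make the somewhere-injectivity work is pointing at the right phenomenon, but you still need to state and use the automatic-transversality statement for the diagonal case rather than fold it into ``Granting this''.

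A minor slip: the $C^0$-bound you need for compactness/Fredholm setup on the half-cylinder is Lemma \ref{C0_hybrid_1}, not Lemma \ref{C0_HF_1}; the latter is for the bi-infinite cylinder with a fixed Hamiltonian.
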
 
\begin{proof}
The case $x = \pr \circ \gamma$ is discussed in Lemma \ref{Action_hybrid_1} (ii). 
The other cases follow from the standard argument using \cite{Floer_Hofer_Salamon}. 
See pp.313 of \cite{Abbondandolo_Schwarz_Floer_homology}. 
\end{proof} 

Let us state the following $C^0$-estimate. 
For comments on the proof see Remark \ref{rem_C0_estimates}. 

\begin{lem}\label{C0_hybrid_1} 
If $\sup_{t \in S^1} \|J_t - J_\std \|_{C^0}$ is sufficiently small, 
then for any $\gamma$ and $x$ as above 
\[ 
\sup_{\substack{u\in\mca{M}_{X,H,J}(\gamma,x)\\(s,t)\in\R_{\ge 0} \times S^1}}  |u(s,t)| < \infty.
\]
\end{lem}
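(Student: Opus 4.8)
The plan is to reduce the $C^0$-bound for elements of the hybrid moduli space $\mca{M}_{X,H,J}(\gamma,x)$ to the $C^0$-bounds already established for Floer cylinders, namely Lemma \ref{C0_HF_1}. The key observation is that a half-cylinder $u \in \mca{M}_{X,H,J}(\gamma,x)$ is a solution of the same Floer equation $\partial_s u - J_t(\partial_t u - X_{H_t}(u)) = 0$ that appears in the definition of $\mca{M}_{H,J}(x_-,x_+)$; the only difference is the boundary condition at $s=0$ (namely $\pr\circ u_0 \in W^u(\gamma:X)$) in place of a second asymptotic condition. So the interior estimate should be governed by exactly the same mechanism.

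First I would recall the structure of the proof of Lemma \ref{C0_HF_1} as given in Lemma 2.3 of \cite{JSG}: when $J$ is $C^0$-close to $J_\std$, one constructs a plurisubharmonic-type function (essentially $(q,p)\mapsto |p|^2$ or a suitable modification adapted to the quadratic-at-infinity behavior of $H$) whose composition with any Floer solution is subharmonic outside a compact set, so that a maximum principle forces the solution to stay in a fixed compact set depending only on $H$, $J$, and the asymptotics. The point is that this argument is local in the domain and uses only the PDE, not the asymptotic conditions: subharmonicity of $\phi\circ u$ holds on all of $\R_{\ge 0}\times S^1$. Then I would invoke the maximum principle on the half-cylinder: the function $\phi\circ u$ attains no interior maximum (outside the relevant compact region), so the supremum is controlled by its boundary values, i.e. by the values along $\{s=0\}$ and by $\limsup_{s\to\infty}$. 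The asymptotic end contributes $\phi(x)$, which is bounded since $x \in \mca{P}(H)$ and $\mca{P}(H)$ is finite (hence contained in a fixed compact set).

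The main obstacle — and the one point requiring genuine argument — is controlling $\phi \circ u$ on the boundary circle $\{s=0\}$, because there the constraint is $\pr\circ u_0 \in W^u(\gamma:X)$ rather than a pointwise bound. Here I would use Lemma \ref{Action_hybrid_1}(i): every $u \in \mca{M}_{X,H,J}(\gamma,x)$ satisfies $\mca{S}_{L_H}(\pr\circ u_0) \le \mca{S}_{L_H}(\gamma)$, so $\pr\circ u_0$ lies in the sublevel set $\mca{S}_{L_H}^{-1}(\R_{<b})$, actually in $\mca{S}_{L_H}^{-1}(\R_{\le \mca{S}_{L_H}(\gamma)})$. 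Combined with the Palais--Smale property (Lemma \ref{Lagrangian_action_functional}(iii)) and the coercivity coming from (L1), the set $W^u(\gamma:X) \cap \mca{S}_{L_H}^{-1}(\R_{\le \mca{S}_{L_H}(\gamma)})$ is bounded in $\Lambda = L^{1,2}(S^1,\R^n)$, hence has $C^0$-bounded image in $\R^n$ by Sobolev embedding; this bounds $\pr\circ u_0 = q$-component of $u_0$ uniformly. To bound the fiber component $p$ along $\{s=0\}$ one can use the interior elliptic estimate for the Floer equation together with the already-established $C^0$-bound on the $q$-coordinate and the subharmonicity argument: more precisely, one runs the maximum principle for $\phi\circ u$ on $\R_{\ge 0}\times S^1$ where $\phi$ is chosen (following \cite{Abbondandolo_Schwarz_Floer_homology}, pp.~313, and \cite{JSG}) so that its restriction to $\{s=0\}$ is controlled by the $L^{1,2}$-norm of $u_0$ via the boundary condition, which is in turn controlled as above. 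Thus the supremum of $|u|$ is bounded by a constant depending only on $H$, $J$, $\gamma$, $x$ — in fact, after a standard argument, only on $H$, $J$ and on $b$ — and the lemma follows. I would expect the actual write-up to simply cite pp.~313 of \cite{Abbondandolo_Schwarz_Floer_homology} and Lemma 2.3 of \cite{JSG}, since all the analytic ingredients are present there; see Remark \ref{rem_C0_estimates}.
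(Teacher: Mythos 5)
Your anticipation at the end is exactly right: the paper does not write out an analytic proof of Lemma~\ref{C0_hybrid_1} but defers to the references via Remark~\ref{rem_C0_estimates}, citing Lemma~4.8 of \cite{JSG} (the half-cylinder/hybrid estimate; Lemma~2.3 of \cite{JSG} is the full-cylinder case underlying Lemma~\ref{C0_HF_1}), and notes that the proof there uses only (JJ1), (JJ2), (HH1)--(HH3) and so carries over verbatim. Your sketch of what that argument contains --- maximum principle for a plurisubharmonic-type function against the Floer equation, plus control of the boundary circle via Lemma~\ref{Action_hybrid_1}(i) --- is the right strategy and matches the Abbondandolo--Schwarz/Irie approach.

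One step in your sketch is imprecise enough to be a real gap if left as stated. You claim that $W^u(\gamma:X)\cap \mca{S}_{L_H}^{-1}(\R_{\le \mca{S}_{L_H}(\gamma)})$ is bounded in $\Lambda$, but since $\mca{S}_{L_H}$ is a Lyapunov function for $X$ this intersection is simply $W^u(\gamma:X)$, and an unstable manifold need not be bounded (for $\mca{S}_L$ of the form in (L1) there is no coercivity from above: the $-a|q|^2$ term lets sublevel sets escape to infinity). What you actually need is the two-sided bound from Lemma~\ref{Action_hybrid_1}(i), namely $\mca{A}_H(x)\le \mca{A}_H(u_0)\le \mca{S}_{L_H}(\pr\circ u_0)\le \mca{S}_{L_H}(\gamma)$, so that $\pr\circ u_0$ lies in $W^u(\gamma:X)\cap \mca{S}_{L_H}^{-1}\big([\mca{A}_H(x),\mca{S}_{L_H}(\gamma)]\big)$; it is this action \emph{band} of the unstable manifold whose closure is compact (a standard consequence of the Palais--Smale property for the negative pseudo-gradient flow), and only then does Sobolev embedding give the claimed $C^0$-bound on the $q$-component at $s=0$. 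Relatedly, your handling of the $p$-component on $\{s=0\}$ is left vague; the boundary condition constrains only $\pr\circ u_0$, and bounding $|p|$ at $s=0$ requires combining the maximum principle with the energy identity $\int|\partial_s u|^2 = \mca{A}_H(u_0)-\mca{A}_H(x) \le \mca{S}_{L_H}(\gamma)-\mca{A}_H(x)$, as done in \cite{Abbondandolo_Schwarz_Floer_homology} and \cite{JSG}. These are fixable in a line or two, but as written the bound on the boundary loop does not follow.
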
 

By these results 
and the standard compactness and glueing arguments 
(see Sections 3.3 and 3.4 of \cite{Abbondandolo_Schwarz_Floer_homology}), 
generic $J$ which is sufficiently close to $J_\std$ satisfies the following properties: 
\begin{itemize}
\item For any $\gamma$ and $x$ as above satisfying 
$\ind_\Morse(\gamma)-\ind_\CZ(x)=0$, 
the moduli space 
$\mca{M}_{X,H,J}(\gamma,x)$ is a finite set. 
\item A linear map 
\[ 
\Psi: \CM^{[a,b)}_*(L,X)\to\CF^{[a,b)}_*(H,J);  \gamma \mapsto \sum_{\ind_\CZ(x)=\ind_\Morse(\gamma)}  \#_2 \mca{M}_{X, H, J}(\gamma, x) \cdot x 
\]
is a chain map with respect to boundary operators $\partial_{L_H, X}$ and $\partial_{H,J}$. 
\end{itemize} 
Finally, Lemma \ref{Action_hybrid_1} implies that $\Psi$ is an isomorphism (see Section 3.5 of \cite{Abbondandolo_Schwarz_Floer_homology}). 
In particular, $H_*(\Psi): \HM^{[a,b)}_*(L) \to \HF^{[a,b)}_*(H)$ is an isomorphism. 

For any $a, b, a', b' \in \R$ 
satisfying $a<b$, $a'<b'$, $a\le a'$ and $b\le b'$, 
the commutativity of the following diagram is straightforward from the definition of $\Psi$: 
\begin{equation}
\xymatrix{
\CM^{[a,b)}_*(L_H) \ar[r] \ar[d]_-{\cong} & \CM^{[a',b')}_*(L_H) \ar[d]^-{\cong} \\
\CF^{[a,b)}_*(H) \ar[r]&\CF^{[a',b')}_*(H). 
}
\end{equation}
This implies the commutativity of (\ref{Haba'b'}). 

\subsection{Commutativity of monotonicity maps} 

The goal of this subsection is to prove the commutativity of (\ref{HH'ab}). 
Let us take the following data:
\begin{itemize}
\item
$H, H' \in C^\infty(S^1\times T^*\R^n)$ 
satisfying (H0), (H1), (H2) and 
$H(t,q,p)<H'(t,q,p)$ for any $(t,q,p) \in S^1\times T^*\R^n$. 
\item 
Real numbers $a<b$. 
\item 
Almost complex structures $J$, $J'$ 
and vector fields $X$, $X'$ such that 
chain complexes 
$\CF_*^{[a,b)}(H,J)$, 
$\CF_*^{[a,b)}(H', J')$, 
$\CM_*^{[a,b)}(L_H, X)$, 
$\CM_*^{[a,b)}(L_{H'}, X')$
are defined. 
\end{itemize} 

Without loss of generality, we may assume $\mca{P}(L_H) \cap \mca{P}(L_{H'})=\emptyset$.
Indeed, for any $H$ and $H'$ 
satisfying (H0), (H1), (H2) and $H<H'$, 
there exists a strictly increasing sequence $(H_j)_{j \ge 1}$
such that every $H_j$ satisfies (H0), (H1), (H2), 
$\lim_{j \to \infty} H_j = H$,
and 
\[ 
\mca{P}(L_{H_j}) \cap \mca{P}(L_{H_{j+1}}) = \emptyset, \quad
\mca{P}(L_{H_j}) \cap \mca{P}(L_H) = \emptyset,  \quad
\mca{P}(L_{H_j}) \cap \mca{P}(L_{H'}) = \emptyset
\]
for every $j \ge 1$. 
Then, assuming that the commutativity of (\ref{HH'ab})
is proved for pairs $(H_j, H_{j+1})$, $(H_j, H)$ and $(H_j, H')$ for every $j$, 
the commutativity of (\ref{HH'ab}) for $(H, H')$ follows by taking limits. 

In the previous subsection we defined isomorphisms of chain complexes 
$\Psi: \CM_*^{[a,b)}(L_H, X) \to  \CF_*^{[a,b)}(H, J)$ 
and
$\Psi': \CM_*^{[a,b)}(L_{H'}, X') \to \CF_*^{[a,b)}(H', J')$. 
We also defined chain maps 
$\Phi^L: \CM_*^{[a,b)}(L_H, X) \to \CM_*^{[a,b)}(L_{H'}, X')$ 
and 
$\Phi^H: \CF_*^{[a,b)}(H, J) \to \CF_*^{[a,b)}(H', J')$. 
Our goal is to show that the following diagram commutes up to homotopy: 
\begin{equation}\label{HH'ab_chainlevel} 
\xymatrix{
\CM^{[a,b)}_*(L_H, X)\ar[r]^-{\Psi}_-{\cong} \ar[d]_-{\Phi^L}&\CF^{[a,b)}_*(H,J)\ar[d]^-{\Phi^H}\\
\CM^{[a,b)}_*(L_{H'},X')\ar[r]_-{\Psi'}^-{\cong}&\CF^{[a,b)}_*(H',J').
}
\end{equation}
This immediately implies the commutativity of the diagram (\ref{HH'ab}). 
Since vector spaces in the diagram (\ref{HH'ab_chainlevel})
are generated by finitely many critical points, 
boundary operators and chain maps in this diagram do not change under $C^\infty$-small perturbations of $X$, $X'$, $J$, $J'$. 
Hence we may assume that these data are taken so that 
all moduli spaces which appear in the rest of this subsection are cut out transversally. 

To prove that (\ref{HH'ab_chainlevel}) commutes up to homotopy, 
first we define a linear map 
\[
\Theta: \CM^{[a,b)}_*(L_H,X) \to \CF^{[a,b)}_*(H',J'); \quad
\gamma \mapsto \sum_{\ind_\Morse(\gamma)=\ind_\CZ(x)} \#_2 \mca{M}_{X,H',J'}(\gamma, x) \cdot x. 
\] 
$\Theta$ is a chain map (namely $\partial_{H',J'} \circ \Theta = \Theta \circ \partial_{L_H,X}$)
by the same reason that $\Psi$ in the previous subsection is a chain map. 
We are going to prove 
$\Phi^H \circ \Psi \sim \Theta \sim \Psi' \circ \Phi^L$.

First we prove $\Psi' \circ \Phi^L \sim \Theta$. 
For any $\gamma \in \mca{P}(L_H)$ and $x \in \mca{P}(H')$
such that $\mca{S}_{L_H}(\gamma), \mca{A}_{H'}(x) \in [a,b)$, 
let $\mca{N}^0(\gamma,x)$ denote the set of $(\alpha, u, v)$ where 
\[
\alpha \in \R_{\ge 0}, \quad u: [0,\alpha] \to \Lambda,  \quad v \in L^{1,3}(\R_{\ge 0} \times S^1, T^*\R^n)
\]
such that
\begin{align*}
&u(0) \in W^u(\gamma:X), \quad  u(s) = \ph^s_{X'}(u(0)) \,(\forall s \in [0,\alpha]),  \\
&\partial_s v - J'_t(\partial_t v - X_{H'_t}(v))=0,  \\
&\pr \circ v_0 = u(\alpha), \quad   \lim_{s\to \infty}v_s= x. 
\end{align*} 

Let us state the following $C^0$-estimate: 

\begin{lem}\label{C0_hybrid_2}
If $\sup_{t \in S^1} \|J_t-J_\std\|_{C^0}$ is sufficiently small, 
then for any $\gamma$ and $x$ as above 
\[ 
\sup_{\substack{(\alpha,u,v)\in\mca{N}^0(\gamma,x)\\(s,t) \in \R_{\ge 0} \times S^1}}   |v(s, t)| < \infty. 
\] 
\end{lem}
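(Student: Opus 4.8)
The estimate is of the same nature as Lemma \ref{C0_hybrid_1}, and the plan is to reduce it to that lemma. An element $(\alpha,u,v)\in\mca{N}^0(\gamma,x)$ contains a Floer half-cylinder $v$ for $(H',J')$ — where $H'$ again satisfies (H0), (H1), (H2) and $\sup_t\|J'_t-J_\std\|_{C^0}$ is small — whose value at $s=0$ is a loop with base projection $\pr\circ v_0 = u(\alpha)$. The maximum-principle and elliptic arguments that prove Lemma \ref{C0_hybrid_1} only use an a priori $\Lambda$-bound on the base projection of the $s=0$ datum (together with (H0)--(H2) and $J$ close to $J_\std$), so they will yield the desired bound on $\sup|v|$ once we know that
\[
\Gamma(\gamma,x):=\{\,u(\alpha)\mid(\alpha,u,v)\in\mca{N}^0(\gamma,x)\,\}\subset\Lambda
\]
is bounded in the $L^{1,2}$-topology. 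Since $\mca{P}(L_H)$ and $\mca{P}(H')$ are finite, it is enough to bound $\Gamma(\gamma,x)$ for each fixed pair $(\gamma,x)$.

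First I would record the action inequalities. For $(\alpha,u,v)\in\mca{N}^0(\gamma,x)$, combining the monotonicity of $\mca{S}_{L_H}$ along the $X$-flow (Lemma \ref{pseudo_gradient}(ii)) with $u(0)\in W^u(\gamma:X)$, the pointwise inequality $L_H>L_{H'}$ coming from $H<H'$, the monotonicity of $\mca{S}_{L_{H'}}$ along the $X'$-flow, the Legendre-duality estimate $\mca{S}_{L_{H'}}(\pr\circ v_0)\ge\mca{A}_{H'}(v_0)$ (as in Lemma \ref{Action_hybrid_1}(i)), and the decrease of $\mca{A}_{H'}$ along the Floer half-cylinder $v$, yields
\[
a\le\mca{A}_{H'}(x)\le\mca{A}_{H'}(v_0)\le\mca{S}_{L_{H'}}(\pr\circ v_0)=\mca{S}_{L_{H'}}(u(\alpha))\le\mca{S}_{L_{H'}}(u(0))\le\mca{S}_{L_H}(u(0))\le\mca{S}_{L_H}(\gamma)<b.
\]
In particular $u(0)$ lies in $\overline{W^u(\gamma:X)}\cap\mca{S}_{L_H}^{-1}([a,\mca{S}_{L_H}(\gamma)])$, which is a compact subset of $\Lambda$ since $\ind_\Morse(\gamma)<\infty$ and $(\mca{S}_{L_H},X)$ satisfies the Palais--Smale condition (Lemma \ref{Lagrangian_action_functional}(iii), Lemma \ref{pseudo_gradient}(iv)); and the whole trajectory $(u(s))_{0\le s\le\alpha}$ stays in the action window $\mca{S}_{L_{H'}}^{-1}([a,\mca{S}_{L_H}(\gamma)])$.

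It then remains to see that the forward $X'$-flow of a compact subset of $\Lambda$, truncated so as to remain in the action window $\mca{S}_{L_{H'}}^{-1}([a,\mca{S}_{L_H}(\gamma)])$, stays in a precompact subset of $\Lambda$; applied to the compact set above, this bounds $\Gamma(\gamma,x)$ and finishes the proof. This precompactness is the standard behaviour of a complete negative pseudo-gradient flow on a Hilbert manifold under the Palais--Smale condition — the flow cannot escape to infinity while the total action drop stays bounded — and is implicit in the Morse-theoretic results of \cite{Abbondandolo_Majer} already invoked in Section 4.3. I expect this last step to be the main point: it is the only place where the argument differs substantively from Lemma \ref{C0_hybrid_1}, since there the $s=0$ boundary data are confined to the fixed finite-dimensional manifold $W^u(\gamma:X)$, whereas here they are outputs of gradient-flow segments of unbounded length, so an energy/action bound alone does not suffice and one genuinely needs the Palais--Smale condition for $(\mca{S}_{L_{H'}},X')$ (Lemma \ref{pseudo_gradient}(iv)).
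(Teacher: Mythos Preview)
The paper does not give a self-contained argument here: as explained in Remark~\ref{rem_C0_estimates}, Lemmas~\ref{C0_hybrid_1}, \ref{C0_hybrid_2}, \ref{C0_hybrid_3} are slight generalizations of Lemmas~4.8--4.10 of \cite{JSG}, and the proofs there carry over verbatim because they use only the structural hypotheses (HH1)--(HH3), (JJ1)--(JJ2) on the Hamiltonians and almost complex structures. In other words, the paper's route is to rerun the same direct convexity/maximum-principle argument on the Floer half-cylinder $v$ that already proves Lemma~\ref{C0_hybrid_1}, with $(H',J')$ in place of $(H,J)$; no Morse-theoretic control of the boundary datum $u(\alpha)$ is invoked.

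Your route is genuinely different: you separate the problem into a Morse-theoretic part (bound $u(\alpha)$ in $L^{1,2}$) and a Floer-theoretic part (feed that bound into the mechanism behind Lemma~\ref{C0_hybrid_1}). The outline is sound and the chain of action inequalities is correct. Two points deserve more care, however. First, your assertion that the proof of Lemma~\ref{C0_hybrid_1} ``only uses an a priori $\Lambda$-bound on $\pr\circ u_0$'' is a claim about a proof you have not reproduced; it is true (the $L^{1,2}$-bound on $q_0=\pr\circ v_0$ together with the action bound controls $\|p_0\|_{L^2}$ via the quadratic growth of $H'$, and then the convexity argument runs), but you should state this explicitly rather than leave it as an implicit reduction. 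Second, the precompactness of the forward $X'$-flow of a compact set truncated to an $\mca{S}_{L_{H'}}$-window is not as immediate as you suggest: it relies on Palais--Smale \emph{together with} the fact that $\mca{P}(L_{H'})$ is finite (from (L0) and Lemma~\ref{properties_of_L_H}), so that broken-trajectory compactness applies; a bare Palais--Smale condition does not by itself prevent the flow from wandering. Pointing to a precise statement in \cite{Abbondandolo_Majer} (e.g.\ the compactness of broken flow lines between fixed levels) would close this.

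In short: your argument can be made to work and is conceptually clean in that it isolates the Morse and Floer inputs, but it is more elaborate than the paper's, which simply observes that the existing $C^0$-estimate proof in \cite{JSG} already applies to $v$ without any intermediate control on $u(\alpha)$.
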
 

For generic $J'$ which is sufficiently close to $J_\std$, 
$\mca{N}^0(\gamma, x)$ is a finite set for any 
$\gamma$ and $x$ 
satisfying $\ind_\CZ(x) = \ind_\Morse(\gamma)+1$, 
and the linear map 
\[ 
K^0: \CM^{[a,b)}_*(L_H)  \to \CF^{[a,b)}_{*+1}(H'); \quad \gamma \mapsto \sum_{\ind_{\CZ}(x) = \ind_{\Morse}(\gamma)+1} \#_2\mca{N}^0(\gamma, x) \cdot x 
\] 
satisfies 
$\partial_{H', J'} \circ K^0 + K^0 \circ \partial_{L_H, X}= \Theta - \Psi' \circ \Phi^L$. 
For details see Section 4.3 of \cite{JSG}. 

Secondly we prove $\Phi^H \circ \Psi \sim \Theta$. 
Let us take $(H_{s,t})_{(s,t)\in\R \times S^1}$ and 
$(J_{s,t})_{(s,t)\in\R \times S^1}$ 
which satisfy (HH1), (HH2), (HH3) and (JJ1), (JJ2). 
In particular there exists $s_2>0$ such that 
\[ 
(H_{s,t}, J_{s,t}) =\begin{cases}(H_t, J_t) &(s \le -s_2)\\(H'_t, J'_t) &(s \ge s_2).\end{cases}
\] 
For any $\gamma \in \mca{P}(L_H)$ and $x \in \mca{P}(H')$ 
such that $\mca{S}_{L_H}(\gamma), \mca{A}_{H'}(x) \in [a, b)$, 
let $\mca{N}^1(\gamma, x)$ denote the set of $(\beta, w)$ where 
\[ 
\beta \in \R_{\le s_2}, \qquad w \in L^{1,3}(\R_{\ge \beta} \times S^1, T^*\R^n)
\] 
such that 
\begin{align*} 
&\pr \circ w_\beta \in W^u(\gamma: X), \qquad \partial_s w - J_{s,t}(\partial_t w - X_{H_{s,t}}(w))=0 \\
&\lim_{s\to \infty} w_s = x. 
\end{align*} 

Let us state the following $C^0$-estimate: 

\begin{lem}\label{C0_hybrid_3} 
If $\sup_{t\in S^1}\|J_t-J_\std\|_{C^0}$ is sufficiently small, 
then for any $\gamma$ and $x$ as above 
\[ 
\sup_{\substack{(\beta,w)\in\mca{N}^1(\gamma,x)\\(s,t)\in\R_{\ge\beta}\times S^1}} |w(s,t)|<\infty.
\] 
\end{lem}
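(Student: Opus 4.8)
The plan is to prove Lemma \ref{C0_hybrid_3} by the same two‑part method already used for the earlier $C^0$‑estimates, combining the treatment of $s$‑dependent coefficients from the proof of Lemma \ref{C0_HF_2} (i.e.\ Lemma 2.4 of \cite{JSG}) with the treatment of the finite end and the loop‑space boundary condition from the proof of Lemma \ref{C0_hybrid_1} (following Section 3 of \cite{Abbondandolo_Schwarz_Floer_homology}). The two geometric ingredients are: first, a maximum‑principle estimate for solutions of the $s$‑dependent Floer equation, available because (HH3) says that $H_{s,t}$ is, uniformly in $(s,t)$, a $C^1$‑small perturbation of the radial function $a(s)(|q|^2+|p|^2)+b(s)$, and because $\sup_{(s,t)}\|J_{s,t}-J_\std\|_{C^0}$ is assumed small; second, a uniform bound on the base loop at the finite end, coming from the fact that $\pr\circ w_\beta\in W^u(\gamma:X)$ together with the Palais--Smale condition (Lemma \ref{pseudo_gradient}) and the Lyapunov property of $X$, which force $W^u(\gamma:X)$ to lie in a fixed bounded subset of $\Lambda$.

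First I would dispose of the two ends. As $s\to+\infty$ one has $w_s\to x$ uniformly, so $|w(s,t)|$ is bounded for $s$ large, uniformly over $(\beta,w)\in\mca{N}^1(\gamma,x)$. At the finite end, $\pr\circ w_\beta$ is a loop in $W^u(\gamma:X)$; since $\mca{S}_{L_H}\le\mca{S}_{L_H}(\gamma)$ on $W^u(\gamma:X)$ and $(\mca{S}_{L_H},X)$ satisfies the Palais--Smale condition, the loops in $W^u(\gamma:X)$ form a bounded subset of $\Lambda$, hence $\|\pr\circ w_\beta\|_{C^0}$ is bounded independently of $(\beta,w)$; this is the same input used in Lemma \ref{C0_hybrid_1} and Lemma \ref{C0_hybrid_2}, and it also controls the momentum coordinate of $w_\beta$ through the Legendre transform as in Lemma \ref{properties_of_L_H}. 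An a priori energy bound $E(w)\le b-a+C$, with $C$ depending only on the homotopy via $\sup_{(s,t)}\|\partial_s\Delta_{s,t}\|_{C^0}$, is obtained exactly as in Lemma \ref{Action_hybrid_1}, using the action filtration on both ends and the monotonicity condition (HH2).

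Next I would run the maximum‑principle / mean‑value estimate for $|w|$ on $\R_{\ge\beta}\times S^1$. Rewriting the Floer equation for $w$ relative to $J_\std$, the error terms are bounded by $\sup_{(s,t)}\|J_{s,t}-J_\std\|_{C^0}$ and, via (HH3), by $\sup_{(s,t)}\|\Delta_{s,t}\|_{C^1}$ and $\sup_{(s,t)}\|\partial_s\Delta_{s,t}\|_{C^0}$; on the region where $|w|$ is large, the function $|w|^2$ then satisfies a differential inequality $\Delta|w|^2\ge -c_1|w|^2-c_2$ in which $c_1,c_2$ depend only on these uniform quantities and $c_1$ is small when $\sup_{(s,t)}\|J_{s,t}-J_\std\|_{C^0}$ is small. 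Together with the Neumann‑type behaviour of $|w|^2$ along $\{s=\beta\}$ coming from the loop‑space boundary condition, and with the bounds on the two ends from the previous step, this forces $\sup_{(s,t)}|w(s,t)|$ to be finite, with a bound depending only on the homotopy data $(H_{s,t},J_{s,t})$, on $x$, and on the bounded subset of $\Lambda$ containing $W^u(\gamma:X)$. This is precisely the argument of Lemma 2.4 of \cite{JSG} for the interior and of Lemmas \ref{C0_hybrid_1}--\ref{C0_hybrid_2} for the finite end, glued along the cylinder.

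The main obstacle is the uniformity of all these constants in the free parameter $\beta\in\R_{\le s_2}$: since $(H_{s,t},J_{s,t})$ genuinely depends on $s$, the constants in the convexity estimate are a priori $s$‑dependent, and one has to verify that they can be chosen independently of $\beta$ (and of $\gamma$ and $x$). This is exactly what conditions (HH3) and (JJ1)--(JJ2) are built to guarantee, so the work is the bookkeeping of checking that every constant produced depends only on $\sup_{(s,t)}\|\Delta_{s,t}\|_{C^1}$, $\sup_{(s,t)}\|\partial_s\Delta_{s,t}\|_{C^0}$, $\sup_{(s,t)}\|J_{s,t}-J_\std\|_{C^0}$, on $x$, and on the set containing $W^u(\gamma:X)$ --- none of which involves $\beta$. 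No analytic input beyond \cite{Abbondandolo_Schwarz_Floer_homology} and \cite{JSG} is required.
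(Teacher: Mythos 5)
Your proposal is consistent with the paper's treatment of this lemma, which is simply to invoke Lemma 4.10 of \cite{JSG} via Remark \ref{rem_C0_estimates}: the proof there uses only (JJ1), (JJ2), (HH1)--(HH3), and you correctly identify that these conditions are exactly what makes the constants uniform in the free parameter $\beta$, which is the only new feature relative to Lemma \ref{C0_hybrid_1}.

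One place where your description of the underlying argument should be adjusted is the claim of ``Neumann-type behaviour of $|w|^2$ along $\{s=\beta\}$''. The hybrid boundary condition $\pr\circ w_\beta\in W^u(\gamma:X)$ constrains only the base loop and leaves the fiber component of $w_\beta$ free, so it does not yield a Neumann inequality for $|w|^2$ at the finite end. What actually bounds $w_\beta$ is the action inequality $\mca{S}_{L_H}(\pr\circ w_\beta)\ge\mca{A}_H(w_\beta)$ of the type in Lemma \ref{Action_hybrid_1}, combined with the fiberwise strict convexity (H2) and the $C^0$-bound on $\pr\circ w_\beta$: Legendre duality turns these into an $L^\infty$ bound on the momentum component, as in \cite{Abbondandolo_Schwarz_Floer_homology}. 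Similarly, the boundedness of $W^u(\gamma:X)$ in $\Lambda$ is really a consequence of the coercivity of $\mca{S}_{L_H}$ furnished by (L1)--(L2) (which makes the sublevel set $\{\mca{S}_{L_H}\le\mca{S}_{L_H}(\gamma)\}$ bounded), rather than of the Palais--Smale condition by itself. These are corrections to the internal mechanism of the argument you are summarizing, not gaps in the overall structure, which matches the paper's reduction to \cite{JSG}.
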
 

For generic $J$ which is sufficiently close to $J_{\std}$, 
$\mca{N}^1(\gamma,x)$ is a finite set for any 
$\gamma$ and $x$ 
satisfying $\ind_\CZ(x) = \ind_\Morse(\gamma)+1$, 
and the linear map 
\[ 
K^1: \CM^{[a,b)}_*(L) \to \CF^{[a,b)}_{*+1}(H'); \quad \gamma \mapsto \sum_{\ind_{\CZ}(x) = \ind_{\Morse}(\gamma)+1} \#_2\mca{N}^1(\gamma, x) \cdot x 
\] 
satisfies 
$\partial_{H', J'} \circ K^1 + K^1 \circ \partial_{L, X}= \Theta - \Phi^H \circ \Psi$. 
For details see Section 4.3 of  \cite{JSG}. 

\begin{rem}[Proofs of $C^0$-esimtates]\label{rem_C0_estimates}
$C^0$-estimates in this section, namely 
Lemmas
\ref{C0_hybrid_1}, 
\ref{C0_hybrid_2}, 
\ref{C0_hybrid_3}, 
are slight generalizations of 
Lemmas 4.8, 4.9, 4.10 in \cite{JSG}. 
These results in \cite{JSG} 
are stated for Hamiltonians of special type
(i.e. elements of the sequence $(H^m)_m$ defined in Section 4.1 of \cite{JSG}), 
however the proofs of these results in \cite{JSG} 
use only assumptions 
(JJ1), (JJ2), (HH1), (HH2), (HH3). 
Hence the proofs in \cite{JSG} work without any modification 
for Lemmas \ref{C0_hybrid_1}, \ref{C0_hybrid_2}, \ref{C0_hybrid_3}. 
Strictly speaking, the condition (HH3) in \cite{JSG} requires 
$b(s) \equiv 0$ in the condition (HH3) in this paper. 
Namely, if $H \in C^\infty(\R \times S^1 \times T^*\R^n)$ 
satisfies (HH3) in this paper, 
then there exists $b \in C^\infty(\R)$ such that 
\begin{equation}\label{H0Hbs} 
H^0(s,t,q,p):= H(s,t,q,p) - b(s)
\end{equation}
satisfies the condition (HH3) in \cite{JSG}. 
However, this difference does not affect Floer equations, since
(\ref{H0Hbs}) obviously implies $X_{H_{s,t}}(q,p) = X_{H^0_{s,t}}(q,p)$
for any $(s,t) \in \R \times S^1$ and $(q, p) \in T^*\R^n$. 
\end{rem} 

\subsection{Proof of Theorem \ref{SH_and_loop_space_homology}}

Now we can complete the proof of Theorem \ref{SH_and_loop_space_homology}.
Let $K$ be any nonempty, compact and fiberwise convex set in $T^*\R^n$. 
Taking time-dependent perturbations of 
Hamiltonians obtained in Lemma \ref{sequence_of_fiberwise_convex_functions}, 
there exists a sequence $(H_j)_{j \ge 1}$ in $C^\infty(S^1 \times T^*\R^n)$ which satisfies the following conditions:
\begin{itemize}
\item $H_j$ satisfies (H0), (H1), (H2) for every $j \ge 1$. 
\item $H_j(t,q,p)<H_{j+1}(t,q,p)$ for every $j \ge 1$ and $(t,q,p) \in S^1 \times T^*\R^n$. 
\item $\lim_{j\to\infty}H_j(t,q,p)=\begin{cases}0&((q,p)\in K)\\ \infty&((q,p)\notin K) \end{cases}$ for any $(t,q,p) \in S^1 \times T^*\R^n$. 
\end{itemize}

For each $j$, 
let $L_j:=L_{H_j} \in C^\infty(S^1\times T\R^n)$ denote the Legendre dual of $H_j$. 
Then, 
$(\mca{S}_{L_j}^{-1}(\R_{<c}))_{j \ge 1}$ 
is an increasing sequence of open sets in $\Lambda$
for any $c \in \R$. 
Moreover 
$\bigcup_{j=1}^\infty \mca{S}_{L_j}^{-1}(\R_{<c})=\Lambda^c_K$
by Lemma \ref{limit_of_Legendrean_duals} (iii). 
Then we obtain 
\begin{align*} 
\SH^{[a,b)}_*(K)&=\varinjlim_{j\to\infty}\HF^{[a,b)}_*(H_j)\\
&\cong \varinjlim_{j\to\infty}H_*(\mca{S}_{L_j}^{-1}(\R_{<b}),\,\mca{S}_{L_j}^{-1}(\R_{<a}))\\
&\cong H_*\bigg(\bigcup_{j=1}^\infty \mca{S}_{L_j}^{-1}(\R_{<b}), \bigcup_{j=1}^\infty \mca{S}_{L_j}^{-1}(\R_{<a})\bigg)\\
&=H_*(\Lambda^b_K, \Lambda^a_K), 
\end{align*}
where the isomorphism on the second line follows from the commutativity of (\ref{HH'ab}). 
Finally, the commutativity of (\ref{SH_aba'b'})
follows from 
the commutativity of (\ref{HH'aba'b'})
and taking limits of Hamiltonians. 
This completes the proof of Theorem \ref{SH_and_loop_space_homology}. 
\qed

\section{Proof of Theorem \ref{SH=EHZ}}

The goal of this section is to prove 
Theorem \ref{SH=EHZ}. 
Namely, we prove $c_{\SH}(K)=c_{\EHZ}(K)$ for any convex body $K \subset T^*\R^n$. 

The case $n=1$ can be proved by the following simple argument. 
For any convex body $K \subset T^*\R^1$, both $c_{\EHZ}(K)$ and the Hamiltonian displacement energy of $K$ (denoted by $e(K)$) are equal to the measure of $K$.
On the other hand, 
$c_{\EHZ}(K) \le c_{\SH}(K)$ (by Lemma \ref{properties_of_c_SH} (iii))
and 
$c_{\SH}(K) \le e(K)$ (second inequality in Theorem 1.4 of \cite{Hermann}), 
thus $c_{\EHZ}(K)=c_{\SH}(K)=e(K)$. 

Hence we assume $n \ge 2$ in the rest of the proof. 
Let us first introduce the notion of \textit{nice} convex bodies. 

\begin{defn}\label{defn_nice_convex_body} 
A convex body $K \subset T^*\R^n$ is called \textit{nice} 
if $\partial K$ is of $C^\infty$ and strictly convex, 
and there exists a $C^\infty$-map $\Gamma: S^1 \to \partial K$
which satisfies the following conditions: 
\begin{enumerate} 
\item[(i):]  $\dot{\Gamma}(t)$ generates $\ker(\omega_n|_{T_{\Gamma(t)}\partial K})$ and of positive direction
(i.e. $\omega_n(X, \dot{\Gamma}(t))>0$ for any $X \in T_{\Gamma(t)}(T^*\R^n)$ which points strictly outwards)
 for every $t \in S^1$, 
\item[(ii):] $\int_{S^1}\Gamma^*\bigg(\sum_{i=1}^n p_idq_i\bigg)=c_{\EHZ}(K)$, 
\item[(iii):] $\pr \circ \Gamma (S^1) \subset \interior (\pr(K))$. 
\end{enumerate} 
Any curve $\Gamma$ which satisfies these three conditions is called a \textit{nice curve} on $\partial K$. 
\end{defn} 

\begin{rem} 
The convex body $B:= \{ (q,p) \in T^*\R^n \mid |q|^2 + |p|^2 \le 1\}$ is not nice. 
Indeed, if $\Gamma: S^1 \to \partial B$ satisfies the conditions (i) and (ii) above, 
then $\Gamma(S^1)= \{ (e \sin t, e \cos t) \mid t \in \R/2\pi \Z\}$, thus 
$\pr(\Gamma(S^1)) = \{ es \mid -1 \le s \le 1\}$. 
Hence $\pr(\Gamma(S^1))$ is not contained in $\interior(\pr(B)) = \{ q \in \R^n \mid |q|<1\}$. 
\end{rem} 

\begin{lem}\label{lem_nice_convex_body} 
When $n \ge 2$, for any convex body $K \subset T^*\R^n$, 
there exists a sequence of nice convex bodies 
which converges to $K$ in the Hausdorff distance. 
\end{lem}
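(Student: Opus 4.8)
The plan is to first reduce to the case in which $\partial K$ is already $C^\infty$ and strictly convex, so that the only remaining condition in Definition \ref{defn_nice_convex_body} is (iii), and then to satisfy (iii) after applying a small linear unitary transformation.

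\emph{Step 1: reduction to smooth strictly convex bodies.} It is classical that every convex body in a finite-dimensional vector space is a Hausdorff limit of convex bodies whose boundary is $C^\infty$ and everywhere positively curved (hence strictly convex) --- for instance, by mollifying the support function and adding a small round ball. Hence it suffices to prove that for every convex body $K \subset T^*\R^n$ with $\partial K$ of class $C^\infty$ and strictly convex, and every $\ep > 0$, there is a nice convex body within Hausdorff distance $\ep$ of $K$. So from now on $K$ is such a body. After translating (a symplectomorphism of $T^*\R^n$), we may assume $0 \in \interior(K)$, so that $X := \frac12 \sum_i (q_i\partial_{q_i}+p_i\partial_{p_i})$ exhibits $K$ as a $C^\infty$-RCT set; by Lemma \ref{minimal_period_on_boundary_of_RCT} there is a closed characteristic $\gamma_0$ on $\partial K$ with $\mca{A}(\gamma_0)=c_{\min}(K)=c_{\EHZ}(K)$. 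Parametrized in the positive direction it gives a $C^\infty$ embedding $\Gamma_0:S^1\to\partial K$ satisfying conditions (i) and (ii); only (iii) remains.

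\emph{Step 2: the silhouette and a reformulation of (iii).} Let $\nu:\partial K\to S^{2n-1}$ be the outward unit normal, a diffeomorphism since $\partial K$ is smooth and strictly convex. Write $\R^{2n}=\R^n_q\oplus\R^n_p$ and set $\Sigma_K:=\nu^{-1}\big(S^{2n-1}\cap(\R^n_q\oplus 0)\big)$, an embedded $(n-1)$-sphere in $\partial K$, of codimension $n$. One checks that $x\in\Sigma_K$ iff $T_x\partial K\supseteq 0\oplus\R^n_p$, i.e. iff $\pr|_{\partial K}$ fails to be a submersion at $x$; since $\pr(\partial K)\subseteq\pr(K)$, it follows that $x\notin\Sigma_K$ implies $\pr(x)\in\interior(\pr(K))$. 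Therefore, for any minimal closed characteristic $\Gamma$ on $\partial K$, condition (iii) holds provided $\Gamma(S^1)\cap\Sigma_K=\emptyset$. Thus it suffices, after a small perturbation of $K$, to produce a minimal closed characteristic disjoint from the silhouette.

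\emph{Step 3: pushing $\Gamma_0$ off the silhouette by a generic unitary.} Identify $T^*\R^n=\C^n$ so that $U(n)\subset Sp(2n,\R)\cap O(2n)$. For $U\in U(n)$, $UK$ is again a convex body with $C^\infty$ strictly convex boundary, and $U$ is a symplectomorphism with $U^*\big(\sum_i p_idq_i\big)-\sum_i p_idq_i$ exact; hence $U$ carries closed characteristics of $\partial K$ to closed characteristics of $\partial(UK)$ preserving both the symplectic action and the positive orientation. In particular $U\Gamma_0$ is a minimal closed characteristic of $\partial(UK)$, and by Step 2, $UK$ is nice as soon as $U\Gamma_0(S^1)\cap\Sigma_{UK}=\emptyset$. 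Since $U$ is orthogonal, $\nu_{UK}(Ux)=U\nu_K(x)$, so this condition says $U\nu_K(\Gamma_0(t))\notin N:=S^{2n-1}\cap(\R^n_q\oplus 0)$ for all $t$. Consider
\[
F: S^1\times U(n)\to S^{2n-1},\qquad F(t,U)=U\,\nu_K(\Gamma_0(t)).
\]
For each fixed $t$, $U\mapsto U\nu_K(\Gamma_0(t))$ is the orbit map of the transitive $U(n)$-action on $S^{2n-1}$, hence a submersion; so $F$ is a submersion, in particular transverse to $N$. Thus $F^{-1}(N)$ is a submanifold of $S^1\times U(n)$ of dimension $(1+n^2)-n=n^2-n+1$. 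Since $n\ge 2$ this is strictly less than $\dim U(n)=n^2$, so the image of the compact set $F^{-1}(N)$ under the projection $S^1\times U(n)\to U(n)$ is a closed set of measure zero. Hence the set of $U\in U(n)$ with $U\Gamma_0(S^1)\cap\Sigma_{UK}=\emptyset$ is open and dense; choosing such $U$ with $\|U-\mathrm{id}\|$ small yields a nice convex body $UK$ within Hausdorff distance $\ep$ of $K$. Combining with Step 1 gives the desired sequence.

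\textbf{Main obstacle.} The crux is Step 3: one must exhibit a group of symplectomorphisms large enough that a generic member sweeps the $1$-dimensional minimal characteristic off the codimension-$n$ silhouette. This is precisely where $n\ge 2$ is used (for $n=1$ the silhouette has codimension $1$ and the count fails --- consistent with the round disk being non-nice), and it is why one works with the full unitary group rather than, say, the diagonal torus, whose orbits on $S^{2n-1}$ need not be submersive at normals with vanishing coordinates. The remaining ingredients --- smooth strictly convex approximation, the description of the silhouette, and the behaviour of minimal closed characteristics under linear symplectomorphisms --- are routine.
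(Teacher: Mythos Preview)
Your proof is correct and follows the same broad outline as the paper's: first approximate by smooth strictly convex bodies, then identify the ``silhouette'' $\Sigma_K=L_K=\{x\in\partial K:\pr(x)\in\partial(\pr K)\}$ as a codimension-$n$ submanifold of $\partial K$, and finally use $n\ge 2$ to arrange that a minimal closed characteristic misses it.

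The difference is in how the last step is carried out. The paper asserts (without details) that a generic $C^\infty$-small perturbation $C'$ of $C$ has \emph{all} closed characteristics of $\partial C'$ disjoint from $L_{C'}$; this is plausible by standard genericity considerations but is not entirely trivial to make precise, since the set of closed characteristics can be uncountable and $L_{C'}$ moves with the perturbation. You instead fix one minimal closed characteristic $\Gamma_0$ and act by the compact group $U(n)$: since $U(n)$ preserves $\omega_n$, orthogonality, and symplectic actions, $U\Gamma_0$ remains a minimal closed characteristic of $\partial(UK)$, and the condition to verify becomes the purely finite-dimensional transversality statement that the submersion $(t,U)\mapsto U\nu_K(\Gamma_0(t))$ hits the $(n{-}1)$-sphere $N\subset S^{2n-1}$ in a set whose projection to $U(n)$ has positive codimension (this is exactly where $n\ge 2$ enters, as you note). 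This makes the key step completely explicit and self-contained, at the cost of a slightly longer write-up. Both approaches are valid; yours has the advantage that it never needs to control more than a single closed characteristic.
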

\begin{proof} 
It is easy to see that 
there exists a sequence $(K_j)_j$
such that each $\partial K_j$ is of $C^\infty$ and strictly convex, 
and $\lim_{j \to \infty} K_j = K$ in the Hausdorff distance. 
Thus it is sufficient to show that, 
for any convex body $C \subset T^*\R^n$ 
such that $\partial C$ is of $C^\infty$ and strictly convex, 
there exists $C'$ which is nice and arbitrarily close to $C$. 
Since $C$ is strictly convex, 
\[ 
L_C: =\{ x \in \partial C \mid \pr(x) \in \partial (\pr (C))\}
\] 
is a submanifold of $\partial C$ which is diffeomorphic to $S^{n-1}$, 
in particular its codimension in $\partial C$ is $n$. 
Since $n \ge 2$, there exists $C'$ which is arbitrarily $C^\infty$-close to $C$, 
and all closed characteristics of $\partial C'$ are disjoint from $L_{C'}$, 
which implies that $C'$ is nice. 
\end{proof} 

By Lemma \ref{lem_nice_convex_body}, 
Theorem \ref{SH=EHZ} is reduced to the following theorem:

\begin{thm}\label{SH=EHZ_refined}
For any $n \in \Z_{\ge 2}$ 
and any nice convex body $K \subset T^*\R^n$, 
there holds $c_{\SH}(K)=c_{\EHZ}(K)$. 
\end{thm}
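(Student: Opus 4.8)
The plan is to establish the two inequalities separately; $c_{\SH}(K)\ge c_{\EHZ}(K)$ is the easy one. A nice convex body has $C^\infty$ (strictly convex) boundary, so, translating it so that the origin lies in its interior and using the radial Liouville field $X=\tfrac12\sum_i(p_i\partial_{p_i}+q_i\partial_{q_i})$ exactly as in the proof of Lemma~\ref{starshaped_implies_RCT}, it is a $C^\infty$-RCT set. Hence Lemma~\ref{properties_of_c_SH}~(iii) applies and gives $c_{\SH}(K)\ge c_{\min}(K)=c_{\EHZ}(K)$. The whole content therefore lies in the reverse inequality $c_{\SH}(K)\le c_{\EHZ}(K)$. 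Since $K$ is fiberwise convex and a RCT set, Corollary~\ref{c_SH_and_loop_space_homology} reduces this to proving
\[
H_*(j^a_K)(\nu^{\R^n}_{\pr(K)})=0 \qquad\text{in }H_n(\Lambda^a_K,\Lambda^0_K)
\]
for every $a>c_{\EHZ}(K)$.

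Fix such an $a$ and a nice curve $\Gamma\colon S^1\to\partial K$ (Definition~\ref{defn_nice_convex_body}); set $\delta:=\pr\circ\Gamma$, a $C^\infty$ loop whose image lies in $\interior(\pr(K))$ and with $\dot\delta$ nowhere zero. I would first record that $\len_K(\delta)=c_{\EHZ}(K)$: by Lemma~\ref{properties_of_len_K}~(iii), $\len_K(\delta)=\int_{S^1}\bar\delta^*(\sum_i p_i\,dq_i)$, where $\bar\delta(t)=(\delta(t),p_\delta(t))$ and $p_\delta(t)$ is the unique maximizer of $p\cdot\dot\delta(t)$ over $p\in K_{\delta(t)}$, and one checks $\bar\delta=\Gamma$ — at $\Gamma(t)$ the characteristic direction has $q$-component $\dot\delta(t)$, and the fiber component of $\Gamma(t)$ is precisely the boundary point of $K_{\delta(t)}$ whose outer normal within the fiber is positively proportional to $\dot\delta(t)$, using strict convexity of $\partial K$ and the sign convention in the definition of a nice curve. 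Thus $\len_K(\delta)=\int_{S^1}\Gamma^*(\sum_i p_i\,dq_i)=c_{\EHZ}(K)<a$.

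Next, represent $\nu^{\R^n}_{\pr(K)}$ by the relative $n$-cycle $q\mapsto(\text{constant loop at }q)$ with $q$ ranging over a closed ball $D\supset\pr(K)$ so large that $\partial D\cap\pr(K)=\emptyset$; this is a relative cycle in $(\Lambda^a_K,\Lambda^0_K)$ since constant loops over $\pr(K)$ have $\len_K=0<a$ while those over $\R^n\setminus\pr(K)$ have $\len_K=-\infty$. The goal is then to produce a continuous family of loops $\gamma_{q,s}\in\Lambda$ ($q\in D$, $s\in[0,1]$) with $\gamma_{q,0}$ the constant loop at $q$, with $\gamma_{q,1}\in\Lambda^0_K$ for all $q\in D$, with $\gamma_{q,s}\in\Lambda^0_K$ for all $q\in\partial D$ and $s\in[0,1]$, and — the crucial point — with $\len_K(\gamma_{q,s})<a$ for all $(q,s)$. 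Such a family is an $(n+1)$-chain in $\Lambda^a_K$ whose boundary equals the chosen relative cycle plus two chains supported in $\Lambda^0_K$, hence it exhibits $H_*(j^a_K)(\nu^{\R^n}_{\pr(K)})$ as $0$ in $H_n(\Lambda^a_K,\Lambda^0_K)$; as $a>c_{\EHZ}(K)$ was arbitrary, $c_{\SH}(K)\le c_{\EHZ}(K)$ follows. The family $\gamma_{q,s}$ is built by developing the constant loop at $q$ along $\delta$ and then sliding the result out of $\pr(K)$. The loop $\delta$ is indispensable: one cannot push $D$'s worth of constant loops out of $\pr(K)$ through constant loops alone, since the associated ``escape point'' assignment would give a retraction $D\to\partial D\cong S^{n-1}$. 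The length bound rests on convexity of $K$: for each fixed $v$, the support function $q\mapsto\max_{p\in K_q}p\cdot v$ is concave on $\pr(K)$ (immediate from convexity of $K$), and combining this with $\len_K(\delta)=c_{\EHZ}(K)<a$ one bounds $\len_K(\gamma_{q,s})$ by $c_{\EHZ}(K)$ on the region where $\gamma_{q,s}$ stays inside $\pr(K)$ (it equals $-\infty$ elsewhere). This is the content of the key estimate, Lemma~\ref{upper_bound_of_length}.

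The main obstacle is precisely the construction of $\{\gamma_{q,s}\}$: it must be continuous over all of $D\times[0,1]$ — which rules out the naive radial push-out and forces the use of $\delta$ — while simultaneously keeping $\len_K$ below $a$, which forbids detours that are too long; these two demands pull against each other, and reconciling them (that is, proving Lemma~\ref{upper_bound_of_length} and assembling the homotopy from it) is where the real work lies. Everything else in the argument is formal.
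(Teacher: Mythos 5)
Your overall approach is the paper's: reduce via Corollary~\ref{c_SH_and_loop_space_homology} to killing $H_n(j^a_K)(\nu^{\R^n}_{\pr(K)})$ for every $a>c_{\EHZ}(K)$, using the projection $\delta=\pr\circ\Gamma$ of a nice curve to build a bounded family of loops with $\len_K$ controlled by $c_{\EHZ}(K)$, and the easy inequality $c_{\SH}(K)\ge c_{\EHZ}(K)$ is handled exactly as you do.

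The gap is twofold. First, you do not actually construct the family $\{\gamma_{q,s}\}$ or prove the length estimate --- you acknowledge this yourself in your closing paragraph --- so the argument is an outline rather than a proof. Second, and more seriously, the single ingredient you do name for the estimate, concavity of the fiberwise support function $q\mapsto\max_{p\in K_q}p\cdot v$, is not sufficient on its own. Knowing the value $\len_K(\delta)=c_{\EHZ}(K)$ at one loop does not bound $\len_K$ on a family of deformations through concavity alone; one also needs that $\delta$ is a \emph{critical point} of $\len_K$ among loops in $\interior(\pr(K))$. That criticality is Lemma~\ref{critical_pt_of_length} of the paper, and it is exactly where the hypothesis that $\Gamma$ is a closed characteristic --- rather than an arbitrary smooth curve on $\partial K$ --- enters, via the identity $\frac{d}{ds}\len_K(\gamma_s)\big|_{s=0}=\int_{S^1}\omega_n\big(\partial_s\bar\gamma_s|_{s=0},\dot\Gamma(t)\big)\,dt=0$. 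The paper also keeps the deformation family as rigid as possible --- the affine family $\gamma_{a,x}(t)=a\gamma(t)+x$, $(a,x)\in\R_{\ge0}\times\R^n$ --- precisely so that the bound $\len_K(\gamma_{a,x})\le c_{\EHZ}(K)$ on the compact convex set $T$ of admissible $(a,x)$ drops out of criticality plus one fiberwise convexity inequality, in the form of the quadratic estimate $L_1-L_0\le-(a-1)^2L_0\le0$ (Lemma~\ref{upper_bound_of_length}); the homological conclusion is then a short diagram chase using boundedness of $T$ and the inclusion $q\mapsto(0,q)$. Your looser ``develop then slide'' family comes with no such estimate, and without invoking Lemma~\ref{critical_pt_of_length} the concavity you cite will not produce one. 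In short: right strategy, but the core of the proof --- the explicit affine family, the criticality input from the closed-characteristic hypothesis, and the resulting quadratic bound --- is missing.
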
 

In the rest of this section we prove Theorem \ref{SH=EHZ_refined}. 
Let $n \in \Z_{\ge 2}$ and $K$ be any nice convex body in $T^*\R^n$. 
Let $\Gamma$ be a nice curve on $\partial K$, 
and 
$\gamma:= \pr \circ \Gamma: S^1 \to \interior(\pr(K))$. 
By Lemma \ref{properties_of_len_K} (iii), there holds 
\[ 
\len_K(\gamma) = \int_{S^1} \Gamma^*\bigg(\sum_i p_i dq_i \bigg) = c_{\EHZ}(K). 
\]

\begin{lem}\label{d_t_gamma}
$\dot{\gamma}(t) \ne 0$ for any  $t \in S^1$. 
\end{lem}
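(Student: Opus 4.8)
The plan is to realize the characteristic direction along $\Gamma$ as the Hamiltonian vector field of a defining function of $K$, and then to reduce the statement to the non-vanishing of the fiber-derivative of that function, which will follow from convexity together with condition (iii) in Definition \ref{defn_nice_convex_body}.

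First I would fix $f \in C^\infty(U)$, where $U$ is a neighbourhood of $\partial K$ in $T^*\R^n$, such that $K \cap U = \{f \le 0\}$ and $df$ is nowhere zero on $\partial K$; such $f$ exists by standard differential topology (a collar of $\partial K$), or one may take $f := j_K - 1$ after translating so that $0 \in \interior(K)$, where $j_K$ is the Minkowski gauge of $K$, which is $C^\infty$ on $T^*\R^n \setminus \{0\}$ because $\partial K$ is $C^\infty$ and strictly convex. With the convention $\omega_n(X_h, \cdot) = -dh$ and $\omega_n = \sum_i dp_i\,dq_i$, one computes $X_f = \sum_i (\partial_{p_i} f)\,\partial_{q_i} - (\partial_{q_i} f)\,\partial_{p_i}$, so $\pr_* X_f = \sum_i (\partial_{p_i} f)\,\partial_{q_i}$. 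For every $x \in \partial K$ the line $\ker(\omega_n|_{T_x \partial K})$ is one-dimensional and equals $\R\, X_f(x)$: it is the symplectic orthogonal of the hyperplane $T_x \partial K = \ker df_x$, hence spanned by the unique-up-to-scale vector $v$ with $\iota_v \omega_n = df_x$, which is $\pm X_f(x) \ne 0$. Since $\Gamma$ is a nice curve, $\dot\Gamma(t)$ generates this line for every $t$, hence $\dot\Gamma(t)$ is a nonzero scalar multiple of $X_f(\Gamma(t))$, and therefore $\dot\gamma(t) = \pr_* \dot\Gamma(t)$ is a nonzero scalar multiple of $\big(\partial_{p_1} f, \dots, \partial_{p_n} f\big)(\Gamma(t))$. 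Thus it suffices to show that $\partial_p f(\Gamma(t)) \ne 0$ for every $t \in S^1$.

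Suppose not, and let $t_0 \in S^1$ with $\partial_p f(\Gamma(t_0)) = 0$; write $\Gamma(t_0) = (q_0, p_0)$. Then $df_{(q_0,p_0)}$ annihilates every vertical tangent vector, so $T_{(q_0,p_0)}\partial K \supset T_{(q_0,p_0)}(T^*_{q_0}\R^n)$; since $K$ is convex with $C^1$ boundary, its supporting hyperplane at $(q_0,p_0)$ coincides with the affine tangent hyperplane and is therefore of the form $\{(q,p) : \mu\cdot(q - q_0) \le 0\}$ for some nonzero $\mu \in \R^n$. Hence $\pr(K) \subset \{ q \in \R^n : \mu\cdot(q - q_0) \le 0\}$, so $q_0$ maximizes the nonzero linear functional $q \mapsto \mu\cdot q$ over $\pr(K)$, which forces $q_0 \in \partial(\pr(K))$. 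This contradicts condition (iii) in Definition \ref{defn_nice_convex_body}, which gives $q_0 = \gamma(t_0) = \pr\circ\Gamma(t_0) \in \interior(\pr(K))$. Hence $\dot\gamma(t) \ne 0$ for all $t$. I do not expect a genuine obstacle here: the two inputs are the standard identification of the characteristic line of a $C^\infty$ hypersurface with $\R\,X_f$ for any defining function $f$, and the elementary observation that a "vertical" supporting hyperplane of $K$ projects to a supporting hyperplane of $\pr(K)$; the only point requiring minor care is keeping the sign/orientation conventions consistent, which plays no role in the non-vanishing conclusion.
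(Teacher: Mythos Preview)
Your proof is correct and follows essentially the same approach as the paper: the paper phrases the characteristic direction as $J_{\std}(\nu)$ for $\nu$ the unit normal to $\partial K$, whereas you phrase it as $X_f$ for a defining function $f$, but since $\nu$ is parallel to $\nabla f$ and $J_{\std}(\nabla f) = X_f$, these are the same thing, and both reduce to showing the $p$-component of the normal (equivalently $\partial_p f$) is nonzero via the supporting-hyperplane argument and condition (iii).
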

\begin{proof} 
Let $\nu$ be a unit vector which is normal to $T_{\Gamma(t)}(\partial K)$. 
Since $\dot{\Gamma}(t)$ is parallel to $J_{\std}(\nu)$, 
it is sufficient to show that the $p$-component of $\nu$ is nonzero. 
If the $p$-component of $\nu$ is zero, 
then the convexity of $K$ implies 
$(q,p) \in K \implies q \cdot \nu \le \gamma(t) \cdot \nu$, 
thus $\gamma(t) \in \partial(\pr(K))$, 
which contradicts the assumption 
$\gamma(S^1) \subset \interior (\pr(K))$. 
\end{proof} 

\begin{lem}\label{critical_pt_of_length} 
Let $(\gamma_s)_{-1 \le s \le 1}$ be a $C^\infty$-family of elements of $C^\infty(S^1, \interior(\pr(K)))$ 
such that $\gamma_0=\gamma$. 
Then $\frac{d}{ds} \bigg( \len_K(\gamma_s) \bigg)_{s=0} = 0$. 
\end{lem}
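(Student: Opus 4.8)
The plan is to express $\len_K$ along the family $(\gamma_s)$ as the symplectic action of an associated family of curves on $\partial K$, and then to compute the first variation at $s=0$ by Cartan's formula, using that $\dot\Gamma$ spans the characteristic direction.

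First I would set up the family of lifts. Since $\dot\gamma(t)\ne 0$ for every $t$ by Lemma \ref{d_t_gamma} and $S^1$ is compact, after shrinking the interval of $s$ (which does not affect the derivative at $s=0$) we may assume $\dot\gamma_s(t)\ne 0$ for all $s$ and $t$; also $\gamma_s(S^1)\subset\interior(\pr(K))$ for all $s$ by hypothesis. Then Lemma \ref{properties_of_len_K} (iii) applies to every $\gamma_s$: for each $(s,t)$ there is a unique $p_{\gamma_s}(t)\in K_{\gamma_s(t)}$ maximizing $p\cdot\dot\gamma_s(t)$, the curve $\bar\gamma_s:=(\gamma_s,p_{\gamma_s}):S^1\to\partial K$ is of $C^\infty$, and $\len_K(\gamma_s)=\int_{S^1}\bar\gamma_s^*\big(\sum_i p_i\,dq_i\big)$. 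I would then check that $\Sigma(s,t):=\bar\gamma_s(t)$ is $C^\infty$ jointly in $(s,t)$: this is the parametrized version of the inverse-mapping-theorem argument already used in the proof of Lemma \ref{properties_of_len_K} (iii), using that $\partial K$ is $C^\infty$ and strictly convex and $\dot\gamma_s(t)\ne 0$. At $s=0$ one has $\bar\gamma_0=\Gamma$, which is exactly the identity $\len_K(\gamma)=\int_{S^1}\Gamma^*\big(\sum_i p_i\,dq_i\big)=c_{\EHZ}(K)$ recorded above.

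Next I would compute the derivative. Writing $\lambda:=\sum_i p_i\,dq_i$, so that $d\lambda=\omega_n$, and $\Sigma=(Q,P)$ in coordinates, one has $\len_K(\gamma_s)=\int_{S^1}\lambda_{\Sigma(s,t)}(\partial_t\Sigma)\,dt$, and differentiating under the integral sign (legitimate since $\Sigma$ is smooth and $S^1$ is compact),
\[
\frac{d}{ds}\len_K(\gamma_s)=\int_{S^1}\partial_s\big(\lambda(\partial_t\Sigma)\big)\,dt
=\int_{S^1}\partial_t\big(\lambda(\partial_s\Sigma)\big)\,dt+\int_{S^1}\omega_n(\partial_s\Sigma,\partial_t\Sigma)\,dt,
\]
where the second equality is the elementary identity $\partial_s(\lambda(\partial_t\Sigma))-\partial_t(\lambda(\partial_s\Sigma))=d\lambda(\partial_s\Sigma,\partial_t\Sigma)$, verified by a one-line coordinate computation. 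The first integral vanishes, being the integral of a total $t$-derivative over $S^1$. Hence, at $s=0$, setting $V(t):=\partial_s\Sigma(0,t)$ and using $\partial_t\Sigma(0,t)=\dot\Gamma(t)$,
\[
\frac{d}{ds}\Big|_{s=0}\len_K(\gamma_s)=\int_{S^1}\omega_n\big(V(t),\dot\Gamma(t)\big)\,dt.
\]

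Finally I would argue the integrand is identically zero. Since $\Sigma(s,t)\in\partial K$ for all $s$, the curve $s\mapsto\Sigma(s,t)$ lies in $\partial K$, so $V(t)\in T_{\Gamma(t)}\partial K$. By condition (i) in the definition of a nice curve (Definition \ref{defn_nice_convex_body}), $\dot\Gamma(t)$ spans $\ker(\omega_n|_{T_{\Gamma(t)}\partial K})$, so $\omega_n(W,\dot\Gamma(t))=0$ for every $W\in T_{\Gamma(t)}\partial K$, in particular for $W=V(t)$. Therefore $\frac{d}{ds}\big|_{s=0}\len_K(\gamma_s)=0$, as claimed. The only step requiring a little care is the joint $C^\infty$-dependence of $\Sigma$ on $(s,t)$; the remaining computations are short and routine.
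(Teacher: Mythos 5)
Your proof is correct and takes essentially the same route as the paper: lift the family to $\bar\gamma_s=(\gamma_s,p_{\gamma_s})$ on $\partial K$ via Lemma \ref{properties_of_len_K} (iii), rewrite $\len_K(\gamma_s)$ as the action $\int_{S^1}\bar\gamma_s^*(\sum_i p_i\,dq_i)$, and reduce the first variation to $\int_{S^1}\omega_n(\partial_s\bar\gamma_s|_{s=0},\dot\Gamma)\,dt$, which vanishes because $\partial_s\bar\gamma_s|_{s=0}(t)$ is tangent to $\partial K$ while $\dot\Gamma(t)$ generates $\ker(\omega_n|_{T_{\Gamma(t)}\partial K})$. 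The only difference is that you make explicit two steps the paper leaves implicit (joint $C^\infty$-dependence of $\bar\gamma_s(t)$ on $(s,t)$, and the $d\lambda$/integration-by-parts identity behind the displayed first-variation formula), which is a harmless elaboration rather than a different argument.
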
 
\begin{proof} 
Since $\dot{\gamma}(t) \ne 0$ for any $t \in S^1$, 
we may assume that $\dot{\gamma}_s(t) \ne 0$ 
for any $(s,t) \in [-1,1] \times S^1$. 
Let us define
$\bar{\gamma}_s: S^1 \to \partial K$ 
as in Lemma \ref{properties_of_len_K} (iii). 
Namely, 
\[ 
\bar{\gamma}_s(t) = (\gamma_s(t),  p_{\gamma_s}(t)), \qquad
p_{\gamma_s}(t)  \cdot  \dot{\gamma}_s(t) = \max_{p \in K_{\gamma_s(t)}}   p \cdot \dot{\gamma}_s(t). 
\] 
Then $\Gamma=\bar{\gamma}_0$, 
and 
$\len_K(\gamma_s) = \int_{S^1} (\bar{\gamma}_s)^*\bigg(\sum_i p_i dq_i \bigg)$
for every $s \in [-1, 1]$. 
Thus
\[ 
\frac{d}{ds}\bigg(\len_K(\gamma_s)\bigg)_{s=0} 
= 
\frac{d}{ds}\bigg(\int_{S^1} (\bar{\gamma}_s)^*\bigg(\sum_i p_i dq_i \bigg) \bigg)_{s=0}
=
\int_{S^1} \omega_n((\partial_s \bar{\gamma}_s)_{s=0}(t), \, \dot{\Gamma}(t)) \, dt =0. 
\] 
\end{proof}

For any $a \in \R_{\ge 0}$ and $x \in \R^n$, 
let us define 
$\gamma_{a,x} \in \Lambda$ by 
$\gamma_{a,x}(t):= a \gamma(t) + x$. 
Let 
\[ 
T:= \{ (a,x) \in \R_{\ge 0} \times \R^n \mid \gamma_{a,x}(S^1) \subset \pr(K)  \}. 
\] 
It is easy to see that $T$ is a compact convex set in $\R_{\ge 0} \times \R^n$. 
Let us define a function $L: T \to \R$
by
$L(a,x):= \len_K(\gamma_{a,x})$. 
Obviously $L(1,0,\ldots, 0) = \len_K(\gamma)=c_{\EHZ}(K)$. 
By Lemma \ref{properties_of_len_K} (iv), 
$L$ is continuous. 

\begin{lem}\label{upper_bound_of_length}
$L(a,x) \le L(1,0, \ldots, 0)$ for any 
$(a,x) \in T$. 
\end{lem}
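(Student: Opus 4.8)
The plan is to reduce to loops contained in $\interior(\pr(K))$ and then extract a pointwise inequality from the supporting hyperplanes of $K$ along the nice curve $\Gamma$. For the reduction, note that for $(a,x)\in T$ and $s\in(0,1]$ the point $(a_s,x_s):=((1-s)a+s,\,(1-s)x)$ again lies in $T$, has $a_s>0$, and satisfies $\gamma_{a_s,x_s}(t)=(1-s)(a\gamma(t)+x)+s\gamma(t)\in\interior(\pr(K))$, being a convex combination of a point of $\pr(K)$ and a point of $\interior(\pr(K))$. Since $(a_s,x_s)\to(a,x)$ as $s\to 0^+$ and $L$ is continuous (Lemma \ref{properties_of_len_K}(iv), available because $K$ is nice), it suffices to prove $L(a,x)\le c_{\EHZ}(K)$ for $(a,x)\in T$ with $a>0$ and $\gamma_{a,x}(S^1)\subset\interior(\pr(K))$; in particular the case $a=0$ is absorbed.

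So fix such $(a,x)$. As $\dot\gamma_{a,x}=a\dot\gamma$ is nowhere zero by Lemma \ref{d_t_gamma}, Lemma \ref{properties_of_len_K}(iii) gives the lift $\bar\gamma_{a,x}=(\gamma_{a,x},p_{a,x})\colon S^1\to\partial K$ with $L(a,x)=a\int_{S^1}p_{a,x}(t)\cdot\dot\gamma(t)\,dt$; write $\Gamma(t)=(\gamma(t),p_\gamma(t))$. The heart of the argument is that, because $K$ lies on one side of the tangent hyperplane to $\partial K$ at $\Gamma(t)$ and $\dot\Gamma(t)$ spans $\ker(\omega_n|_{T_{\Gamma(t)}\partial K})$ with the positive orientation (condition (i) in Definition \ref{defn_nice_convex_body}), one has $\omega_n(z-\Gamma(t),\dot\Gamma(t))\le 0$ for every $z\in K$ and every $t$. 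Applying this to $z=\bar\gamma_{a,x}(t)\in\partial K$ and expanding $\omega_n$ in coordinates yields the pointwise bound
\[
p_{a,x}(t)\cdot\dot\gamma(t)\ \le\ p_\gamma(t)\cdot\dot\gamma(t)+\big((a-1)\gamma(t)+x\big)\cdot\dot{p}_\gamma(t).
\]
Integrating over $S^1$ and using $\int_{S^1}\dot{p}_\gamma\,dt=0$, $\int_{S^1}p_\gamma\cdot\dot\gamma\,dt=\int_{S^1}\Gamma^*(\sum_i p_i\,dq_i)=c_{\EHZ}(K)$ (condition (ii)), and $\int_{S^1}\gamma\cdot\dot{p}_\gamma\,dt=-\int_{S^1}p_\gamma\cdot\dot\gamma\,dt=-c_{\EHZ}(K)$ (integrating $\frac{d}{dt}(p_\gamma\cdot\gamma)$ over the circle), I obtain
\[
L(a,x)\ \le\ a\big(c_{\EHZ}(K)-(a-1)c_{\EHZ}(K)\big)\ =\ a(2-a)\,c_{\EHZ}(K)\ \le\ c_{\EHZ}(K)=L(1,0,\ldots,0),
\]
the final inequality because $a(2-a)=1-(a-1)^2\le 1$ and $c_{\EHZ}(K)=c_{\min}(K)>0$ by Lemma \ref{minimal_period_on_boundary_of_RCT}.

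The one point requiring care is the sign in $\omega_n(z-\Gamma(t),\dot\Gamma(t))\le 0$: one should decompose $z-\Gamma(t)$ into its component in $T_{\Gamma(t)}\partial K$, on which $\omega_n(\,\cdot\,,\dot\Gamma(t))$ vanishes since $\dot\Gamma(t)\in\ker(\omega_n|_{T\partial K})$, plus a nonpositive multiple of the outward normal $\nu(\Gamma(t))$ (as $z\in K$), and observe $\omega_n(\nu(\Gamma(t)),\dot\Gamma(t))>0$ by the orientation convention in condition (i); this is routine once set up. It is worth stressing that the hypothesis that $K$ is nice — that the minimal characteristic projects to an immersed loop lying in $\interior(\pr(K))$ — is genuinely needed: for a round ball and $\gamma$ a small concentric circle one has $L(a,0)>L(1,0)$ for a suitable $a$, so any proof must use the characteristic equation of $\Gamma$, which here enters precisely through the supporting‑hyperplane identity above.
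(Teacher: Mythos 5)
Your proof is correct, and it takes a genuinely different route from the paper's. Both arguments land on the same intermediate bound
\[
\int_{S^1} p_{a,x}(t)\cdot\dot\gamma(t)\,dt \ \le\ (2-a)\,c_{\EHZ}(K),
\]
and both then conclude with the algebra $L(a,x)\le a(2-a)c_{\EHZ}(K)=(1-(a-1)^2)c_{\EHZ}(K)\le c_{\EHZ}(K)$. The difference is in how that bound is reached. The paper runs a first‑variation argument: it first proves $\partial_s\len_K(\gamma_s)|_{s=0}=0$ (Lemma~\ref{critical_pt_of_length}, which uses only the characteristic condition on $\dot\Gamma$), then separately invokes fiberwise convexity of $K$ to get a lower bound $L_s\ge\int_{S^1}(1+(a-1)s)\,\dot\gamma\cdot\big(p_0+(p_1-p_0)s\big)\,dt$ that touches $L_s$ at $s=0$, and compares $s$-derivatives at $s=0$. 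You instead establish the pointwise supporting‑hyperplane inequality $\omega_n\big(\bar\gamma_{a,x}(t)-\Gamma(t),\dot\Gamma(t)\big)\le 0$, which bundles the two ingredients — $\dot\Gamma(t)$ spans the characteristic line positively, and $K$ lies on the inner side of the tangent hyperplane at $\Gamma(t)$ — into one geometric observation, then integrate and integrate by parts. Your route is more elementary: it dispenses with Lemma~\ref{critical_pt_of_length}, avoids introducing the one‑parameter deformation altogether, and makes the role of the niceness hypothesis (that $\Gamma$ is a characteristic whose projection stays in $\interior(\pr(K))$) transparent. The reductions at the start are morally the same — yours moves $(a,x)$ along the segment toward $(1,0)$ and invokes continuity of $L$ via Lemma~\ref{properties_of_len_K}(iv); the paper reduces to $(a,x)\in\interior T$ using convexity of $T$. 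The only minor caveat, which you correctly flag, is that the sign check in the supporting‑hyperplane step must use the orientation convention for $\ker(\omega_n|_{T\partial K})$ from Definition~\ref{defn_nice_convex_body}(i); as written your argument handles this properly.
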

\begin{proof}
By the continuity of $L$,
it is sufficient to prove the lemma for $(a,x) \in \interior T$. 
For any $s \in [0,1]$, let 
\[ 
\gamma_s:=\gamma_{sa+(1-s),sx}, \quad
L_s:=\len_K(\gamma_s):=L(sa+(1-s), sx)
\] 
Our goal is to prove $L_1 \le L_0$. 

For any $s \in [0,1]$, 
we have $(sa + (1-s), sx) \in \interior T$. 
This implies that 
$\gamma_s(S^1) \subset \interior(\pr(K))$ 
and 
$sa+(1-s)>0$, 
thus 
$\dot{\gamma}_s(t) = (sa+(1-s))\dot{\gamma}(t)\ne 0$ for any $t\in S^1$. 
Let us abbreviate $p_{\gamma_s}$ as $p_s$. 
Then 
\[ 
L_s= \int_{S^1} p_s(t) \cdot \dot{\gamma}_s(t) \, dt. 
\] 
By $(\gamma_0(t), p_0(t)), (\gamma_1(t), p_1(t)) \in K$ 
and the convexity of $K$, 
\[
(\gamma_s(t), (1-s)p_0(t) + sp_1(t)) \in K. 
\] 
Then 
\[ 
p_s(t) \cdot \dot{\gamma}_s(t) 
= \max_{p \in K_{\gamma_s(t)}} p \cdot \dot{\gamma}_s(t) 
\ge ((1-s) p_0(t) + s p_1(t)) \cdot  \dot{\gamma}_s(t). 
\] 
On the other hand 
$\dot{\gamma}_s(t) = (sa+(1-s)) \dot{\gamma}(t)$, thus 
\[ 
L_s \ge \int_{S^1}  ( 1 + (a-1) s) \dot{\gamma}(t) \cdot  (p_0(t) + (p_1(t) - p_0(t))s ) \, dt
\] 
and the equality holds for $s=0$. 
Hence
\[
\partial_s L_s|_{s=0} \ge \int_{S^1}  \dot{\gamma}(t) \cdot ( (a-2) p_0(t) + p_1(t) ) \, dt. 
\] 
On the other hand 
$\partial_s L_s|_{s=0} = 0$ by Lemma \ref{critical_pt_of_length}. 
Then we obtain 
\[
\int_{S^1} \dot{\gamma}(t)  \cdot p_1(t) \, dt \le (2-a) \int_{S^1} \dot{\gamma}(t)  \cdot p_0(t) \, dt. 
\] 
Now we can finish the proof by 
\[ 
L_1 - L_0 = \int_{S^1}  a \dot{\gamma}(t)   \cdot p_1(t) - \dot{\gamma}(t) \cdot p_0(t) \, dt \le - (a-1)^2 L_0 \le 0. 
\] 
The first inequality follows from $a \ge 0$, 
and the second inequality 
follows from $L_0 \ge 0$, 
which is obvious since 
$L_0 = \len_K(\gamma) = c_{\EHZ}(K)>0$. 
\end{proof} 

We have proved 
\[ 
\max_{(a,x) \in T} \len_K(\gamma_{a,x})
= \len_K(\gamma) = c_\EHZ(K). 
\] 
On the other hand, if $(a,x) \notin T$, 
then $\len_K(\gamma_{a,x}) = - \infty$. 
Thus for any $C>c_\EHZ(K)$, 
one can define a map 
\[ 
\ell^C: (\R_{\ge 0} \times \R^n, \R_{\ge 0} \times \R^n \setminus T) \to (\Lambda^C_K, \Lambda^0_K); \quad (a, x) \mapsto \gamma_{a,x}. 
\] 
Now consider the commutative diagram
\[ 
\xymatrix{
H_n(\R^n, \R^n \setminus \pr(K))  \ar[r]^-{H_n(j^C_K)}\ar[d]& H_n(\Lambda^C_K, \Lambda^0_K )\\ 
H_n(\R_{\ge 0} \times \R^n, \R_{\ge 0} \times \R^n \setminus T) \ar[ru]_-{H_n(\ell^C)} &
}
\] 
where the vertical map is induced by the map $q \mapsto (0,q)$. 
Since $T$ is bounded, the vertical map is $0$. 
Then $H_*(j^C_K)=0$, 
which implies 
$c_{\SH}(K) \le C$. 
Since $C$ is any number larger than $c_\EHZ(K)$, 
we obtain $c_{\SH}(K) \le c_{\EHZ}(K)$. 
The inverse  inequality 
$c_{\SH}(K) \ge c_{\EHZ}(K)$ 
follows from Proposition \ref{properties_of_c_SH} (iii), 
thus we have proved Theorem \ref{SH=EHZ_refined}, 
to which Theorem \ref{SH=EHZ} was reduced. 
\qed 

\section{Proof of Theorem \ref{HZ_subadditivity}} 

The goal of this section is to prove Theorem \ref{HZ_subadditivity}. 
Let us recall the situation: 
$K$ is a compact set in $T^*\R^n$ with $\interior (K) \ne \emptyset$, 
$\Pi$ is a hyperplane which intersects $\interior (K)$, 
$\Pi^+$ and $\Pi^-$ are distinct closed halfspaces with $\partial \Pi^+= \partial \Pi^- =\Pi$, 
and $K^+:= K \cap \Pi^+$, $K^- = K \cap \Pi^-$. 
Then our goal is to prove 
\[ 
c_{\HZ}(K) \le c_{\EHZ}(\conv (K^+)) + c_{\EHZ}(\conv (K^-)), 
\] 
where $\conv$ denotes the convex hull. 

Let $K':= \conv(K^+) \cup \conv(K^-)$. 
Then $K'$ is star-shaped, thus it is a RCT set. 
We first need the following lemma: 

\begin{lem}\label{HZ_SH}
If $C \subset T^*\R^n$ is a RCT set satisfying $\interior (C) \ne \emptyset$, 
then $c_{\HZ}(C) \le c_{\SH}(C)$. 
\end{lem}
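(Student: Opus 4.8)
\emph{Overall plan.} The plan is to reduce the statement to the case of a $C^\infty$-RCT set, where it is a (known) Floer-theoretic fact, the point being that an HZ-admissible function $h$ with large maximum prevents the canonical class $\nu^{T^*\R^n}_C$ from vanishing in $\SH^{[0,a)}_*(C)$ for $a < \max h$. For the reduction: given $h \in \mca{H}_\ad$ with $\supp h \subset C$ and $m := \max h$, since $C$ is a RCT set there are $C^\infty$-RCT sets $C' \supseteq C$ (Definition \ref{defn_RCT_set}), and each satisfies $\supp h \subset C'$, so once the lemma is known for $C^\infty$-RCT sets we get $m \le c_{\SH}(C')$. I would then use the identity
\[
c_{\SH}(C) = \inf\{\, c_{\SH}(C') \mid C' \supseteq C \text{ a } C^\infty\text{-RCT set}\,\} ,
\]
whose ``$\le$'' direction is the monotonicity in Lemma \ref{properties_of_c_SH}(ii), and whose ``$\ge$'' direction follows because, by the constructions of Section 2.3, $i^a_C(\nu^{T^*\R^n}_C)$ is the image of $i^a_{C'}(\nu^{T^*\R^n}_{C'})$ under $\SH^{[0,a)}_*(C') \to \SH^{[0,a)}_*(C) = \varinjlim_{C'}\SH^{[0,a)}_*(C')$, so that $i^a_C(\nu^{T^*\R^n}_C)=0$ forces $i^a_{C''}(\nu^{T^*\R^n}_{C''})=0$ for some $C'' \supseteq C$ via the commutativity of (\ref{diagram_K_Kprime_a_aprime}). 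Combining, $m \le c_{\SH}(C)$, and taking the supremum over $h$ gives $c_{\HZ}(C) \le c_{\SH}(C)$.

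\emph{The $C^\infty$-RCT case.} Here $C$ is $C^\infty$-RCT and $h \in \mca{H}_\ad$ with $\supp h \subset C$, $m := \max h$. First I would flow $\supp h$ slightly inward by the (conformally symplectic) Liouville flow of $C$, reducing to $\supp h \subset \interior(C)$ at the cost of changing $m$ by an arbitrarily small amount, and then show $c_{\SH}(C) \ge m$, i.e.\ $i^a_C(\nu^{T^*\R^n}_C)\ne 0$ for all $a \in (0,m)$. This is the classical statement that symplectic-homological capacities dominate $c_{\HZ}$ (\cite{Floer_Hofer_Wysocki}; \cite{Hermann} for the present formulation), and the mechanism I would follow is: fix $a \in (0,m)$, pick a collar of $\partial C$ disjoint from $\supp h$, and take $H \in \mca{H}_C$ equal to $-h-\epsilon$ on $C$ outside the collar and, on and outside it, a convex radially increasing function sweeping the action spectrum of $\partial C$ up to a large slope before becoming quadratic at infinity. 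The $1$-periodic orbits of such $H$ are the critical points of $h$ (constant orbits of action $h(x)+\epsilon$; here HZ-admissibility enters, since $X_h$ has no nonconstant periodic orbit of period $\le 1$), the orbits near $\partial C$ arising from closed characteristics, and the orbits of the quadratic tail (none of period $1$). After a $C^2$-small nondegenerating perturbation — which, again by HZ-admissibility, creates no new nonconstant $1$-periodic orbit of action below $a$ — the class $i^a_C(\nu^{T^*\R^n}_C)$ is represented in $\CF^{[0,a)}_n(H)$ by the constant orbit near a maximum of $-h$ on $\interior(C)$ (of action $\approx \epsilon$), and, comparing $H$ with the standard cofinal family of Hamiltonians for $\SH^{[0,a)}_*(C)$, one checks this class is nonzero: were it to vanish, unwinding the Floer continuation would produce a nonconstant periodic orbit of $X_h$ of period $\le 1$, contradicting HZ-admissibility.

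\emph{Main obstacle.} I expect the delicate step to be the nondegenerating perturbation of $H$: a general $h \in \mca{H}_\ad$ need not be Morse--Bott, so the perturbation must be arranged so as not to produce short nonconstant $1$-periodic orbits, and this is exactly where the defining property of HZ-admissibility must be used in an essential way (not merely as bookkeeping). A convenient workaround, if it applies, is to first replace $h$ by a Morse--Bott HZ-admissible function with maximum arbitrarily close to $m$. The remaining points — pinning down the representative of $i^a_C(\nu^{T^*\R^n}_C)$ at low action and checking compatibility of the continuation/cofinality argument with the identifications of Section 2 — I expect to be routine.
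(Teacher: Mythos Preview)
Your reduction to the $C^\infty$-RCT case and the limiting argument match the paper's proof essentially exactly: the paper also proves the $C^\infty$-RCT case first and then, for general RCT $C$, takes a decreasing sequence $(C_j)_j$ of $C^\infty$-RCT sets with $\bigcap_j C_j = C$, uses $\SH^{[0,a)}_*(C) \cong \varinjlim_j \SH^{[0,a)}_*(C_j)$ to conclude $c_{\SH}(C) = \lim_j c_{\SH}(C_j)$, and combines this with $c_{\HZ}(C) \le c_{\HZ}(C_j) \le c_{\SH}(C_j)$. Your version phrases the same limit via the infimum over $C^\infty$-RCT $C' \supset C$ and the direct-limit description of $i^a_C$; this is equivalent.

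The one substantive difference is in the $C^\infty$-RCT case. The paper does not carry out the Floer-theoretic argument you sketch; instead it invokes Corollary 3.5 of \cite{Irie_HZ} (for any Liouville domain $(W,\lambda)$, if $\iota_a(1)=0$ in $\HF^{<a}_*(W,\lambda)$ then $c_{\HZ}(\interior W) \le a$) and then uses Hermann's identification \cite[Proposition 4.5]{Hermann} of $\HF^{<a}_*(C,\lambda)$ with $\SH^{[0,a)}_*(C)$, under which $\iota_a$ corresponds to $i^a_C$. So your ``main obstacle'' (the nondegenerating perturbation of $H$ compatible with HZ-admissibility) is absorbed into a cited result rather than handled directly. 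Your sketch of that argument is the standard one and is fine as an outline, but for the purposes of this paper you can simply cite \cite{Irie_HZ} and \cite{Hermann} and avoid it entirely.
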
 
\begin{proof} 
First we need to recall 
Corollary 3.5 of \cite{Irie_HZ}: 
for any $2n$-dimensional Liouville domain $(W, \lambda)$ and $a \in \R_{>0} \setminus \mathrm{Spec}(W, \lambda)$ 
such that the canonical map $\iota_a: H^{n-*}(W) \to \HF^{<a}_*(W,  \lambda)$ 
satisfies $\iota_a(1)=0$, there holds 
$c_{\HZ}(\interior W,  d\lambda) \le a$. 
Moreover, since $\mathrm{Spec}(W, \lambda)$ is a measure zero set, 
the assumption $a \notin \mathrm{Spec}(W, \lambda)$ can be omitted. 

Now let us assume that $C \subset T^*\R^n$ is a $C^\infty$-RCT set with a nice action spectrum in the sense of \cite{Hermann}. 
There exists $X \in \mca{X}(T^*\R^n)$ satisfying $L_X \omega_n \equiv \omega_n$ and $X$ points outwards on $\partial C$. 
Setting $\lambda:= (i_X \omega_n)|_C$, 
$(C, \lambda)$ is a Liouville domain and 
there exists a canonical isomorphism $\HF^{<a}_*(C, \lambda) \cong \SH^{[0,a)}_*(C)$ such that $\iota_a$ corresponds to $i^a_C$ 
(see Section 4, in particular Proposition 4.5 of \cite{Hermann}). 
Now, if $a> c_{\SH}(C)$ then $\iota_a (1)=0$, thus $c_{\HZ}(C) \le a$. 
This completes the proof when $C$ is a $C^\infty$-RCT set with a nice action spectrum. 

Let $C$ be an arbitrary RCT set in $T^*\R^n$. 
Then there exists a sequence of $C^\infty$-RCT sets (with nice action spectra) $(C_j)_{j \ge 1}$ 
such that $C_{j+1} \subset C_j$ for every $j \ge 1$ and $\bigcap_{j=1}^\infty C_j = C$. 
Then $\SH^{[0,a)}_*(C) \cong \varinjlim_{j \to \infty} \SH^{[0,a)}_*(C_j)$ for every $a>0$, 
which implies $c_{\SH}(C) = \lim_{j \to \infty} c_{\SH}(C_j)$. 
On the other hand, for each $j$ there holds 
$c_{\HZ}(C) \le c_{\HZ}(C_j) \le c_{\SH}(C_j)$
thus we obtain 
$c_{\HZ}(C) \le \lim_{j \to \infty} c_{\SH}(C_j) =c_{\SH}(C)$. 
\end{proof} 

Now let us state the key inequality: 

\begin{lem}\label{SH_EHZ_EHZ}
$c_{\SH}(K') \le c_{\EHZ}(\conv(K^+)) + c_{\EHZ}(\conv(K^-))$. 
\end{lem}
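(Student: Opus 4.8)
The plan is to pass to a fiberwise convex set and then apply Corollary~\ref{c_SH_and_loop_space_homology}. After an affine symplectomorphism (which changes neither $c_{\SH}$ nor $c_{\EHZ}$) we may assume $\Pi=\{q_1=0\}$ and $\Pi^{\pm}=\{\pm q_1\ge0\}$; put $A:=\conv(K^+)\subset\{q_1\ge0\}$ and $B:=\conv(K^-)\subset\{q_1\le0\}$, both convex bodies (since $\Pi$ meets $\interior K$), with $A\cap B\subset\Pi$. Let $\tilde K'$ be the fiberwise convex hull of $K'=A\cup B$, i.e.\ $\tilde K'\cap T^*_q\R^n:=\conv\big((A\cup B)\cap T^*_q\R^n\big)$. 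First I would verify (by an elementary convexity computation) that $\tilde K'$ is compact, fiberwise convex, and star-shaped with respect to any point of $K\cap\Pi$ — hence a RCT set by Lemma~\ref{starshaped_implies_RCT} — and that $K'\subset\tilde K'$, so that $c_{\SH}(K')\le c_{\SH}(\tilde K')$ by Lemma~\ref{properties_of_c_SH}(ii). Since $\tilde K'$ is fiberwise convex and RCT, Corollary~\ref{c_SH_and_loop_space_homology} reduces the lemma to showing that, for every $a>c_{\EHZ}(A)+c_{\EHZ}(B)$, the class $H_*(j^a_{\tilde K'})(\nu^{\R^n}_{\pr\tilde K'})\in H_n(\Lambda^a_{\tilde K'},\Lambda^0_{\tilde K'})$ vanishes. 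The geometric input I would use repeatedly is that $\pr\tilde K'=\pr A\cup\pr B$ with $\pr A\subset\{q_1\ge0\}$, $\pr B\subset\{q_1\le0\}$, and that $\tilde K'_q=A_q$ whenever $q_1>0$ and $\tilde K'_q=B_q$ whenever $q_1<0$.

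As in Section~5 I would then, using Lemma~\ref{lem_nice_convex_body} and an approximation argument (possibly after a preliminary translation to create some separation), reduce to the case where $A$ and $B$ are nice convex bodies still lying on opposite sides of a hyperplane $\{q_1=0\}$; fix nice curves on $\partial A$, $\partial B$ and write $\gamma_A,\gamma_B\colon S^1\to\R^n$ for their projections, so by Lemma~\ref{properties_of_len_K}(iii) one has $\gamma_A(S^1)\subset\interior(\pr A)\subset\{q_1>0\}$, $\len_A(\gamma_A)=c_{\EHZ}(A)$, and likewise for $B$. Fix $a>c_{\EHZ}(A)+c_{\EHZ}(B)$. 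The target class is represented under $j^a_{\tilde K'}$ by the relative $n$-cycle $q\mapsto(\text{constant loop at }q)$ over a large round ball $D\supset\pr\tilde K'$, and the task is to exhibit this cycle as a boundary in $(\Lambda^a_{\tilde K'},\Lambda^0_{\tilde K'})$. Split $D=D^+\cup D^-$ along $E:=D\cap\{q_1=0\}$. Over $D^+$ take the prism $(s,x)\mapsto s\gamma_A+x$ on $[0,S]\times D^+$, $S$ large: since $\gamma_A(S^1)\subset\{q_1>0\}$ and $x_1\ge0$, every loop $s\gamma_A+x$ with $s>0$ lies in $\{q_1>0\}$, hence is either contained in $\pr A$ — whence $\len_{\tilde K'}(s\gamma_A+x)=\len_A(s\gamma_A+x)\le c_{\EHZ}(A)<a$ by Lemma~\ref{upper_bound_of_length} — or not contained in $\pr\tilde K'$ at all — whence $\len_{\tilde K'}=-\infty$. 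So this prism lands in $\Lambda^a_{\tilde K'}$, and for $S$ large its faces $\{S\}\times D^+$ and $[0,S]\times(\partial D\cap\{q_1\ge0\})$ land in $\Lambda^0_{\tilde K'}$; symmetrically over $D^-$ with $\gamma_B$. Summing the two prisms, the boundary is the constant-loop cycle over $D$, plus chains in $\Lambda^0_{\tilde K'}$, plus two leftover ``seam'' chains carried by $\{s\gamma_A+x:x\in E\}$ and $\{s\gamma_B+x:x\in E\}$, which do not cancel.

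The final step, which I expect to be the main obstacle, is to bridge these two seam chains by an $(n+1)$-chain over $[0,S]\times E\times[0,1]$ lying in $\Lambda^a_{\tilde K'}$. I would interpolate from $s\gamma_A+x$ (at $u=0$) to $s\gamma_B+x$ (at $u=1$) through a figure-eight loop: translate $s\gamma_A$, keeping it in $\{q_1\ge0\}$, so that its lowest $q_1$-point becomes a fixed point $q_*$ in the relative interior of $\pr(A\cap\Pi)\cap\pr(B\cap\Pi)$ (nonempty because $\Pi\cap\interior K\ne\emptyset$); attach at $q_*$ a translate of $s\gamma_B$ running into $\{q_1\le0\}$; grow it to full size while shrinking the $\gamma_A$-lobe away; and end at $s\gamma_B+x$. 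At each stage the loop is a concatenation, at the single point $q_*$, of a loop contained in $\{q_1\ge0\}$ and a loop contained in $\{q_1\le0\}$, so by additivity of $\len_{\tilde K'}$ under concatenation, Lemma~\ref{upper_bound_of_length} applied to $A$ and to $B$, and the identifications $\tilde K'_q=A_q$ over $\{q_1>0\}$ and $\tilde K'_q=B_q$ over $\{q_1<0\}$, it satisfies $\len_{\tilde K'}\le c_{\EHZ}(A)+c_{\EHZ}(B)<a$ (or $\len_{\tilde K'}=-\infty$) — this is exactly where the sum of capacities is used. One then checks that the $\{u=0\}$ and $\{u=1\}$ faces of the bridge cancel the two seam chains, that the $\{s=0\}$ face collapses to constant loops over $E$ (absorbed into the constant-loop cycle over $D$), and that the $\{s=S\}$ and $[0,S]\times\partial E\times[0,1]$ faces land in $\Lambda^0_{\tilde K'}$ (for $S$ large at least one lobe of the figure-eight is too big to fit inside the bounded set $\pr\tilde K'$). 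It follows that the constant-loop cycle over $D$ bounds in $(\Lambda^a_{\tilde K'},\Lambda^0_{\tilde K'})$, so $H_*(j^a_{\tilde K'})(\nu^{\R^n}_{\pr\tilde K'})=0$ for all $a>c_{\EHZ}(A)+c_{\EHZ}(B)$; hence $c_{\SH}(\tilde K')\le c_{\EHZ}(A)+c_{\EHZ}(B)$ and the lemma follows. The points requiring most care are making the figure-eight interpolation precise and $L^{1,2}$-continuous together with the behaviour of its boundary faces, and the approximation step keeping $A$ and $B$ on opposite sides of $\{q_1=0\}$; both are modelled on Section~5.
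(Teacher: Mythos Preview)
Your approach is essentially the paper's, and the core mechanism---bounding $\len_{K'}$ of figure-eight loops by $c_{\EHZ}(\conv K^+)+c_{\EHZ}(\conv K^-)$ via two applications of Lemma~\ref{upper_bound_of_length}---is exactly right. Two simplifications are worth knowing.

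First, once $\Pi=\{q_1=0\}$, the set $K'=\conv(K^+)\cup\conv(K^-)$ is \emph{already} fiberwise convex: for $q_1>0$ one has $K'_q=\conv(K^+)_q$, for $q_1<0$ one has $K'_q=\conv(K^-)_q$, and for $q_1=0$ both agree with $\conv(K\cap\Pi)_q$ (any convex combination of points of $K^+$ landing on $\Pi$ can use only points with $q_1=0$). So your passage to a fiberwise convex hull $\tilde K'$ is unnecessary.

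Second, the step you single out as the main obstacle---approximating $A,B$ by nice bodies that still lie on opposite sides of $\Pi$---is genuinely awkward: a strictly convex body containing $\conv(K^+)$ must cross $\Pi$, and ``translating to create separation'' destroys the inclusion $K'\subset A\cup B$ needed for monotonicity. The paper sidesteps this entirely. It takes \emph{arbitrary} nice bodies $C^\pm\supset\conv(K^\pm)$ with $c_{\EHZ}(C^+)+c_{\EHZ}(C^-)<A$, takes nice curves $\Gamma^\pm$ on $\partial C^\pm$, and translates the \emph{curves} $\pr\circ\Gamma^\pm$ (not the bodies) so that the translated $\gamma^\pm$ lie in $\{\pm q_1\ge0\}$. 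Since $K'\cap\pr^{-1}(\{q_1\ge0\})=\conv(K^+)\subset C^+$, any loop in $\{q_1\ge0\}$ satisfies $\len_{K'}\le\len_{C^+}$, and Lemma~\ref{upper_bound_of_length} applied to $C^+$ gives $\le c_{\EHZ}(C^+)$; symmetrically on the other side. This removes the delicate approximation you were worried about.

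Finally, the paper packages your ``two prisms plus figure-eight bridge'' into a single continuous map
\[
\ell^A:(\R^2\setminus(\R_{<0})^2)\times\R^{n-1}\to\Lambda^A_{K'}:
\]
over $\{s\le0,\,t\ge0\}$ it is $t\gamma^+$ translated by $(-s,x)$; over $\{s,t\ge0\}$ it is the concatenation of $t\gamma^+$ and $s\gamma^-$ based at $(0,x)$ (your figure-eight); and symmetrically over $\{s\ge0,\,t\le0\}$. The constant-loop map $j^A_{K'}$ factors as $\ell^A\circ h$ where $h$ embeds $\R^n$ onto the L-shaped locus $\{s\le0,\,t=0\}\cup\{s=0,\,t\le0\}$, and the relative class dies because $(\ell^A)^{-1}(\Lambda^0_{K'})$ has bounded complement. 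This is exactly your null-homology, with the boundary bookkeeping already absorbed into one diagram.
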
 

Assuming Lemma \ref{SH_EHZ_EHZ}, we obtain 
\[ 
c_{\HZ}(K) \le c_{\HZ}(K') \le c_{\SH}(K') \le c_{\EHZ}(\conv(K^+)) + c_{\EHZ}(\conv(K^-)), 
\] 
where the first inequality follows from $K \subset K'$, 
the second inequality follows from Lemma \ref{HZ_SH}, and 
the last inequality is Lemma \ref{SH_EHZ_EHZ}. 
Hence we have reduced Theorem \ref{HZ_subadditivity} to Lemma \ref{SH_EHZ_EHZ}.

\subsection{Proof of Lemma \ref{SH_EHZ_EHZ}}

The case $n=1$ is easy to prove. 
Indeed, for any compact $S \subset T^*\R^1$ satisfying $\interior (S) \ne \emptyset$, 
there holds $c_{\HZ}(S) \le |S|$, where $| \, \cdot \,|$ denotes the measure. 
Also, $|S|=c_{\EHZ}(S)$ if $S$ is convex. 
Then we can prove the case $n=1$ by 
\[ 
c_{\HZ}(K') \le |K'| = |\conv(K^+)| + |\conv(K^-)| = c_{\EHZ}(\conv(K^+)) + c_{\EHZ}(\conv(K^-)). 
\]

Hence in the rest of the proof we may assume $n \ge 2$. 
We may also  assume that $\Pi=\{q_1=0\}$, 
since for any hyperplane $\Pi$  
there exists an affine map $A$ on $T^*\R^n$
with $A^*\omega_n=\omega_n$
and $A(\Pi)= \{q_1=0\}$. 
Finally, we assume that 
$K^+ = K \cap \{q_1 \ge 0\}$, 
$K^- = K \cap \{q_1 \le 0\}$. 

\begin{lem}
$K'$ is fiberwise convex.
\end{lem}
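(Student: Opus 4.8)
The plan is to verify convexity of each fiber $K'_q := K' \cap T_q^*\R^n$ directly, splitting into cases according to the value of $q_1$ (recall we have normalized so that $\Pi = \{q_1 = 0\}$, $K^+ = K \cap \{q_1 \ge 0\}$ and $K^- = K \cap \{q_1 \le 0\}$). Recall also $K' = \conv(K^+) \cup \conv(K^-)$.

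Since the halfspaces $\{q_1 \ge 0\}$ and $\{q_1 \le 0\}$ are convex, we have $\conv(K^+) \subset \{q_1 \ge 0\}$ and $\conv(K^-) \subset \{q_1 \le 0\}$. Hence, if $q_1 > 0$ then $T_q^*\R^n$ is disjoint from $\conv(K^-)$, so $K'_q = \conv(K^+) \cap T_q^*\R^n$, which is convex as the intersection of a convex set with an affine subspace; the case $q_1 < 0$ is symmetric. The only remaining case is $q_1 = 0$, i.e. $T_q^*\R^n \subset \Pi$. I would handle it using the elementary fact that if a set $S$ lies in a halfspace $\{f \ge 0\}$ for a linear functional $f$, then $\conv(S) \cap \{f = 0\} = \conv(S \cap \{f = 0\})$: writing a point of the left-hand side as a finite convex combination $\sum_i \lambda_i s_i$ with $\lambda_i > 0$ and $f(s_i) \ge 0$, the relation $\sum_i \lambda_i f(s_i) = 0$ forces $f(s_i) = 0$ for every $i$. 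Applying this with $f = q_1$ to $S = K^+$ and to $S = K^-$, and using $K^+ \cap \Pi = K^- \cap \Pi = K \cap \Pi$, we get $\conv(K^+) \cap \Pi = \conv(K^-) \cap \Pi = \conv(K \cap \Pi)$. Therefore $K'_q = \big( \conv(K^+) \cup \conv(K^-) \big) \cap T_q^*\R^n = \conv(K \cap \Pi) \cap T_q^*\R^n$, which is again convex.

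The only step requiring any thought is the case $q_1 = 0$: there $K'_q$ is a priori the union of two convex pieces, and the point is that the supporting-hyperplane argument above forces those two pieces to coincide along $\Pi$, so the union is in fact convex. Everything else is immediate, so I do not anticipate a genuine obstacle.
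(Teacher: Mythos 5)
Your proof is correct and takes essentially the same approach as the paper: the same case split on the sign of $q_1$, with the only nontrivial point being the identification $\conv(K^+) \cap \Pi = \conv(K^-) \cap \Pi$ when $q_1 = 0$. The paper states this identity without justification; you supply the short supporting-functional argument behind it, which is a welcome amplification but not a different route.
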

\begin{proof}
Let $q=(q_1, \ldots, q_n) \in \R^n$. 
If $q_1>0$, then $K'_q = K' \cap T^*_q\R^n = \conv (K^+) \cap T^*_q\R^n$, 
thus $K'_q$ is convex. 
Similarly, if $q_1<0$, then $K'_q = \conv(K^-) \cap T^*_q\R^n$, thus $K'_q$ is convex. 
Finally, when $q_1=0$, 
there holds $K'_q = \conv(K^+) \cap T^*_q\R^n = \conv(K^-) \cap T^*_q\R^n$, 
since 
$\conv(K^+) \cap \{q_1=0\} = \conv(K \cap \{q_1=0\}) = \conv(K^-) \cap \{q_1=0\}$. 
In particular, $K'_q$ is convex.
\end{proof}

For any $A \in \R_{>0}$, let us consider the map 
$j^A_{K'}: (\R^n, \R^n \setminus \pr(K')) \to (\Lambda^A_{K'}, \Lambda^0_{K'})$ 
which maps each $q \in \R^n$ to the constant loop at $q$. 
By Corollary \ref{c_SH_and_loop_space_homology}, 
to prove Lemma \ref{SH_EHZ_EHZ} 
it is sufficient to prove the following: 
\begin{equation}\label{AK+K-}
A  > c_{\EHZ}(\conv(K^+)) + c_{\EHZ}(\conv(K^-))  \implies  H_n(j^A_{K'})=0. 
\end{equation}
By Lemma \ref{lem_nice_convex_body}, 
there exist nice convex bodies $C^+$ and $C^-$ 
such that 
$\conv (K^+) \subset C^+$, 
$\conv (K^-) \subset C^-$ and 
$c_{\EHZ}(C^+) + c_{\EHZ}(C^-) <A$. 
Let $\Gamma^+: S^1 \to \partial C^+$ be a nice curve on $C^+$, 
and $\Gamma^-: S^1 \to \partial C^-$ be a nice curve on $C^-$. 
By changing parameterizations if necessary, 
we may assume that the following properties hold: 
\begin{itemize} 
\item The $q_1$-component of $\pr \circ \Gamma^+: S^1 \to \R^n$ takes its minimum at $0 \in S^1$, 
\item The $q_1$-component of $\pr \circ \Gamma^-: S^1 \to \R^n$ takes its maximum at $0 \in S^1$. 
\end{itemize} 
Then there exist
$\gamma^+: S^1 \to \R_{\ge 0} \times \R^{n-1}$
and 
$\gamma^-: S^1 \to \R_{\le 0} \times \R^{n-1}$
such that 
$\gamma^+ - \pr \circ \Gamma^+$ and 
$\gamma^-  - \pr \circ \Gamma^-$ 
are constant maps from $S^1$ to $\R^n$.

\begin{rem}\label{gamma_pm_nonconstant}
By Lemma \ref{d_t_gamma}, 
$\gamma^+$ and $\gamma^-$ are nonconstant. 
\end{rem} 

\begin{lem}\label{upper_bound_of_length_2} 
\begin{enumerate} 
\item[(i):] 
For any $a \in \R_{\ge 0}$ and $x \in \R_{\ge 0} \times \R^{n-1}$, 
\[ 
\gamma^+_{a,x}: S^1 \to \R^n; \, t \mapsto a \gamma^+(t) + x
\] 
satisfies $\len_{K'} (\gamma^+_{a,x}) \le c_{\EHZ}(C^+)$. 
\item[(ii):] 
For any $a \in \R_{\ge 0}$ and $x \in \R_{\le 0} \times \R^{n-1}$, 
\[ 
\gamma^-_{a,x}: S^1 \to \R^n; \, t \mapsto a \gamma^-(t) + x
\] 
satisfies $\len_{K'} (\gamma^-_{a,x}) \le c_{\EHZ}(C^-)$. 
\end{enumerate} 
\end{lem}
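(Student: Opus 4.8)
The plan is to deduce Lemma \ref{upper_bound_of_length_2} from Lemma \ref{upper_bound_of_length} applied to the nice convex bodies $C^+$ and $C^-$. I will describe the argument for part (i); part (ii) then follows by applying the same reasoning after the linear symplectomorphism $(q_1, q_2, \ldots, q_n, p_1, \ldots, p_n) \mapsto (-q_1, q_2, \ldots, q_n, -p_1, p_2, \ldots, p_n)$, which interchanges $\{q_1 \ge 0\}$ with $\{q_1 \le 0\}$ and hence interchanges the roles of $K^+, C^+, \gamma^+$ with $K^-, C^-, \gamma^-$.

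First I would observe that for any $a \in \R_{\ge 0}$ and $x \in \R_{\ge 0} \times \R^{n-1}$ the loop $\gamma^+_{a,x}$ takes values in $\{q_1 \ge 0\}$, because $\gamma^+(S^1) \subset \R_{\ge 0} \times \R^{n-1}$. Next I would record the identity $K' \cap \{q_1 \ge 0\} = \conv(K^+)$: indeed $\conv(K^+) \subset \{q_1 \ge 0\}$ since $K^+ \subset \{q_1 \ge 0\}$, whereas $\conv(K^-) \cap \{q_1 \ge 0\} = \conv(K^-) \cap \{q_1 = 0\} = \conv(K \cap \{q_1 = 0\}) \subset \conv(K^+)$, using the equality $\conv(K^+) \cap \{q_1 = 0\} = \conv(K \cap \{q_1 = 0\}) = \conv(K^-) \cap \{q_1 = 0\}$ already noted in the proof that $K'$ is fiberwise convex. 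Since $\{q_1 \ge 0\}$ is a union of fibers of $\pr$, this yields $\pr(K') \cap \{q_1 \ge 0\} = \pr(\conv(K^+))$ and $K'_q = \conv(K^+)_q$ whenever $q_1 \ge 0$. Hence, directly from the definition (\ref{190420_1}), $\len_{K'}(\gamma^+_{a,x}) = \len_{\conv(K^+)}(\gamma^+_{a,x})$ (both being $-\infty$ unless $\gamma^+_{a,x}(S^1) \subset \pr(\conv(K^+))$), and by Lemma \ref{properties_of_len_K}(v) together with $\conv(K^+) \subset C^+$ we obtain $\len_{K'}(\gamma^+_{a,x}) \le \len_{C^+}(\gamma^+_{a,x})$.

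It then remains to show $\len_{C^+}(\gamma^+_{a,x}) \le c_{\EHZ}(C^+)$. Writing $\gamma^+ = \pr \circ \Gamma^+ + c_0$ with $c_0 \in \R^n$ the constant translation vector, the loop $\gamma^+_{a,x}$ is $t \mapsto a\,(\pr \circ \Gamma^+)(t) + (a c_0 + x)$, which is exactly the loop to which Lemma \ref{upper_bound_of_length} applies with $C^+$, $\Gamma^+$ in place of $K$, $\Gamma$ and with affine parameters $(a, a c_0 + x) \in \R_{\ge 0} \times \R^n$. That lemma then gives $\len_{C^+}(\gamma^+_{a,x}) \le \len_{C^+}(\pr \circ \Gamma^+) = c_{\EHZ}(C^+)$ when $(a, a c_0 + x)$ lies in the associated compact convex set $T$, and $\len_{C^+}(\gamma^+_{a,x}) = -\infty$ otherwise; in either case $\len_{C^+}(\gamma^+_{a,x}) \le c_{\EHZ}(C^+)$, proving (i).

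I expect the only delicate point to be the reduction to $C^+$: one must carry out the halfspace bookkeeping correctly (the identity $K' \cap \{q_1 \ge 0\} = \conv(K^+)$ and the compatibility of $\pr$ with intersecting by $\{q_1 \ge 0\}$), and must absorb the translation constant $c_0$ into the affine parameter so that Lemma \ref{upper_bound_of_length} applies to $C^+$ without modification. No new analytic input is needed, since the variational estimate at the heart of the bound was already established in the proof of Lemma \ref{upper_bound_of_length}.
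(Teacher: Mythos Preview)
Your proposal is correct and follows essentially the same approach as the paper: both arguments reduce to Lemma \ref{upper_bound_of_length} applied to $C^+$ via the observation that $\gamma^+_{a,x}$ lies in the right halfspace and $K'$ restricted to that halfspace is contained in $C^+$. You spell out more detail than the paper (the exact identity $K'\cap\{q_1\ge 0\}=\conv(K^+)$ and the absorption of the translation constant $c_0$ into the affine parameter), whereas the paper simply invokes the containment $K'\cap\pr^{-1}(\R_{\ge 0}\times\R^{n-1})\subset C^+$ and Lemma \ref{upper_bound_of_length} directly; but the logical skeleton is the same.
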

\begin{proof}
Since $\gamma^+_{a,x}(S^1) \subset \R_{\ge 0} \times \R^{n-1}$
and $K' \cap \pr^{-1}(\R_{\ge 0} \times \R^{n-1}) \subset C^+$, 
there holds $\len_{K'} (\gamma^+_{a,x}) \le \len_{C^+}(\gamma^+_{a,x})$. 
On the other hand, Lemma \ref{upper_bound_of_length} 
implies $\len_{C^+}(\gamma^+_{a,x}) \le c_{\EHZ}(C^+)$, 
which completes the proof of (i). 
The proof of (ii) is similar to the proof of (i). 
\end{proof}

For any $(s, t, x_2, \ldots, x_n) \in (\R^2 \setminus (\R_{<0})^2) \times \R^{n-1}$, 
we define 
$\gamma_{s,t,x_2,\ldots, x_n}: S^1 \to \R^n$ as follows: 
\begin{itemize} 
\item When $s \le 0$ and $t \ge 0$, 
\[ 
\gamma_{s,t,x_2,\ldots, x_n}(\theta) := \begin{cases} t \cdot \gamma^+(2\theta) + (-s, x_2, \ldots, x_n)  &(0 \le \theta \le 1/2) \\  (-s, x_2, \ldots, x_n) &(1/2 \le \theta \le 1).  \end{cases}
\] 
\item When $s,t \ge 0$, 
\[
\gamma_{s,t,x_2,\ldots,x_n}(\theta) := \begin{cases}  t \cdot \gamma^+(2\theta) + (0,x_2, \ldots, x_n) &(0 \le \theta \le 1/2) \\  s \cdot \gamma^-(2\theta-1) + (0,x_2, \ldots, x_n) &(1/2 \le \theta \le 1). \end{cases}
\] 
\item When $s \ge 0$ and $t \le 0$, 
\[ 
\gamma_{s,t,x_2,\ldots, x_n}(\theta):= \begin{cases}   (t,x_2, \ldots, x_n) &(0 \le \theta \le 1/2) \\ s \cdot \gamma^-(2\theta-1) + (t,x_2,\ldots,x_n) &(1/2 \le \theta \le 1).\end{cases}
\]
\end{itemize} 
Then, Lemma \ref{upper_bound_of_length_2} implies 
\[ 
\sup_{(s,t,x_2, \ldots, x_n) \in (\R^2 \setminus (\R_{<0})^2) \times \R^{n-1}}   \len_{K'}(\gamma_{s,t,x_2, \ldots, x_n}) \le c_{\EHZ}(C^+) + c_{\EHZ}(C^-) < A, 
\] 
thus one can define a map 
\[ 
\ell^A: (\R^2 \setminus (\R_{<0})^2) \times \R^{n-1} \to \Lambda^A_{K'}; \, (s,t,x_2, \ldots, x_n) \mapsto \gamma_{s,t,x_2,\ldots,x_n}.
\] 
It is easy to check that $\ell^A$ is continuous with respect to the $L^{1,2}$-topology on $\Lambda$. 
For any $(x_1, \ldots, x_n) \in \R^n$, let $c_{(x_1, \ldots, x_n)}$ denote the constant map from $S^1$ to $(x_1, \ldots, x_n)$. 

\begin{lem}\label{lem_ell_A}
\begin{enumerate}
\item[(i):] For any $r \in \R_{\le 0}$, 
\[ 
\gamma_{r,0,x_2, \ldots,x_n} = c_{(-r, x_2, \ldots, x_n)}, \qquad
\gamma_{0,r,x_2,, \ldots,x_n} = c_{(r, x_2,\ldots, x_n)}. 
\] 
\item[(ii):] There exists $R \in \R_{>0}$ such that 
\[
\max\{ |s|, |t|, |(x_2,\ldots, x_n)| \} > R \implies \len_{K'}(\gamma_{s,t,x_2,\ldots,x_n})  = -\infty. 
\]
\end{enumerate}
\end{lem}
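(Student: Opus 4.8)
The plan is to verify both claims by directly unwinding the piecewise definition of $\gamma_{s,t,x_2,\ldots,x_n}$, using two facts about the loops $\gamma^{\pm}$. First, they are normalized so that $\gamma^{+}(0)=\gamma^{-}(0)=0$: here $\gamma^{+}=\pr\circ\Gamma^{+}-\pr\circ\Gamma^{+}(0)$, and the requirement that its image lie in $\R_{\ge 0}\times\R^{n-1}$ is precisely that the $q_1$-component of $\pr\circ\Gamma^{+}$ be minimized at $0\in S^1$, and symmetrically for $\gamma^{-}$; this normalization is also exactly what makes the formula for $\gamma_{s,t,x_2,\ldots,x_n}$ a well-defined continuous loop on $S^1$. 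Second, $\gamma^{+}$ and $\gamma^{-}$ are nonconstant by Remark \ref{gamma_pm_nonconstant}, so $D^{+}:=\max_{\theta\in S^1}|\gamma^{+}(\theta)|$ and $D^{-}:=\max_{\theta\in S^1}|\gamma^{-}(\theta)|$ are strictly positive.

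For (i), let $r\le 0$. Then $\gamma_{r,0,x_2,\ldots,x_n}$ is given by the first case of the definition ($s=r\le 0$, $t=0\ge 0$), whose first branch is $0\cdot\gamma^{+}(2\theta)+(-r,x_2,\ldots,x_n)=(-r,x_2,\ldots,x_n)$ and whose second branch is $(-r,x_2,\ldots,x_n)$; hence $\gamma_{r,0,x_2,\ldots,x_n}=c_{(-r,x_2,\ldots,x_n)}$. Likewise $\gamma_{0,r,x_2,\ldots,x_n}$ is given by the third case ($s=0\ge 0$, $t=r\le 0$), both of whose branches equal $(r,x_2,\ldots,x_n)$; hence $\gamma_{0,r,x_2,\ldots,x_n}=c_{(r,x_2,\ldots,x_n)}$. (When $r=0$ the relevant parameter lies in the overlap of the cases, but every branch gives the same constant loop, so there is no ambiguity.)

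For (ii), since $K'$ is compact, fix $M>0$ with $\pr(K')\subset\{q\in\R^n:|q|\le M\}$, and put $R:=\max\{M,\,2M/D^{+},\,2M/D^{-}\}$. Let $\gamma:=\gamma_{s,t,x_2,\ldots,x_n}$ and suppose $\gamma(S^1)\subset\pr(K')$; by (\ref{190420_1}) it suffices to show $\max\{|s|,|t|,|(x_2,\ldots,x_n)|\}\le R$. In each of the three cases $\gamma$ contains in its image a constant ``hub'' point $h$: namely $h=(-s,x_2,\ldots,x_n)$ if $s\le 0,\ t\ge 0$; $h=(0,x_2,\ldots,x_n)$ if $s,t\ge 0$; $h=(t,x_2,\ldots,x_n)$ if $s\ge 0,\ t\le 0$. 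Since $h\in\pr(K')$ we get $|h|\le M$, which already gives $|(x_2,\ldots,x_n)|\le M$ in all cases, plus $|s|\le M$ in the first case and $|t|\le M$ in the third. For the remaining scale parameter, $\gamma$ also attains a value $h+t\gamma^{+}(\theta_{+})$ with $|\gamma^{+}(\theta_{+})|=D^{+}$ (in the first two cases) and/or a value $h+s\gamma^{-}(\theta_{-})$ with $|\gamma^{-}(\theta_{-})|=D^{-}$ (in the last two cases); as both $h$ and such a value have norm $\le M$, the triangle inequality forces $|t|\,D^{+}\le 2M$ and/or $|s|\,D^{-}\le 2M$. Combining the bounds case by case yields $\max\{|s|,|t|,|(x_2,\ldots,x_n)|\}\le R$, which proves (ii).

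The only step of real substance is the use of $D^{\pm}>0$ in part (ii): this is precisely where the nonconstancy of $\gamma^{\pm}$ (Remark \ref{gamma_pm_nonconstant}) enters, since otherwise the scale parameters $s,t$ could tend to infinity while the loop stays inside $\pr(K')$. Everything else is a mechanical inspection of the three cases of the definition.
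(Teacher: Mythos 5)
Your proof is correct and follows essentially the same route as the paper: in both cases the "hub" value $\gamma_{s,t,x_2,\ldots,x_n}(0)$ bounds the translation parameters, while the size of the scaled copy of $\gamma^{\pm}$ bounds the scale parameter; the paper phrases the latter bound via $\diam(\gamma^{\pm}(S^1))$ and you via $\max|\gamma^{\pm}|$ plus the triangle inequality, which are interchangeable up to a factor of two. Your explicit observation that the normalization $\gamma^{\pm}(0)=0$ is forced by continuity of the piecewise formula (and is what makes part (i) and the evaluation $\gamma_{s,t,\ldots}(0)=(-s,x_2,\ldots,x_n)$ literally true) is a helpful clarification that the paper leaves implicit.
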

\begin{proof} 
(i) follows directly from the definition. 
To prove (ii), let us take $R>0$ so that the following conditions hold: 
\[ 
B^n(R) \supset \pr(K'), \qquad 
R \cdot \min\{ \diam(\gamma^+(S^1)), \, \diam(\gamma^-(S^1))\} \ge  \diam(\pr(K')). 
\] 
Here $B^n(R):=\{q \in \R^n \mid |q| \le R\}$ 
and $\diam$ denotes the diameter. 
Note that the second condition can be achieved
when $R$ is sufficiently large, 
since
$\gamma^+$ and $\gamma^-$ are 
both nonconstant maps (see Remark \ref{gamma_pm_nonconstant}). 

Let us prove that such $R$ satisfies the required conditions: 
if $\len_{K'}(\gamma_{s,t,x_2,\ldots,x_n})>-\infty$ (which is equivalent to $\gamma_{s,t,x_2,\ldots,x_n}(S^1) \subset \pr(K')$)
then $\max\{ |s|, |t|, |(x_2,\ldots,x_n)|\} \le R$. 
It is sufficient to consider the following three cases: 
\begin{itemize}
\item $s \le 0$ and $t \ge 0$ :
Since $t \cdot \diam(\gamma^+(S^1)) \le \diam (\pr(K'))$, we obtain $t \le R$. 
Since $\gamma_{s,t,x_2,\ldots,x_n}(0) = (-s,x_2,\ldots, x_n) \in \pr(K') \subset B^n(R)$, 
we obtain $|s|, |(x_2,\ldots, x_n)| \le R$. 
\item $s,t \ge 0$ :
Since $t \cdot \diam(\gamma^+(S^1)), s \cdot \diam(\gamma^-(S^1)) \le \diam(\pr(K'))$, we obtain $t, s \le R$. 
Since $\gamma_{s,t,x_2,\ldots,x_n}(0)=(0,x_2,\ldots,x_n) \in \pr(K') \subset B^n(R)$, 
we obtain $|(x_2,\ldots,x_n)| \le R$. 
\item $s \ge 0$ and $t \le 0$ :
this case is similar to the first case. 
\end{itemize} 
\end{proof} 

Let us define $h: \R^n \to (\R^2 \setminus (\R_{<0})^2) \times \R^{n-1}$ by 
\[ 
h(x_1, x_2, \ldots, x_n) = \begin{cases} (-x_1, 0, x_2, \ldots, x_n) &(x_1 \ge 0),  \\ (0, x_1, x_2, \ldots, x_n) &(x_1 \le 0). \end{cases}
\] 
Then Lemma \ref{lem_ell_A} (i) implies
$\ell^A \circ h(x_1, \ldots, x_n) = c_{(x_1,\ldots, x_n)}$. 
By Lemma \ref{lem_ell_A} (ii), 
when $R \in \R_{>0}$ is sufficiently large,
\[ 
H_n(\ell^A \circ h):  H_n(\R^n, \R^n \setminus B^n(R)) \to H_n(\Lambda^A_{K'}, \Lambda^{-\infty}_{K'}) \to H_n(\Lambda^A_{K'}, \Lambda^0_{K'})
\] 
is zero. We may also assume that $\pr(K') \subset B^n(R)$. 
Now the diagram 
\[ 
\xymatrix{
H_n(\R^n, \R^n \setminus \pr(K'))  \ar[r]^-{H_n(j^A_{K'})} & H_n(\Lambda^A_{K'}, \Lambda^0_{K'}) \\
H_n(\R^n, \R^n \setminus B^n(R))  \ar[u]\ar[ru]_-{H_n(\ell^A \circ h)} &
}
\] 
commutes, 
the vertical map is surjective
(since $\pr(K')$ is star-shaped) 
and the diagonal map is zero, 
thus $H_n(j^A_{K'})=0$, which completes the proof of (\ref{AK+K-}). 
\qed

\section{Proof of Proposition \ref{2021_0530_1}}

First let us introduce a few notations. 
For any $S \subset \R^n$, let 
\begin{align*} 
D^*S&:= \{ (q,p) \in T^*\R^n \mid q \in S, \, |p| \le 1\}, \\ 
w(S)&:= \inf \{ \sup h - \inf h \mid h \in C^\infty_c(\R^n), \text{ $|dh(x)| \ge 1$ for any $x \in S$} \},  \\ 
r(S)&:= \sup \{ r \mid \text{there exists $q \in \R^n$ with $B^n(q:r) \subset S$} \}. 
\end{align*} 
$B^n(q:r)$ denotes the closed ball in $\R^n$ with center $q$ and radius $r$. 

Our goal is to show that, for any bounded $B \subset T^*\R^n$ 
and any $\ep \in \R_{>0}$, there exist compact star-shaped sets $K_1, K_2 \subset T^*\R^n$
such that $B \subset K_1 \cup K_2$ and $e(K_1), e(K_2) < \ep$. 
Note that, for any compact $K \subset T^*\R^n$ and $a>0$, there holds 
$e(aK)= a^2 e(K)$. Thus we may assume that 
$B$ is a subset of $D^*B^n(1)= \{(q,p) \in T^*\R^n \mid |q|, |p| \le 1\}$. 

For any nonempty compact $S \subset \R^n$, there holds 
\[ 
e(D^*S) \le 2 w(S) \le C_n  r(S)
\] 
where $C_n$ is a positive constant which depends only on $n$. 
The first inequality is proved in Lemma 4 of \cite{Irie_displacement}, 
and the second inequality is proved in Section 2.2 of \cite{Irie_displacement}, 
although notations and settings in this section are slightly different from those in \cite{Irie_displacement}.

For any $\theta$, let $R_\theta$ denote the anti-clockwise rotation of $\R^2$ with 
center $(0,0)$ and angle $\theta$. 
For any integer $N \ge 1$, let 
\[ 
T(N):= \{ (r \cos \theta, r \sin \theta) \mid 0 \le r \le 1, \, 0 \le \theta \le \pi/N\} \subset \R^2. 
\] 
Moreover, for any $i \in \{1, 2\}$, let 
us define $S_i(N) \subset \R^2$ and $\bar{S}_i(N) \subset \R^n$ by 
\[ 
S_i(N):= \bigcup_{j=0}^{N-1}  R_{\frac{(i+2j-1)\pi}{N}}(T(N)), \qquad 
\bar{S}_i(N):=\begin{cases}  S_i(N) &(n=2)  \\  S_i(N) \times B^{n-2}(1) &(n \ge 3). \end{cases} 
\]
Then 
$D^*\bar{S}_i(N) \subset T^*\R^n$ is a compact star-shaped set for any $i \in \{1,2\}$, and there holds
\[ 
B \subset D^* B^n(1) \subset D^* \bar{S}_1(N) \cup D^* \bar{S}_2(N). 
\] 
On the other hand, for any $N$ and $i$, 
\[ 
r(\bar{S}_i(N)) \le r(S_i(N)) \le r(T(N)) \le \frac{\pi}{2N}. 
\] 
Thus, if $N  > \frac{\pi C_n}{2\ep}$, then 
$\max_{1 \le i \le 2} e( D^* \bar{S}_i(N)) < \ep$. 
One can complete the proof by taking such $N$ and setting 
$K_i:= D^*\bar{S}_i(N) \, (i=1, 2)$. 
\qed

\end{document}